\documentclass[10pt, reqno]{article}
\usepackage{fullpage}
\usepackage[dvipsnames]{xcolor}  
\usepackage{tipa}
\usepackage{dsfont}
\usepackage{tikz-cd}
\usepackage{tikz}
\usetikzlibrary{decorations.markings, patterns, decorations.pathmorphing}
\usetikzlibrary{cd,fit,shapes.geometric}

\tikzset{E/.style = {ellipse, draw=blue, dashed,
                    inner xsep=0mm,inner ysep=0mm,
                    fit=#1
                   }
        }

\tikzset{->-/.style={decoration={
			markings,
			mark=at position #1 with {\arrow{>}}},postaction={decorate}}}

\usepackage{graphicx}

\usepackage{amsxtra}
\usepackage{amsmath}
\usepackage{amssymb}
\usepackage{amsfonts}
\usepackage{mathrsfs}
\usepackage{amsthm}
\usepackage[all]{xy}
\usepackage{enumitem}
\usepackage{hyperref}
\usepackage{subfigure}
\usepackage{mathabx}
\usepackage{euscript}
\usepackage{xargs}                      
\usepackage{leftindex}
\usepackage{bbm}

\usepackage[colorinlistoftodos,prependcaption,textsize=tiny]{todonotes}
\newcommandx{\note}[2][1=]{\todo[linecolor=Plum,backgroundcolor=Plum!25,bordercolor=Plum,#1]{#2}}

\theoremstyle{plain}
\newtheorem{thm}{Theorem}[section]
\newtheorem{lem}[thm]{Lemma}
\newtheorem{prop}[thm]{Proposition}
\newtheorem{cor}[thm]{Corollary}
\theoremstyle{definition}
\newtheorem{rem}[thm]{Remark}
\newtheorem{defi}[thm]{Definition}

\newtheorem{exa}[thm]{Example}
\newtheorem{notation}[thm]{Notation}

\hypersetup{%
	colorlinks,%
	linkcolor={red!50!black},%
	citecolor={blue!50!black},%
	urlcolor={blue!80!black}%
}

\numberwithin{equation}{section}

\setlist[enumerate,1]{label=(\arabic{*})}
\setlist[enumerate,2]{label=(\roman{*})}
\setlist[enumerate,3]{label=(\alph{*})}

\def\eR{ {\EuScript R}}
\def\eS{ {\EuScript S}}
\def\eT{ {\EuScript T}}

\def\A{ {\EuScript A}}

\def\B{ {\EuScript B}}

\def\can{\operatorname{can}}

\def\X{{\EuScript X}}

\def\CC{\mathbb{C}}

\def\Cat{{\mathcal Cat}}
\def\Cat{{\EuScript Cat}}

\def\cone{\operatorname{cone}}
\def\cof{\operatorname{cof}}

\def\D{{\EuScript D}}

\def\Dp{{}^{\mathfrak p}\!{D}}

\def\Rec{\on{Rec}}
\def\Ext{\on{Ext}}

\def\Exit{\on{Exit}}

\def\F{{ \EuScript F}}

\def\fib{\operatorname{fib}}

\def\Fun{\operatorname{Fun}}

\def\G{{ \EuScript G}}

\def\h{\mathfrak{h}}
\def\H{{\EuScript{H}}}

\def\Hom{\operatorname{Hom}}

\def\hra{\hookrightarrow}

\def\Im{\on{Im}}

\def\id{\on{id}}
\def\Id{\on{Id}}

\def\llim{\mathop{\underleftarrow{{\mathbbm{l}}\text{im}}}}

\def\L{{\EuScript L}}

\def\llra{\longleftrightarrow}
\def\lra{\longrightarrow}

\def\Map{\operatorname{Map}}

\def\Mod{\on{Mod}}

\def\N{\operatorname{N}}
\def\NN{\mathbb{N}}

\def\on{\operatorname}

\def\op{{\operatorname{op}}}

\def\Open{\mathbf{Open}}

 \def\P{{\EuScript P}}

\def\Perv{{\on{Perv}}}

\def\phi{{\varphi}}

\def\pt{ {\on{pt}}}

\def\p{\mathfrak{p}}

\def\Re{\on{Re}}

\def\RR{\mathbb{R}}

 \def\Set{ {\operatorname{Set}}}

\def\Sh{\on{Sh}}

 \def\sSet{{\Set}_{\Delta}}
 \def\St{{\EuScript S}t}

 \def\Sym{{\on{Sym}}}
  
 \def\sd{\operatorname{sd}}

\def\totfib{\on{totfib}}

\def\tfib{\operatorname{tfib}}
 
\def\U{ {\EuScript U}}

\def\V{ {\EuScript V}}

\def\Vect{ {\on{Vect}(k)}}

\def\var{\operatorname{var}}

\def\W{ {\EuScript W}}

\def\ZZ{\mathbb{Z}}

\def\Z{\mathcal{Z}}

 \def\={{\,\, \simeq\,\, }}
 \def\-{{\setminus}}
\def\<<{\langle {}\hskip -.1cm {}\langle}
\def\>>{\rangle \hskip -.1cm \rangle}

\newcommand{\doi}[1]{\href{https://dx.doi.org/#1}{{\small DOI:#1}}}

\newcommand{\googlebooks}[1]{(preview at \href{https://books.google.com/books?id=#1}{google books})}

\newcommand{\numdam}[1]{}
\newcommand{\mat}[1]{\begin{smallmatrix}#1\end{smallmatrix}}

\usepackage{subfiles}

\title{Lax descent and foliated Verdier duality}

\author{Tobias Dyckerhoff\footnote{Universität Hamburg, Fachbereich Mathematik, Bundesstraße 55, 20146 Hamburg, Germany
	email: {\tt first.last@uni-hamburg.de}}, 
    Till Heine\footnotemark[\value{footnote}],
Simon Schneider\footnotemark[\value{footnote}],
Kostiantyn Tolmachov\footnotemark[\value{footnote}]
    }

\begin{document}

\maketitle

\begin{abstract} 
    Higher categorical methods allow us to view recollements as lax descent
    data and build a corresponding descent theory governed by an
    $\infty$-bicategorical theory of limits. We demonstrate some concrete
    innovations brought along by these new techniques in the classical context
    of constructible and perverse sheaves on stratified spaces. 
    Our first application uses partially (op)lax descent to recover the famous result of
    Beilinson telling us how to glue perverse sheaves in terms of vanishing and
    nearby cycles. The remainder of the article is concerened with establishing
    a uniform view on a series of classification results for perverse sheaves
    culminating in recent work of Kapranov-Schechtman. Namely, we use lax
    descent to establish a ``foliated'' variant of Verdier duality which,
    informally speaking, identifies the derived constructible category of a
    suitably foliated space with sections of a categorical cosheaf on the leaf
    space whose stalks are the categories of sheaves along the leaves. Again,
    this is obtained as an application of lax descent, and seems to provide a
    reasonable framework to formalize the idea of ``mixed sheaf-cosheaf
    objects'' promoted by Kapranov and Schechtman. 
\end{abstract}

\tableofcontents

\section{Introduction}
\label{sec:intro}

The concept of a recollement was introduced in \cite{BBD} as a means to
systematically construct $t$-structures on derived categories of sheaves,
ultimately leading to the definition of perverse sheaves. When equipping the
derived categories with a suitable enhancement, say differential graded, then a
recollement can be regarded as a kind of descent datum: In analogy to classical
descent, describing the category of sheaves on a space $X$ as a limit over the
\v{C}ech nerve of an open cover, a recollement gives rise to a description of
the derived category $\D(X)$ of sheaves on $X$ as a {\em lax} limit
parameterized by an open subset and its closed complement.

Recent $\infty$-(bi)categorical tools provide us with a framework to study this
notion of {\em lax descent} with the flexibility familiar from the classical
theory of descent for open covers: For example, the work \cite{amr:stratified} is
concerned with generalizing the above-mentioned statement for recollements from
open-closed decompositions to more general stratifications, leading to descent
statements in terms of (op)lax limits over the poset parameterizing the strata.
Ongoing work of Abellan and Loubaton implements, as part of an
$(\infty,2)$-categorical concept of a topos, even more general forms of
partially lax descent that apply to covers by constructible subsets.

The point of this work is to demonstrate some applications of these new
techniques in the habitat of their origin: the theory of constructible, and
more specifically perverse sheaves, on a stratified complex analytic space $X$: 

As a first illustration we use partially (op)lax descent for constructible
covers to obtain a variant of Beilinson gluing. As a novelty, we obtain the
gluing equivalence in terms of vanishing and nearby cycles already on level of
derived categories, recovering the description of perverse sheaves from
$t$-exactness.

The main focus of the remainder of the paper is to establish a uniform
perspective on a series of classification results for perverse sheaves
culminating in a series of articles by Kapranov-Schechtman (cf.
\cite{gm:rebroussement, GMP85, KS:Riemann, KS:hyperplane, ks:bruhat}). Here,
we use lax descent for stratifications that are, in a suitable sense, foliated
by totally real manifolds leading to a ``partial'' variant of Verdier duality,
which we call {\em foliated Verdier duality}: in contrast to ordinary Verdier
duality, which is obtained by passing to sections with compact support, here we
only impose partial compactness contraints in directions transverse to the
leaves of the foliation. The equivalence brought along by this duality may be
viewed as a formalization of the ``mixed sheaf'' phenomenon promoted by
Kapranov-Schechtman: objects of the derived category can be identified with
sections of a {\em cosheaf} of categories on the leaf space with stalks given
by the various categories of {\em sheaves} along the leaves. 
The perverse $t$-exactness of foliated Verdier duality then recovers the
above-mentioned classification results for perverse sheaves on Riemann
surfaces, hyperplane arrangements, and colored configuration spaces.

Let us note right away that, in this work, we develop foliated Verdier duality
in a purely combinatorial context, in the presence of regular cell
structures. While, unfortunately, this does ruin some of the geometric appeal
of the setup, it is justified by the desire to connect to the explicit
classification results as quickly as possible. It seems like a very interesting
task to provide an intrinsically geometric account of foliated Verdier duality
-- we hope to return to it in future work.

In what follows, we outline these applications in some more detail.

\subsection{Basic notation} Let $\D$ be a stable and presentable
$\infty$-category. For a locally compact Hausdorff space $X$ we write $\Sh(X;
\D)$ and  $\Sh_c(X; \D)$ for the $\infty$-categories of $\D$-valued sheaves on
$X$ and $\infty$-category of complex analytically constructible $\D$-valued
sheaves on $X$, respectively. If $\D$ is a derived category of an abelian
category $\A$ we write $\Perv(X; \A)$ for the heart of the perverse
t-structure, where appropriate. If $P$ is a poset, and $X \to P$ is a
continuous map defining a stratification of $X$, we write $\Sh_P(X; \D),
\Perv_P(X; \A)$ for the sheaves and perverse sheaves constructible with respect
to the $P$-stratification, leaving the map $X \to P$ implicit. We refer the reader
to Appendix \ref{sec:sheaves} for the details.  

\subsection{Beilinson gluing via lax descent}%
\label{sub:beilinson_introduction}

We sketch how Beilinson's method for gluing perverse sheaves in terms of
vanishing and nearby cycles arises via a nesting of lax descent results. The
details of the construction will be discussed in \S
\ref{sec:beilinson_gluing_via_lax_descent}. Let $X$ be a complex manifold
equipped with a holomorphic map $f: X \to \CC$, let $\A$ be the category of abelian groups and $\D = \D(\A).$

In this context, we may apply a nesting of two different types of lax descent:

\begin{enumerate}[label=(D\arabic *)]
    \item\label{descent:1} First, we cover
        \[
            X = U \cup Z_+ 
        \]
        by the open subset $j: U = f^{-1}(\CC \setminus \{0\}) \hra X$ and the
        (non-complementary!) closed subset $s: Z_+ = f^{-1}(\RR_{\ge 0}) \hra X$.
        In this situation, it turns out that the derived category $\Sh(X; \D)$ forms
        the tip of a universal cone of the shape
        \[
        \begin{tikzcd}
            & \Sh(X; \D) \ar{dl} \ar[dr, ""{name=U}] & \\
            \Sh(U; \D) \ar[to=U, shorten <=30pt, shorten >=20pt, Rightarrow] \ar[rr]  \ar[swap]{dr} & & \Sh(Z_+, \D) \ar{dl}\\
              & \Sh(U \cap Z_+; \D) & 
            \arrow[from=1-2, to=3-2, crossing over]
        \end{tikzcd}
        \]
        which amounts to saying that $\Sh(X;\D)$ can, up to equivalence, be
        described as the category formed by the following {\em partially lax descent
        data}:
        \begin{itemize}
            \item an object $E \in \Sh(U; \D)$,
            \item an object $F \in \Sh(Z_+; \D)$,
            \item a morphism $\alpha: s^!j_!E \to F$, 
            \item such that $\alpha|_{U \cap Z_+}$ is an equivalence.
        \end{itemize}
        Note that there is a (simpler) lax descent statement obtained by
        covering $X$ with $U$ and its {\em complement}, leading to a
        recollement, but this will {\em not} lead to the desired outcome.
    \item\label{descent:2} Second, we cover 
        \[
            Z_+ = Z \cup V
        \]
        by the closed subset $k: Z = f^{-1}(0) \hra Z_+$ and its open complement $l: V = Z_+
        \setminus Z \hra Z_+$. This leads to a recollement which allows for two
        different types of descent: lax and oplax. Here, we use {\em oplax
        descent} to exhibit $\Sh(Z_+; \D)$ as the tip of a universal cone of the
        shape
        \[
        \begin{tikzcd}
            & \Sh(Z_+, \D) \ar{dl} \ar[dr, ""{name=U}]   & \\
            \Sh(V; \D) \ar[from=U, shorten <=20pt, shorten >=20pt, Rightarrow] \ar[rr] & & \Sh(Z; \D)
        \end{tikzcd}
        \]
        which exhibits $\Sh(X; \D)$ as equivalent to the category of {\em oplax descent
        data}:
        \begin{itemize}
            \item an object $G \in \Sh(V; \D)$,
            \item an object $H \in \Sh(Z; \D)$,
            \item a morphism $\beta: H \to k^*l_*G$. 
        \end{itemize}
\end{enumerate}

Unravelling the nesting of the two equivalences from \ref{descent:1} and
\ref{descent:2}, removing redundant bits of data, we obtain a description of
$\Sh_c(X;\D)$ by the following data (where, for simplicity, we restrict ourselves to
objects with complex-analytically constructible cohomology):
\begin{itemize}
    \item an object $A \in \Sh_c(U; \D)$,
    \item an object $B \in \Sh_c(Z; \D)$,
    \item morphisms $\can: \Psi_f(A) \to B$ and $\var: B \to \Psi_f(A)$,
    \item a homotopy making the diagram
        \[
        \begin{tikzcd}
            \Psi_f(A) \ar[dr, swap,"\id - T"]\ar[r,"\can"] &  B \ar[d,"\var"] \\
             &  \Psi_f(A)
        \end{tikzcd}
        \]
\end{itemize}
where, given an object $F \in \Sh_c(X; \D)$, the data corresponds to the
familiar vanishing and nearby cycle package with $A = F|_U$ and $B =
\Phi_f(F)$. The Beilinson gluing for perverse sheaves now follows from the
observation that the equivalence restricts to an equivalence involving the
exact same data, but with values in the hearts of the various perverse
$t$-structures. 

\subsection{Perverse sheaves via foliated Verdier duality}
  
\subsubsection{Approach in a nutshell}%
\label{sub:nutshell}

Our approach to the above-mentioned classification results for perverse sheaves
involves the following components: 

\begin{enumerate}[label = (\Alph *)]
    \item\label{met:1} {\em Lax descent.} For every stratification of $X$, indexed by a
        poset $L$, there is a canonical equivalence
        \begin{equation}
            \label{eq:laxdescent}
            \nu: \Sh(X;\D) \overset{\simeq}{\lra} \llim_{l \in L} \Sh(X_l;\D)
        \end{equation}
        between the $\infty$-category of $\D$-valued sheaves on $X$ and the lax
        limit over the $\infty$-categories of sheaves on the various strata
        $X_l$, $l \in L$. The equivalence is given by associating to a sheaf
        $F$, the family $\{\underline{\Gamma}_{X_l}(F)\}_{l \in L}$ of sections
        with support on the respective strata, related by lax gluing data. 

    \item\label{met:2} {\em $t$-exactness.} Assume that $\D = \D(\A)$ is the
        derived category of an abelian category $\A$. For a ``totally real
        foliated'' $L$-stratification of $X$, suitably adapted to a given
        complex analytic stratification of $X$, indexed by a poset $P$, the
        restriction of $\nu$, mildly normalized by dimension shifts, yields a
        fully faithful embedding
        \begin{equation}
            \label{eq:pureperv}
            \Perv_P(X;\A) \subset \llim_{l \in L} \Sh(X_l;\A)
        \end{equation}
        where $\Perv_P(X;\A) \subset \Sh(X;\D)$ be the abelian subcategory of
        perverse sheaves which are constructible with respect to the
        complex analytic stratification of $X$. 
\end{enumerate}

In an ideal situation, unravelling the obtained embedding \eqref{eq:pureperv}
yields an elementary, purely abelian description of the category of perverse
sheaves without any reference to derived categories. 

\subsubsection{Interpretation as foliated Verdier duality}

Lurie's variant of Verdier duality takes the form of an equivalence
\[
    \Sh(X;\D) \simeq \Sh(X;\D^{\op})^{\op}
\]
stating that, via passage to sections with compact support, any sheaf can be
identified with a corresponding cosheaf. In the context of \ref{met:1}, we may
interpret the $L$-indexed diagram $\{\Sh(X_l;\D)\}_{l \in L}$ as defining a
constructible (possibly lax) cosheaf of categories (of sheaves
along the various leaves). The equivalence \eqref{eq:laxdescent} then states
that a sheaf on $X$ may be described as a lax global section of this
cosheaf, i.e. a family of sheaves along the leaves of the foliation, related by
gluing data along the corestriction functors of the categorical cosheaf. 
Thus, while global Verdier duality identifies sheaves and cosheaves, the
equivalence \eqref{eq:laxdescent} may be interpreted as an intermediate variant
of Verdier duality that identifies sheaves on $X$ with
gadgets that behave like sheaves along the leaves and cosheaves transverse to
the leaves. With this in mind, we sometimes refer to the equivalence
\eqref{eq:laxdescent}, or rather some variant normalized by dimension shifts, as {\em
foliated Verdier duality}.

As pointed out by Kapranov-Schechtman, in many examples the classification data
for perverse sheaves exhibits some kind of ``mixed sheaf'' behaviour -- lax
descent seems to provide a convenient conceptual framework to axiomatize this
behaviour. 

\subsubsection{An illustrative example}
\label{subsec:illustrative}

We illustrate the procedure outlined in \S \ref{sub:nutshell} in the
simplest example of interest: Consider the complex plane $X = \CC$ with $P =
\{0 < 1\}$ and $P$-stratification given by $X = \{0\} \cup (\CC \setminus
\{0\})$. As a foliation, we may consider the horizontal foliation of the
quadratic differential $\frac{1}{z} dz^2$ (cf. \cite{Strebel1984}), depicted in
Figure \ref{fig:foliation}.
The leaves can be described as the fibers of the map
\[
    \lambda: X \to \RR_{\ge 0},\; z \mapsto \Im(\sqrt{z})
\]
so that we may refer to the space $\RR_{\ge 0}$ as the ``leaf space'' of the
foliation. It comes equipped with an $L= \{0 <1\}$ stratification $\RR_{\ge 0} = \{0\} \cup
\RR_{>0}$ distinguishing the different types of leaves. Pulling back this
latter stratification along $\lambda$, we obtain a ``leaf stratification'' $X
\to L$ with strata $X_1 = \CC \setminus \RR_{\ge 0}$ and $X_0 = \RR_{\ge 0}$. 
The stratum $X_1$ is exhausted by regular leaves homeomorphic to $Z = \RR$, in
fact $X_1 \cong \RR \times Z$ while the stratum $X_0$ consists of the single
``singular'' leaf $Z_0 = \RR_{\ge 0}$ with induced $P$-stratification $\{0\}
\cup \RR_{>0}$. 

\begin{figure}[htp]
    \centering
    \includegraphics[width=0.3\textwidth]{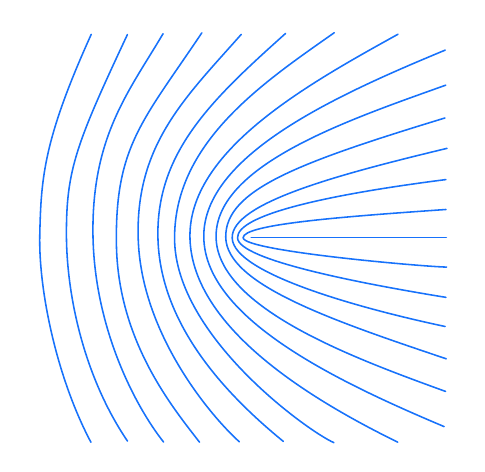}
    \caption{Horizontal foliation of $\frac{1}{z}dz^2$}
    \label{fig:foliation}
\end{figure}

According to \eqref{eq:laxdescent}, we obtain
\begin{equation}
    \label{eq:c_laxdesc}
    Sh_{P \times L}(\CC;\D) \simeq \llim(\; Sh_P(\RR_{\ge 0}; \D) \overset{\gamma_{01}}{\leftarrow} Sh_P(Z; \D)\; )
\end{equation}
By passage to an exit path presentation, we may identify $Sh_P(\RR_{\ge 0};
\D) \simeq \Fun(\{0 < 1 \}, \D)$ and $Sh_P(Z; \D) \simeq \D$. In terms of
these identifications, the gluing functor is then given by 
\[
    \gamma_{01}: \D \to \Fun(\{0 < 1 \}, \D), E \mapsto (E \to E \oplus E)
\]
so that \eqref{eq:c_laxdesc} identifies $Sh_{P \times L}(\CC;\D)$ with the
$\infty$-category of 
\begin{enumerate}
    \item triples $(A,B,E) \in \D^3$, along with  
    \item a coherent square
        \[
            \begin{tikzcd}
                E \ar{r}\ar{d} & E \oplus E \ar{d} \\
                A \ar{r} & B 
            \end{tikzcd}
        \]
    where the horizontal arrow is the diagonal map.
\end{enumerate}
The subcategory of $P$-constructible sheaves corresponds to those data for which the map $E \oplus E
\to B$ restricts to an equivalence on each summand. Finally, suppose that $\D =
\D(\A)$ is the derived category of an abelian category $\A$. Then, due to the perverse
$t$-exactness of the restriction of the equivalence \eqref{eq:c_laxdesc} to
$P$-constructible sheaves, the above data describes a perverse sheaf if and
only if the objects $A,B,E$ lie in $\A$ so that we recover the classical
description of $\Perv_P(\CC;\A)$ in terms of vanishing cycles $\Phi = A \in
\A$, nearby cycles $\Psi = B \in \A$, along with maps $\on{can}: \Psi \to \Phi$
and $\on{var}: \Phi \to \Psi$ such that $\on{var} \circ \on{can} - \id_{\Psi}$
is invertible.

\subsection*{Acknowledgements}
We thank Mikhail Kapranov and Vadim Schechtman for discussions on various
aspects of their work on perverse sheaves. We would further like to thank Felix
Loubaton for explaining to us the concept of partially lax descent for
constructible covers from $2$-topos theory. The version for stable coefficients
used in this work emerged from discussions with Fernando Abellan, Jonte
Gödicke, and Tashi Walde.

\subsection*{Funding}
The authors acknowledge support from the Deutsche Forschungsgemeinschaft (DFG,
German Research Foundation) under Germany's Excellence Strategy - EXC 2121
``Quantum Universe'' - 390833306 and the Collaborative Research Center - SFB
1624 ``Higher structures, moduli spaces and integrability'' - 506632645.

\section{Lax descent for open-closed covers}%
\label{sec:lax_topology}

In this section, we fix a locally compact Hausdorff space $X$ and a
stable presentable $\infty$-category $\D$. We make use of the functor formalism
for the categories $\Sh(Y; \D)$ for various subspaces $Y \subset X$,
established in greater generality in \cite{volpe2025six} (and briefly recalled
in \S \ref{sec:sheaves}).

We record different types of (partially lax) descent which allow us to recover the
$\infty$-category $\Sh(X)$ on a space $X$ from
\begin{enumerate}[label= \arabic*.]
    \item an open cover $X = U \cup V$,
    \item a partition $X = U \cup Z$ into an open $U \subset X$ and its closed complement $Z \subset X$, 
    \item a constructible cover $X = U \cup Z$ with $U \subset X$ open and $Z \subset X$ closed. 
\end{enumerate}

This latter type of descent (and even more general variants for covers by
constructible subsets) was explained to us by Felix Loubaton, pointing out that
it appears naturally in his approach to $2$-topos theory. Here, we prove a
variant for stable coefficients.

\subsection{Open cover}

We begin with ordinary descent for an open covering $X = U \cup V$. Denote the
inclusions of the opens $U$ and $V$ by $j: U \to X$ and $k: V \to X$ as well as
$\tilde{j}: U \cap V \to V$ and $\tilde{j}: U \cap V \to U$.
We obtain a commutative square
\[
    \begin{tikzcd}
        & \Sh(X) \ar[swap]{dl}{j^*} \ar{dr}{k^*} & \\ \Sh(U) \ar[swap]{dr}{\tilde{k}^*} & & \Sh(V)\ar{dl}{\tilde{j}^*}\\ & \Sh(U \cap V) & 
    \end{tikzcd} 
\]
which, as we claim, is a pullback square thus exhibiting the $\infty$-category
$\Sh(X)$ as the fiber product $\Sh(U) \times_{\Sh(U \cap V)} \Sh(V)$. We
provide a ``noncommutative'' proof of this well-known fact:  

\begin{prop}
    \label{prop:noncomopen}
    Let $\X$ be a stable $\infty$-category, and let $\U,\V \subset \X$ be full
    stable subcategories. Assume that
    \begin{enumerate}[label= \arabic*.]
        \item the subcategories $\U$ and $\V$ generate $\X$ (i.e. $\X$ is the
            smallest stable subcategory containing $\U$ and $\V$), and
        \item the commutative square
            \begin{equation}\label{eq:opensquare}
            \begin{tikzcd}
                \U \ar{r} &  \X \\ \ar{u} \U \cap \V \ar{r} &\ar{u} \V
            \end{tikzcd}
        \end{equation}
        is horizontally and vertically right adjointable, i.e. all functors
        admit right adjoints and the squares obtained by passing to horizontal
        (resp. vertical) right adjoints commute.
    \end{enumerate}
    Then the square of right adjoints of \eqref{eq:opensquare} exhibits an equivalence
    \[
        \X \overset{\simeq}{\lra} \U \times_{\U \cap \V} \V
    \]
\end{prop}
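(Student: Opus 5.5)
Write $i_\U:\U\to\X$, $i_\V:\V\to\X$, $a:\U\cap\V\to\U$, $b:\U\cap\V\to\V$ for the inclusions, and $r_\U, r_\V, a^R, b^R$ for their right adjoints. Right adjointability gives natural equivalences $a^R r_\U \simeq b^R r_\V$ (both being right adjoint to the single inclusion $\U\cap\V\to\X$) and the Beck--Chevalley equivalences
\[
r_\V i_\U \simeq b\, a^R, \qquad r_\U i_\V \simeq a\, b^R.
\]
The functor $\Phi=(r_\U,r_\V):\X\to \U\times_{\U\cap\V}\V$ is exact, since it is built from exact right adjoints. The plan is to write down an explicit inverse candidate $\Psi$ and check that both unit and counit are equivalences, using generation for one direction and the Beck--Chevalley formulas for the other.

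\emph{Construction of $\Psi$.} An object of the pullback is a triple $(E,F,\phi)$ with $E\in\U$, $F\in\V$ and $\phi: a^R E\simeq b^R F$. Set
\[
\Psi(E,F,\phi)=\fib\bigl(i_\U E\oplus i_\V F \to i(a^R E)\bigr),
\]
where $i:\U\cap\V\to\X$ is the inclusion. The map is the difference of the counit $i a^R E\to E$ and the counit $i b^R F\to F$, transported along $\phi$. This defines a functor; equivalently, $\Psi$ is the right adjoint of $\Phi$, whose existence we could also deduce formally, but the explicit formula is what we need.

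\emph{Counit $\Phi\Psi\simeq\id$.} Apply $r_\U$ to the fiber sequence. The Beck--Chevalley formula gives $r_\U i_\V F\simeq a b^R F\simeq a a^R E$, and also $r_\U i(a^R E)\simeq a a^R E$. Hence $r_\U\Psi(E,F,\phi)\simeq\fib(E\oplus aa^RE\to aa^RE)$. The component $aa^RE\to aa^RE$ is the identity (up to sign), so the fiber is $E$. The $\V$-component gives $F$ in the same way. The main obstacle is checking that these identifications are compatible with the structure maps, so that the resulting triple is equivalent to $(E,F,\phi)$ and not merely componentwise. We would handle this by working in the $\infty$-category of squares and using that the relevant maps are the counits, so that the triangle identities produce the identity components.

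\emph{Unit $X\to\Psi\Phi X$.} For $X\in\X$, the natural map $X\to\Psi\Phi X=\fib\bigl(i_\U r_\U X\oplus i_\V r_\V X\to i\,a^Rr_\U X\bigr)$ is induced by the counits. Since both $\Phi$ and $\Psi$ are exact, the full subcategory of objects for which the unit is an equivalence is a stable subcategory. By generation, it therefore suffices to check that it contains $\U$ and $\V$. For $X=i_\U E$ we have $r_\U X=E$ and $r_\V X\simeq ba^RE$, so the fiber becomes $\fib(E\oplus i a^RE\to ia^RE)\simeq E$, and the unit is an equivalence. The case $X\in\V$ is symmetric.

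Combining the two steps, $\Phi$ is an equivalence with inverse $\Psi$.
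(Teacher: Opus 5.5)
\paragraph{The gap.} As written, $\Psi$ does not exist, because its arrows point the wrong way. The counits $i\,a^RE\to i_\U E$ and $i\,b^RF\to i_\V F$ point \emph{into} $i_\U E$ and $i_\V F$. Their difference, transported along $\phi$, is therefore a map $i(a^RE)\to i_\U E\oplus i_\V F$. There is no natural map $i_\U E\oplus i_\V F\to i(a^RE)$ whose fiber you could take.

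The same reversal runs through the rest of the proposal. $\Phi$ is built from the right adjoints $r_\U,r_\V$, so what it has is a \emph{left} adjoint, namely
\[
\Psi(E,F,\phi)=\cof\bigl(i\,a^RE\to i_\U E\oplus i_\V F\bigr)\simeq i_\U E\sqcup_{i\,a^RE} i_\V F .
\]
You can see this from
\[
\Map_\X(\Psi(E,F,\phi),X)\simeq\Map_\U(E,r_\U X)\times_{\Map_{\U\cap\V}(a^RE,\,a^Rr_\U X)}\Map_\V(F,r_\V X),
\]
which is the mapping space in the fiber product from $(E,F,\phi)$ to $\Phi X$. Likewise, the comparison map for $X\in\X$ induced by the counits is $\Psi\Phi X\to X$, not $X\to\Psi\Phi X$. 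So both of your steps must be redone with cofibers in place of fibers, and with unit and counit exchanged.

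\paragraph{The corrected argument.} After this fix your strategy works.
\begin{itemize}
\item The counit $\Psi\Phi X\to X$ is an equivalence for $X=i_\U E$, where it is $\cof(i\,a^RE\to i_\U E\oplus i\,a^RE)\to i_\U E$. By symmetry it is an equivalence for $X\in\V$, hence for all $X$ by generation.
\item For the unit $(E,F,\phi)\to\Phi\Psi(E,F,\phi)$, the $\U$-component is an equivalence because $r_\U$ sends the counit $i\,b^RF\to i_\V F$ to an equivalence. This is exactly invertibility of the horizontal mate $a\,b^R\to r_\U i_\V$, and it is the form of adjointability you actually need, not just some abstract equivalence $r_\U i_\V F\simeq a b^RF$. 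The $\V$-component is symmetric.
\item The compatibility issue you flag as the main obstacle then disappears. The unit is a genuine morphism in $\U\times_{\U\cap\V}\V$, and a morphism there is an equivalence as soon as its two components are.
\end{itemize}

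\paragraph{Comparison with the paper.} The paper never writes down an inverse. It considers squares $w\to u$, $w\to v$, $u\to x$, $v\to x$ with Cartesian edges out of $w$. It shows that ``the edges into $x$ are Cartesian'' is equivalent to ``the square is a pushout''. Consequently the subcategory of such squares maps by equivalences both to $\X$ (by choosing Cartesian edges into $x$) and to spans (by forming pushouts). Generation enters there through the triangle $j_!j^*\to\id\to p$ with $p$ valued in $\V$.

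Both arguments use the hypotheses in the same places: adjointability for one composite, generation for the other. Your adjunction route is more elementary and gives the Mayer--Vietoris inverse explicitly. The paper's square template handles coherence via Kan extensions and transfers unchanged to the partially lax and lax cases (Proposition \ref{prop:noncomconst}, Theorem \ref{thm:lax_lim}). There the target is a category of sections of a Grothendieck construction rather than a strict pullback.
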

\begin{proof}
    We denote the embedding $\U \subset \X$ by $j_!$ and its right adjoint by
    $j^*$. We claim that, for every $x \in \X$, the cone of the counit $j_! j^*
    x \to x$ lies in $\V \subset \X$. Indeed, those $x \in \X$ for which the claim holds
    form a full stable subcategory of $\X$ so that it suffices to verify the
    statement for $u \in \U$ (where it is obvious, since the
    cone is $0$) and for $v \in \V$. In the latter case, it holds due to
    the horizontal adjointability of \eqref{eq:opensquare}, which implies $j^*
    v \in \U \cap \V \subset \V$. In particular, we obtain an exact functor $p:
    \X \to \X, x \mapsto \cone(j_!j^*x \to x)$, taking values in $\V$, so that
    there is an exact triangle
    \begin{equation}
        \label{eq:exactopen}
        j_!j^* \to \id \to p.
    \end{equation}

    Now let $K$ denote the $\infty$-category of diagrams in $\X$ of the form
    \begin{equation}
        \label{eq:universalsquare}
        \begin{tikzcd}
            u \ar{r}{\alpha} &  x\\
           \ar{r}{*} \ar{u}{*} w &  v \ar{u}{\beta}
        \end{tikzcd}
    \end{equation}
    where the morphisms marked $*$ are $(\U \cap \V,\U)$-Cartesian,
    respectively $(\U \cap \V,\V)$-Cartesian (cf. Definition \ref{defi:universal}).
    We claim that, for a given diagram $\kappa \in K$, the following conditions are equivalent:
    \begin{enumerate}[label=(\roman*)]
        \item\label{open:1} The morphisms $\alpha$ and $\beta$ are $(\U,\X)$-Cartesian and $(\V,\X)$-Cartesian, respectively. 
        \item\label{open:2} The square \eqref{eq:universalsquare} is a pushout square in $\X$. 
    \end{enumerate}
    Assuming \ref{open:1}, due to \eqref{eq:exactopen}, it suffices to check
    that the squares obtained from $\kappa$ by applying $j_!j^*$ and $p$,
    respectively, are pushout squares. This can be seen immediately, since, due
    to the adjointability assumption, the horizontal (resp. vertical) edges of
    these squares are equivalences. 

    Vice versa, assuming \ref{open:2}, we apply $j_!j^*$ to $\kappa$ to
    deduce that the canonical edge $u \to j_!j^*x$ is an equivalence (since it
    is the top horizontal edge of a pushout square whose bottom edge is an
    equivalence). In other words, the edge $\alpha: u \to x$ is
    $(\U,\X)$-Cartesian. By the analogous argument for the inclusion $\V
    \subset \X$, we obtain (using the vertical adjointability) that $\beta$ is $(\V,\X)$-Cartesian.

    Denoting by $K' \subset K$ the full subcategory of
    diagrams satisfying the equivalent conditions \ref{open:1} and
    \ref{open:2}, we obtain a roof of equivalences
    \[
        \X \overset{\simeq}{\longleftarrow}  K' \overset{\simeq}{\lra} \{ u \overset{*}{\leftarrow} w \overset{*}{\rightarrow} v \}
    \]
    given by the apparent forgetful functors. The right-hand category of
    diagrams can be identified with the $\infty$-category of Cartesian sections
    of the Grothendieck construction of the diagram
    \[
        \U \lra \U \cap \V \longleftarrow \V
    \]
    thus providing a model for the claimed limit.
\end{proof}

\begin{rem}
    \label{rem:opendescent} The above-mentioned open descent statement for
    sheaves follows from Proposition \ref{prop:noncomopen} by realizing the
    various categories of sheaves on the opens as full subcategories by means
    of $!$-embeddings and using base change as recorded in Remark
    \ref{rem: comparisonshandhypsh}.
\end{rem}

\subsection{Partition and open-closed cover}
\label{sec:part-open-clos}
We discuss a variant of descent for a cover of a space by an open and a closed
subsets -- partitions are the special case where the subsets are disjoint which
will hence be treated afterwards.  

Denote the inclusions of the opens $U$ and $Z$ by $j: U \to X$ and $i: Z \to X$
as well as $\tilde{j}: U \cap Z \to Z$ and $\tilde{i}: U \cap Z \to U$.
We obtain a commutative (since $j^* = j^!$) square
\begin{equation}
    \label{eq:laxsquare}
    \begin{tikzcd}
        & \Sh(X; \D) \ar[swap]{dl}{j^*} \ar{dr}{i^!} & \\ \Sh(U; \D) \ar[swap]{dr}{\tilde{i}^!} & & \Sh(Z; \D)\ar{dl}{\tilde{j}^*}\\ & \Sh(U \cap Z; \D) & 
    \end{tikzcd}
\end{equation}
from which we produce a partially lax cone with tip $\Sh(X; \D)$ over the commutative diagram 
\begin{equation}
    \label{eq:partiallylax}
    \begin{tikzcd}
        \Sh(U; \D) \ar{rr}{i^!j_!} \ar{dr}{\tilde{i}^!} & & \Sh(Z; \D) \ar{dl}{\tilde{j}^*}\\
          & \Sh(U \cap Z; \D) & 
    \end{tikzcd}
\end{equation}
We will show that this cone exhibits $\Sh(X; \D)$ as a partially lax limit where
the cone triangles over the edges $\tilde{i}^!$ and $\tilde{j}^*$ commute
strictly.

Again, we provide a purely ``noncommutative'' proof of this statement:

\begin{prop}
    \label{prop:noncomconst}
    Let $\X$ be a stable $\infty$-category, and let $\U,\Z \subset \X$ be full
    stable subcategories. Assume that
    \begin{enumerate}[label= \arabic*.]
        \item the subcategories $\U$ and $\Z$ generate $\X$, and
        \item all functors in the commutative square
            \begin{equation}\label{eq:constsquare}
            \begin{tikzcd}
                \U \ar{r}  & \X \\ \ar{u} \U \cap \Z \ar{r}&\ar{u} \Z 
            \end{tikzcd}
        \end{equation}
        admit right adjoints and the square is horizontally right adjointable. 
    \end{enumerate}
    Then $\X$ is a partially lax limit
    \[
        \X \overset{\simeq}{\lra} \U \times_{\U \cap \Z}^{\to} \Z.
    \]
\end{prop}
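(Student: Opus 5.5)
The plan is to imitate the proof of Proposition~\ref{prop:noncomopen}, replacing the pushout-square description by a recollement-type description, and then to rewrite the result in the shape of the partially lax limit. I write $j_!\colon \U\subset\X$ with right adjoint $j^*$, and $i_*\colon \Z\subset \X$ with right adjoint $i^!$.

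\emph{Step 1.} First I show that for every $x\in\X$ the cone $p(x)$ of the counit $j_!j^*x\to x$ lies in $\Z$. The objects with this property form a full stable subcategory of $\X$. By generation it therefore suffices to check $x=u\in\U$, where the cone is $0$, and $x=z\in\Z$. In the second case horizontal right adjointability gives $j^*z\in\U\cap\Z\subset\Z$, so the cone lies in $\Z$. This yields an exact functor $p\colon\X\to\Z$ and an exact triangle $j_!j^*\to\id\to p$. Since $j_!$ is fully faithful, $j^*p\simeq 0$. Hence $p$ takes values in the full subcategory $\Z^\circ=\{c\in\Z : j^*c\simeq 0\}$. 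By adjointability, $j^*$ restricted to $\Z$ is $\tilde j^*$, so $\Z^\circ$ is the kernel of $\tilde j^*\colon \Z\to\U\cap\Z$.

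\emph{Step 2.} Next I show that $\X$ is equivalent to the $\infty$-category $\X'$ of triples $(u,c,\delta)$. Here $u\in\U$, $c\in\Z^\circ$, and $\delta\colon c\to i^!j_!u[1]$ is a morphism in $\Z$; by adjunction, $\delta$ is the same as a map $i_*c\to j_!u[1]$. Following the proof of Proposition~\ref{prop:noncomopen}, I realize $\X'$ as a full subcategory of fiber sequences $j_!u\to x\to c$ in $\X$: namely those whose first map is $(\U,\X)$-Cartesian and whose cofiber lies in $\Z^\circ$. Step~1 shows that forgetting to $x$ is an equivalence, since $u=j^*x$ and $c=p(x)$ are determined by $x$. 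Conversely, every datum $(u,c,\delta)$ defines $x=\fib(c\to j_!u[1])$ with $j^*x\simeq u$ and $p(x)\simeq c$. The key input here is that $\Hom(j_!u,c)\simeq\Hom(u,j^*c)\simeq 0$ for $c\in\Z^\circ$. Together with the long exact sequences this gives full faithfulness, and it also shows that the Cartesian condition is automatic.

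\emph{Step 3.} Finally I translate $\X'$ into the partially lax limit $\U\times^{\to}_{\U\cap\Z}\Z$. Its objects are $u\in\U$, $z\in\Z$, and $\varphi\colon i^!j_!u\to z$ such that $\tilde j^*\varphi$ is an equivalence. The strict triangles over $\tilde i^!$ and $\tilde j^*$ say that the images of $u$ and $z$ in $\U\cap\Z$ agree via $\varphi$. Using that $\tilde i^!$ and $i^!j_!$ followed by $\tilde j^*$ agree (horizontal adjointability again), this amounts exactly to the stated condition on $\varphi$.

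Passing between $\varphi$ and $\delta$ uses the stable fiber--cofiber correspondence: set $c=\cof(\varphi)$ and let $\delta$ be the connecting map; conversely set $z=\fib(\delta)[\,\cdot\,]$ appropriately. Under this correspondence, $\tilde j^*\varphi$ is an equivalence if and only if $\tilde j^*c\simeq0$, that is, $c\in\Z^\circ$. The composite $\X\to\X'\simeq\U\times^\to_{\U\cap\Z}\Z$ sends $x$ to $(j^*x,\,i^!x,\,i^!(j_!j^*x\to x))$. So it is the functor induced by the cone~\eqref{eq:laxsquare}, because applying $i^!$ to the triangle of Step~1 gives $i^!j_!j^*x\to i^!x\to p(x)$.

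\emph{Main obstacle.} I expect the hardest part to be Step~3 done coherently. One must identify the explicit category of triples with an actual model of the partially lax limit, for instance as sections of the Grothendieck construction of $\U\to\U\cap\Z\leftarrow\Z$ together with the lax edge $i^!j_!$, Cartesian over the two strict edges. One must also check that the equivalences are natural rather than objectwise. I would handle this as in Proposition~\ref{prop:noncomopen}: work with a category $K$ of diagrams $j_!u\to x\leftarrow i_*z$ in $\X$ with marked Cartesian edges, and produce a roof of trivial fibrations given by forgetful functors.
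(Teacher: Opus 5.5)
Your proof is correct, but it assembles the equivalence along a different route from the paper's. Both arguments share your Step~1: the functor $p=\cone(j_!j^*\to\id)$ lands in $\Z$, by generation and horizontal adjointability. Both also use the observation that applying $i^!$ to $j_!j^*x\to x\to p(x)$ recovers the gluing map.

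The paper then considers the category $K$ of squares $u\to x\leftarrow z\leftarrow z'\to u$ with $z'\to u$ Cartesian. It proves that $u\to x$ and $z\to x$ are Cartesian if and only if the square is a pushout and $j^*(z'\to z)$ is invertible. The roof of forgetful functors $\X\leftarrow K'\to\{u\leftarrow z'\to z\}$ then lands directly in the shape of subdivided sections. So coherence is automatic and no rotation is needed.

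You instead show, in effect, that the orthogonal $\ker(j^*)$ of $\U$ equals $\Z^\circ=\ker(\tilde j^*)\subset\Z$. Hence $(\Z^\circ,\U)$ is a semiorthogonal decomposition of $\X$, whose gluing bimodule $\Map_\X(c,j_!u[1])\simeq\Map_\Z(c,i^!j_!u[1])$ is represented through $i^!$. You then rotate the triangle $i^!j_!u\to z\to c$.

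What this buys is a conceptual reason for the partial laxness. Invertibility of $\tilde j^*\varphi$ is exactly the condition that the cofiber of the gluing map lies in the orthogonal of $\U$. This reduces the statement to the semiorthogonal-decomposition formalism of Appendix~\ref{sec:recollements}. The cost is two extra identifications that must be made functorial: the adjunction on comma categories and the fiber--cofiber rotation. As you anticipate, the cleanest way to do this is the paper's device of forgetful functors out of a category of Kan-extended diagrams.

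Two minor remarks:
\begin{itemize}
\item The identity $\tilde i^!\simeq\tilde j^*\circ i^!j_!$ needs no adjointability. It follows from uniqueness of the right adjoint of the composite inclusion $\U\cap\Z\subset\X$, together with full faithfulness of $j_!$.
\item In Step~3 you simply take $z=\fib(\delta)$; no further shift is needed.
\end{itemize}
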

\begin{proof}
    Let $K$ denote the $\infty$-category of diagrams in $\X$ of the form
    \[
    \begin{tikzcd}
        u \ar{r}{\alpha} &  x\\
        z' \ar{r}\ar{u}{*} &  \ar[swap]{u}{\beta} z
    \end{tikzcd}
    \]
    where 
    \begin{enumerate}[label=\arabic*.]
        \item $z,z' \in \Z$, $u \in \U$, $x \in \X$, 
        \item the morphism marked $*$ is $(\Z,\X)$-Cartesian,

    \end{enumerate}
    We claim that, for a given diagram $\kappa \in K$, the following conditions are equivalent:
    \begin{enumerate}[label=(\roman*)]
        \item\label{const:1} The morphisms $\alpha$ and $\beta$ are
            $(\U,\X)$-Cartesian and $(\Z,\X)$-Cartesian, respectively. 
        \item\label{const:2} $\kappa$ is a pushout square in $\X$ and the image of the morphism $z' \to z$ under $j^\ast$ is an equivalence. 
    \end{enumerate}
    The proof is analogous to the argument for Proposition
    \ref{prop:noncomopen}: Again denote by $p = \cone(j_!j^\ast \to \id)$ (it takes values in $\Z$) and assume $\kappa \in K$ satisfies \ref{const:1}. Then $\kappa$ is a pushout square if it is after restricting along $j_!j^\ast$ and $p$ respectively. When restricting along $j_!j^\ast$ horizontal adjoinability guarantees that $j_!j^\ast(z' \to z)$ is indeed an equivalence, whilst $j_!j^\ast(u \xrightarrow{\alpha} x)$ is an equivalence by the assumption that $\alpha$ is $(\U,\X)$-Cartesian. Notice that $p(\kappa)$ is pushout if and only if $\on{totcofib}(j_!j^\ast \kappa \to \kappa) \simeq 0$, where $j_!j^\ast \kappa \to \kappa$ is interpreted as a $3$-dimensional cube. By taking iterated cofibers, we conclude that it is pushout if and only if the map $\cone(z' \to z) \to \cone(u \to x)$ is an equivalence. Since $\cone(z' \to z) = i^!\cone(u \to x)$ and $\cone(u \to x) \in \Z$ because $u \simeq j_!j^\ast x$ and $p(x) \in \Z$ by the previous arguments, the claim follows. Conversely assume \ref{const:2}, restricting $\kappa$ to $j_!j^\ast$ we see that $u \to j_!j^\ast x$ is an equivalence. To show that $z \xrightarrow{\beta} x$ is $(\Z,\X)$-Cartesian we apply $i^!$ to $\kappa$ and notice that $i^!(z' \xrightarrow{\ast} u)$ is an equivalence and, since $\kappa$ is a pushout square, deduce that $z \to i_\ast i^!x$ is an equivalence as well.
    
     Denoting by $K' \subset K$ the full subcategory of
    diagrams satisfying the equivalent conditions \ref{const:1} and
    \ref{const:2}, we obtain a roof of equivalences
    \[
        \X \overset{\simeq}{\longleftarrow}  K' \overset{\simeq}{\lra} \{ u \overset{*}{\leftarrow} z' \overset{g}{\rightarrow} z \; | \; \text{$g|\U\cap\Z$ is an equivalence}\}
    \]
    given by the apparent forgetful functors. This latter category can be identified
    with the $\infty$-category of subdivided sections
    \begin{equation}
        \label{eq:subdivsec}
        \begin{tikzcd}[column sep={3em, between origins},row sep={2em, between origins}, nodes in empty cells]
            &   &  z  &    & \\
            &&&&& \\
            & w' \ar{uur}{*}\ar{ddl}{\simeq} & & z' \ar{uul} \ar{ddr}{*} & \\
            &  & w \ar{ur}{*} \ar{ul}{\simeq} \ar{d}{\simeq} &  & \\
            w'''  & & \ar{ll}{\simeq} w'' \ar{rr}{*}& & u
        \end{tikzcd}
    \end{equation}
    of the Grothendieck construction of the diagram 
    \[
    \begin{tikzcd}
        \U \ar{rr}{i^!j_!} \ar[swap]{dr}{\tilde{j}^*} & & \Z \ar{dl}{\tilde{i}^!}\\
          & \U \cap \Z & 
    \end{tikzcd}
    \]
    thus providing a model for the claimed partially lax limit.
\end{proof}

\begin{rem}
    \label{rem:constdescent} Just as for the open descent statement,
    Proposition \ref{prop:noncomconst} provides the claimed partially lax
    descent equivalence for $\Sh(X; \D)$. Note that, in the present situation, only
    one of the adjointability conditions is guaranteed due to the
    base change square
    \begin{equation}\label{eq:basechange}
        \begin{tikzcd}
            \Sh(U; \D)& \Sh(X; \D)\ar[swap]{l}{j^*}  \\ \Sh(U \cap Z; \D)\ar{u}{\tilde{i}_! = \tilde{i}_*}  & \ar[swap]{u}{i_! = i_*} \Sh(Z; \D)\ar{l}{\tilde{j}^*} .
        \end{tikzcd}
    \end{equation}
\end{rem}

\begin{rem}
    \label{rem:recoll}
    The special case of a partition corresponds to the additional assumption
    $\U \cap \Z = 0$ in Proposition \ref{prop:noncomconst}. Here the
    subcategories $\U$ and $\Z$ form a recollement. In this situation, the
    condition on $g | \U \cap \Z$ is vacuous so that the partially lax limit
    becomes a purely lax one, recovering $\X$ as a lax limit over the diagram 
    \[
        \U \overset{i^!j_!}{\lra} \Z
    \]
    given by the gluing functor (parameterized by the $(\Z,\X)$-Cartesian edges
    with target $\U$). See \S \ref{sec:recollements} for a review of 
    foundational aspects of of recollements. 
\end{rem}

Finally, there is an oplax variant of Proposition \ref{prop:noncomconst} which
corresponds to the following geometric variation: Instead of the square
\eqref{eq:laxsquare}, consider the commutative square
\begin{equation}
    \label{eq:oplaxsquare}
    \begin{tikzcd}
        & \Sh(X; \D) \ar[swap]{dl}{j^*} \ar{dr}{i^*} & \\ \Sh(U; \D) \ar[swap]{dr}{\tilde{i}^*} & & \Sh(Z; \D)\ar{dl}{\tilde{j}^*}\\ & \Sh(U \cap Z; \D) & 
    \end{tikzcd}
\end{equation}
from which we produce a partially oplax cone with tip $\Sh(X; \D)$ over the commutative diagram 
\begin{equation}
    \label{eq:partiallyoplax}
    \begin{tikzcd}
        \Sh(U; \D) \ar{rr}{i^*j_*} \ar{dr}{\tilde{i}^*} & & \Sh(Z; \D) \ar{dl}{\tilde{j}^*}\\
          & \Sh(U \cap Z; \D) & 
    \end{tikzcd}
\end{equation}
We will show that this cone exhibits $\Sh(X; \D)$ as a partially lax limit where
the cone triangles over the edges $\tilde{i}^!$ and $\tilde{j}^*$ commute
strictly. The ``noncommutative'' formulation reads as follows: 

\begin{prop}
    \label{prop:opnoncomconst}
    Let $\X$ be a stable $\infty$-category, and let $\U,\Z \subset \X$ be full
    stable subcategories. Assume that
    \begin{enumerate}[label= \arabic*.]
        \item the subcategories $\U$ and $\Z$ generate $\X$, and
        \item all functors in the commutative square
            \begin{equation}\label{eq:opconstsquare}
            \begin{tikzcd}
                \U \ar{r}  & \X \\ \ar{u} \U \cap \Z \ar{r}&\ar{u} \Z 
            \end{tikzcd}
        \end{equation}
        admit left adjoints and the square is vertically left adjointable. 
    \end{enumerate}
    Then $\X$ is a partially oplax limit of the diagram 
    \[
    \begin{tikzcd}
        \U \ar{rr}{i^*j_*} \ar[swap]{dr}{\tilde{j}^*} & & \Z \ar{dl}{\tilde{i}^*}\\
          & \U \cap \Z. & 
    \end{tikzcd}
    \]
\end{prop}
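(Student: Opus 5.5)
We deduce the statement from Proposition \ref{prop:noncomconst} by passing to opposite categories. Consider $\X^{\op}$ with its full stable subcategories $\U^{\op}$ and $\Z^{\op}$. Generation is preserved under passing to opposites, since the stable subcategory generated by a family is self-dual. Left adjoints in \eqref{eq:opconstsquare} become right adjoints in the opposite square. Vertical left adjointability becomes vertical right adjointability of the opposite square.

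Proposition \ref{prop:noncomconst} asks for \emph{horizontal} right adjointability, so we transpose the square. That is, we apply Proposition \ref{prop:noncomconst} with the roles of the two subcategories exchanged: $\Z^{\op}$ plays the part of $\U$ and $\U^{\op}$ plays the part of $\Z$. The hypothesis then reads: the square of right adjoints of $\U^{\op}\cap\Z^{\op}\to \Z^{\op}$ and $\U^{\op}\to\X^{\op}$ commutes. Unwinding the opposites, this is exactly the statement that the left adjoints of $\U\cap\Z\to\U$ and $\Z\to\X$ are compatible, i.e. $\tilde{i}^*$ and $i^*$ intertwine the inclusions $\U \to \X$ and $\U\cap\Z \to \Z$. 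This is the assumed vertical left adjointability.

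Proposition \ref{prop:noncomconst} then exhibits $\X^{\op}$ as a partially lax limit of the diagram
\[
\Z^{\op}\lra \U^{\op}\lra \U^{\op}\cap\Z^{\op}\longleftarrow \Z^{\op}
\]
in which the gluing functor is $i^!j_!$ computed in the opposite categories. Right adjoints in $\X^{\op}$ are left adjoints in $\X$. Hence the opposite of the gluing functor $\U^{\op}\to\Z^{\op}$ is $i^*j_*$, where $j_*$ now denotes the right adjoint of $\U\subset\X$. Careful: this uses the reversed roles, so the lower-shriek of the ``open'' piece $\Z^{\op}$ is the inclusion of $\Z$ and the upper-shriek onto the ``closed'' piece is the left adjoint to $\U\subset\X$. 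I therefore expect to need the symmetric form of the construction, with the direction of gluing read as $\U\to\Z$ after applying $\op$ to all $2$-cells.

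Finally, a partially lax limit of $\infty$-categories, after applying $(-)^{\op}$ objectwise, becomes a partially oplax limit of the opposite diagram, because $\op$ reverses $2$-cells but not $1$-cells. The model by subdivided sections \eqref{eq:subdivsec} of the Grothendieck construction dualizes to the analogous model with coCartesian-type edges.

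The main obstacle is this bookkeeping: matching which subcategory plays which role and in which direction the gluing functor points. For safety, I would instead run the proof of Proposition \ref{prop:noncomconst} directly with everything dualized. Let $K$ be the category of squares $x\to u$, $x\to z$, $z\to z'$, $u\to z'$, where $u\to z'$ is $(\Z,\X)$-coCartesian, i.e. exhibits $z'\simeq i^*u$. One then shows the equivalence of two conditions on such a square:
\begin{enumerate}[label=(\alph*)]
    \item $x\to u$ and $x\to z$ are the units $x\to j_*j^*x$ and $x\to i_*i^*x$;
    \item the square is a pullback and $j^*(z\to z')$ is an equivalence.
\end{enumerate}
For this one uses the fiber sequence $\fib(\id\to j_*j^*)\to\id\to j_*j^*$, whose fiber term lies in $\Z$ by vertical left adjointability and generation, exactly mirroring the argument above. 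The forgetful roof then identifies $\X$ with data $(u,z,z\to i^*u)$ satisfying the compatibility over $\U\cap\Z$. This is the partially oplax limit, and it matches the data $\beta:H\to k^*l_*G$ of \ref{descent:2}.
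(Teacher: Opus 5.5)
Your two routes rely on two different adjointability conditions, and the one you actually unwind cannot yield the statement. Write $L_\U,L_\Z$ for the left adjoints of $\U\subset\X$, $\Z\subset\X$, and $L^{\U}_{\U\cap\Z},L^{\Z}_{\U\cap\Z}$ for those of $\U\cap\Z\subset\U$, $\U\cap\Z\subset\Z$.

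Your unwinding (``$i^*$ and $\tilde i^*$ intertwine the inclusions'') is $L_\Z\circ\iota_\U\simeq\iota\circ L^{\U}_{\U\cap\Z}$, i.e.\ $L_\Z(\U)\subset\U\cap\Z$. Under the convention of Proposition \ref{prop:noncomopen}, this is literally what `vertically left adjointable' means for \eqref{eq:opconstsquare}. The sentence just before it names the other pair of arrows, which would give the other condition.

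With this condition, your role swap does match the hypotheses of Proposition \ref{prop:noncomconst} for $\X^{\op}$. But the conclusion then glues from the new open piece $\Z^{\op}$ to the new closed piece $\U^{\op}$, along the right adjoint of $\U^{\op}\subset\X^{\op}$. The gluing functor is therefore $L_\U|_\Z\colon\Z\to\U$, and the data are $(u,z,u\to L_\U z)$ with $L_\Z$ of the map invertible. Passing to $(-)^{\op}$ reverses $2$-cells but not functors, as you note yourself. So this cannot be `read' as gluing along $i^*j_*=L_\Z|_\U\colon\U\to\Z$; you have proved a different statement.

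The same confusion sits in the key step of your direct argument. Checking on generators, $\fib(\id\to j_*j^*)$ takes values in $\Z$ if and only if $L_\U(\Z)\subset\Z$, i.e.\ $L_\U|_\Z\simeq L^{\Z}_{\U\cap\Z}$. That is adjointability along the \emph{other} pair of arrows, $\U\cap\Z\to\Z$ and $\U\to\X$, and it does not follow from $L_\Z(\U)\subset\U$.

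Nor can the claim be rescued under the condition you unwound. Take a recollement with functors $i_*,i^*,j^*,j_*,j_!$ as in \eqref{eq:classical_recollement}, and put $\U=i_*\A$, $\Z=j_*\B$.
\begin{itemize}
\item All hypotheses hold: $\U\cap\Z=0$, $\U$ and $\Z$ generate, all left adjoints exist, and $L_\Z|_\U=j_*j^*i_*=0$, so your condition is satisfied.
\item The asserted gluing functor $L_\Z|_\U$ vanishes, so the partially oplax limit is $\U\times\Z$.
\item The functor $x\mapsto(L_\U x,L_\Z x)$ is then not fully faithful when $i^*j_*\neq 0$: $\Map_\X(i_*a,j_*b)$ is contractible, whereas $\Map(L_\U i_*a,L_\U j_*b)\simeq\Map(a,i^*j_*b)$.
\item Correspondingly, $\fib(j_*b\to i_*i^*j_*b)\simeq j_!b\notin\Z$ in general, so your key step fails here.
\end{itemize}

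The paper gives no separate proof; it intends the proposition as the formal dual of Proposition \ref{prop:noncomconst}. That dual needs the hypothesis $L_\U|_\Z\simeq L^{\Z}_{\U\cap\Z}$, which is what the geometry supplies. For the geometric $\U=j_*\Sh(U)$, $\Z=i_*\Sh(Z)$ of \S\ref{sec:part-open-clos} it is the base change \eqref{eq:basechange}. In \ref{descent:2} it holds trivially, since $\U\cap\Z=0$ and $l_*l^*k_*=0$.

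Under this hypothesis the proof is simply Proposition \ref{prop:noncomconst} applied to $\U^{\op},\Z^{\op}\subset\X^{\op}$ with the roles \emph{unchanged}. The gluing functor is $(R_{\Z^{\op}}|_{\U^{\op}})^{\op}=L_\Z|_\U=i^*j_*$. The data are $(u,z,z\to i^*u)$ with $j^*$ of the map invertible. Your direct dualized argument also goes through as sketched once this hypothesis is assumed.
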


\subsection{Application: Beilinson gluing via lax descent}%
\label{sec:beilinson_gluing_via_lax_descent}
We explain how to recover a derived variant of Beilinson gluing for perverse
sheaves as a direct corollary of (op)lax descent for a constructible cover,
both in the complex-analytic and the algebraic setting. 

\subsubsection{Complex-analytic}

Let $X$ be a complex manifold equipped with a holomorphic map $f: X \to \CC$.
Set $Z = f^{-1}(\{0\})$, $U = X \setminus Z$.

The fundamental result of Beilinson \cite{Beilinson1987howtoglue} explains how to ``glue'' the abelian category $\Perv(X)$ of perverse sheaves on $X$ from the categories $\Perv(U), \Perv(Z)$: the category $\Perv(X)$ is equivalent to the category of tuples $(\F_U, \F_Z, u, v)$ where $\F_U \in \Perv(U), \F_Z \in \Perv(Z)$ are arbitrary perverse sheaves and $u: \Psi_f^{un}(\F_U) \to \F_Z, v: \F_Z \to \Psi_f^{un}(\F_U)$ is a pair of maps such that $\operatorname{Id}_{\Psi_f^{un}(\F_U)} - v \circ u$ is invertible. Here $\Psi_f^{un}(\F_U)$ is the unipotent nearby cycles functor, which is known to be $t$-exact with respect to the perverse $t$-structure. 

Given a perverse sheaf $\F \in \Perv(X)$ one recovers the corresponding tuple as $(\F|_U, \Phi^{un}_f(\F), \can_\F, \var_\F)$, where $\Phi^{un}_f(\F)$ is the unipotent vanishing cycles functor, and $\can: \Psi_f^{un} \to \Phi^{un}_f, \var: \Phi^{un}_f \to \Psi_f^{un}$ are certain natural transformations obtained as a part of the construction of the functors $\Psi^{un}_f, \Phi^{un}_f$.

Let now $X$ be an arbitrary locally compact Hausdorff topological space, $Z$ a closed subset and $U = X \setminus Z$ a complimentary open subset. Write $i:Z \to X, j: U \to X$ for the corresponding closed and open embeddings, respectively. The results of Section \ref{sec:part-open-clos} for the stable category $\Sh(X; \D)$ of sheaves on $X$ can be formulated as follows. Applying Propositions \ref{prop:noncomconst}, \ref{prop:opnoncomconst} with $\X = \Sh(X; \D)$, $\mathcal{U} = \Sh(U;\D)$ and $\mathcal{Z} = \Sh(Z; \D)$, we get
\begin{prop}
  \label{sec:appl-beil-gluing}
  \begin{enumerate}
  \item \label{item:1} the category $\Sh(X; \D)$ is equivalent to the category of tuples $(\F_U, \F_Z, g)$, where $\F_U \in \D(U), \F_Z \in \D(Z)$ and $g: i^!j_!\F_U \to \F_Z$ is a gluing map. Given a sheaf $\F \in \D(X)$ one recovers the corresponding tuple as $(\F|_U, i^!\F, i^!(j_!j^!\F \to \F))$, where the map $j_!j^!\F \to \F$ comes from the counit of the adjunction $(j_!, j^!)$.
  \item \label{item:2} the category $\Sh(X; \D)$ is equivalent to the category of tuples $(\F_U, \F_Z, g)$, where $\F_U \in \D(U), \F_Z \in \D(Z)$ and $g:\F_Z \to i^*j_*\F_U$ is a gluing map. Given a sheaf $\F \in \D(X)$ one recovers the corresponding tuple as $(\F|_U, i^*\F, i^*(\F \to j_*j^*\F))$, where the map $\F \to j_*j^*\F$ comes from the unit of the adjunction $(j^*, j_*)$. 

  If a pair of open and closed subsets $U, Z$ is not assumed to be complementary, we also recover the following more general statement. Assume that the categories $j_!\Sh(U; \D), i_*\D(Z)$ generate $\D(X)$. Let $i, j$ be as before, and let $\tilde{j}: U \cap Z \to Z, \tilde{i}: U \cap Z \to U$ be the embeddings.  
  \item \label{item:3} The category $\Sh(X; \D)$ is equivalent to the category of tuples $(\F_U, \F_Z, g)$, where $\F_U \in \D(U), \F_Z \in \D(Z)$ and $g:i^!j_!\F_U \to \F_Z$ is a gluing map such that $\tilde{j}^!g$ is an equivalence. Given a sheaf $\F \in \D(X)$ one recovers the corresponding tuple as $(\F|_U, i^!\F, i^!(j_!j^!\F \to \F))$, where the map $j_!j^!\F \to \F$ comes from the counit of the adjunction $(j_!, j^!)$.
  \end{enumerate}
\end{prop}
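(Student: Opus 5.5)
We will deduce all three items from Propositions \ref{prop:noncomconst} and \ref{prop:opnoncomconst}. To do so, we realize $\Sh(U;\D)$ and $\Sh(Z;\D)$ as full stable subcategories of $\X = \Sh(X;\D)$, either via $j_!$ and $i_* = i_!$ (in the lax cases \ref{item:1}, \ref{item:3}) or via $j_*$ and $i_*$ (in the oplax case \ref{item:2}). Then we check the adjointability hypotheses, and finally unravel the (partially) lax limit into the stated tuples.

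For \ref{item:1} and \ref{item:3}, set $\U = j_!\Sh(U;\D)$ and $\Z = i_*\Sh(Z;\D)$. These embeddings are fully faithful, since $j^*j_! \simeq \id$ and $i^!i_* \simeq \id$ (or $i^*i_*\simeq\id$). Their right adjoints are $j_!j^*$ and $i_*i^!$. The intersection $\U\cap\Z$ is identified with $\Sh(U\cap Z;\D)$, embedded via $j_!\tilde i_* \simeq i_*\tilde j_!$; this uses that $U\cap Z$ is locally closed and that $(-)_!$ is compatible with composition. In the complementary case $\U\cap\Z=0$ by Remark \ref{rem:recoll}. Generation holds in case \ref{item:1} by the recollement triangle $j_!j^*\F\to\F\to i_*i^*\F$, and in case \ref{item:3} it is assumed. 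Horizontal right adjointability is exactly the base change square \eqref{eq:basechange}, as recorded in Remark \ref{rem:constdescent}.

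Proposition \ref{prop:noncomconst} then gives the equivalence with the partially lax limit. Unwinding the model in its proof, an object consists of $u\in\U$, $z\in\Z$, and a $(\Z,\X)$-Cartesian edge $z'\to u$, i.e.\ $z'\simeq i_*i^!u$, together with a map $g\colon z'\to z$ that is an equivalence after restriction to $\U\cap\Z$. Writing $u=j_!\F_U$ and $z=i_*\F_Z$ turns $g$ into a map $i^!j_!\F_U\to\F_Z$, and the restriction condition becomes $\tilde j^*g = \tilde j^!g$ being an equivalence. The inverse equivalence in the proof sends $x=\F$ to the square with $\alpha$ the counit $j_!j^*\F\to\F$ and $\beta$ the counit $i_*i^!\F\to\F$. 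Applying $i^!$ to $\alpha$ yields the stated formula $i^!(j_!j^!\F\to\F)$.

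For \ref{item:2}, dually take $\U = j_*\Sh(U;\D)$ and $\Z=i_*\Sh(Z;\D)$ with left adjoints $j_*j^*$ and $i_*i^*$, and apply Proposition \ref{prop:opnoncomconst}. Here $\U\cap\Z=0$, so vertical left adjointability is vacuous, and the recollement triangle $i_*i^!\F\to\F\to j_*j^*\F$ gives generation. The dual unwinding produces $g\colon\F_Z\to i^*j_*\F_U$, recovered from $\F$ as $i^*$ of the unit $\F\to j_*j^*\F$.

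The main obstacle is bookkeeping rather than conceptual. First, one must make precise that the abstract subdivided-section model \eqref{eq:subdivsec} matches the concrete tuple description, including identifying the gluing functor $\U\to\Z$ with $i^!j_!$ under the chosen embeddings. Second, in \ref{item:3} one must identify $\U\cap\Z$ and its embedding functors with $\tilde i^!$ and $\tilde j^*$, which requires base change in the form recalled in the appendix (Remark \ref{rem: comparisonshandhypsh}).
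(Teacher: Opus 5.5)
Your route is the paper's own. The paper simply applies Propositions \ref{prop:noncomconst} and \ref{prop:opnoncomconst}, and your treatment of \ref{item:1} and \ref{item:3} is correct: generation, horizontal right adjointability as the base change square \eqref{eq:basechange}, $g=i^!(\alpha)$, and invertibility of $\tilde j^*g$ as the restriction condition.

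The one wrong step is in \ref{item:2}, where you say vertical left adjointability is vacuous because $\U\cap\Z=0$. An adjointability condition does not disappear when a corner is zero. It then asserts that a certain composite vanishes.

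Take $\U=j_*\Sh(U;\D)$ and $\Z=i_*\Sh(Z;\D)$, and pass to left adjoints of the vertical functors $\U\cap\Z\to\U$ and $\Z\to\X$, using the convention of Proposition \ref{prop:noncomopen}. This asks that the composite of $\U\subset\X$ with the left adjoint $i_*i^*$ of $\Z\subset\X$ factor through $\U\cap\Z=0$. That composite is exactly the gluing functor $i^*j_*$, so the condition says $i^*j_*\simeq 0$. This is false in general: for $X=\CC$, $Z=\{0\}$ and a constant sheaf $E$ on $\CC\setminus\{0\}$, one gets $i^*j_*E\simeq E\oplus E[-1]$. If it did hold, the oplax limit would collapse to $\U\times\Z$.

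The condition the argument actually uses is horizontal left adjointability. It is the formal dual, via $\X^{\op}$, of the horizontal right adjointability in Proposition \ref{prop:noncomconst}, and it is what makes $\fib(\id\to j_*j^*)$ take values in $\Z$. It says that the left adjoint $j^*$ of $\U\subset\X$ sends $\Z$ into $\U\cap\Z$ compatibly, i.e.\ $j^*i_*\simeq\tilde i_*\tilde j^*$. In the complementary case this reads $j^*i_*\simeq 0$, which holds; in the non-complementary oplax setting it is again the base change \eqref{eq:basechange}.

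So \ref{item:2} is fine once you verify this condition instead of calling the hypothesis vacuous. In effect, ``vertically'' in Proposition \ref{prop:opnoncomconst} must be read as ``horizontally''. Alternatively, since $U$ and $Z$ are complementary here, \ref{item:2} is just Corollary \ref{cor:lax}\ref{it:oplaxlim} applied to the recollement.
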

  The Beilinson construction is recovered when one applies lax descent followed by an oplax descent for a suitable auxiliary stratification.

  Let again $Z = f^{-1}(0), U = X \setminus Z$. Let $Z_+ = f^{-1}(\mathbb{R}_{\geq 0})$, $V = Z_+ \setminus Z$, and $W = X \setminus Z_+$.
We denote the corresponding embeddings as follows: 
\begin{equation}
    \label{eq:glossary0}
    \begin{tikzcd}
        Z \ar[r,equal]\ar[d,swap,hook', "k"] & Z \ar[d,swap,hook', "i"] &  \\
        Z_+ \ar[r,hook,"s"] & X & W \ar[l,swap,hook',"t"] \\
        V \ar[r,hook,"r"] \ar[u,hook,"l"] & U \ar[u,hook,"j"] & 
    \end{tikzcd}
\end{equation}
  
  Applying the lax descent to a pair $U, Z_+$ one gets that the category $\Sh(X; \D)$ is equivalent to the category of tuples $(\F_U, \F_{Z_+}, g)$, where $\F_U \in \D(U), \F_{Z_+} \in \D(Z_+)$ and \[g:s^!j_!\F_U \to \F_{Z_+}\] is a gluing map such that $l^!g$ is an equivalence. 

  We now apply the \emph{oplax} descent with respect to the open-closed decomposition of $Z_+ = Z \cup V$ for the sheaves $s^!j_!\F_U, \F_{Z_+}$. It follows that the data of $(\F_U, \F_{Z_+}, g)$ is equivalent to the data of $(\F_U, \F_V, \F_Z, g_V, g_Z, g_+)$ for $g_V, g_Z, g_+$ are maps in the following commutative diagram
    \begin{equation}
      \label{eq:1}
      \begin{tikzcd}
        k^*s^!j_!\F_U \arrow[d] \arrow[r, "g_Z"] &  \F_Z \arrow[d, "g_+"]  \\
 k^*l_*l^*s^!j_!\F_U \arrow[r, "k^*l_*g_V"']      & k^*l_* \F_V  
      \end{tikzcd}
    \end{equation}

    Here we think of $g_V, g_Z$ as of components of $g$ in the ``oplax gluing coordinates'' on $Z_+$.

    \begin{rem}
      \label{sec:appl-beil-gluing-1}
      Note that
      the condition that $l^!g$ is an equivalence makes $\F_V$ determined by the rest of the data. We may, thus, replace the bottom row with the identity map.
    \end{rem}
  It follows from Proposition \ref{sec:appl-beil-gluing} that, given $\F \in \Sh(X; \D)$, the sheaf $\F_Z$ is defined as $k^*s^!\F.$ 
\begin{defi}
    \label{defi:nearby_vanishing}
    Given sheaves $\F \in \Sh(X; \D), \F_U \in \Sh(U; \D)$, we define
    \begin{enumerate}
        \item the {\em vanishing cycles} $\Phi_f(\F) := k^* s^! \F \in \Sh(Z; \D)$, 
        \item the {\em left nearby cycles} $\Psi^l_f(\F_U) := \Phi_f(j_!\F_U) \in \Sh(Z; \D)$, and
        \item the {\em right nearby cycles} $\Psi^r_f(\F_U) := \Phi_f(j_*\F_U)
            \simeq k^* l_* r^! \F_U \in \Sh(Z; \D)$ where the equivalence follows
            from base change along the bottom square of \eqref{eq:glossary0}. 
    \end{enumerate}
\end{defi}
\begin{rem}
    \label{rem:leftandright}
    By applying $\Phi_f$, to the exact triangle
    $j_!j^* \F \to \F \to i_* i^* \F$,
    we obtain an exact triangle
    \[
        \Psi^l_f(j^*\F) \overset{\can}{\lra} \Phi_f(\F) \lra i^* \F.
    \]
    Similarly, by applying $\Phi_f$, to the exact triangle
    $ i_* i^! \F \to \F \to j_*j^* \F$,
    we obtain an exact triangle
    \[
        i^!\F \lra \Phi_f(\F) \overset{\var}{\lra} \Psi^r_f(j^*\F)
    \]
    which, using base change, can alternatively be obtained by applying the
    triangle 
        $k_*k^! \to \id_{Z_+} \to l_*l^*$
    to $s^!\F$ and composing with $k^*(-)$. 
\end{rem}

Substituting these definitions and keeping in mind Remark \ref{sec:appl-beil-gluing-1}, we obtain the following gluing data describing the sheaf $\F \in \Sh(X; \D)$.
    \begin{equation}
      \label{eq:3}
      \begin{tikzcd}
        \Psi^l_f j^*\F \arrow[d] \arrow[r, "\can"] &  \Phi_f(\F) \arrow[d, "\var"]  \\
 \Psi^r_f j^*\F \arrow[r, "\Id"']      & \Psi^r_f \F_U 
      \end{tikzcd}
    \end{equation}

It remains to identify the left vertical arrow in this diagram. Note that, by definition, it only depends on the restriction $j^*\F$ of $\F$ to $U$.

\begin{prop}
    \label{prop:canvar}
    Let $\F \in \Sh(X; \D)$.
    \begin{enumerate}
        \item The composite 
            \[
                \Psi^l_f(j^*\F) \overset{\can}{\to} \Phi_f(\F) \overset{\var}{\to} \Psi^r_f(j^*\F)
            \]
            decomposes as $\var \circ \can = \tau_+ - \tau_-$ for canonical maps $\tau_+$ and $\tau_-$.
        \item Suppose that $\F$ is constructible with respect to a complex
            analytic stratification of $X$. Then $\tau_+$ and $\tau_-$ are equivalences. 
    \end{enumerate}
\end{prop}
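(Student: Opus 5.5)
Set $\G = j^*\F$. Since $\can$ comes from the counit $j_!j^*\F \to \F$ and $\var$ from the unit $\F \to j_*j^*\F$, the composite $\var \circ \can$ is $\Phi_f$ applied to the composite $j_!\G \to \F \to j_*\G$. This composite is the canonical map $j_!\G \to j_*\G$ (adjoint to the identity of $\G$), so $\var\circ\can = \Phi_f(j_!\G \to j_*\G)$ depends only on $\G$. The plan is to compute this map geometrically by splitting the neighbourhood of $Z$ inside $Z_+$ into two sides.

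For the first step, write $U_\pm = f^{-1}(\{\pm\Im > 0\})$, so that $U \setminus V = W = U_+ \sqcup U_-$. Near $V$, the space $U$ is covered by the closed pieces $\overline{U_\pm}\cap U = f^{-1}(\{\pm \Im \ge 0\}) \cap U$, which meet exactly in $V$. Compute $\Psi^r_f\G = k^*l_*r^!\G$ and $\Psi^l_f\G = k^*s^!j_!\G$. The key local identification will be an exact triangle $r^!\G \to r^*\G \to (\text{contributions of } t_*t^*\G \text{ from } U_+ \text{ and } U_-)$. Mayer--Vietoris for the closed cover by the two half-spaces then expresses $r^!\G$ as the fiber of $r^*\G \to \mathrm{res}_+ \oplus \mathrm{res}_-$. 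Dually, $s^!j_!\G$ restricted near $Z$ is computed by sections supported in $Z_+$. Applying $k^*l_*$ and taking germs at $Z$, the map $j_!\G\to j_*\G$ decomposes along the two half-planes. This yields $\tau_\pm \colon \Psi^l_f\G \to \Psi^r_f\G$, defined as $k^*$ of the composite of restriction to the closure of $U_\pm$, with the identification of both $\Psi^l$ and $\Psi^r$ with $k^*(\text{sections near } Z \text{ on } \overline{U_\pm}\cap U)$. The sign comes from the Mayer--Vietoris boundary orientation, namely from identifying the fiber of $A\oplus A \to A$ (difference map) with $A$. I would set this up concretely as a $3\times 3$ diagram of exact triangles obtained from the cover $\overline{U_+}\cup\overline{U_-}$ of $U$ near $Z_+$. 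Part (1) then follows by reading off the induced map as a difference of the two restriction maps.

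For part (2), assume $\F$, hence $\G$, is constructible for a complex analytic stratification. It suffices to show each restriction $k^*l_*(\G|_V)\leftarrow k^*(\text{sections over a half-plane neighbourhood})$ is an equivalence, and similarly on the $\Psi^l$ side. By the standard description of nearby cycles for analytically constructible sheaves (Milnor fibration and local conic structure), the preimage under $f$ of a small sector $\{0<|w|<\epsilon,\ \arg w \in I\}$, intersected with a small ball around $z\in Z$, deformation retracts onto the Milnor fiber over any point of the sector in a manner compatible with the stratification. Hence sections over $f^{-1}$ of the sector equal sections over $f^{-1}$ of a ray. Applying this to the closed half-disc sectors $\arg\in[0,\pi]$ and $[-\pi,0]$, both containing the ray $\RR_{>0}$, shows that $\tau_+$ and $\tau_-$ are equivalences. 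Their composite $\tau_-^{-1}\tau_+$ is then the monodromy $T$, consistent with $\var\circ\can = \tau_+(1 - T^{-1}\cdots)$ up to normalization.

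The main obstacle is step one: producing the decomposition canonically at the level of stable $\infty$-categories with correct signs, and checking that the resulting map agrees with $\Phi_f(j_!\G\to j_*\G)$ rather than merely with some map having the same source and target. I would first handle it by a base-change computation reducing everything to $X=\CC$ and $f=\mathrm{id}$ with $\G$ pulled back, using proper base change for $k^*$ along the ray.
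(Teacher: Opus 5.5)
Your reformulation $\var\circ\can=\Phi_f(j_!\G\to j_*\G)$ with $\G=j^*\F$ is correct, but step one has a genuine gap. It is essentially the obstacle you flag yourself.

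First, $U\setminus V=W$ is not $U_+\sqcup U_-$. The set $W=f^{-1}(\CC\setminus\RR_{\ge 0})$ also contains $f^{-1}(\RR_{<0})$, and it splits into an upper and a lower piece only near $V$. So the source does not decompose: since $i^*j_!=0$ and $s_*s^!=\fib(\id\to t_*t^*)$, one has $\Psi^l_f(\G)\simeq i^*t_*t^*\F[-1]$, which is sections over the whole slit neighbourhood of $Z$.

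Second, you define $\tau_\pm$ through identifications of $\Psi^l_f(\G)$ and $\Psi^r_f(\G)$ with sections over $\overline{U_\pm}\cap U$. These identifications exist only in the constructible case. There is not even a restriction map from $W$ to $\overline{U_\pm}\cap U$, since the latter contains $V$. Hence for an arbitrary $\F$, as part (1) requires, neither $\tau_\pm$ nor the decomposition has actually been constructed.

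Your proposed remedy does not help. A general $\G$ is not pulled back from $\CC^*$, the stalks along $Z$ are local in $X$, and $s^!$, $j_*$, $l_*$ do not commute with $f^*$. (Also, what computes $r^!\G$ is the open--closed triangle for $W\subset U$ plus the local splitting of $W$ near $V$, not a closed Mayer--Vietoris sequence.)

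The missing mechanism is that the splitting happens only on the target side, and additivity comes from a zero corner. The paper's route is as follows.

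Whiskering $\can$ with the unit $\id\to l_*l^*$ identifies $\var\circ\can$, up to an equivalence, with $h=k^*(s^!j_!\G\to l_*l^*s^!j_!\G)$. Applying $k^*s^*\to k^*l_*l^*s^*$ to $s_*s^!j_!\G\to j_!\G\to t_*t^*\F$ exhibits $h$ as the map on horizontal fibres of a square. Its top row is $0\to i^*t_*t^*\F$ and its bottom row is $k^*l_*l^*s^*\F\to k^*l_*l^*s^*t_*t^*\F$.

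The paper resolves $t_*t^*$ by the \v{C}ech triangle of $W=W_1\cup W_2$. The overlap $W_1\cap W_2=f^{-1}(\{\Re<0\})$ is invisible after $l^*s^*$. So the bottom right corner becomes $k^*l_*l^*s^*(a_*a^*\F\oplus b_*b^*\F)$, and the right vertical map is $(t_1,-t_2)$.

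Since the top left corner is $0$, the induced map on fibres is additive in the right vertical map. This yields canonical $\tau_\pm$ for every $\F$.

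Constructibility enters only afterwards. It is used to show that $t_1$, $t_2$ and the two components of the bottom map are equivalences; these are restrictions from the slit neighbourhood, resp. from a neighbourhood of $V$, to the upper and lower sides of $V$. That is where your sector/Milnor fibration argument belongs.
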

\begin{proof}
    By whiskering the map $\can$ with the unit $\id \to l_*l^*$ we obtain a commutative square
    \begin{equation}
        \label{eq:varcansquare}
        \begin{tikzcd}
            k^* s^! j_! j^* \F \ar{r}{\can}\ar[d, "h"'] & k^* s^! \F  \ar{d}{\var}\\
            k^* l_* l^* s^! j_! j^*\F \ar{r}{\simeq} & k^* l_* l^* s^! \F
        \end{tikzcd}
    \end{equation}
    Let $W_1 = f^{-1}(\{\Re <0\} \cup \{\Im >0\})$ and $W_2 = f^{-1}(\{\Re <0\}
    \cup \{\Im <0\})$ so that $W_1$ and $W_2$ cover $W$ with intersection $f^{-1}(\{\Re
    <0\})$. Denoting the embeddings of $W_1$, $W_2$, and $W_1 \cap W_2$ into $X$
    by $a$, $b$, and $c$, respectively, we obtain a Cech exact triangle
    \begin{equation}
        \label{eq:resot}
        t_*t^* \to a_*a^* \oplus b_*b^* \to c_*c^*
    \end{equation}
    Now expressing $s_*s^!$ as the fiber of the map  
    \[
        \id \to t_* t^*
    \]
    and using \eqref{eq:resot} to resolve $t_*t^*$, we obtain that the map $h$
    can be described as the map induced on horizontal fibers of the square 
    \begin{equation}
        \label{eq:resolution}
        \begin{tikzcd}
            0 \ar{r}\ar{d} & i^*(t_*t^*\F) \ar{d}{(t_1,-t_2)}\\
            i^*(s_*l_*l^*s^*\F) \ar{r}{r} & k^*( l_*l^*(a_*a^*\F \oplus b_*b^*\F))
        \end{tikzcd}
    \end{equation}
    This implies the claimed sum decomposition where
    $\tau_+$ and $\tau_-$ are the maps induced by $(t_1,0)$ and $(0,-t_2)$,
    respectively. If $\F$ is constructible then it can be deduced, for example, from \cite[Lemma 1.1.1, Example 1.1.3]{zbMATH02012928} that $t_1$, $t_2$ and both components
    of $r$ in \eqref{eq:resolution} are equivalences which implies that both
    $\tau_1$ and $\tau_2$ are equivalences as well.
\end{proof}

Summarizing the discussion in this section, we obtain
\begin{cor}[Stable Beilinson gluing]
    \label{cor:derivedbeilinson}
    There is an equivalence between $\Sh(X; \D)$ and the $\infty$-category comprised of
    \begin{enumerate}
        \item a sheaf $\F_Z$ on $Z$,
        \item a sheaf $\F_U$ on $U$,
        \item maps $a: \Psi^l_f(\F_U) \to \F_Z$ and $b: \F_Z \to \Psi^r_f(\F_U)$, 
        \item a homotopy identifying $ba$ with $\tau_1 - \tau_2$.
    \end{enumerate}
\end{cor}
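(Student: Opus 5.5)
The plan is to compose the two descent equivalences already established and then simplify the resulting data, using Remark \ref{sec:appl-beil-gluing-1} and Proposition \ref{prop:canvar}.

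First, apply Proposition \ref{sec:appl-beil-gluing}\ref{item:3} to the constructible cover $X = U \cup Z_+$. Its generation hypothesis holds: any $\F$ sits in the triangle $j_!j^*\F \to \F \to i_*i^*\F$, and $i_*i^*\F$ lies in $s_*\Sh(Z_+;\D)$ since $Z \subset Z_+$. This identifies $\Sh(X;\D)$ with triples $(\F_U,\F_{Z_+},g)$, where $g\colon s^!j_!\F_U \to \F_{Z_+}$ is an equivalence after applying $l^!$.

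Second, apply the oplax descent of Proposition \ref{sec:appl-beil-gluing}\ref{item:2} to $Z_+ = Z \sqcup V$. This is functorial in the object of $\Sh(Z_+;\D)$, so it also applies to the morphism $g$. The object $\F_{Z_+}$ becomes $(\F_V,\F_Z,g_+\colon \F_Z \to k^*l_*\F_V)$. The morphism $g$ becomes a pair $(g_V,g_Z)$ together with the commuting square \eqref{eq:1}. Because $l^!g = l^*g = g_V$ is required to be invertible, $\F_V$ can be replaced by $l^*s^!j_!\F_U$ and $g_V$ by the identity; this is Remark \ref{sec:appl-beil-gluing-1}. To make this rigorous, I would note that the full subcategory where $g_V$ is an equivalence is equivalent, via the forgetful functor, to the subcategory where $g_V$ is an identity, because the space of pairs (object, equivalence from a fixed object) is contractible.

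After this reduction, the remaining data are:
\begin{itemize}
\item $\F_U$ and $\F_Z$;
\item a map $a = g_Z\colon k^*s^!j_!\F_U = \Psi^l_f(\F_U) \to \F_Z$;
\item a map $b = g_+\colon \F_Z \to k^*l_*l^*s^!j_!\F_U$;
\item a homotopy making \eqref{eq:1} commute, that is, identifying $b\circ a$ with the left vertical unit map $h$.
\end{itemize}
The target of $b$ is identified with $\Psi^r_f(\F_U) = k^*l_*r^!\F_U$ by base change, since $l^*s^!j_! \simeq r^!$ on $V$. This gives exactly the square \eqref{eq:3}.

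Finally, I would invoke Proposition \ref{prop:canvar} and its proof. There, $h$ is the map $\var\circ\can$ for the sheaf $j_!\F_U$, and its expression as a map on horizontal fibers of \eqref{eq:resolution} yields a canonical equivalence $h \simeq \tau_1 - \tau_2$ that depends only on $\F_U$. Precomposing with this equivalence turns a homotopy $ba \simeq h$ into a homotopy $ba \simeq \tau_1 - \tau_2$, which is the data listed in the corollary.

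The main obstacle I expect is coherence. The two descent equivalences, the contraction of $(\F_V,g_V)$, and the identification of $h$ must all be functorial and compatible, so that the final description is an equivalence of $\infty$-categories rather than a bijection on objects. I would handle this by writing each step as a pullback or limit of $\infty$-categories: the lax limit, then the full subcategory condition, then a base change along $\Fun(\Delta^1,-)$. Each step is then an equivalence by construction. The identification $h \simeq \tau_1-\tau_2$ must be taken as a natural equivalence of functors $\Sh(U;\D) \to \Fun(\Delta^1,\Sh(Z;\D))$; this is what the resolution \eqref{eq:resot} gives, since it is a functorial Čech triangle.
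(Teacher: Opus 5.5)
Your proposal is correct and follows the same route as the paper: lax descent for the constructible cover $X = U \cup Z_+$, then oplax descent for $Z_+ = Z \cup V$, then elimination of $(\F_V, g_V)$ via Remark \ref{sec:appl-beil-gluing-1} to reach the square \eqref{eq:3}, and finally identification of its left vertical arrow with $\tau_1 - \tau_2$ via Proposition \ref{prop:canvar}. Your additional points (checking generation via $i_*i^*\F \simeq s_*k_*i^*\F$, the contractibility argument for discarding $(\F_V,g_V)$, and naturality of $h \simeq \tau_1-\tau_2$) make explicit what the paper's ``summarizing the discussion'' leaves implicit.
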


\begin{rem}
    \label{rem:nearby}
    For constructible $\F \in \Sh(U; \D)$, we refer to $\Psi_f(\F) = \Psi^l_f(\F)$ as
    the {\em nearby cycles} and, by convention, use $\tau_-$ to implicitly
    identify $\Psi_f(\F)$ with $\Psi^r_f(\F)$. We further obtain an automorphism $\tau =
    (\tau_+)^{-1} \circ \tau_-$ of $\Psi_f(\F)$, called {\em monodromy}. In
    particular, we recover the familiar relation 
    \[
        \var \circ \can \simeq \id - \tau.
    \]
\end{rem}

\begin{rem}
    \label{rem:texactbeilinson}
    In view of Remark \ref{rem:nearby}, when restricting the equivalence from
    Corollary \ref{cor:derivedbeilinson} to complex analytically constructible
    sheaves, then we obtain an equivalence with 
    \begin{enumerate}
        \item a constructible sheaf $\F_Z$ on $Z$,
        \item a constructible sheaf $\F_U$ on $U$,
        \item maps $a: \Psi_f(\F_U) \to \F_Z$ and $b: \F_Z \to \Psi_f(\F_U)$, 
        \item a homotopy identifying $ba$ with $\id - \tau$,
    \end{enumerate}
    and, restricting further to perverse sheaves, we obtain an equivalence with
    such data where, in addition, the sheaves $\F_Z$ and $\F_U$ are perverse. This
    latter statement follows from the classical $t$-exactness of the nearby and
    vanishing cycles functors. 
\end{rem}

    \begin{rem}
      Note that the description we obtained is formally different from Beilinson's description in that the full (as opposed to unipotent) nearby and vanishing cycles appear. It is easy to see, however, that the commutativity of the diagram \eqref{eq:3} implies that the non-unipotent part of each of its corners is determined by $\F_U$, and all its maps are isomorphisms on the non-unipotent part.
    \end{rem}

\subsubsection{Algebraic}

We can also consider the following algebraic variant of the construction above. Fix a prime $\ell$. All schemes we will consider will be quasi-compact, quasi-separated schemes defined over $\mathbb{Z}[1/\ell]$. For such a scheme $X$, let $\Sh(X, \bar{\mathbb{Q}}_\ell)$ be a stable category of \'{e}tale $\bar{\mathbb{Q}}_\ell$-sheaves on $X$ \cite{zbMATH07716347}.

Let $S = \mathrm{Spec}(V)$ be a (for simplicity, strict) henselian trait with a special point $t$, generic point $\eta$ and geometric point $\bar{\eta}$ over $\eta$. Let $\bar{S} = \mathrm{Spec}(\bar{V})$, where $\bar{V}$ is the integral closure of $V$ in $k(\bar{\eta})$.

Let $X$ be a scheme over $S$ with a structure map $f: X \to S$. Then write
$Z = X_{\bar{t}}$,
$U = X_\eta$,
$Z_+ = X \times_S \bar{S}$,
$V = X_{\bar{\eta}}$,
with maps $i, j, k, l, s$ being the natural maps between the same-named spaces from the previous subsection.

The triangles of the Remark \ref{rem:leftandright} are \cite[(3.13, 3.14)]{luDualityNearbyCycles2019}, with the composition $\var \circ \can$ identified in \cite[Remark 2.17]{luDualityNearbyCycles2019}.

\begin{rem}
  \label{sec:algebr-vari-constr}
  Note that it follows from the construction that the functor $s^!$ is t-exact with respect to the absolute perverse t-structure on the source and relative (with respect to the map $X \to \bar{S}$) perverse t-structure on the target, see \cite{hansenRelativePerversity2023}. We think it would be interesting to investigate an algebraic analogue of the developments in Section \ref{sec:foliations_and_purity} in this context.
\end{rem}
\section{Lax descent for stratifications}%
\label{sec:stratifications}

The classical concept of recollement in the prototypical captures the simplest kind of
stratification given by a decomposition of a space into an open subset and its
closed complement. Of course, iterated recollements can then be used to analyze
stratifications with more strata, but this approach has its limitations. 
As demonstrated in \cite{amr:stratified}, the framework of $\infty$-categories
offers a beautiful and systematic framework where such nested recollements can
be controlled by lax universal properties. The starting point is the
perspective of \S \ref{sub:recollements_as_op_lax_limits}: recollements can be viewed
as data that exhibits the category $\X$ as an (op)lax limit of the associated
gluing functors (cf. Corollary \ref{cor:lax}). In this section, we survey some
results of \cite{amr:stratified}, adapted to our context -- for the reader's
convenience we provide proofs.

\subsection{An example: The \texorpdfstring{$A_3$}{A3}-quiver}
\label{sub:quiver}

Before investigating the general situation, it is useful to consider a toy
example which nevertheless captures all general features. To this end, let $\D$
be a stable $\infty$-category and consider the $\infty$-category
\[
    \X = \Fun([2],\D) \simeq \{ a_0 \to a_1 \to a_2 \}
\]
of representations of the $A_2$-quiver. We consider the stable subcategories
\[
    \begin{matrix}
        \A_0 & = \{ a_0 \to 0 \to 0 \}\\
        \A_1 & = \{ 0 \to a_1 \to 0 \}\\
        \A_2 & = \{ 0 \to 0 \to a_2 \}
    \end{matrix}
\]
of $\X$. Note that we may interpret this datum as a nested recollement, first
combining $\A_0$ and $\A_1$, as a recollement of $\Fun([1],\D)$ which can then
be combined with $\A_2$ to form a recollement of $\X$. But, of course, this is
unsatisfying since it depends on ad-hoc choices and will be hard to implement
systematically in more complicated situations. We therefore investigate the
situation more directly, determining the functors corresponding to the various
$(\eR,\eS)$-Cartesian edges (cf. Definition \ref{defi:universal} in $\X$. 

We determine an analogue of the lax limit cone \eqref{eq:laxtriangle} for the
given ``$3$-component recollement''. Determining the various
$(\A_i,\X)$-Cartesian as well as $(\A_i,\A_j)$-Cartesian edges in $\X$ by
direct computation, we observe that they describe the following lax diagram of
functors 
\begin{equation}\label{eq:laxcone}
    \begin{tikzcd}[sep=1cm]
        & \X \ar[swap,bend right]{ddl}{\fib(a_0 \to a_1)} \ar[d, "\fib(a_1 \to a_2)" description]\ar[bend left]{ddr}{a_2}&\\ 
        & \ar[swap]{ld}{[-1]} \A_1 & \\
        \A_0 && \ar[swap]{lu}{[-1]}\A_2 \ar{ll}{0}
    \end{tikzcd}
\end{equation}
where we have identified each $\A_i$, via projection to the nonzero component, with $\D$. 

\begin{rem}
    \label{rem:laxgluing}
    We already observe one curious feature of \eqref{eq:laxcone}: As to be
    expected from a lax limit cone, the triangles connecting the tip of the
    cone to its base only commute laxly, i.e. up to noninvertible natural
    transformation. But more remarkably, also the ``diagram of gluing
    functors'' that forms the base of the cone does only commute {\em laxly}. 
\end{rem}

A bit more formally, we may construct this diagram as a fibration as in Remark
\ref{rem:fibration}: Let $\G$ denote the full subcategory of $\X \times
\Delta^3$ on the vertices of $\A_i \times \{i\}$ and $\X \times \{3\}$. Then
the projection functor $\pi: \G \to \Delta^3$ is a locally Cartesian fibration
modelling the lax diagram. 
We formulate, for a general locally Cartesian fibration $\Delta^3 =
(\Delta^2)^{\rhd}$, the criterion for it to model a lax limit cone:

\begin{prop}
    \label{prop:laxlimcone}
    Let $\pi: \G \to \Delta^3$ be a locally Cartesian fibration. 
    \begin{itemize}
        \item Denote by $\Gamma^*(\G)$ the $\infty$-category of diagrams of the form
            \begin{equation}
                \label{eq:bary}
                \begin{tikzcd}[column sep={3em, between origins},row sep={2em, between origins}, nodes in empty cells]
                    &   &   x_{1} &    & \\
                    &&&&& \\
                    & x_{01} \ar{uur}{*}\ar{ddl} & & x_{12} \ar{uul} \ar{ddr}{*} & \\
                    &  & x_{012} \ar{ur}{*} \ar{ul} \ar{d} &  & \\
                    x_0  & & \ar{ll} x_{02} \ar{rr}{*}& & x_{2} 
                \end{tikzcd}
            \end{equation}
            in $\G$ where, for $I \subset [2]$, we have $\pi(x_I) = \min(I)$ and all
            arrows marked $*$ are locally $\pi$-Cartesian. 
        \item Denote further by $\Gamma(\G)$ the $\infty$-category of cone
            shaped diagrams in $\G$ with base diagram as in \eqref{eq:bary}
            augmented by a final tip $x_3 \in \pi^{-1}(3)$ such that all edges
            $x_i \to x_3$ are locally $\pi$-Cartesian over the edges $i \to 3$, $0 \le
            i \le 2$. 
    \end{itemize}
    Then the forgetful functor $\tau: \Gamma(\G) \to \G_3$ given by projection to the
    tip vertex $x_3$ is a trivial Kan fibration.
\end{prop}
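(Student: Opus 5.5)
We realize $\tau$ as the composite of restriction maps, each of which is a trivial Kan fibration by the standard criterion for right Kan extensions relative to $\pi$ (Lurie, HTT 4.3.2.15). The idea is that every vertex of the diagram is determined, essentially uniquely, by the tip $x_3$ through locally $\pi$-Cartesian lifts. The only freedom left is in the non-Cartesian edges, and those are forced by the universal property of the Cartesian edges.

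\emph{Step 1: an explicit indexing category.} Let $S$ be the poset of nonempty subsets of $[2]$ (the barycentric subdivision, ordered by reverse inclusion so that the arrows in \eqref{eq:bary} go $x_I \to x_J$ for $J \subset I$), with a cone point $3$ adjoined as terminal object. The functor $m: S^{\rhd} \to \Delta^3$ sends $I \mapsto \min(I)$ and $3 \mapsto 3$. Then $\Gamma^*(\G)$ is the full subcategory of $\Fun_{\Delta^3}(S, \G)$ on diagrams whose marked edges are locally Cartesian. The marked edges are exactly those $x_I \to x_J$ with $\min I \neq \min J$, i.e. $x_{01}\to x_1$, $x_{012}\to x_{12}$, $x_{12}\to x_2$ and $x_{02}\to x_2$. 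Likewise $\Gamma(\G)$ is a full subcategory of $\Fun_{\Delta^3}(S^{\rhd},\G)$.

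\emph{Step 2: filter and extend stage by stage.} We build the diagram from $x_3$ in the order $x_2,x_1,x_0$, then $x_{12}$, then $x_{01},x_{02}$, then $x_{012}$. At each stage we add the new object as a $\pi$-right Kan extension of the already-constructed diagram restricted to the new object's overcategory. The extensions run as follows.
\begin{itemize}
\item $x_i$ for $i\le 2$ is a locally Cartesian lift of $i\to 3$; since its only map out goes to $x_3$, this is exactly the Cartesian condition in $\Gamma(\G)$.
\item $x_{12}$ must receive the data $x_{12}\to x_1$, $x_{12}\to x_2$ and the composite to $x_3$. Being the right Kan extension means $\Map(y,x_{12}) \simeq \Map(y,x_1)\times_{\Map(y,x_3)}\Map(y,x_2)$ over the appropriate edges. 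We need to check that the locally Cartesian lift of $1\to 2$ at $x_2$, together with the induced map to $x_1$ (obtained from Cartesianness of $x_1\to x_3$), realizes this.
\item $x_{01},x_{02},x_{012}$ are handled analogously. For example, $x_{012}$ is the lift over $0\to 1$ of $x_{12}$, with its maps to $x_{01},x_{02}$ induced.
\end{itemize}
At each stage, the condition ``the new object is a $\pi$-right Kan extension'' should be shown equivalent to ``the prescribed marked edge out of it is locally Cartesian''. Granting that, HTT 4.3.2.15 shows that each restriction is a trivial fibration, and hence so is $\tau$.

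\emph{Main obstacle.} The crux is the equivalence in Step 2 between the Kan extension condition and the local Cartesian condition. Because $\pi$ is only \emph{locally} Cartesian, composites of Cartesian edges need not be Cartesian. So we must check that the overcategory of each new vertex in $S^{\rhd}$ has an initial object whose image in $\Delta^3$ is hit by exactly one Cartesian edge, with all the other maps factoring through it. I would verify this directly:
\begin{itemize}
\item for $x_{12}$, the relevant overcategory $\{1,2,3\}$ has initial object $\{1\}$ lying over $1$, so only $x_{12}\to x_1$ matters;
\item more carefully, I would reindex so that each new vertex's overcategory, pulled back to the fiber, is cofinally controlled by the single marked edge, using that $\min$ is preserved along the unmarked edges.
\end{itemize}
Hence the limit reduces to a single mapping space computed in one fiber, which is precisely the local Cartesian condition. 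If cofinality fails for $x_{012}$, I would instead treat the square $x_{012},x_{01},x_{02},x_0$ by first lifting along $0\to1$ and then using the universal property within the fiber $\G_0$.
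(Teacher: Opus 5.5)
\paragraph{A genuine gap: $\pi$-limits are the wrong universal property.}
Your overall plan is viable: build the diagram from $x_3$ one vertex at a time, in an order where each new vertex only maps to vertices already built, and show each restriction is a trivial fibration. The tool you invoke at each step does not apply, however. HTT 4.3.2.15 for $p=\pi\colon\G\to\Delta^3$ characterizes the new vertex as a $\pi$-limit. A $\pi$-limit tests mapping spaces out of \emph{all} objects $z\in\G$, not only those in the fibre over $m(v)$.

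This already breaks at the first stage. A $\pi$-limit of the one-object diagram $x_3$ with cone point over $1$ is a $\pi$-Cartesian edge over $1\to 3$. That is a locally Cartesian edge $x_1\to x_3$ for which, in addition, $\Map(z,x_1)\to\Map(z,x_3)$ is an equivalence for all $z\in\G_0$. This amounts to $F_{0<1}F_{1<3}(x_3)\to F_{0<3}(x_3)$ being an equivalence. For a merely locally Cartesian fibration this fails, and then no $\pi$-Cartesian lift exists at all. In the $A_3$-quiver example of \S\ref{sub:quiver} with $x_3=(0\to 0\to a_2)$, $a_2\neq 0$, the comparison is $a_2[-2]\to 0$.

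So your first bullet (``this is exactly the Cartesian condition in $\Gamma(\G)$'') is false. The subcategory $K'$ of HTT 4.3.2.15 does not contain all of $\G_3$, and your argument never reaches such $x_3$. The problem is not only that composites of locally Cartesian edges need not be locally Cartesian. A single locally Cartesian edge is already not a $\pi$-limit, and the same mismatch recurs at every stage.

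\paragraph{How to repair it, and what remains to check.}
Let $C^0\subset S^{\rhd}$ be the part already built, $v$ the new vertex, and $q$ the restriction of the diagram to $C^0_{v/}$. Require the cone to be terminal in $\G_{m(v)}\times_{\G}\G_{/q}$. Equivalently, require it to be a relative limit for the restriction of $\pi$ to the face of $\Delta^3$ spanned by $m(C_{v/})$; there $m(v)$ is minimal, so objects over other vertices impose no condition. Since $N(C)=N(C^0)\amalg_{N(C^0_{v/})}N(C^0_{v/})^{\lhd}$, HTT 4.3.2.15 applied to these restricted fibrations does make each restriction a trivial fibration. Existence of the fibrewise limits follows from the identifications below, because they are represented by locally Cartesian lifts.

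You still have to identify these conditions with the marked ones, and here your cofinality claims are incorrect.
\begin{itemize}
\item $C^0_{\{1,2\}/}$ is the cospan $\{1\}\to 3\leftarrow\{2\}$, which has no initial object. The relevant edge is $x_{12}\to x_2$, not $x_{12}\to x_1$. For $z\in\G_1$ the limit is $\Map(z,x_1)\times_{\Map(z,x_3)}\Map(z,x_2)\simeq\Map(z,x_2)$, because $x_1\to x_3$ is locally Cartesian.
\item For $\{0,1,2\}$ the indexing category is $\sd(\partial\Delta^2)^{\rhd}$, again without an initial object. For $z\in\G_0$, the functor $\Map(z,x_{(-)})$ on it is right Kan extended from $\{\{1,2\},\{1\},\{2\},3\}$. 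This uses that $x_0\to x_3$, $x_{01}\to x_1$ and $x_{02}\to x_2$ are locally Cartesian over edges starting at $0$. In that subposet $\{1,2\}$ is initial, so the limit is $\Map(z,x_{12})$. The condition is therefore exactly that $x_{012}\to x_{12}$ be locally Cartesian.
\end{itemize}

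With these computations your argument becomes a valid, hands-on alternative to the paper's. The paper instead proves the general Lemma~\ref{lem:tipprojection} by showing $\sd(\emptyset)^{\rhd}\subset\sd(K)^{\rhd}$ is marked local anodyne. It encodes locality by permitting right horn fillers only over an edge.
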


The proof will be given in greater generality in \S \ref{sub:laxlimcones}. In
consequence, by composing an inverse to the functor $\tau$ with the projection
functor $\beta: \Gamma(\G) \to \Gamma^*(\G)$ to the base of the cone, we obtain a functor
\[
    \lambda: \G_3 \to \Gamma^*(\G)
\]
which associates to every object of $\G_3$ a canonical diagram as in
\eqref{eq:bary}. In our running example, this functor associates to a quiver
representation $a_0 \to a_1 \to a_2$, the diagram
\begin{equation}
    \label{eq:pullbackcone}
\begin{tikzcd}[column sep={4em, between origins},row sep={3em, between origins}, nodes in empty cells]
    &   &   \fib(a_1 \to a_2) &    & \\
    &&&&\\
    & \fib(a_1 \to a_2)[-1] \ar{uur}{*}\ar{ddl} & & a_2[-1] \ar{uul} \ar{ddr}{*} & \\
    &  & a_2[-2] \ar{ur}{*} \ar{ul} \ar{d} &  & \\
    \fib(a_0 \to a_1)  & & \ar{ll} 0 \ar{rr}{*}& & a_2.
\end{tikzcd}
\end{equation}

\begin{defi}
    \label{defi:laxlimcone}
    Let $\pi: \G \to \Delta^3$ be a locally Cartesian fibration. We say that
    $\pi$ models a lax limit cone, if the projection functor
    \[
        \beta: \Gamma(\G) \to \Gamma^*(\G)
    \]
    is an equivalence of $\infty$-categories. 
\end{defi}

\begin{rem}
    \label{rem:informal} A bit more informally, we may formulate the statement
    that $\pi$ models a lax limit cone as follows: 
    Unravelling the data comprised in a diagram of the
    shape \eqref{eq:bary}, and using Proposition \ref{prop:laxlimcone}, we
    obtain an equivalence between the $\infty$-category $\G_3$, forming the tip
    of the cone, and the $\infty$-category described informally by the following data:
    \begin{itemize}
        \item objects $x_0 \in \G_0$, $x_1 \in \G_1$,  $x_2 \in \G_2$,
        \item morphisms $f: F_{0<1}(x_1) \to x_0$, $g: F_{1<2}(x_2) \to x_1$, $h: F_{0<2}(x_2) \to x_0$,
        \item a homotopy filling the square
            \[
            \begin{tikzcd}
                F_{0<1} F_{1<2}(x_2) \ar{r}{F_{0<1}(g)} \ar{d}{\alpha_{x_2}} & F_{0<1}(x_1)\ar{d}{f}\\
                F_{0<2}(x_2) \ar{r}{h} & x_0
            \end{tikzcd}
            \]
    \end{itemize}
    where $F_{i<j}: \G_j \to \G_i$ are choices of the various functors modelled
    by the locally Cartesian fibration $\pi$ and $\alpha: F_{0<1} F_{1<2}
    \Rightarrow F_{0<2}$ corresponds to the natural transformation (also
    encoded by $\pi$). 
\end{rem}

As will be shown within the more general context of ``stratified stable
$\infty$-categories'', for the example at hand, the locally Cartesian fibration
$\pi$ does indeed model a lax limit cone which exhibits the category $\X$ as a
lax limit of the lax diagram
\[
\begin{tikzcd}[sep=.5cm]
    & \ar[swap]{ld}{[-1]} \D & \\
    \D && \ar[swap]{lu}{[-1]}\D. \ar{ll}{0}
\end{tikzcd}
\]
Eliminating redundand parts of the diagram \eqref{eq:bary}, the
$\infty$-category $\Gamma^*(\G)$ can be identified with the category of $3$-term
complexes in $\D$, i.e. diagrams
\[
\begin{tikzcd}
    x \ar{r}\ar{d} & y\ar{d} \\
    0 \ar{r} & z
\end{tikzcd}
\]
in $\D$ where $0$ is a zero object (and no exactness constraints). The
composite equivalence 
\[
    \X \overset{\simeq}{\longleftarrow} \Gamma(\G) \lra \Gamma^*(\G)
\]
associates to a quiver representation $a_0 \to a_1 \to a_2$ the $3$-term
complex
\[
\begin{tikzcd}
    a_2[-2] \ar{r}\ar{d} & \fib(a_1 \to a_2)[-1]\ar{d} \\
    0 \ar{r} & \fib(a_0 \to a_1)
\end{tikzcd}
\]
which is part of the diagram \eqref{eq:pullbackcone}. This statement, and its
generalization to the $A_n$-quiver, recovers the classical result that derived
representations of this quiver are equivalent to derived representations of the
$A_n$-quiver with zero relations for consecutive arrows. We refer to
\cite{rio:sod} for detailed proofs of these statements where they are used to
establish a general theory of stable semiorthogonal decompositions of finite length.

\subsection{Stratified stable \texorpdfstring{$\infty$}{infty}-categories}
\label{sec:stratified_stable_infty_categories}

Throughout, we fix a stable $\infty$-category $\X$ and a finite poset $P$. We
give a definition of a $P$-stratification of $\X$ that immediately puts the
focus on the strata, which are of primary concern since they appear in the
gluing diagram.

\begin{defi}
    Let $\X$ be a stable $\infty$-category and $P$ a finite poset. 
    A {\em right-admissible $P$-stratification} of $\X$ consists of a collection of full stable subcategories $\X_p \subset \X$ for $p \in P$, satisfying the following conditions:
    \begin{enumerate}
        \item The subcategories $\X_p \subset \X$ are right-admissible.
        \item Denote for any subset $Q \subset P$ by $\X_{Q} \subset \X$ the
            smallest full stable subcateory containing all subcategories
            $\X_q$, $q \in Q$. Then, for every downward-closed subset $A
            \subset P$ (i.e. a closed subset in the Alexandrov topology), the
            inclusion $\X_A \subset \X$ is admissable.
        \item We have $\X_P = \X$.
        \item\label{strat:4} If $p \nleq q$ in $P$, then $\Map_{\X}(x_p,x_q) \simeq \{0\}$ for $x_p \in \X_p$ and $x_q \in \X_q$.
    \end{enumerate}
    A {\em left-admissible $P$-stratification} of $\X$ consists of a collection
    of full stable subcategories $\X_p \subset \X$ for $p \in P$, such that
    $\{\X_p^{\op}\}$ defines a right-admissible $P$-stratification of $\X^{\op}$. 
\end{defi}

\begin{exa}
    \label{exa:strat_space}
    Let $X$ be a locally compact Hausdorff space and $\D$ a presentable stable
    $\infty$-category. Let $P$ be a poset and $X \to P$ be a stratified space.
    For $p \in P$, denote by $f_p: X_p \hra X$ the embedding of the $p$-stratum
    of $X$. Then the collection $\{(f_p)_*\Sh(X_p;\D)\}$ (resp.
    $\{(f_p)_!\Sh(X_p;\D)\}$) forms a right-admissible (resp. left-admissible)
    $P$-stratification of $\Sh(X;\D)$.
\end{exa}

\begin{prop}
    \label{prop:recollement}
    Let $\{\X_p\}$ be a right-admissible (resp. left-admissible) $P$-stratification of $\X$ and let $A \subset
    P$ be downward-closed. Then there is a recollement
    \[
        \begin{tikzcd}
            \X_A \ar{r} &\ar[shift left=.5em,swap]{l} \ar[shift right=.5em, swap]{l} \X \ar{r} & \ar[shift left=.5em]{l}{\beta} \ar[shift right=.5em, swap]{l}{\alpha} \X_{P \setminus A}.
        \end{tikzcd}
    \]
    where $\alpha$ (resp. $\beta$) indicates the inclusion functor $\X_{P \setminus A} \subset
    \X$.   
\end{prop}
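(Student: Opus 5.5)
The plan is to treat the right-admissible case directly; the left-admissible case then follows by passing to opposite categories, since $\{\X_p^{\op}\}$ is then a right-admissible stratification of $\X^{\op}$ and recollements dualize. The strategy is to show that $\X_{P \setminus A}$ coincides with the left orthogonal ${}^{\perp}\X_A = \{x \in \X \mid \Map_{\X}(x,a) \simeq 0 \text{ for all } a \in \X_A\}$. Once this is known, the recollement follows from the standard fact (cf.\ \S\ref{sec:recollements}) that an admissible stable subcategory $\X_A \subset \X$ sits in a recollement with its orthogonals. Here $\X_A \subset \X$ is admissible by condition (2), because $A$ is downward-closed.

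\emph{Step 1: orthogonality.} Let $a \in A$ and $b \in P \setminus A$. If $b \le a$, then $b \in A$ since $A$ is downward-closed, which is impossible. Hence $b \nleq a$, and condition \ref{strat:4} gives $\Map_{\X}(x_b, x_a) \simeq 0$ for $x_b \in \X_b$ and $x_a \in \X_a$. For fixed $x_a$, the objects $y$ with $\Map(y, x_a) \simeq 0$ form a full stable subcategory; likewise for fixed $y$ in the target variable. Applying this twice, stable generation yields $\Map(y, z) \simeq 0$ for all $y \in \X_{P \setminus A}$ and $z \in \X_A$. In other words, $\X_{P\setminus A} \subset {}^{\perp}\X_A$.

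\emph{Step 2: decomposition triangles.} Consider the full subcategory $\mathcal{C} \subset \X$ of objects $x$ admitting a fiber sequence $u \to x \to z$ with $u \in \X_{P\setminus A}$ and $z \in \X_A$. By Step 1, such a sequence is unique up to contractible choice and functorial in $x$: the map $x \to z$ is initial among maps from $x$ to $\X_A$. From this, together with the $3\times 3$ lemma for cofibers, I would show that $\mathcal{C}$ is closed under shifts and cofibers, hence is a stable subcategory. It visibly contains $\X_A$ and $\X_{P\setminus A}$, so condition (3), $\X_P = \X$, gives $\mathcal{C} = \X$. This is the step I expect to need the most care. The cleanest route is probably to observe that on $\mathcal{C}$ the assignment $x \mapsto z$ is a partially defined left adjoint to $\X_A \subset \X$, whose domain of definition is automatically a stable subcategory since left adjoints of exact inclusions are exact.

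\emph{Step 3: equality and conclusion.} Let $x \in {}^{\perp}\X_A$ and take its decomposition $u \to x \to z$. The map $x \to z$ is zero, so $u \simeq x \oplus z[-1]$. Then $z[-1]$ is a retract of $u \in {}^{\perp}\X_A$, which forces $\Map(z[-1], z[-1]) \simeq 0$, so $z \simeq 0$ and $x \simeq u \in \X_{P\setminus A}$. Hence $\X_{P\setminus A} = {}^{\perp}\X_A$.

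The functor $x \mapsto u$ is right adjoint to the inclusion $\alpha$. Combined with the admissibility of $\X_A$, this is precisely the data of a recollement in which the quotient $\X \to \X_{P\setminus A}$ has the left adjoint $\alpha$. The remaining adjoint $\beta$ identifies $\X_{P\setminus A}$ with the right orthogonal $\X_A^{\perp}$ via the right adjoint of $\X_A \subset \X$, as in the general theory of recollements.
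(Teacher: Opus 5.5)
Your proposal is correct and follows essentially the paper's route: condition (4) gives $\X_{P\setminus A}\subset {}^{\perp}\X_A$, generation gives every object a decomposition $u\to x\to z$ with $u\in\X_{P\setminus A}$ and $z\in\X_A$, and for $x\in{}^{\perp}\X_A$ the $\X_A$-part vanishes, so $x\in\X_{P\setminus A}$. The only difference is in Step 2: the paper takes the left adjoint $i^*$ already supplied by condition (2) and checks on generators that the exact functor $\fib(\id\to i_*i^*)$ lands in $\X_{P\setminus A}$, which makes your partially defined adjoint unnecessary; note also that its domain of definition is a priori larger than your $\mathcal{C}$, so it is your $3\times 3$ argument that actually proves $\mathcal{C}$ is stable.
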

\begin{proof}
    We verify the statement for right-admissible stratifications. Since $\X_A
    \subset \X$ is admissible, there is a recollement
    \[
        \begin{tikzcd}
            \X_A \ar{r} &\ar[shift left=.5em,swap]{l} \ar[shift right=.5em, swap]{l} \X \ar{r} & \ar[shift left=.5em]{l} \ar[shift right=.5em, swap]{l}{\alpha} \leftindex^{\perp} {\X}_A.
        \end{tikzcd}
    \]
    where $\alpha$ is the inclusion functor (see Appendix \ref{sec:recollements}). 
    By \ref{strat:4}, we have $\X_{P \setminus A} \subset \leftindex^{\perp}
    {\X}_A$, we need to show $\leftindex^{\perp} {\X}_A \subset \X_{P \setminus A}$. 
    Consider the functor 
    \[
        p = \fib(\id \to i_*i^*): \X \to \X. 
    \]
    We first claim that it takes values in $\X_{P \setminus A}$: since $\X_P =
    \X$, it suffices to verify this for objects in $\X_A$ and objects in $\X_b$
    for $b \notin A$. For $x \in \X_A$ we have $p(x) = 0$. For $y \in \X_b$
    with $b \notin A$, we have, by \ref{strat:4}, for every $x \in \X_A$, 
    \[
        * \simeq \Map_{\X}(y,x) \simeq \Map_{\X_A}(i^*y, x)
    \]
    so that $i^*y \simeq 0$ and hence $p(y) \simeq y$. This shows $p(\X)
    \subset \X_{P \setminus A}$. 
    Now by the last part of the argument, we actually also have $p(z) \simeq z$
    for every $z \in \leftindex^{\perp} {\X}_A$, so that $\leftindex^{\perp}
    {\X}_A \subset \X_{P \setminus A}$. 
\end{proof}

\begin{rem}
    \label{rem:leftvsright}
    The concepts of left- and right-admissible stratifications are in fact
    different perspectives on the same structure: Given a right-admissible
    $P$-stratification $\{\X_p\}$ of $\X$ consider, for $p \in P$, the
    recollement
    \[
        \begin{tikzcd}
            \X_{\not\ge p} \ar{r} &\ar[shift left=.5em,swap]{l} \ar[shift right=.5em, swap]{l} \X \ar{r} & \ar[shift left=.5em]{l}{\beta} \ar[shift right=.5em, swap]{l}{\alpha} \X_{\ge p}.
        \end{tikzcd}
    \]
    and define $\X_p' := \beta(\X_p)$. Then $\{\X_p'\}$ forms a
    left-admissible $P$-stratification. In other words, for a closed subset $A
    \subset P$, the subcategories $\X_A \subset \X$ agree for both concepts but
    the ``open complements'' are taken to be either the left- or the
    right-orthogonal complements of $\X_A$ in $\X$. 
\end{rem}

\begin{defi}
    \label{defi:grothendieck} Let $\X$ be a stable $\infty$-category equipped
    with a right-admissible $P$-stratification $\{\X_p\}$. We introduce the
    Grothendieck construction $\pi: \G(\X) \to P^{\rhd}$ by letting
    \[
        \G(\X) \subset \X \times P^{\rhd}
    \]
    be the full (generally {\em not} stable) subcategory spanned by the
    vertices of $\X_l \times \{l\}$ along with $\X \times \{*\}$. The map $\pi$
    is a locally Cartesian fibration.
\end{defi}

\begin{thm}\label{thm:lax_lim}
    Let $\X$ be a stable $\infty$-category equipped with a right-admissible
    $P$-stratification $\{\X_p\}$. 
    For a finite poset $P$ the diagrams in $\Gamma(\G(\X)) =
    \Fun_{/P^{\rhd}}^{\natural}(\sd(P)^{\rhd},\G(\X))$ are precisely the
    colimit diagrams in $\X$ which restrict to diagrams in
    $\Fun_{/P}^{\natural}(\sd(P),\G(\X))$. In particular, the restriction 
    \[
        \Gamma(\G(\X)) \to \Gamma^*(\G(\X))
    \]
    is a trivial Kan fibration exhibitting $\X$ as a lax limit of $\G(\X)|P$.
\end{thm}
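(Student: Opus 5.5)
We argue by induction on the size of $P$, splitting off a minimal element and reducing to the recollement (lax) descent of Proposition \ref{prop:noncomconst} and Remark \ref{rem:recoll}. First we make the objects explicit. A section $\sd(P)^{\rhd}\to\G(\X)$ over $P^{\rhd}$ assigns to each chain $\sigma$ in $P$ an object $x_\sigma\in\X_{\min\sigma}$ and to the cone point an object $x\in\X$. The marking ($\natural$) requires that edges $\sigma\subset\tau$ with $\min\sigma=\min\tau$ go to equivalences, while edges $\sigma\to *$ and edges $\sigma\subset\tau$ with $\min\tau<\min\sigma$ go to locally $\pi$-Cartesian edges. Here $(\X_p,\X)$-Cartesian means that the edge exhibits $x_{\{p\}}\simeq i_p^! x$, where $i_p^!$ is the right adjoint of the inclusion $\X_p\subset\X$, and similarly between strata.

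The key claim is the characterization: a cone $\kappa\in\Fun^\natural_{/P}(\sd(P),\G(\X))^{\rhd}$ has all tip edges Cartesian if and only if $\kappa$ is a colimit diagram in $\X$. We prove both directions by induction on $|P|$.

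Choose a maximal element $m\in P$. Then $A=P\setminus\{m\}$ is downward closed, and Proposition \ref{prop:recollement} gives a recollement of $\X$ into $\X_A$ and $\X_m=\X_{\{m\}}$. Chains in $P$ split into chains in $A$, the singleton $\{m\}$, and chains $\sigma\cup\{m\}$ with $\sigma$ a nonempty chain in $A_{<m}$. We follow the proof of Proposition \ref{prop:noncomconst}: apply the exact functors $j_!j^*$, which here means projecting to the $\X_A$-part via $i_*i^*$, and the complementary functor $p=\fib(\id\to i_*i^*)$. A diagram is a colimit if and only if both of these images are colimits.

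For the $p$-part: every $x_\sigma$ with $m\notin\sigma$ lies in $\X_A$ and is killed by $p$. The remaining subdiagram is indexed by chains containing $m$; this poset has initial object $\{m\}$ up to the equivalence edges, so it is weakly contractible. Hence its colimit is $x_{\{m\}}$, and the colimit condition becomes exactly $x_{\{m\}}\simeq p(x)\simeq i_m^!x$ (using \ref{strat:4}). For the $i_*i^*$-part: all objects $x_{\sigma\cup\{m\}}$ for $\sigma\neq\emptyset$ are Cartesian lifts from $\X_m$ into $\X_{\min\sigma}\subset\X_A$. One then uses a cofinality argument: the inclusion of $\sd(A)$, together with these added vertices, into $\sd(P)\setminus\{\{m\}\}$ is controlled by the poset map $\sigma\cup\{m\}\mapsto\sigma$, with left Kan extension along it. This reduces the colimit to a colimit over $\sd(A)$ of the diagram with modified entries. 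Applying the induction hypothesis to the stratification $\{\X_a\}_{a\in A}$ of $\X_A$ then identifies the colimit condition with Cartesianness of the edges to $i^*x$, and hence to $x$.

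This bookkeeping of the combined $\sd(A)$-diagram is the main obstacle. The issue is that the $\X_A$-component of $x_{\sigma\cup\{m\}}$ and of $x_\sigma$ must assemble, via a fiber/cofiber, into the Cartesian lift of $i^*x$. The first thing we would try is to mirror the totcofib argument from Proposition \ref{prop:noncomconst}, viewing $\kappa$ as the cofiber of a map of diagrams.

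Once the characterization holds, the trivial Kan fibration statement follows formally. Restriction to the tip $\Gamma(\G(\X))\to\X$ is a trivial fibration because Cartesian lifts exist uniquely (this is Proposition \ref{prop:laxlimcone} in general). The restriction $\Gamma\to\Gamma^*$ is the inclusion of colimit cones over sections, i.e. $\Fun^{\natural}(\sd(P),\G)$ equipped with a colimit extension. Such extensions always exist, since the diagram is finite and $\X$ is stable, and the space of colimit extensions is contractible by \cite[HTT 4.3.2.15]{}-type Kan extension arguments. This makes the restriction a trivial Kan fibration and exhibits $\X$ as the lax limit.
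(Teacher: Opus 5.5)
\paragraph{The marking.} Your description of the marking is wrong, and with it the lax-limit statement would fail. In $\sd(P)^{\rhd}$ the only marked edges are $\tau\to\tau\setminus\{\min\tau\}$ and the singleton-to-tip edges $\{q\}\to *$. Edges $\tau\to\sigma$ with $\min\tau=\min\sigma$ are unmarked, and they carry exactly the gluing morphisms of the lax limit (the map $f\colon F_{0<1}(x_1)\to x_0$ in Remark~\ref{rem:informal}). If they were forced to be equivalences, a section over $\sd([1])$ would be determined by $x_{\{1\}}$ alone. Then $\Gamma^*$ would reduce to $\X_1$ instead of recovering the recollement description of $\X$.

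The same misreading enters your $p$-part. It is not the chains containing $m$ that survive $p=i_m^!$, but only $\{m\}$ itself, since $x_{\sigma\cup\{m\}}\in\X_{\min\sigma}\subset\X_A$ for every nonempty $\sigma$. Your conclusion there is still correct, for a different reason: $\{m\}$ has no non-identity outgoing edges in $\sd(P)$, so $p\kappa|_{\sd(P)}$ is left Kan extended from the vertex $\{m\}$ and has colimit $x_{\{m\}}$.

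\paragraph{The gap: the $i_*i^*$-part.} This step carries the whole content of the induction, you leave it open, and the mechanism you indicate cannot work. Left Kan extension along $r\colon\sigma\cup\{m\}\mapsto\sigma$ returns $x_\sigma$ unchanged, because $\sigma$ is terminal in the comma category of $r$ over $\sigma$. It only expresses that $\sd(A)\subset\sd(P)\setminus\{\{m\}\}$ is cofinal, and it does not see the zero that $i_*i^*$ places at $\{m\}$. Since $\{m\}$ is a sink, what one actually gets is
\[
\operatorname{colim}_{\sd(P)} i_*i^*\kappa\simeq\cof(x_C\to x_M),\qquad x_C=\operatorname{colim}_{\sigma\in\sd(A_{<m})}x_{\sigma\cup\{m\}},\quad x_M=\operatorname{colim}_{\sigma\in\sd(A)}x_\sigma .
\]
Your final step ``and hence to $x$'' also fails for $a<m$. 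Write $i_a^!$ for the right adjoint of $\X_a\subset\X$. Then $i_a^!(i_*i^*x)\simeq\cof(i_a^!i_m^!x\to i_a^!x)$, and $\Map(\X_a,\X_m)$ need not vanish. So $(\X_a,\X_A)$-Cartesianness of $x_{\{a\}}\to i_*i^*x$ is not the same as $(\X_a,\X)$-Cartesianness of $x_{\{a\}}\to x$.

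\paragraph{How to complete your route.} The following steps would close the gap:
\begin{enumerate}
\item Replace $x|_{\sd(A)}$ by $x'_\sigma=\cof(x_{\sigma\cup\{m\}}\to x_\sigma)$ for $\sigma\subset A_{<m}$, and $x'_\sigma=x_\sigma$ otherwise.
\item Check that $x'$ is again a section of $\G(\X_A)$ over $\sd(A)$. This uses exactness of the gluing functors, and the fact that for $|\sigma|\geq 2$ one has $\sigma\subset A_{<m}$ iff $\sigma\setminus\{\min\sigma\}\subset A_{<m}$.
\item Check that $x'$ has colimit $\cof(x_C\to x_M)$, with a cone to $i_*i^*x$ induced by $\kappa$.
\item Apply the induction hypothesis to $x'$.
\item Use $x_{\{a,m\}}\simeq i_a^!x_{\{m\}}\simeq i_a^!i_m^!x$ (from the $p$-part) to see that $x'_{\{a\}}\to i_a^!(i_*i^*x)$ is an equivalence iff $x_{\{a\}}\to i_a^!x$ is.
\end{enumerate}
Repaired this way, your route needs the induction hypothesis only for $A$.

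\paragraph{Comparison with the paper.} The paper avoids this bookkeeping. Cofinality of $M\leftarrow C\to\{m\}$ reduces the colimit to the single square $x_M\leftarrow x_C\to x_{\{m\}}$. Induction on $P_{<m}$ shows that $x_C\to x_{\{m\}}$ is $(\X_{P'},\X)$-Cartesian. Corollary~\ref{cor:recoll_case}, together with induction on $P'$ for $x_M$, then finishes the proof.
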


\begin{rem}
    \label{rem:informal_lax_lim}
    A bit more informally speaking, Theorem \ref{thm:lax_lim}
    is saying that, given a right-admissible $P$-stratification of $\X$ with
    strata $\X_p$, $p \in P$, we may recover $\X$ as the lax limit of the lax
    gluing diagram
    \[
        \gamma: P^{op} \to \Cat_{\infty},\; p \mapsto \X_p
    \] 
    obtained by straightening the locally Cartesian fibration $\G(\X)|P \to P$. 
\end{rem}

In preparation of the proof, we first recall the situation for a recollement
\[
\begin{tikzcd}
	{\A} & \X & {\B}
	\arrow[from=1-1, to=1-2]
	\arrow[shift left=3, from=1-2, to=1-1]
	\arrow[shift right=3, from=1-2, to=1-1,]
	\arrow[from=1-2, to=1-3]
	\arrow[shift left=3, from=1-3, to=1-2, "i_*"]
	\arrow[shift right=3, from=1-3, to=1-2, "i_!"']
\end{tikzcd}
\]
of stable $\infty$-categories, which correspondends to the case $P=\Delta^1$.
Recall that an edge $i_!b \to x$ in $\X$ is $(i_! \B,\X)$-Cartesian if and only if the fiber $\fib(i_!b \to x)$ lies in $\A$.
Similarly, a morphism $a \to x$ in $\X$ is $(\A,\X)$-Cartesian if and only if $\fib(i_* a \to x) \in i_* \B$.

\begin{lem}\label{lem:cart_bicat}
    Let
    \[\begin{tikzcd}
	{a'} & {a} \\
        {i_!b} & x \\
	{}
	\arrow[, from=1-1, to=1-2]
	\arrow["h"', from=1-1, to=2-1]
	\arrow["g", from=1-2, to=2-2]
	\arrow["f"', from=2-1, to=2-2]
    \end{tikzcd}\]
    be a biCartesian square in $\X$ with $a,a' \in \A$ and $b \in B$. Then
    \begin{itemize}
        \item $f$ is $(i_! \B,\X)$-Cartesian,
        \item $g$ is $(\A,\X)$-Cartesian if and only if $h$ is $(\A,i_! \B)$-Cartesian.
    \end{itemize}
\end{lem}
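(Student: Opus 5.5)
We argue directly from the two characterisations of Cartesian edges recalled just before the lemma, together with the fact that in a stable $\infty$-category a biCartesian square has equivalent horizontal fibers (and equivalent vertical fibers). Write $\A$ for the closed part with inclusion $i_*$ on $\B$ as in the displayed recollement, so that $\A = \leftindex^{\perp}(i_*\B)$ and $i_!\B = \A^{\perp}$ in the appropriate sense; in particular $\A$ is a full stable subcategory of $\X$, hence closed under fibers.

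For the first claim, consider the horizontal fibers of the square. Because the square is biCartesian, the induced map $\fib(a' \to a) \to \fib(h\text{-}\ldots)$ — more precisely the comparison $\fib(a'\to a) \to \fib(i_!b \xrightarrow{f} x)$ — is an equivalence. The source $\fib(a' \to a)$ lies in $\A$, since $a,a' \in \A$ and $\A$ is stable. Hence $\fib(f) \in \A$, and by the recalled criterion this says exactly that $f\colon i_!b \to x$ is $(i_!\B,\X)$-Cartesian.

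For the second claim, pass to the vertical fibers, so that $\fib(h) \simeq \fib(g)$. By the recalled criterion, $g\colon a \to x$ is $(\A,\X)$-Cartesian if and only if the relevant fiber lies in $i_*\B$. We need the analogous description of $(\A,i_!\B)$-Cartesian edges $h\colon a' \to i_!b$, understood in the subcategory spanned by $\A$ and $i_!\B$ inside $\X$. The plan here is to unwind Definition \ref{defi:universal}. The condition should be that $a' \to i_!b$ exhibits $a'$ as the $\A$-coreflection of $i_!b$; by the recollement triangle this holds if and only if the fiber (equivalently the cone, up to shift) of $h$ lies in the orthogonal $i_*\B$. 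Both conditions then become the statement that the same object, $\fib(h)\simeq\fib(g)$, lies in $i_*\B$, which proves the equivalence.

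The main obstacle is bookkeeping. One must pin down the precise criterion for $(\A, i_!\B)$-Cartesian edges, since the source lies in $\A$ but the target does not lie in $\X$ as a whole. One must also check that it matches the $(\A,\X)$-criterion via the same orthogonal, $i_*\B$ as opposed to $i_!\B$, and that the fiber identification respects the direction of the maps. If the fiber convention turns out to be awkward, the fallback is to phrase both Cartesian conditions as the invertibility of the induced map on $i^*$, the right adjoint to $\A\subset\X$. Since $i^*$ is exact, it preserves the biCartesian square, and the equivalence $i^*(\fib h)\simeq i^*(\fib g)$ then gives the claim.
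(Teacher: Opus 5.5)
Your proposal is correct and is essentially the paper's argument: the first claim comes from identifying the horizontal (co)fibers, and the second from $\fib(h)\simeq\fib(g)$ (the paper extends to a row of biCartesian squares and writes $\cof(g)\simeq\fib(h)[1]$), with both Cartesian conditions amounting to this object lying in $i_*\B$. The bookkeeping obstacle you flag does not actually arise: Definition \ref{defi:universal} involves only $\A$ and the target object, so an edge $a'\to i_!b$ is $(\A,i_!\B)$-Cartesian exactly when it is $(\A,\X)$-Cartesian, which gives the criterion you need directly (the paper instead cites the $M$-square criterion \cite[2.4.2]{DKS:spherical}).
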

\begin{proof}
    Since $\cof(i_! b \to x) \simeq \cof(a' \to a) \in \A$, the first part follows. For the second claim, we extend the diagram to a diagram of biCartesian squares
    \[
    \begin{tikzcd}
	y & {a'} & a & 0 \\
	0 & {i_!b} & x & z \\
	& {}
	\arrow[from=1-1, to=1-2]
	\arrow[from=1-1, to=2-1]
	\arrow[from=1-2, to=1-3]
	\arrow["h"', from=1-2, to=2-2]
	\arrow[from=1-3, to=1-4]
	\arrow["g", from=1-3, to=2-3]
	\arrow[from=1-4, to=2-4]
	\arrow[from=2-1, to=2-2]
	\arrow["f"', from=2-2, to=2-3]
	\arrow[from=2-3, to=2-4]
    \end{tikzcd}
    \]
    Note that $h$ is $(\A,i_! \B)$-Cartesian if and only if $y \in i_* \B$, i.e. the left square is an $M$-square by \cite[2.4.2]{DKS:spherical}.
    Further, $z \cong y[1]$ and $g$ is $(\A,\X)$-Cartesian if and only if $z \in i_* \B$ which concludes the argument.
\end{proof}

\begin{cor}
    \label{cor:recoll_case}
    Let
    \[\begin{tikzcd}
	{a'} & {a} \\
        {i_!b} & x \\
	{}
	\arrow[, from=1-1, to=1-2]
	\arrow["h"', from=1-1, to=2-1]
	\arrow["g", from=1-2, to=2-2]
	\arrow["f"', from=2-1, to=2-2]
    \end{tikzcd}\]
    be a square in $\X$ with $a,a' \in \A, b \in \B$ and $h$ $(\A,i_! \B)$-Cartesian. Then the following are equivalent:
    \begin{enumerate}
        \item The square is biCartesian.
        \item The edges $f$ and $g$ are $(i_! \B,\X)$-Cartesian and $(\A,\X)$-Cartesian respectively.
    \end{enumerate}
\end{cor}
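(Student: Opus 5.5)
The plan is to derive both implications from Lemma \ref{lem:cart_bicat} together with the characterisations of Cartesian edges recalled just before it. Recall that $f\colon i_!b \to x$ is $(i_!\B,\X)$-Cartesian if and only if its fiber lies in $\A$. Likewise, $g\colon a \to x$ is $(\A,\X)$-Cartesian if and only if its fiber (equivalently, its cofiber up to shift) lies in $i_*\B$, the right orthogonal of $\A$.

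For (1)$\Rightarrow$(2), assume the square is biCartesian. Lemma \ref{lem:cart_bicat} immediately says that $f$ is $(i_!\B,\X)$-Cartesian. Its second bullet says that $g$ is $(\A,\X)$-Cartesian if and only if $h$ is $(\A,i_!\B)$-Cartesian. The latter holds by hypothesis, so (2) follows.

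For (2)$\Rightarrow$(1), form the pushout $x' := a \sqcup_{a'} i_!b$. The square commutes, so we obtain a comparison map $\phi\colon x' \to x$ compatible with the maps from $a$ and from $i_!b$. Applying Lemma \ref{lem:cart_bicat} to the biCartesian pushout square (with the same $h$), we get that $f'\colon i_!b \to x'$ is $(i_!\B,\X)$-Cartesian and $g'\colon a \to x'$ is $(\A,\X)$-Cartesian. It remains to show that $\phi$ is an equivalence.

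The key step uses the recollement triangle $i_!i^*y \to y \to j_*j^*y$, where $j^*$ denotes the left adjoint of the inclusion $\A \subset \X$. Since $f$ and $f'$ are both $(i_!\B,\X)$-Cartesian, both exhibit $i_!b$ as the coreflection of the target into $i_!\B$. Hence $i^*\phi$ is an equivalence: it is compatible with $f$ and $f'$, which both become equivalences after applying $i^*$. Similarly, $g$ and $g'$ are both $(\A,\X)$-Cartesian, so both exhibit $a$ as the $\A$-component of the target. Concretely, their fibers lie in $i_*\B$, which is killed by $j^*$, so $j^*\phi$ is an equivalence. Since $j^*$ and $i^*$ are jointly conservative on a recollement, $\phi$ is an equivalence. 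Therefore the original square is a pushout, and hence biCartesian by stability.

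The main obstacle is keeping the variances straight. $f$ is Cartesian for the \emph{left} embedding $i_!$, whereas $g$ is Cartesian for $\A$, whose orthogonal is $i_*\B$. One must therefore check that the two functors detecting the components of $\phi$ really form a jointly conservative pair, namely $i^*$ (the right adjoint of $i_!$) and the left adjoint of $\A \subset \X$.

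If this bookkeeping becomes awkward, there is a fallback: compute $\cof(\phi)$ directly. The cofiber sequences of $f$ and $f'$ show that $\cof(\phi)$ is the cofiber of a map between objects of $\A$, so it lies in $\A$. The sequences for $g$ and $g'$ show that it lies in $i_*\B$. Since $\A \cap i_*\B = 0$, we get $\cof(\phi) = 0$.
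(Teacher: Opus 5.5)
\paragraph{Main route.} Your direction (1)$\Rightarrow$(2) is exactly the paper's. In (2)$\Rightarrow$(1), however, the key step of your main route is false. The left adjoint of the inclusion $\A\subset\X$ (your $j^*$) annihilates $i_!\B$, the \emph{left} orthogonal of $\A$. It does not annihilate $i_*\B=\A^{\perp}$: its restriction to $i_*\B$ is precisely the gluing functor of the recollement, which is nonzero in general. For instance, take $\X=\Fun(\Delta^1,\D)$ with $\A$ the objects $x_0\to 0$. Then $i_*\B$ consists of the objects $d\xrightarrow{\id}d$, and the left adjoint is evaluation at $0$, which sends $d\xrightarrow{\id}d$ to $d$. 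So $\fib(g),\fib(g')\in i_*\B$ tells you nothing about $j^*\phi$.

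The repair is to use the \emph{right} adjoint $\rho$ of $\A\subset\X$, which does kill $i_*\B$. Then $\rho g\colon a\to\rho x$ and $\rho g'\colon a\to\rho x'$ are equivalences, so $\rho\phi$ is one. Moreover $i^*$ and $\rho$ are jointly conservative by the triangle $\rho(y)\to y\to i_*i^*y$, not the triangle $i_!i^*y\to y\to j_*j^*y$ you invoked. You flagged exactly this variance issue as the main obstacle, but picked the wrong adjoint.

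\paragraph{Fallback.} Your fallback, by contrast, is a complete and correct proof on its own:
\begin{itemize}
\item $\cof(f')\simeq\cof(a'\to a)$ and $\cof(f)$ lie in $\A$;
\item $\cof(g')\simeq\cof(h)$ and $\cof(g)$ lie in $i_*\B$;
\item hence the cofiber sequences $\cof(f')\to\cof(f)\to\cof(\phi)$ and $\cof(g')\to\cof(g)\to\cof(\phi)$ place $\cof(\phi)$ in $\A\cap i_*\B=0$.
\end{itemize}

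\paragraph{Comparison with the paper.} This is a genuinely different route. The paper forms the \emph{pullback} $y=a\times_x i_!b$ rather than the pushout. It uses $\cof(f)\in\A$ to get $y\simeq\fib(a\to\cof(f))\in\A$. It then applies the second bullet of Lemma \ref{lem:cart_bicat} to the biCartesian square through $y$, showing that $y\to i_!b$ is $(\A,i_!\B)$-Cartesian. Finally it concludes $a'\simeq y$, since $h$ and $y\to i_!b$ are both final in the same slice.

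Thus the paper compares two candidates for the corner $a'$ via uniqueness of Cartesian arrows. You compare two candidates for the corner $x$ via the orthogonality $\A\cap i_*\B=0$. Your argument treats $f$ and $g$ symmetrically and needs nothing beyond the fiber/cofiber characterisations of Cartesian edges. The paper's stays in the universal-property language that it reuses in the inductive proof of Theorem \ref{thm:lax_lim}.
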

\begin{proof}
    A biCartesian square fulfills the second criterion by \ref{lem:cart_bicat}. Conversely, extend the square to a diagram
\[\begin{tikzcd}
	{a'} \\
	& y & a \\
	& {i_!b} & x & {} \\
	& 0 & z
	\arrow[from=1-1, to=2-2]
	\arrow[from=1-1, to=2-3]
	\arrow["h"', from=1-1, to=3-2]
	\arrow[from=2-2, to=2-3]
	\arrow[from=2-2, to=3-2]
	\arrow["g", from=2-3, to=3-3]
	\arrow["f"', from=3-2, to=3-3]
	\arrow[from=3-2, to=4-2]
	\arrow[from=3-3, to=4-3]
	\arrow[from=4-2, to=4-3]
\end{tikzcd}\]
where both squares are biCartesian. Since $f$ is $(i_! \B,\X)$-Cartesian, we have $z \in \A$ and therefore also $y \simeq \fib(a \to z) \in \A.$
It follows from \ref{lem:cart_bicat} that the edge $y \to i_!b$ is $(\A,i_!\B)$-Cartesian. Since $h$ is $(\A,i_!\B)$-Cartesian as well,
the edge $a' \to y$ is an equivalence, so that the original square is also biCartesian.
\end{proof}

\begin{proof}[Proof of Theorem \ref{thm:lax_lim}]
    We will inductively reduce the statement to the case of a single recollement. Let $p \in P$ be a maximal element. We write $P'=P\backslash \{p\}$.
    We implement a decomposition argument by introducing an intermediate extension: Let $\sd(P)^{\op}_p \supset \sd(P)^{\op}$ denote the
    poset obtained from $\sd(P)^{\op}$ by adjoining:
    \begin{itemize}
        \item A maximal element $M$ such that, for $S \subset P$, $S < M$ if and only if $p \notin S$.
        \item An element $C$ with $C < \{p\}$, $C < M$, such that, for $S \subset P$, $S<C$ if and only if $p \in S$ and
            $S \neq \{p\}$.
    \end{itemize}
    Let $D:(\sd(P)^{\op})^{\triangleright} \to \X$ be a diagram whose restriction lies in $\Fun^{\natural}_{/P}(\sd(P)^{\op},\G(\X))$.
    We will write $x_S$ for the value $D(S)$ at an object $S \in \sd(P)^{\op}$ and $x$ for the value at the tip. Our goal is to show that $D$
    is a colimit diagram if and only if all the edges $x_q \to x$ are $(\X_q,\X)$-Cartesian for $q \in P$.

    Let $D':(\sd(P)^{\op}_p)^{\triangleright} \to \X$ denote a left Kan extension of $D$ along the natural inclusion. Then $D$ is a colimit diagram if and only if $D'$ is a colimit diagram by composability of Kan extensions. Note further that the diagram
    \[\begin{tikzcd}
	M \\
	C & p
	\arrow[from=2-1, to=1-1]
	\arrow[from=2-1, to=2-2]
\end{tikzcd}\]
is cofinal in $\sd(P)^{\op}_p$, so $D'$ is a colimit diagram if and only if the square
\[\begin{tikzcd}
	{x_M} & x \\
	{x_C} & {x_p}
	\arrow[from=1-1, to=1-2]
	\arrow[from=2-1, to=1-1]
	\arrow[from=2-1, to=2-2]
	\arrow[from=2-2, to=1-2]
\end{tikzcd}\]
is biCartesian. Note further that $x_M, x_C \in \X_{P'}$. We now wish to apply Corollary \ref{cor:recoll_case} for the recollement
\[
    \begin{tikzcd}
        {\X_{P'}} & \X & {\X_{p}}
	\arrow[from=1-1, to=1-2]
	\arrow[shift left=3, from=1-2, to=1-1]
	\arrow[shift right=3, from=1-2, to=1-1,]
	\arrow[from=1-2, to=1-3]
	\arrow[shift left=3, from=1-3, to=1-2 ]
	\arrow[shift right=3, from=1-3, to=1-2 ]
\end{tikzcd}
\]
To this end, we first argue that the edge $x_C \to x_p$ is $(\X_{P'},\X)$-Cartesian (and therefore also $(\X_{P'},\X_p)$-Cartesian):
The subposet $(\sd(P)^{\op}_p)_{\leq C}$ of elements $\leq C$ is isomorphic as a marked poset to the poset $(\sd(P_{< p})^{\op})^{\triangleright}$ (by mapping $S \mapsto S \backslash \{p\}$.)
The restriction of $D'$ to this diagram is a colimit diagram, so, by induction hypothesis, all the edges
$x_{qp} \to x_C$ for $q < p$ are $(\X_q,\X)$-Cartesian. It follows that the edge $x_C \to x_p$ is $(\X_{P'},\X)$-Cartesian. Indeed,
let $y \to x_p$ be a $(\X_{P'},\X)$-Cartesian edge, then $y \in \X_{P_{<p}}$ since $x_p \in \X_p$.
Further, the induced edges $x_{pq} \to y$ are again $(\X_{\{q,p\}},\X)$-Cartesian, since the edges $x_{qp} \to x_p$ are.
By inductive hypothesis, $y$ is also a colimit of the restriction of $D$ to $(\sd(P)^{\op})_{<C}$, so the induced edge $x_C \to y$ is an equivalence.

Applying Corollary \ref{cor:recoll_case}, we conclude that $D'$ is a colimit diagram if and only if the edges $x_M \to x$ and $x_p \to x$ are $(\X_{P'},\X)$-Cartesian and $(\X_p,\X)$-Cartesian respectively.
Since $x_M$ is the colimit of the restriction of $D$ to $(\sd(P)^{\op})_{< M} \cong (\sd(P')^{\op})^{\triangleright}$, the inductive hypothesis implies that the edges
$x_q \to x_M$ are $(\X_q,\X)$-Cartesian. As above, this implies that the edge $x_M \to x$ is $(\X_{P'},\X)$-Cartesian if and only if all the edges $x_q \to x$ for $q \neq p$ are $(\X_q,\X)$-Cartesian.

In summary, we have shown that $D$ is a colimit diagram if and only if all the edges $x_q \to x$ for $q \in P$ are $(\X_q,\X)$-Cartesian, as claimed.
\end{proof}

\begin{rem}
    \label{rem:left} Let $\X$ be a stable $\infty$-category equipped with a
    left-admissible $P$-stratification $\{\X_p\}$. Then, applying Theorem
    \ref{thm:lax_lim} to $\{\X_p^{\op}\}$, we obtain that $\X$ is an {\em
    op}lax limit of the oplax gluing diagram 
    \[
        P^{\op} \to \Cat_{\infty},\; p \mapsto \X_p
    \]
\end{rem}

\subsection{Application: Cellular sheaves}%
\label{sub:cellular_sheaves}

Let $X$ be a locally compact Hausdorff space and $\D$ a stable
$\infty$-category. Assume that $X$ comes equipped {\em regular cell
decomposition} by which we mean a stratification $f: X \to Q$ such that, for
every $q \in Q$, the closure $\overline{X_q} \subset X$ in $X$ satisfies:
\begin{enumerate}
    \item \label{it:1} $\overline{X_q} = X_{\le q}$,
    \item \label{it:2} the inclusion $X_q \subset \overline{X_q}$ can be extended to a diagram
        \[
        \begin{tikzcd}
            X_q \ar[d, hookrightarrow] \ar{r}{\cong}& \mathring{B}^n \ar[d, hookrightarrow]\\
            \overline{X_q}  \ar[r, hookrightarrow]&  B^n
        \end{tikzcd}
        \]
        where $B^n$ denotes a closed ball of dimension $n$ and $X_q
        \cong \mathring{B}^n$ is a homeomorphism onto its
        interior.
\end{enumerate} 
In this case, we also refer to the strata $X_q$ as the {\em cells} of the cell
decomposition. Let $\Sh(X,\D)$ be the $\infty$-category of $\D$-valued sheaves on $X$. A sheaf
$F$ in $\Sh(X,\D)$ is called {\em $Q$-constructible}, or {\em cellular}, if its
restriction to each cell $X_q$, $q \in Q$, is constant, i.e. in the image of
the pullback 
\[
    \D \to \Sh(X_q,\D)
\]
along the constant map $X_q \to \pt$. We denote by 
\[
    \Sh_Q(X;\D) \subset \Sh(X;\D)
\]
the full subcategory spanned by the cellular sheaves.
Consider the inclusion $c_q: X_q \to X$, and set further
\[
    \X_q := (c_q)_* \Sh_Q(X_q,\D) \subset \Sh(X,\D),
\]
noting that we have an equivalence $\D \simeq \Sh_Q(X_q,\D)$ by \cite[A.2.1]{lurie:ha}.

\begin{prop}
    \label{prop:strat}
    \begin{enumerate}
        \item\label{strat:1} The family $\{\X_q\}_{q \in Q}$ forms a left-admissible stratification of the stable
    $\infty$-category $\X = \Sh_Q(X,\D)$. 
        \item\label{strat:2} The corresponding lax gluing diagram can be identified with the
            constant $Q$-diagram with value $\D$ so that we obtain an equivalence
            \[
                \Sh_Q(X,\D) \simeq \Fun(Q,\D).
            \]
    \end{enumerate}
\end{prop}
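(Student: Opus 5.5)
We plan to check the axioms of a left-admissible stratification directly. Then we compute the associated gluing functors by a stalk computation on the closed balls $\overline{X_q}$. Finally we invoke Remark \ref{rem:left}, i.e.\ Theorem \ref{thm:lax_lim} applied to $\X^{\op}$.

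For \ref{strat:1}, we first check that $\X_q \subset \Sh_Q(X;\D)$. Let $E \in \D$ and let $\underline{E}$ denote the constant sheaf on $X_q$. Then $(c_q)_*\underline{E}$ vanishes outside $\overline{X_q} = X_{\le q}$. For $x \in X_p$ with $p < q$, its stalk is computed by sections of $\underline{E}$ over $U \cap X_q$, where $U$ runs over small neighbourhoods of $x$. Using the ball chart $\overline{X_q} \hookrightarrow B^n$, these intersections can be chosen contractible (open half-balls), so the stalk is $E$. Moreover, the restriction maps between such neighbourhoods are equivalences. Hence $(c_q)_*\underline{E}$ is constant on every cell, and $c_p^*(c_q)_*\underline{E} \simeq \underline{E}$ for $p \le q$, with zero otherwise.

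The orthogonality axiom, read in $\X^{\op}$, asks that $\Map_\X(x_q,x_p) \simeq 0$ for $x_q \in \X_q$, $x_p \in \X_p$ and $p \nleq q$. This follows from the adjunction
\[
\Map((c_q)_*\underline E,(c_p)_*\underline E') \simeq \Map(c_p^*(c_q)_*\underline E,\underline E'),
\]
together with the vanishing of $c_p^*(c_q)_*$ for $p \nleq q$ established above.

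For a downward-closed $A \subset Q$, the subspace $X_A$ is closed. We identify $\X_A$ with $i_*\Sh_Q(X_A;\D)$ via the usual recollement triangles $j_!j^* \to \id \to i_*i^*$, using induction on the number of cells. Its adjoints are $i^*$ and $i^!$. Here one must check that $i^!$ preserves cellularity. We plan to do this by reducing, via the same triangles, to the sheaves $(c_q)_*\underline E$. Generation $\X_Q = \X$ follows from the same induction: filter a cellular sheaf by closed unions of cells, and note that $(c_q)_!\underline E$ and $(c_q)_*\underline E$ differ by an object supported on $X_{<q}$.

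For \ref{strat:2}, we identify the locally (co)Cartesian edges of $\G(\X^{\op})$. For $p<q$ and $x_q = (c_q)_*\underline E$, the universal edge relating $x_q$ to $\X_p$ is the unit $x_q \to (c_p)_*c_p^*x_q$. By the stalk computation this equals $(c_p)_*\underline E$. So, under $\X_p \simeq \D \simeq \X_q$, the gluing functor $F_{p<q}$ is the identity of $\D$. The comparison $2$-cells $F_{p<q}F_{q<r} \Rightarrow F_{p<r}$ are induced by the restriction maps between contractible half-ball neighbourhoods, hence are equivalences.

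The main obstacle is upgrading this to an equivalence of the whole locally Cartesian fibration $\G(\X^{\op})|_Q$ with the trivial fibration $\D \times Q$, coherently rather than object- and edge-wise. We would first try to write down a functor $\D \times Q \to \G(\X^{\op})$, $(E,q) \mapsto (c_q)_*\underline E$, using the functoriality of pushforward along the inclusions $X_q \subset X_{\le q}$. We would then check that it is fully faithful and essentially surjective fibrewise and preserves locally Cartesian edges, which reduces to the computations above. Once this is done, the oplax limit of a constant diagram over $Q$ (in the variance of Remark \ref{rem:left}) is the functor category. Theorem \ref{thm:lax_lim} then yields $\Sh_Q(X,\D) \simeq \Fun(Q,\D)$.
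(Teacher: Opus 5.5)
Your proposal is correct and follows essentially the same route as the paper. Both show, via the ball chart, that $(c_q)_*$ of a constant sheaf is cellular with $c_p^*(c_q)_*\underline{E}\simeq\underline{E}$ for $p\le q$, identify the gluing functors with $\id_{\D}$, and conclude from Remark~\ref{rem:left}. You go further than the paper in two places, both of which it leaves as ``straightforward'' or omits: you actually check the left-admissibility axioms (orthogonality, admissibility of $\X_A$ via $i^*$ and $i^!$, generation), and you indicate how to make the identification coherent through a fibrewise-equivalent map $\D\times Q\to\G(\X^{\op})$ preserving locally Cartesian edges.
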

\begin{proof}
    We first verify that $\X_q$ is indeed contained in $\X$. To this
    end, let $F \in \X_q$ and note that we may factor the inclusion as $X_q
    \subset \overline{X_q} \subset X$. Since $\overline{X_q}$ is closed, the
    pushforward $(c_q)_*F$ has support on $\overline{X_q} = X_{\le q}$. It now
    follows from \ref{it:2} that $(c_q)_*F$ is constant (since the pushforward
    along $\mathring{B}^n \subset B^n$ preserves constant sheaves). In
    particular, the restriction to each cell $X_p$ with $p \le q$ is constant
    so that $(c_q)_*F \in \X$. 

    The coCartesian gluing functor for $p \le q$ is the composite $(c_p)^*
    (c_q)_*$ which fits into the diagram  
    \[
    \begin{tikzcd}
        \Sh_Q(X_q,\D) \ar{r}{(c_q)_*} & \Sh_Q(X_{\le q},\D) \ar{r}{c_p^*} & \Sh_Q(X_p,\D)\\
        \D \ar{u}{\cong} \ar{r}{\id}  & \D \ar[u] \ar{r}{\id} & \D \ar{u}{\cong} 
    \end{tikzcd}
    \]
    where all vertical equivalences are pullbacks along the constant map to
    $\pt$. It is straightforward to extend this diagram to a coherent
    identification of the lax gluing diagram 
    \[
        Q^{\op} \to \St, q \mapsto \X_q
    \]
    with the constant diagram with value $\D$. In particular, its oplax limit
    identifies with $\Fun(Q,\D)$ yielding \ref{strat:2} by Remark \ref{rem:left}.
\end{proof}

Given $X\to P$ a stratified space. We say $X \to P$ admits a \textit{refined cellular decomposition}, with cell poset $Q$, if the map $X \to P$ factors through $Q$, i.e there exists a dashed arrow making the triangle
\[
\begin{tikzcd}
	X \arrow[r] \arrow[d] & P \\
	Q \arrow[ru, dashed]  &  
\end{tikzcd}
\]
commute.
\begin{prop}\label{prop: localizedexodromy}
	Let $\D$ be a stable and presentable $\infty$-category. Given $X \to P$ a complex conically stratified manifold with complex strata $X_p$ together with a refined cellular decomposition with cell poset over $P$ given by $s:Q \to P$. Then
	\[
	\Sh_P(X) \simeq \Fun^P(Q,\D),
	\]
	where $\Fun^P(Q,\D) \subset \Fun(Q,\D)$ is the full subcategory of functors $F:Q \to \D$ sending $q \leq q'$ to an equivalence in $\D$ if $s(q) = s(q')$.
\end{prop}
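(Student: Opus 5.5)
The plan is to realize $\Sh_P(X)$ as a full subcategory of the category of cellular sheaves $\Sh_Q(X;\D)$, and then to match it with $\Fun^P(Q,\D)$ under the equivalence $\Sh_Q(X,\D)\simeq \Fun(Q,\D)$ of Proposition \ref{prop:strat}. Since $X \to P$ factors through $Q$, every $P$-constructible sheaf is locally constant on each cell $X_q \subset X_{s(q)}$. A cell is contractible, being homeomorphic to $\mathring{B}^n$, so such a sheaf is constant on each cell. Hence $\Sh_P(X) \subset \Sh_Q(X;\D)$ is a full subcategory. It then suffices to show that, under the equivalence of Proposition \ref{prop:strat}, this subcategory corresponds exactly to $\Fun^P(Q,\D)$.

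The first step is to make the equivalence explicit on values. For $F \in \Sh_Q(X;\D)$, the associated functor $Q \to \D$ should send $q$ to the costalk (or stalk, depending on the $!$/$*$ convention) of $F$ along the cell $X_q$. For $q \le q'$, the map $F(q) \to F(q')$ should be the generalization (exit path) map from a point of $X_q$ to a point of the adjacent cell $X_{q'}$. Concretely, I would compute it via a small open star neighbourhood $U_q = X_{\ge q}$ of the cell, using that $\Gamma(U_q;F)$ computes the stalk at points of $X_q$ for cellular $F$. This is the exit-path description; I would extract it from the identification of the gluing functors $(c_p)^*(c_q)_*$ with identities in the proof of Proposition \ref{prop:strat}.

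Next come the two inclusions. If $F$ is $P$-constructible and $s(q)=s(q')=p$ with $q\le q'$, then $X_q \cup X_{q'}$ lies in the stratum $X_p$, on which $F$ is locally constant. A path from $X_q$ into $X_{q'}$ that stays inside $X_p$ therefore induces an equivalence on stalks, so $F(q)\to F(q')$ is an equivalence. Conversely, suppose $F(q\le q')$ is invertible whenever $s(q)=s(q')$. The restriction $F|_{X_p}$ is then a cellular sheaf for the induced cell structure $Q_p=s^{-1}(p)$ on $X_p$, and all of its generalization maps are equivalences. I would argue that such a sheaf is locally constant. Locally around a point of $X_q$, the star $X_{\ge q}\cap X_p$ is an open neighbourhood on which $F$ restricts to a sheaf whose sections over the star agree with each stalk through these equivalences. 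This forces $F$ to be constant on that star, so $F|_{X_p}$ is locally constant, which means $F\in \Sh_P(X)$.

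The main obstacle is this last step: showing that the open stars $X_{\ge q}\cap X_p$ form a basis of contractible (or at least $F$-acyclic) neighbourhoods inside $X_p$. Equivalently, the issue is that the cellular exit-path category localized at $Q_p$ recovers the homotopy type of $X_p$ locally. I would first try to use the regularity of the cell decomposition, namely that closures are closed balls and that $\overline{X_q}=X_{\le q}$, to show that the star of a cell deformation retracts onto the cell. The complex conical structure would then be used only to guarantee that $P$-constructible sheaves are locally constant on the connected strata. If this retraction argument turns out to be delicate, I would fall back on citing exodromy for conically stratified spaces, $\Sh_P(X)\simeq\Fun(\Exit_P(X),\D)$. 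The claim would then reduce to showing that $\Exit_P(X)$ is the localization of $Q$ at the maps $q \le q'$ with $s(q)=s(q')$.
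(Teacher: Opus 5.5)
\paragraph{Verdict.} The skeleton of your argument is right, and your fallback is the paper's route. As written, though, the key step is never supplied, in either route.

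\paragraph{What works.} Your reduction is correct. Both $\Sh_P(X)$ and $\Fun^P(Q,\D)$ are full subcategories of $\Sh_Q(X;\D)\simeq\Fun(Q,\D)$. Moreover, $\W$ consists exactly of the arrows inside the fibres $Q_p=s^{-1}(p)$. So everything comes down to this: for $Q$-constructible $F$, the restriction $F|_{X_p}$ is locally constant if and only if $F|_{Q_p}$ inverts all arrows. The forward implication is fine. (The equivalence of Proposition \ref{prop:strat} is given by stalks, not costalks: the projection functors of the left-admissible stratification are the left adjoints $c_q^*$ of $(c_q)_*$.)

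\paragraph{The gap.} The converse implication is the entire content of the proposition.
\begin{itemize}
\item In your direct route, everything rests on $\Gamma(X_{\ge q}\cap X_p;F|_{X_p})\simeq F(q)$, and this does not follow from Proposition \ref{prop:strat}. $X_p$ is only locally closed, so the closure of a cell inside $X_p$ is a closed ball with the cells of lower strata removed. Hence $(X_p,Q_p)$ is not a regular cell decomposition. Proving the star computation is essentially a local instance of the coarsening statement you are trying to establish.
\item Two smaller points on the direct route. The stars need not form a basis; they only need to cover $X_p$. Also, upgrading the stalkwise equivalence $\underline{\Gamma(V;F)}\to F|_V$ to an equivalence uses hypercompleteness, which is where paracompactness and finite covering dimension enter (\S\ref{sub:comparison}).
\item In your fallback, the assertion that $\Exit_P(X)$ is the localization of $Q$ at $\W$ is just a restatement of the proposition. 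You give no argument or citation for it.
\end{itemize}

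\paragraph{The missing ingredient.} What you need is stability of exit-path categories under coarsening, Proposition \ref{prop:stabilityundercoarsening} (from \cite{haine2026exodromy}):
\[
\Exit_P(X)\simeq\Exit_Q(X)[\W^{-1}].
\]
The paper's proof is your fallback with this input:
\begin{itemize}
\item $\Fun^P(Q,\D)\simeq\Fun(Q[\W^{-1}],\D)$;
\item $\Exit_Q(X)$ is identified with $Q$ for the regular cell decomposition;
\item exodromy $\Sh_P(X)\simeq\Fun(\Exit_P(X),\D)$ holds for sheaves, not only hypersheaves, because $X$ is paracompact of finite covering dimension (\S\ref{sub:comparison}).
\end{itemize}
With that citation, your fallback becomes exactly the paper's argument. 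Your direct route would be a more hands-on alternative, but only once the star computation above is actually proved rather than assumed.
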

\begin{proof}
	Notice that $\Fun^P(Q,\D) \simeq \Fun(Q[\W^{-1}],\D)$ for $\W$ the set of morphisms $q \leq q'$ in $Q$, satisfying $s(q) = s(q')$. Further $X$ is paracompact and has finite covering dimension. Then the statement follows from \S \ref{sub:comparison} and \ref{prop:stabilityundercoarsening}.
\end{proof}
\begin{rem}
	This should be valid in a much broader setting, then just complex manifolds. We stated it however in the context that is relevant for the purpose of this work.
\end{rem}

\subsection{Application: Foliated Verdier duality}%
\label{sec:combinatorial_foliated_verdier_duality}

As alluded to in the introduction, we will be interested in investigating
constructible (and ultimately perverse) sheaves on a complex manifold $X$ equipped with a 
$P$-stratification into complex submanifolds and, in addition, equipped with the
geometric auxiliary datum of a suitable foliation. 
Since, in this work, we are mainly interested in performing explicit
computations, instead of keeping track of the above data geometrically, we will
put ourselves into a combinatorial framework which is applicable in all
examples of interest. 

To this end, we assume that the space $X$ admits the structure of a regular
cell decomposition, with incidence poset $Q$, as discussed in \S
\ref{sub:cellular_sheaves}. We further assume that $Q$ a refinement of the
$P$-stratification of primary interest -- in other words, there is a map of posets
$s: Q \to P$. 
In this situation, by Proposition \ref{prop:strat}, we may first identify
$Q$-constructible sheaves on $X$ with cellular sheaves
\[
    \Sh_Q(X;\D) \simeq \Fun(Q,\D)
\]
and then observe that the category $\Sh_P(X;\D)$ can be described as the full
subcategory of $\Fun(Q,\D)$ consisting of those $Q$-diagrams that send every
arrow $q < q' \in Q$ with $s(q) = s(q')$ to an equivalence in $\D$. Thus, in what
follows, we focus on analyzing the category $\Fun(Q,\D)$, leaving the passage
to the subcategory $\Sh_P(X;\D)$ until the very end of the discussion.

The stratification corresponding to the foliation of $X$ will be kept track of
as a map $Q \to L$ where $L$ is a poset parametrizing the cellular
decomposition of the leaf space of the stratification. We denote the fiber of
this map over a given $l \in L$ by $Q_l$. The $\infty$-category $\Fun(Q_l,\D)$
will be implicitly identified with the full subcategory of $\Fun(Q,\D)$
consisting of diagrams with support on $Q_l$ (via the equivalence induced by
the restriction functor $\Fun(Q,\D) \to \Fun(Q_l,\D)$). 
Finally, we assume given a map of posets $\delta: L \to \NN$ keeping track of the
codimension of a leaf within its cell: if $l$ parametrizes a cell $X_l$ homeomorphic to $\RR^d \times
Z_l$ for a leaf $Z_l$, then we set $\delta(l) = d$. The equivalence 
\begin{equation}
    \label{eq:strat_to_leaf}
    \underline{\Gamma}_{Z_l}: \Sh_P(\RR^d \times Z_l;\D) \overset{\simeq}{\lra} \Sh_P(Z_l;\D)
\end{equation}
from sheaves on a ``leaf-stratum'' to sheaves on a leaf within the stratum can
then be modelled combinatorially by the shift functor 
\begin{equation}
    \label{eq:normalization}
    [-\delta(l)]: \Fun(Q_l,\D) \overset{\simeq}{\lra} \Fun(Q_l,\D).
\end{equation}

We set $\X := \Fun(Q,\D)$ and $\X_l := \Fun(Q_l,\D)$. We will implicitly
identify $\X_l$ with the full subcategory $\X_l \subset \X$ consisting of
$Q$-diagrams supported on $Q_l$, i.e. $F \in \X$ such that, for every $q \in
Q_l$, $F(x) \simeq 0$.  

\begin{prop}
    \label{prop:laxlim}
    The collection $\{\X_l\}_{l \in L}$ defines a right-admissible stratification of the stable
    $\infty$-category $\X$. 
\end{prop}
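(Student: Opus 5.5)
Throughout, $\X_l$ is the full subcategory of $\X = \Fun(Q,\D)$ of diagrams supported on $Q_l$ (i.e. $F(q)\simeq 0$ for $q \notin Q_l$). The map $\pi: Q \to L$ is a map of posets, so each fiber $Q_l$ is convex in $Q$: if $q \le q'' \le q'$ with $q,q' \in Q_l$, then $l \le \pi(q'') \le l$. We verify the four axioms of a right-admissible stratification in turn. The guiding principle is that for a convex (more generally locally closed) subposet $S \subset Q$, the support subcategory $\X_S$ is equivalent to $\Fun(S,\D)$ via restriction, and has both adjoints given by Kan extensions. For $S$ downward closed, extension by zero $\Fun(S,\D) \to \Fun(Q,\D)$ is the left Kan extension along $S \subset Q$. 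This extension has a left adjoint $F \mapsto$ left Kan extension of $F|_S$ back by zero, which is not needed, and a right adjoint, restriction.

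\textbf{Axiom (4): orthogonality.} Let $l \not\le l'$, $F \in \X_l$, $G \in \X_{l'}$. I would compute $\Map(F,G)$ as an end over $Q$, or more simply factor as follows. Let $U = \pi^{-1}(L_{\ge l})$, which is upward closed in $Q$. Since $F$ vanishes off $Q_l \subset U$, $F$ is the left Kan extension of $F|_U$ along $U \subset Q$ (extension by zero from an upward closed set is left Kan extension). Therefore $\Map_\X(F,G) \simeq \Map_{\Fun(U,\D)}(F|_U, G|_U)$. Because $l' \notin L_{\ge l}$, we have $G|_U = 0$, so the mapping space is contractible.

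\textbf{Axiom (2): generation and admissibility of closed unions.} For a downward closed $A \subset L$, set $Q_A = \pi^{-1}(A)$, which is downward closed in $Q$. I claim $\X_A$ equals the full subcategory of diagrams supported on $Q_A$. One inclusion is clear. For the other, argue by induction on the finite set $A$: if $F$ is supported on $Q_A$ and $a \in A$ is maximal, there is a fiber sequence $F' \to F \to F''$. Here $F''$ is the extension by zero of $F|_{Q_a}$; this is a legitimate quotient because $Q_a$ is upward closed in $Q_A$. Then $F'$ is supported on $Q_{A \setminus \{a\}}$. Taking $A = L$ gives axiom (3), $\X_L = \X$. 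For admissibility, identify $\X_A \simeq \Fun(Q_A,\D)$ with inclusion $i$ given by extension by zero along the downward closed $Q_A$, which is the right Kan extension $i_*$. Its left adjoint is restriction. Its right adjoint sends $G$ to the diagram $q \mapsto \fib(G(q) \to \lim_{q' \in Q_{>q}\setminus Q_A} G(q'))$, i.e. the fiber of the unit to the right Kan extension from the complement. The cleanest way to produce this is to note that $\Fun(Q,\D)$ is presentable and $i$ preserves limits and colimits (restriction and zero-extension are computed pointwise), so both adjoints exist by the adjoint functor theorem. Since $Q$ is finite, I would instead give the explicit formulas to avoid presentability assumptions on $\D$, assuming only finite limits exist.

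\textbf{Axiom (1): right-admissibility of each $\X_l$.} This follows from the same argument applied to the convex set $Q_l = Q_{\le l} \cap Q_{\ge l}$. The inclusion $\X_l \to \X$ factors as $\X_l \subset \X_{L_{\le l}} \subset \X$. The first inclusion is zero-extension along an upward closed subset of $Q_{L_{\le l}}$, which is a left Kan extension with right adjoint restriction. The second inclusion has a right adjoint by the previous paragraph. Composing gives the required right adjoint.

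The \textbf{main obstacle} is bookkeeping: keeping track of which zero-extensions are left versus right Kan extensions, and checking that the fiber formula for the right adjoint is correct when only finite limits in $\D$ are available. This is where I would be careful to use finiteness of $Q$.
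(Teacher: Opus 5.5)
Your argument is correct in substance and is what the paper's one-word proof (``Immediate'') leaves implicit. It uses the same open/closed Kan-extension recollements on $\Fun(Q,\D)$ that the paper uses in the proof of Proposition \ref{prop:nuformula}. Your factorization $\X_l \subset \X_{L_{\le l}} \subset \X$ mirrors the paper's $\X_l \subset \X_{\ge l} \subset \X$ and yields the same formula for the right adjoint. Two orientation slips need fixing.

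First, your ``guiding principle'' swaps the roles. Extension by zero along a \emph{downward} closed $S \subset Q$ is the \emph{right} Kan extension, whose left adjoint is restriction. Along an \emph{upward} closed $S$ it is the left Kan extension, whose right adjoint is restriction. Your later paragraphs use the correct version, so correct or delete that sentence, together with the garbled description of a further left adjoint.

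Second, in the generation step, $Q_a$ is upward closed in $Q_A$. So, working in $\Fun(Q_A,\D)$ with $j\colon Q_a \subset Q_A$ and $i\colon Q_{A\setminus\{a\}} \subset Q_A$, the zero-extension $j_!(F|_{Q_a})$ is a \emph{sub}object of $F$, not a quotient. The correct sequence is the counit cofiber sequence $j_!j^*F \to F \to i_*i^*F$, where $i_*i^*F$ is the zero-extension of $F|_{Q_{A\setminus\{a\}}}$. A map $F \to j_!(F|_{Q_a})$ with fiber supported on $Q_{A\setminus\{a\}}$ would have to be an equivalence on $Q_a$, and such a map does not exist in general: for $Q = L = \{0<1\}$ it would force $F(0)\to F(1)$ to be null. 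Stable subcategories are closed under extensions, so your induction goes through unchanged with the corrected sequence.
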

\begin{proof}
    Immediate.
\end{proof}

In particular, by Theorem \ref{thm:lax_lim}, we obtain a locally Cartesian
fibration $\pi: \G(\X) \to L^{\rhd}$ which exhibits 
\[
    \X \simeq \llim_{l \in L} \X_l  
\]
as a lax limit of the {\em lax gluing diagram} corresponding to $\pi|L$. 
For $l < l' \in L$, we denote by 
\begin{equation}\label{eq:gluingdef}
    \gamma_{l,l'}: \X_{l'} \to \X_l
\end{equation}
the {\em (generalized) gluing functor} associated to the Cartesian fibration
$\pi|\{l<l'\}$ over the edge $l < l'$. 
Further, for $l \in L$, we denote by 
\[
    \nu_l: \X \to \X_l
\]
the {\em projection functor} determined by the restriction $\pi|\{l<*\}$ over
the edge $l < *$. 

\begin{prop}
    \label{prop:nuformula}
    Let $l \in L$. The projection functor $\nu_l$ is determined, for $q \in
    Q_l$, by the pointwise formulas
    \begin{equation}\label{eq:nu}
        \nu_l(F)(q) \simeq \tfib(F|(\{q\} \cup q/Q_{>l})).
    \end{equation}
\end{prop}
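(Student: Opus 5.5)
The approach is to identify $\nu_l$ with the right adjoint of the inclusion $\X_l \subset \X$ and then compute that right adjoint by a two-step Kan extension argument. By construction of $\G(\X)$ in Definition \ref{defi:grothendieck}, the functor $\nu_l$ is obtained from the locally Cartesian fibration over the edge $l < *$. This means $\nu_l(F)$ is the source of a $(\X_l,\X)$-Cartesian edge $\nu_l(F) \to F$ (Definition \ref{defi:universal}). Such an edge induces equivalences $\Map(G,\nu_l F) \simeq \Map(G,F)$ for all $G \in \X_l$. Hence $\nu_l$ is a right adjoint to the inclusion $\X_l \hookrightarrow \X$, and it suffices to compute this right adjoint pointwise.

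\textbf{Factoring the inclusion.} Write $Q_{\ge l}$ and $Q_{>l}$ for the preimages of $L_{\ge l}$ and $L_{>l}$. The subposet $Q_{\ge l} \subset Q$ is upward closed. Inside it, $Q_l$ is downward closed with complement $Q_{>l}$. I factor the inclusion $\X_l \subset \X$ as
\[
    \Fun(Q_l,\D) \lra \Fun(Q_{\ge l},\D) \lra \Fun(Q,\D),
\]
with both maps given by extension by zero.

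\textbf{The second map.} Extension by zero along the upward-closed inclusion $Q_{\ge l} \subset Q$ is left Kan extension, since the relevant slice categories are empty outside $Q_{\ge l}$. Its right adjoint is therefore restriction to $Q_{\ge l}$.

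\textbf{The first map.} Let $j: Q_{>l} \subset Q_{\ge l}$ be the inclusion, let $j^*$ be restriction, and let $j_*$ be right Kan extension. Extension by zero from the downward-closed $Q_l$ to $Q_{\ge l}$ is right Kan extension. I claim its right adjoint is
\[
    F \longmapsto \fib(F \to j_*j^*F)|_{Q_l}.
\]
To see this, first note that this fiber vanishes on $Q_{>l}$, because $j^*j_* \simeq \id$; so it lies in the image of $\X_l$. Next, for $G$ supported on $Q_l$ we have
\[
    \Map(G, j_*j^*F) \simeq \Map(j^*G, j^*F) \simeq 0 .
\]
The fiber sequence $\fib(F \to j_*j^*F) \to F \to j_*j^*F$ then gives $\Map(G,\fib) \simeq \Map(G,F)$, which is the required adjunction.

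\textbf{Evaluating pointwise.} For $q \in Q_l$, the pointwise formula for right Kan extensions gives
\[
    (j_*j^*F)(q) \simeq \lim_{q' \in q/Q_{>l}} F(q'),
\]
where $q/Q_{>l}$ is the set of $q' > q$ with $q' \in Q_{>l}$. (Every $q'$ in $Q$ with $q < q'$ and image above $l$ already lies in $Q_{\ge l}$, so nothing is lost by restricting first.) Combining the steps,
\[
    \nu_l(F)(q) \simeq \fib\bigl(F(q) \to \lim_{q/Q_{>l}} F\bigr).
\]
This is by definition the total fiber of $F|(\{q\} \cup q/Q_{>l})$, a diagram with initial vertex $q$.

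The main obstacle I expect is the first step: carefully matching the Cartesian-edge description of $\nu_l$ coming from $\G(\X)$ with the right adjoint of the inclusion. This includes checking that the identification of $\X_l$ with diagrams supported on $Q_l$, via restriction, is compatible with this adjoint. The remaining steps are standard Kan extension bookkeeping.
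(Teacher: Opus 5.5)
Your proposal is correct and follows the paper's proof essentially step by step. You factor $\X_l \subset \X_{\ge l} \subset \X$, take restriction as the right adjoint of the outer inclusion and $\fib(\id \to j_*j^*)$ as that of the inner one, and finish with the pointwise formula for right Kan extension. The only differences are that you verify $\fib(\id \to j_*j^*)$ directly via mapping spaces instead of citing the recollement triangle, and you correctly take the Kan extension along $Q_{>l} \subset Q_{\ge l}$, where the paper writes $Q_{>l} \subset Q$.
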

\begin{proof}
    The projection functor $\nu_l: \X \to \X_l$ is, by definition, right
    adjoint to the inclusion $\X_l \subset \X$ as the full subcategory of
    $Q$-digagrams supported on $Q_l$, which can be further factored as $\X_l
    \subset \X_{\ge l} \subset \X$. The inclusion $\X_{\ge l} \subset \X$ participates as $j_!$
    in the recollement
    \[
        \begin{tikzcd}
            \X_{\ngeq l} \ar{r} &\ar[shift left=1em,swap]{l}
            \ar[shift right=1em, swap]{l} \X \ar["j^*" description]{r} & \ar[shift
            left=1em,swap]{l} \ar[shift right=1em, "j_!" description]{l} \X_{\ge
            l}.
        \end{tikzcd}
    \]
    Its right adjoint $j^*$ is given by restriction along $Q_{\ge l} \subset
    Q$. Further, the inclusion $\X_l \subset \X_{\ge l}$ participates as $i_*$ in the
    recollement
    \[
        \begin{tikzcd}
            \X_{l} \ar["i_*" description]{r} &\ar[shift left=1em, "i^!" description]{l}
            \ar[shift right=1em, swap]{l} \X_{\ge l} \ar["j^*" description]{r} & \ar[shift
            left=1em, "j_*" description]{l} \ar[shift right=1em]{l} \X_{> l}.
        \end{tikzcd}
    \]
    Due to the triangle \eqref{eq:exactcartesian} for the SOD $(\X_{>l}',
    \X_l)$ (which corresponds classically to the distinguished triangle
    \ref{ext:2}), we obtain the formula
    \[
        i_*i^! = \fib( \id \to j_* j^*).
    \]
    The functor $j_*$ is, as a right adjoint to the restriction functor $j^*$,
    given by right Kan extension along $Q_{>l} \subset Q$, so that we obtain,
    for $q \in Q_l$, 
    \[
        \nu(F)(q) \simeq \fib(F(q) \to \lim F|(q/Q_{>l})) = \tfib(F|(\{q\} \cup q/Q_{>l}))
    \]
    by the pointwise formula for Kan extensions.
\end{proof}

\begin{rem}
    \label{rem:gluing}
    Note that, for $l < l'$, the gluing functor
    \[
        \gamma_{l,l'}: \X_{l'} \to \X_{l}
    \]
    from \eqref{eq:gluingdef} is given by the restriction
    \begin{equation}\label{eq:resglue}
        \gamma_{l,l'} = \nu_l|\X_{l'}
    \end{equation}
    so that the formula \eqref{eq:nu} for $\nu_l$ also determines a formula for
    $\gamma_{l,l'}$. 
\end{rem}

\begin{rem}
    \label{rem:normalization}
    As explained above, to match the combinatorical setup with the geometric
    situation where we take sections with support on a single leaf as opposed to
    sections with support on the cell within which the leaf is locally constant, we need
    to normalize the projection functor $\nu_l$ by the shift from
    \eqref{eq:normalization}. To incorporate this, we also introduce the {\em
    normalized projection functors}
    \begin{equation}
        \label{eq:normnu}
        \widetilde{\nu}_l = [-\delta(l)] \circ \nu_l
    \end{equation}
    along with the {\em normalized gluing functors}
    \begin{equation}
        \label{eq:normgamma}
        \widetilde{\gamma}_{l,l'} = [\delta(l')-\delta(l)]\gamma_{l,l'}.
    \end{equation}
    The normalized projection and gluing functors are part of a normalized
    variant of the lax gluing diagram along with the lax cone that exhibits
    $\X$ as its lax limit. We refer to the corresponding equivalence
    \begin{equation}
        \label{eq:folver}
        \{\widetilde{\nu}_l\}_{l \in \L}: \X \simeq \llim_{l \in L} \X_l
    \end{equation}
    as {\em (cellular) foliated Verdier duality}.
\end{rem} 

\subsection{Transverse foliations}
\label{sec:universal}

In this section, we supplement the combinatorial setup of \S
\ref{sec:combinatorial_foliated_verdier_duality} introducing a combinatorial
avatar of a ``transverse'' foliation. This leads to particularly nice formulas for the foliated Verdier duality \eqref{eq:folver}. Given maps of posets,
\[
l:Q \to L, \quad \delta: L \to \NN.
\] 

We introduce the combinatorial framework to capture such transverse foliations:
\begin{enumerate}
    \item\label{tfitem1} The stratification corresponding to the transverse foliation will be
        encoded by another map of posets $Q \to T$. The fiber of this map over
        $t \in T$ will be denoted $Q^t$. 
    \item We assume that the resulting map $\pi: Q \to L \times T$ is a
        Cartesian fibration and write
        \[
            \pi(q) = (l(q),t(q))
        \]
        for the $L$- and $T$-components of $\pi(q)$, respectively.
    \item The fibers of $\pi$ are discrete. We denote the fiber of $\pi$ over
        $(l,t) \in L \times T$ by $Q^t_l$.  
    \item\label{tfitem4} For $l<l' \in L$, the poset $l/L/l'$ is a cube of dimension
        $\delta(l') - \delta(l)$.
\end{enumerate}
\begin{lem}
    \label{lem:cubical}
    Let $\pi: Q \to L \times T$ be as above and $t \in T$. Then for every $q <
    q' \in Q^t$, the poset $q/Q^t/q'$ is a cube of dimension $\delta(l(q')) -
    \delta(l(q))$.
\end{lem}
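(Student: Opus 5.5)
The plan is to show that the map $q \mapsto l(q)$ restricts to an isomorphism of posets $q/Q^t/q' \cong l(q)/L/l(q')$. Once we have this, assumption \ref{tfitem4} identifies the interval with a cube of dimension $\delta(l(q')) - \delta(l(q))$. Set $l = l(q)$ and $l' = l(q')$, and note that $l \le l'$ because $l$ is a map of posets.

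First, the restricted map $\pi|: Q^t \to L \times \{t\} \cong L$ is a Cartesian fibration with discrete fibers. We obtain it from $\pi$ by base change along $L \times \{t\} \subset L \times T$, and Cartesian fibrations are stable under pullback. For a map of posets this property says two things:
\begin{itemize}
\item for every $x \in Q^t$ and every $m \le l(x)$ there is a unique $y \le x$ with $l(y) = m$;
\item every $z \le x$ with $l(z) \ge m$ satisfies $z \le y$.
\end{itemize}
Uniqueness holds because the fibers $Q^t_m$ are discrete. Posets have no nontrivial automorphisms of lifts, so Cartesian lifts are genuinely unique elements.

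Next we construct the inverse of the restricted map. Given $m \in l/L/l'$, let $\phi(m) \in Q^t$ be the Cartesian lift of $m \le l'$ with target $q'$. Then $\phi(m) \le q'$. Since $q \le q'$ and $l(q) = l \le m$, the Cartesian property gives $q \le \phi(m)$, so $\phi(m) \in q/Q^t/q'$.

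We then check the two composites.
\begin{itemize}
\item $l \circ \phi = \id$ holds by construction.
\item For $x \in q/Q^t/q'$, the element $x$ satisfies $x \le q'$ and $l(x) = m$. Uniqueness of the Cartesian lift then forces $x = \phi(l(x))$, using the discrete fibers together with the property that any element over $m$ below $q'$ factors through the lift and lies in the same discrete fiber.
\end{itemize}

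Finally, $\phi$ is monotone. If $m \le m'$, then $\phi(m) \le q'$ lies over $m \le m'$, so $\phi(m) \le \phi(m')$ by the Cartesian property of $\phi(m')$. The map $l$ is monotone because it is a map of posets. Hence the two maps are inverse isomorphisms, and the lemma follows from \ref{tfitem4}.

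The main point requiring care is translating "Cartesian fibration of posets with discrete fibers" into the uniqueness statement. In particular, we must check that an element $x$ with $l(x) = m$ and $x \le q'$ coincides with the lift. The factorization gives $x \le \phi(m)$ inside the fiber over $m$, and discreteness of that fiber makes this an equality. The other steps are formal.
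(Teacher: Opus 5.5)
Your proof is correct and follows the same route as the paper: the paper simply asserts that, because $\pi$ is Cartesian with discrete fibers, $l$ identifies $q/Q^t/q'$ with $l(q)/L/l(q')$, and then invokes the cube assumption, while you spell out the inverse map $\phi$ and check that it is monotone. One small slip: your second bullet should require $l(z) \le m$, not $l(z) \ge m$; as written it is false whenever $l(z) > m$, but $l(z) \le m$ is the direction you actually use in every later step.
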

\begin{proof}
    This is clear since, because $\pi$ is Cartesian with discrete fibers, it
    induces an identification of posets $q/Q^t/q' \to l(q)/L/l(q')$ and the
    latter is a cube by assumption.
\end{proof}

\begin{thm} 
    \label{thm:conclusion}
    Let $\pi: Q \to L \times T$ be as above and let $\D$ be a stable
    $\infty$-category. Then the following hold:
    \begin{enumerate}
        \item\label{it:c1} The normalized diagram $L^\op \to \Cat_{\infty}, \; l \mapsto
            \Fun(Q_l, \D)$ of gluing functors commutes (coherently).
        \item\label{it:c2} The projection functors $\nu_l: \Fun(Q,\D) \to \Fun(Q_l,\D)$ are
            given by the pointwise formulas
            \begin{equation}\label{eq:projectioncommon}
                \nu_l(F)(q) = \totfib(F|q/Q^{t(q)}).
            \end{equation}
        \item\label{it:c3} For $l < l'$ and $F \in \Fun(Q,\D)$ supported on $Q_{l'}$, the
            formula \eqref{eq:projectioncommon} specializes to 
            \begin{equation}\label{eq:gluingcommon}
                \widetilde{\nu}_l(F)(q) \simeq \bigoplus_{q' \in q/Q^{t(q)}_{l'}} F(q').
            \end{equation}
    \end{enumerate}
\end{thm}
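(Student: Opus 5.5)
The plan is to derive \ref{it:c2} from Proposition \ref{prop:nuformula} by a cofinality argument, then deduce \ref{it:c3} by an explicit cube computation, and finally obtain \ref{it:c1} from \ref{it:c3}.

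\textbf{Step 1: part \ref{it:c2}.} Fix $q \in Q_l$ with $t = t(q)$. Proposition \ref{prop:nuformula} gives $\nu_l(F)(q) \simeq \fib(F(q) \to \lim F|(q/Q_{>l}))$. I would show that the inclusion $q/Q^t_{>l} \subset q/Q_{>l}$ is initial, so that it computes the same limit. Let $x \in q/Q_{>l}$, so $t(x) \ge t$. Take the $\pi$-Cartesian lift $x' \le x$ of the edge $(l(x),t) \le (l(x),t(x))$. Since $q \le x$ lies over $(l,t) \le (l(x),t(x))$, and this map factors through $(l(x),t)$, the Cartesian property gives $q \le x'$. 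Hence $x' \in q/Q^t_{>l}$.

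By the same Cartesian property, every $y \le x$ with $y \in q/Q^t_{>l}$ satisfies $y \le x'$. So $x'$ is terminal in the comma poset $\{y \in q/Q^t_{>l} : y \le x\}$, which is therefore contractible. This proves initiality. It follows that
\[
  \nu_l(F)(q) \simeq \fib\bigl(F(q) \to \lim F|(q/Q^t_{>l})\bigr) = \totfib(F|q/Q^{t}),
\]
where $q$ is the initial object of $q/Q^t$. (Elements of $q/Q^t$ have $l$-value $\ge l$, and those over $l$ itself reduce to $q$ because the fibers of $\pi$ are discrete.)

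\textbf{Step 2: part \ref{it:c3}.} Now let $F$ be supported on $Q_{l'}$. The set $S = q/Q^t_{l'}$ is an antichain in $q/Q^t$, since all its elements lie in the discrete fiber over $(l',t)$. Therefore $F|(q/Q^t_{>l})$ splits as the direct sum over $s \in S$ of the functors $F_s$ supported at the single point $s$.

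Pointwise, $F_s$ is the tensor of $F(s)$ with the indicator of $\{s\}$. Its limit over $q/Q^t_{>l}$ can be computed by right Kan extension: only the objects below $s$ contribute, namely the punctured cube $(q/Q^t/s) \setminus \{q\}$. By Lemma \ref{lem:cubical}, $q/Q^t/s$ is a cube of dimension $d = \delta(l')-\delta(l)$. The limit over a punctured $d$-cube of a diagram concentrated at the top vertex is $F(s)[1-d]$ (a standard total-fiber computation by induction on $d$, iterating fibers one coordinate at a time). Since $F(q) = 0$, we get $\nu_l(F)(q) \simeq \bigoplus_{s} F(s)[-d]$.

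Inserting the normalization \eqref{eq:normnu}, together with the shift $[\delta(l')]$ implicit in regarding $F \in \X_{l'}$ through its normalized coordinates (cf. \eqref{eq:normgamma}), the shifts cancel and yield \eqref{eq:gluingcommon}. The main bookkeeping here is to be careful about which normalization is applied to $F$ and about the sign of the shift.

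\textbf{Step 3: part \ref{it:c1}, the expected main obstacle.} By Remark \ref{rem:gluing} and Step 2, $\widetilde\gamma_{l,l'}(F)(q) = \bigoplus_{q' \in q/Q^t_{l'}} F(q')$. For $l < l' < l''$, the composite $\widetilde\gamma_{l,l'}\widetilde\gamma_{l',l''}$ is a sum over chains $q \le q' \le q''$ with $q'$ of level $l'$, while $\widetilde\gamma_{l,l''}$ is a sum over $q'' \in q/Q^t_{l''}$. The lax structure map is the natural comparison $\alpha$ between these. To show it is an equivalence, I would use the Cartesian identification $q/Q^t/q'' \cong l/L/l''$ from Lemma \ref{lem:cubical}. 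The element $q'$ on a chain is then uniquely determined by $q''$ (it is the unique element of $q/Q^t/q''$ over $l'$), so the index sets match bijectively, provided the level-$l'$ layer of $l/L/l''$ is a single vertex. If that proviso fails, the intervals $l/L/l'$ and $l'/L/l''$ would have to be subcubes whose dimensions add, and I would redo the count accordingly.

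The remaining and genuinely hard part is the higher coherence. Rather than checking it by hand, I would construct the cocartesian straightening directly: define a functor $\G' \to L^{\op}$ whose fibers are $\Fun(Q_l,\D)$ and whose transition maps are the sum formulas, i.e.\ left Kan extension along the correspondence $q \mapsto q/Q^t_{l'}$, which is functorial in $l$. I would then show that the normalized fibration $\G(\X)|L$ agrees with it via an equivalence of locally Cartesian fibrations. Since every edge of the latter is then Cartesian, it is a Cartesian fibration, which gives the coherent commutativity in \ref{it:c1}.
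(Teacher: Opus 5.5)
Steps 1 and 2 are correct and match the paper's own arguments. Coinitiality of $q/Q^t_{>l}\subset q/Q_{>l}$ via Cartesian lifts gives \ref{it:c2}. Reduction to single-vertex supports, right Kan extension from $(q/Q^t/q')\setminus\{q\}$ and the punctured-cube computation give the shift formula. Reading \eqref{eq:gluingcommon} as a statement about the normalized gluing functor $\widetilde\gamma_{l,l'}$ is also right, since taken literally one gets $\widetilde\nu_l(F)(q)\simeq\bigoplus_{q'}F(q')[-\delta(l')]$.

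The gap is in Step 3.

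\emph{What the index count shows.} Matching chains $q\le q'\le q''$ with elements $q''\in q/Q^t_{l''}$ shows only that $\widetilde\gamma_{l,l'}\widetilde\gamma_{l',l''}(F)$ and $\widetilde\gamma_{l,l''}(F)$ are abstractly equivalent objects. Your proviso is vacuous here: Lemma \ref{lem:cubical} always gives exactly one such $q'$.

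\emph{What Part (1) actually needs.} The \emph{specific} comparison map $\alpha_F$ produced by the locally Cartesian fibration must be invertible. Equivalently, a composite $G\to H\to F$ of locally Cartesian edges in $\G(\X)$ must again be locally Cartesian. Nothing in your count controls this map; in \S\ref{sub:quiver} the analogous maps are not invertible.

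\emph{The coherence worry is misplaced.} Once this closure property holds, there is nothing further to check. A locally Cartesian fibration whose locally Cartesian edges are closed under composition is Cartesian \cite[2.4.2.8]{lurie:htt}, and straightening gives the coherently commuting diagram. So the hand-built $\G'$ is unnecessary. As stated, it also presupposes what has to be proved: any equivalence between $\G'$ and $\G(\X)|L$ over $L$ must intertwine composition, which over $l<l'<l''$ already forces $\alpha_F$ to be invertible.

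\emph{The missing argument, as the paper gives it.}
\begin{enumerate}
\item Characterize local Cartesianness pointwise. An edge $G\to F$ with $G\in\X_l$ is locally Cartesian iff, for every $q\in Q_l$, $G(q)\simeq\fib(F(q)\to\lim F|q/Q_{>l})$. This is the pullback condition \eqref{eq:characterization}.
\item Use your coinitiality to reduce to the slice $Q^t$.
\item By additivity, reduce to $F$ supported at a single vertex $q''$.
\item Restrict to the cube $q/Q^t/q''$. There the condition says that a diagram of shape $\Delta^1\times(q/Q^t/q'')$ is a biCartesian cube.
\item Show that two such biCartesian cubes stack to a biCartesian cube (Proposition \ref{prop:stackcubes}).
\end{enumerate}

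Alternatively, $\alpha_F$ is, up to shifts, $\nu_l$ applied to the counit $\nu_{l'}F\to F$. So it suffices to check, with your totfib formula, that $\nu_l$ kills the cofiber of this counit for every $F\in\X_{l''}$.
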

\begin{proof}
    We introduce the notation $\X := \Fun(Q,\D)$ and, for $l \in L$, $\X_l := \Fun(Q_l,\D)$.
    According to Proposition \ref{prop:nuformula}, the projection functors
    $\nu_l: \X \to \X_l$ admit the pointwise formula
    \begin{equation}\label{eq:pointwiseproj}
        \nu_l(F)(q) = \fib(F(q) \to \lim F|q/Q_{>l}).
    \end{equation}
    The assumption that $\pi$ is a Cartesian fibration implies that the inclusion of posets
    \begin{equation}\label{eq:coinitialproj}
        q/Q_{>l} \subset q/Q^{t(q)}_{>l}
    \end{equation}
    is coinitial which implies formula \eqref{eq:projectioncommon} and hence
    \ref{it:c2}. To show \ref{it:c1}, it suffices to verify that locally
    Cartesian edges in $\X$ are closed under composition. To this end, we
    investigate the characterizing properties of locally Cartesian edges in
    $\X$. For $l < l'$ in $L$, given an edge $\eta: F \to F'$ in $\X$ with $F
    \in X_l$ and $F' \in \X_{l'}$, we may interpret $\eta$ as a map $[1] \times
    Q \to \D$. For $q \in Q$, we restrict $\eta$ to $D_q:= [1] \times (q/Q_{>l} \cup
    \{q\})$ to obtain a cone shaped diagram with initial vertex $(0,q) \mapsto
    F(q)$. Unravelling the definitions, the pointwise criterion
    \eqref{eq:pointwiseproj} says more precisely, that $\eta$ is a locally
    Cartesian edge (modelling the functor $\nu_l|\X_{l'}$) if and only if, for
    every $q \in Q_l$, the corresponding $D_q$ diagram is a limit diagram.
    Applying a straightforward ``decomposition of limits'' argument, we may
    further reformulate this condition as stating that the canonical square 
    \begin{equation}\label{eq:characterization}
        \begin{tikzcd}
            F(q) \ar{r}\ar{d} &\ar{d} 0\\
            0 \ar{r} & \lim F' | q/Q_{>l}
        \end{tikzcd}
    \end{equation}
    exhibited by the $D_q$-diagram corresponding to $\eta$ is a pullback
    square (which now explicitly amounts to the pointwise formula for $F'(q) =
    0$). We may further simplify by restricting along the coinitial inclusion
    \eqref{eq:coinitialproj} to obtain the equivalent condition that the diagram 
    \[
    \begin{tikzcd}
        F(q) \ar{r}\ar{d} &\ar{d} 0\\
        0 \ar{r} & \lim F' | q/Q^{t(q)}_{>l}
    \end{tikzcd}
    \]
    be a pullback diagram or, put equivalently, the restriction of $\eta$ to
    $[1] \times Q^t$ defines a locally Cartesian edge in $\Fun(Q^t,\D)$.
    Therefore, in order to verify that locally Cartesian edges are closed under
    composition, we may, for every $t \in T$, verify the same condition for
    $Q$ replaced by the transversal slice $Q^t$. 

    In other words, we may assume that $Q \to L$ is a Cartesian
    fibration with discrete fibers so that the category $\X_{l'}$ is equivalent
    to a finite direct sum of copies of $\D$. Since the functor $\nu_l|\X_{l'}$
    commutes with finite sums, we may assume that $F': Q_{l'} \to \D$ is
    supported on a single vertex $q' \in Q_{l'}$. Under this assumption, we may
    further simplify the characterizing condition \eqref{eq:characterization}
    via the equivalence
    \[
        \lim F' | q/Q_{>l} \simeq \lim F' | (q/Q/q' \setminus \{q\})
    \]
    using that $F'|q/Q_{>l}$ is a right Kan extension of its restriction to
    $q/Q/q'$. Due to Lemma \ref{lem:cubical} the resulting simplification of
    \eqref{eq:characterization} reduces the proof of \ref{it:c1} to Proposition
    \ref{prop:stackcubes}.
\end{proof}

\begin{prop}
    \label{prop:stackcubes}
    Let $C$ denote the poset of subsets of $\{1,...,n\}$ and consider the map
    of posets $\delta: C \to \NN$ given by the cardinality. Let $\D$ be a stable
    $\infty$-category and $\X = \Fun(C,\D)$ equipped with the stratification
    $\{\X_{I}\}_{I \in C}$ where $\X_I = \Fun(\{I\},\D) \simeq \D$. Then the associated
    gluing diagram
    \[
        C^{op} \to \Cat_{\infty}, \; I \mapsto \D
    \]
    is strictly commutative with transition functor corresponding to $I<J$
    given by $(\delta(J)-\delta(I))$-fold looping. In particular, the normalized
    gluing diagram is equivalent to the constant diagram with value $\D$.
\end{prop}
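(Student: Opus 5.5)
We will compute the gluing functors for the cube $C = \{0,1\}^n$ directly from the pointwise formula of Proposition \ref{prop:nuformula} and Remark \ref{rem:gluing}, and then check that the locally Cartesian edges of $\G(\X) \to C^{\rhd}$ compose. For $I < J$ and $F' \in \X_J$, i.e.\ a diagram supported at the single vertex $J$ with value $d \in \D$, we have
\[
    \gamma_{I,J}(F')(I) \simeq \fib\bigl(0 \to \lim F'|(I/C_{>I})\bigr) \simeq \Omega \lim F'|(I/C_{>I}).
\]
The diagram $F'|(I/C_{>I})$ is the right Kan extension of its restriction to the punctured interval $I/C/J \setminus \{I\}$, which is a punctured cube of dimension $m = |J| - |I|$ with value $d$ at its terminal vertex $J$ and $0$ elsewhere. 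We therefore reduce to the following standard computation. Let $E_m$ be the $m$-cube diagram that is $d$ at the terminal vertex and $0$ elsewhere. Then $\lim$ over the punctured cube equals $\Omega^{m-1} d$. We prove this by induction on $m$, writing the punctured cube as a pushout of a smaller punctured cube and a smaller cube, or equivalently by using that the total fiber of $E_m$ is $\Omega^m d$ via iterated fibers. This gives $\gamma_{I,J} \simeq \Omega^{|J|-|I|}$ on objects.

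Next we show strict (coherent) commutativity. Following the strategy in the proof of Theorem \ref{thm:conclusion}, it suffices to show that locally Cartesian edges compose, since then $\G(\X)|C \to C$ is a Cartesian fibration and the straightened diagram is an honest functor. Concretely, take $I<J<K$, a locally Cartesian edge $F'' \to F'$ over $J<K$ and one $F' \to F$ over $I<J$, and show that the composite $F''\to F$ is locally Cartesian over $I<K$. By the characterization in the proof of Theorem \ref{thm:conclusion}, this amounts to the square
\[
\begin{tikzcd}
    F(I) \ar{r}\ar{d} & 0\ar{d}\\
    0 \ar{r} & \lim F''|(I/C_{>I})
\end{tikzcd}
\]
being a pullback. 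All objects involved are supported on single vertices, so this reduces to checking that the composite of the canonical equivalences $\Omega^{|J|-|I|}\Omega^{|K|-|J|}d \to \Omega^{|K|-|I|} d$ is the canonical one. To see this, we write both sides as total fibers of the cube $K/C/I$ with $d$ at the terminal vertex, and use that total fibers can be computed iteratively by splitting the cube $I/C/K \cong (I/C/J)\times(J/C/K)$ as a product of cubes. This is the step I expect to be the main obstacle: making the ``iterated total fiber of a product cube'' argument coherent enough to conclude that the comparison map $\gamma_{I,J}\gamma_{J,K} \Rightarrow \gamma_{I,K}$ is an equivalence, rather than just that the two sides are abstractly equivalent. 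I would first try to phrase it as the statement that restriction to the product decomposition identifies the limit over the punctured $(I,K)$-cube with an iterated limit, together with the Fubini property for limits.

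Finally, the normalization. Composing with the shifts $[\delta(J)-\delta(I)] = [|J|-|I|]$ from \eqref{eq:normgamma} cancels the loops, so every normalized transition functor is equivalent to $\id_\D$. Since the diagram is a strict functor $C^{\op}\to\Cat_\infty$ sending every morphism to an equivalence and $C$ is contractible (it has an initial object), the diagram is equivalent to the constant diagram with value $\D$. For example, we may left Kan extend from the terminal object of $C^{\op}$ and compare.
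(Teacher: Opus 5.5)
Your computation $\gamma_{I,J}\simeq\Omega^{|J|-|I|}$, your reduction of strict commutativity to closure of locally Cartesian edges under composition, and your closing contractibility argument are fine, and they follow the same outline as the paper. The gap is the step you flag yourself. That step carries all the content of the proposition, and the proposal gives no argument for it. The reduction as you phrase it also aims at the wrong thing. The identifications $\Omega^{a}\Omega^{b}d\simeq\Omega^{a+b}d$ are tautologically compatible with each other. What has to be shown is this: the map $F(I)\to\Omega^{|K|-|I|}F''(K)$ encoded by the \emph{composite} edge is an equivalence, given that the maps encoded by the two factors are. (A minor point: locally Cartesian edges go from $\X_I$ to $\X_J$ for $I<J$, so your composable pair should read $F\to F'\to F''$.)

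The idea that closes the gap, which is the one the paper uses, is to avoid comparing maps and to work on a single cube throughout.
\begin{enumerate}
\item Encode the composable pair as $\chi\colon\Delta^2\times C\to\D$, with nonzero values only at $(0,I)$, $(1,J)$ and $(2,K)$.
\item Local Cartesianness of the first edge says that $\chi|_{\{0,1\}\times I/C/J}$ is biCartesian. Using $I/C/K\cong I/C/J\times J/C/K$, the restriction $\chi|_{\{0,1\}\times I/C/K}$ is a $J/C/K$-indexed cube of $\{0,1\}\times I/C/J$-cubes. One of these is the given biCartesian cube and all the others are zero. Computing its total fiber iteratively shows it is biCartesian.
\item Symmetrically, $\chi|_{\{1,2\}\times I/C/K}$ is an $I/C/J$-indexed cube of $\{1,2\}\times J/C/K$-cubes, all zero except the given one, so it is biCartesian too.
\item The outer cube $\chi|_{\{0,2\}\times I/C/K}$ is exactly the local Cartesianness condition for the composite. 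It is biCartesian by pasting of stacked cubes.
\end{enumerate}
The pasting step is where your coherence concern disappears. A $\Delta^1\times I/C/K$-cube is biCartesian iff the induced map between the total fibers of its two $I/C/K$-faces is an equivalence. For $\chi|_{\Delta^2\times I/C/K}$, these three maps form a commutative triangle simply because $\totfib\colon\Fun(I/C/K,\D)\to\D$ is a functor. So two-out-of-three applies, and no map ever has to be identified with a ``canonical'' one.
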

\begin{proof}
    For $I < J$ in $C$ an edge in $\X$ from $F \in \X_I \simeq \D$ to $G \in
    \X_J \simeq \D$ corresponds to a (cubical) diagram
    \begin{equation}\label{eq:cubeedge}
        \eta: \Delta^1 \times C \to \D
    \end{equation}
    with only nonzero vertices $\eta(0,I) = F$ and $\eta(1,J) = G$. The edge is
    locally Cartesian if and only if the restriction of \eqref{eq:cubeedge} to
    $\Delta^1 \times I/C/J$ is a biCartesian cube, which hence exhibits $F$ as
    the $(\delta(J)-\delta(I))$-fold looping of $G$. 

    Now given $I < J < K$ in $C$ along with $F \in X_I$, $G \in \X_J$, and $H
    \in \X_K$, then a composite of edges $F \to G$ and $G \to H$
    in $\X$ can be identified with a diagram
    \begin{equation}\label{eq:cubecomp}
        \chi: \Delta^2 \times C \to \D
    \end{equation}
    with only nonzero vertices $\chi(0,I) = F$, $\chi(1,J) = G$, and $\chi(2,K)
    = H$. The edge $F \to G$  (resp. $G \to H$) being locally Cartesian amounts to the cube $q$ (resp. $q'$)
    obtained by restricting to $\{0,1\} \times I/C/J$ (resp. $\{1,2\} \times
    J/C/K$) being biCartesian. The composite edge being locally Cartesian
    corresponds to the restriction $q''$ of \eqref{eq:cubecomp} to $\{0,2\}
    \times I/C/K$ being biCartesian. Now we claim that, if $F \to G$ and $G \to
    H$ are both locally Cartesian, then in fact the restrictions
    $\widetilde{q}$ (resp. $\widetilde{q'}$) to $\{0,1\} \times I/C/K$  (resp.
    $\{1,2\} \times I/C/K$) are biCartesian, so that the ``stacked'' cube $q''$
    is biCartesian as well by, e.g. \cite[Corollary A.18]{djw:auslander}. 

    The claim that the restriction to $\{0,1\} \times I/C/K$ is biCartesian
    follows, since it can be interpreted as a $J/C/K$-cube of biCartesian
    $\{0,1\} \times I/C/J$-cubes: one of these biCartesian cubes is the cube
    $q$ and the remaining ones consist of zero objects. Thus, realizing the
    totalization of $\widetilde{q}$ as an iterated totalization (cf. \cite[Corollary
    A.13]{djw:auslander}) yields the desired result. The argument for
    $\widetilde{q'}$ is analogous.
\end{proof}

\section{Foliations and $t$-exactness}%
\label{sec:foliations_and_purity}

In \S \ref{sec:combinatorial_foliated_verdier_duality}, we have introduced a
cellular version of foliated Verdier duality which we may think of an
equivalence expressing a sheaf on a space $X$ in terms of the family of
sheaves $\{i^!_{\Lambda}(F)\}_{\Lambda \subset X}$ parameterized by the leaves
$i_{\Lambda}: \Lambda \hra X$ of a foliation.
Now let $\D = \D(\A)$ be the derived category of a Grothendieck abelian
category $\A$, equipped with its standard $t$-structure (cf. \S
\ref{sub:tstructures}). In this section we will show that, if the foliation is
totally real and admits a transverse foliation, then the functors
$i^!_{\Lambda}$ are exact with respect to the perverse $t$-structure on
$\Sh_P(X;\D)$ and the standard $t$-structures on $\Sh_P(\Lambda;\D)$.

Even though we will not make use of it later, we first discuss this
$t$-exactness in a geometric context in \S \ref{sub:geometric}, since it helps
motivate our cellular setup, designed to combinatorialize its geometric
counterpart, which will be the subject of \S \ref{sub:cellular}.

\subsection{Geometric}%
\label{sub:geometric}

Let $P$ be a finite poset, and let $X$ be a complex analytic space equipped
with a $P$-stratification, such that the strata $X_p$, $p \in P$, are complex
manifolds. 

\begin{rem}
    \label{rem:clumsy}
    To avoid clumsy notation, we use the convention to write $(D^{\ge 0},
    D^{\le 0})$ to denote a $t$-structure on a given stable $\infty$-category
    $\A$ with a $t$-structure, leaving the reference to $\A$ implicit: For
    example, for $F \in \Sh_{P}(X;\D)$ we will write $F \in \D^{\ge 0}$ to
    indicate that $F \in  \Sh_{P}(X;\D)^{\ge 0}$. Perverse $t$-structures will
    be referred to as $(\Dp^{\ge 0},\Dp^{\le 0})$ where the standard convention
    is to use the middle perversity function.
\end{rem}

\begin{prop}[Left exactness]
    \label{prop:leftexact}
    Let $i_{\Lambda}: \Lambda \subset X$ be a closed subspace and $F \in \Sh_P(X;\D)$. Assume that
    \begin{enumerate}[label=(R)]
        \item\label{it:real} for every $p \in P$, the subspace
            \[
                \Lambda_p := \Lambda \cap X_p \subset X_p
            \]
            is a smooth real submanifold with $\dim_{\RR}(\Lambda_p) \le \dim_{\CC}(X_p)$.
    \end{enumerate}
    Then the functor
    \[
        i^!_{\Lambda}:  \Sh_P(X;\D) \to \Sh_P(\Lambda;\D)
    \]
    is left exact with respect to the perverse $t$-structure on $\Sh_P(X;\D)$
    and the standard $t$-structure on $\Sh_P(\Lambda;\D)$.
\end{prop}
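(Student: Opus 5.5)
We have to show: if $F \in \Dp^{\ge 0}$, then $i^!_\Lambda F \in \D^{\ge 0}$. The plan is to stratify and proceed by dévissage over $P$. The perverse $t$-structure is characterized by costalk conditions. Let $f_p : X_p \hra X$ denote the stratum inclusions. Then $F \in \Dp^{\ge 0}$ if and only if, for every $p$, $f_p^! F$ is a local system complex on $X_p$ concentrated in degrees $\ge -\dim_{\CC} X_p$. (Here we use $P$-constructibility, so that $f_p^!F$ is locally constant on $X_p$.)

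\textbf{Step 1: reduce to single strata.} Choose a filtration of $P$ by downward-closed subsets, adding one minimal element at a time. For an open--closed decomposition $X = U \cup Z$, the recollement triangle $i_*i^! F \to F \to j_*j^*F$ is compatible with $i^!_\Lambda$ by base change. Restricting it to $\Lambda$ yields a triangle built from the pieces $(\Lambda_p \hra \Lambda)_* (\Lambda_p \hra X_p)^! f_p^! F$. We may induct along the filtration. Since $\D^{\ge 0}$ is closed under extensions, and $*$-pushforward along the embeddings $\Lambda_p \hra \Lambda$ is left $t$-exact for the standard $t$-structure, it suffices to show the following: for each $p$, the object $(\Lambda_p \hra X_p)^!\, f_p^! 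F$ lies in $\D^{\ge 0}$ on $\Lambda_p$.

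\textbf{Step 2: the single-stratum estimate.} On $X_p$, the object $G := f_p^!F$ is locally constant with cohomology in degrees $\ge -d_p$, where $d_p = \dim_{\CC} X_p$. Let $e: \Lambda_p \hra X_p$ be the smooth real submanifold, of real codimension $c = 2d_p - \dim_{\RR}\Lambda_p$. For a locally constant sheaf, $e^! G \simeq e^*G \otimes \mathrm{or}[-c]$, via local triviality and the computation $\Gamma_{\RR^m}(\RR^{m+c};k) = k[-c]$. Hence $e^!G$ is concentrated in degrees $\ge -d_p + c = d_p - \dim_\RR \Lambda_p \ge 0$. This last inequality is precisely hypothesis \ref{it:real}.

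\textbf{Step 3: bookkeeping.} We need $i_\Lambda^! F$ to be $P$-constructible on $\Lambda$. We also need the dévissage in Step 1 to be phrased correctly. Concretely, filtering $i_\Lambda^!F$ by the strata $\Lambda_p$ gives successive pieces $(\Lambda_p \hra \Lambda)_* (\Lambda_p\hra\Lambda)^!\, i_\Lambda^! F = (\Lambda_p \hra \Lambda)_* e^! f_p^! F$, using the identity $i_\Lambda \circ (\Lambda_p \hra \Lambda) = f_p \circ e$. Left exactness of $(-)_*$ on the pieces then concludes the argument.

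The main obstacle is Step 2. One must handle the purity isomorphism $e^!G \simeq e^*G\otimes\mathrm{or}_e[-c]$ for locally constant $G$ with values in a general Grothendieck abelian $\A$. This requires a local computation on a tubular neighbourhood, which can be reduced to the constant sheaf on $\RR^{m+c}$ relative to $\RR^m$. A secondary point is that degenerate strata, where $\Lambda_p$ is empty, contribute nothing.
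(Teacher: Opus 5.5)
Your proposal is correct and follows essentially the same route as the paper. Both arguments do a dévissage over the strata $\Lambda_p$ using the recollement triangles, together with left $t$-exactness of $*$-pushforward (the paper phrases this as an induction on depth via the recollement description of $D^{\ge 0}$ on $\Lambda_{\ge p}$). Both then reduce to the single-stratum estimate $l^!k^!F \in D^{\ge 0}$, which follows from the codimension shift for locally constant sheaves (Lemma \ref{lem:shrieksub}) and $\operatorname{codim}_{\RR}(\Lambda_p \subset X_p) \ge \dim_{\CC} X_p$.
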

\begin{proof}
    Let $F \in \Sh_P(X;\D)$ with $F \in \Dp^{\ge 0}$. We need to show
    $i^!_{\Lambda}F \in \D^{\ge 0}$. We prove the statement by induction on the
    depth of elements in the poset $P$: For a given $p \in P$, suppose we
    already have 
    \[
        i^!_{\Lambda} F | \Lambda_{>p} \in D^{\ge 0}.
    \] 
    where $\Lambda_{>p} = X_{>p} \cap \Lambda$. We wish to show that 
    \[
        i^!_{\Lambda_{\ge p}} F = i^!_{\Lambda} F | \Lambda_{\ge p} \in D^{\ge 0}.
    \] 
    Since the standard $t$-structure on $\Sh_P(\Lambda_{\ge p};\D)$ is obtained
    by recollement with respect to $\Lambda_{\ge p} = \Lambda_{>p} \cup
    \Lambda_p$ this is equivalent to showing
    \begin{enumerate}
        \item $i^!_{\Lambda_p} F \in D^{\ge 0}$, and
        \item $i^!_{\Lambda} F | \Lambda_{> p} \in D^{\ge 0}$.
    \end{enumerate}
    The second statement is true by induction hypothesis. The first statement
    follows, since we have 
    \[
        \Lambda_p \overset{l}{\subset} X_p \overset{k}{\subset} X
    \]
    so that 
    \[  
        i^!_{\Lambda_p} F = l^!k^!F
    \]
    Now since $F \in \Dp^{\ge 0}$, we have $k^!F \in D^{\ge - \dim_{\CC}(X_p)}$.
    Further, since the codimension of $\Lambda_p \subset X_p$ is least
    $\dim_{\CC}(X_p)$, we have $l^!k^!F \in \Dp^{\ge 0}$, by Lemma
    \ref{lem:shrieksub}. Note that we may begin the induction by applying the
    induction step at a maximal element $p$ of $P$ where $\Lambda_{>p} =
    \emptyset$.
\end{proof}

\begin{lem}
    \label{lem:shrieksub}
    Let $i: \RR^m \subset \RR^n$ denote the inclusion via the first $m$
    coordinates and let $F \in \Sh(\RR^n,\D)$ be locally constant. Then, for
    every $k \in \ZZ$ and $x \in \RR^m$, we have a canonical isomorphism
    \[
        H^k(i^!F)_x \cong H^{k+m-n}(F)_x.
    \]
\end{lem}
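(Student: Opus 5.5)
The plan is to reduce to the constant sheaf and then compute $i^!$ of a constant sheaf on $\RR^n$ along a linear subspace by local cohomology with supports. Since $\RR^n$ is contractible and $F$ is locally constant, $F$ is constant: $F \simeq \pi^*A$ for $\pi:\RR^n \to \pt$ and $A = F_x \in \D$. This uses the equivalence between locally constant sheaves on a contractible space and $\D$, which the paper already invokes via \cite[A.2.1]{lurie:ha}. Because all constructions are natural in $A$ and commute with the exact functor $H^k$ in the appropriate sense, it suffices to identify $i^!\pi^*A$ with $\pi_m^*A[m-n]$, where $\pi_m: \RR^m \to \pt$.

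For the identification, factor $i$ as a composite of $n-m$ codimension-one inclusions $\RR^{k-1}\subset \RR^k$. Then $i^!$ is the composite of the corresponding $!$-pullbacks, and it is enough to treat the case $n = m+1$. Write $\RR^{m+1} = \RR^m \times \RR$ with closed embedding $i$ and open complement $j: \RR^m\times(\RR\setminus\{0\}) \hra \RR^{m+1}$. The recollement triangle
\[
i_*i^! F \to F \to j_*j^*F
\]
gives $i^! F \simeq \fib(i^*F \to i^*j_*j^*F)$. For $F = \pi^*A$, compute stalks at $x\in\RR^m$ as colimits over small product neighbourhoods $B\times(-\epsilon,\epsilon)$. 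Then $i^*F$ has stalk $A$, and $i^*j_*j^*F$ has stalk $\Gamma(B\times((-\epsilon,0)\sqcup(0,\epsilon)); \pi^*A) \simeq A\oplus A$, since constant sheaves on contractible opens have global sections $A$ (homotopy invariance, again from the exodromy/locally constant equivalence). The map $A \to A\oplus A$ is the diagonal, whose fiber is $A[-1]$.

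This holds compatibly over $\RR^m$: one gets $i^!\pi^*A \simeq \pi_m^*A[-1]$, and in particular $i^!F$ is again locally constant, so the induction on codimension applies. Iterating $n-m$ times yields $i^!F \simeq \pi_m^*A[m-n]$. Taking $H^k$ of stalks then gives $H^k(i^!F)_x \cong H^{k+m-n}(A) = H^{k+m-n}(F)_x$.

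The main obstacle is the canonicity and coherence of the stalk computation: one has to make sure that the stalk of $j_*j^*F$ at a point of the hyperplane really is computed by the cofinal system of product neighbourhoods, and that the resulting identification is natural in $F$ rather than dependent on choices. I would handle this by working globally instead of with stalks. Using that $\pi^*$ commutes with $j_*$ for the product projection (smooth base change in \cite{volpe2025six}), one obtains $j_*j^*\pi^*A \simeq p^*(\iota_*\iota^* A_\RR)$, where $p:\RR^{m+1}\to\RR$ is the last projection and $\iota:\RR\setminus\{0\}\hra\RR$. This reduces everything to the one-dimensional computation $\fib(A \to A\oplus A) = A[-1]$ on $\RR$, pulled back along $p$. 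The pullback then provides the canonical identification.
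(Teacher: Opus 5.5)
Your proposal is correct and follows the paper's own argument: induct on codimension to reduce to $n=m+1$, evaluate the triangle $i_*i^!F \to F \to j_*j^*F$ at a point $x \in \RR^m$, and identify $F_x \to (j_*j^*F)_x$ with the diagonal $E \to E \oplus E$, whose fiber is $E[-1]$. Your extra steps fill in details the paper leaves implicit: you first reduce to a constant sheaf, and you check that $i^!F$ is again locally constant, which is what lets the codimension induction run.
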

\begin{proof} By induction on the codimension, we may assume $n = m+1$. We
    denote by $j: \RR^m \setminus \RR^{m+1}$ the inclusion of the open
    complement of $\RR^m$. For every $x \in \RR^m$, we consider the exact
    triangle
    \[
        (i_*i^!F)_x \to F_x \to (j_* j^*F)_x.
    \]
    where the map $H^k(F_x) \to H^k(j_* j^*F)_x$ can be identified with a
    diagonal map $E \to E \oplus E$. The claim follows.
\end{proof}

\begin{prop}[Right exactness]
    \label{prop:rightexact}
    Let $i_{\Lambda}:\Lambda \subset X$ be a closed subset with complement
    $j: X \setminus \Lambda \subset X$ and assume that
    \begin{enumerate}[label=(T)]
        \item\label{it:transverse} for every $G \in \Sh_P(X;\D)$ with $G \in \Dp^{\le -1}$, we have
            $i_{\Lambda}^*j_*j^*G \in \D^{\le -1}$. 
    \end{enumerate}
    Then the functor
    \[
        i^!_{\Lambda}: \Sh_P(X;\D) \lra \Sh_P(\Lambda;\D)
    \]
    is right exact with respect to the perverse $t$-structure on $\Sh_P(X;\D)$
    and the standard $t$-structure on $\Sh_P(\Lambda;\D)$.
\end{prop}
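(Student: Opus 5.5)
We must show: if $F \in \Dp^{\le 0}$, then $i^!_\Lambda F \in \D^{\le 0}$. The plan is to compare $i^!_\Lambda$ with $i^*_\Lambda$ through the localization triangle for the closed embedding $i_\Lambda$ and its open complement $j$. Applying $i_\Lambda^*$ to the exact triangle $i_{\Lambda *} i^!_\Lambda F \to F \to j_*j^*F$, and using $i^*_\Lambda i_{\Lambda *} \simeq \id$, we obtain an exact triangle in $\Sh_P(\Lambda;\D)$:
\[
    i^!_\Lambda F \lra i^*_\Lambda F \lra i^*_\Lambda j_* j^* F .
\]
Rotating it gives
\[
    i^*_\Lambda j_*j^*F[-1] \to i^!_\Lambda F \to i^*_\Lambda F.
\]
Since $\D^{\le 0}$ is closed under extensions, it suffices to prove two things: $i^*_\Lambda F \in \D^{\le 0}$, and $i^*_\Lambda j_*j^*F \in \D^{\le -1}$.

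The second claim follows from hypothesis \ref{it:transverse}. Set $G = F[1]$. Then $G \in \Dp^{\le -1}$, so \ref{it:transverse} gives $i^*_\Lambda j_* j^* F[1] \in \D^{\le -1}$. We need this one shift better, and it comes from the rotated triangle: the term appearing there is $i^*_\Lambda j_* j^* F[-1]$. The membership $i^*_\Lambda j_*j^*F[1]\in\D^{\le -1}$ means $i^*_\Lambda j_*j^*F \in \D^{\le 0}$, hence $i^*_\Lambda j_*j^*F[-1] \in \D^{\le 1}$, which is too weak. Applying \ref{it:transverse} to $G=F$ itself is not allowed, since we only know $F\in\Dp^{\le 0}$.

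So the bookkeeping must be redone. Use \ref{it:transverse} for $G = F[1]$, i.e. $i^*_\Lambda j_*j^*(F[1]) = (i^*_\Lambda j_*j^*F)[1] \in \D^{\le -1}$, which is equivalent to $i^*_\Lambda j_*j^*F \in \D^{\le 0}$. Now place this in the original (unrotated) triangle
\[
    i^*_\Lambda j_*j^*F[-1] \to i^!_\Lambda F \to i^*_\Lambda F ,
\]
in which the left term lies in $\D^{\le 1}$. That is again too weak. The better route is to read the triangle $i^!_\Lambda F \to i^*_\Lambda F \to i^*_\Lambda j_*j^*F$ through its long exact cohomology sequence:
\[
    H^{k-1}(i^*_\Lambda j_*j^* F) \to H^k(i^!_\Lambda F) \to H^k(i^*_\Lambda F).
\]
For $k\ge 1$ both outer terms vanish, provided $i^*_\Lambda F\in\D^{\le 0}$ and $i^*_\Lambda j_*j^*F \in \D^{\le 0}$. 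Hence $i^!_\Lambda F \in \D^{\le 0}$. So \ref{it:transverse} applied to $F[1]$ suffices, and the shift issue disappears.

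It remains to establish the first claim, $i^*_\Lambda F \in \D^{\le 0}$. Here the standard $t$-structure on $\Sh_P(\Lambda;\D)$ is stalkwise, so we check stalks at points $x \in \Lambda_p$, where $(i^*_\Lambda F)_x = F_x$. Perversity gives only $F|_{X_p} \in \D^{\le -\dim_\CC X_p}$, which is contained in $\D^{\le 0}$. So $i^*_\Lambda F\in\D^{\le 0}$ holds trivially from the support condition on perverse sheaves; no hypothesis \ref{it:real} is needed.

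The main point requiring care is therefore only the correct shift bookkeeping in applying \ref{it:transverse}. That hypothesis is designed precisely so that the connecting term contributes no cohomology in degree $1$, while the restriction $i^*_\Lambda$ is automatically right exact from perverse to standard.
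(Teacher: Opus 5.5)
Your reduction is the paper's. From $i_{\Lambda*}i^!_\Lambda F\to F\to j_*j^*F$, right exactness follows once you know two things. First, $i^*_\Lambda F\in D^{\le 0}$, which holds stalkwise from $F|_{X_p}\in D^{\le -\dim_\CC X_p}$. Second, $i^*_\Lambda j_*j^*F\in D^{\le -1}$.

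The gap is the step where you say the shift issue disappears. In $H^{k-1}(i^*_\Lambda j_*j^*F)\to H^k(i^!_\Lambda F)\to H^k(i^*_\Lambda F)$, the case $k=1$ involves $H^0(i^*_\Lambda j_*j^*F)$. Knowing only $i^*_\Lambda j_*j^*F\in D^{\le 0}$, which is all that (T) applied to $F[1]$ gives, does not make it vanish. The long exact sequence is just the rotated triangle written out, so it cannot recover the lost shift. Your first diagnosis (``too weak'') was correct, and the argument as written only yields $i^!_\Lambda F\in D^{\le 1}$.

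No rearrangement can fix this, because with (T) read literally (for $G\in\Dp^{\le -1}$) the conclusion is false. Take $X=\CC$ with a single stratum, $\Lambda=\{0\}$, and $F=\underline{E}[1]$ for some $0\neq E\in\A$.
\begin{itemize}
\item For constant $G$ one has $i^*_\Lambda j_*j^*G\simeq G_0\oplus G_0[-1]$, the cohomology of a punctured disc.
\item So $G\in\Dp^{\le -1}$, i.e.\ $G\in D^{\le -2}$, gives $i^*_\Lambda j_*j^*G\in D^{\le -1}$, and (T) holds.
\item Yet $F$ is perverse and $i^!_\Lambda F\simeq E[-1]\notin D^{\le 0}$.
\end{itemize}

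What is really needed is $i^*_\Lambda j_*j^*F\in D^{\le -1}$ for $F\in\Dp^{\le 0}$, i.e.\ (T) with $\Dp^{\le 0}$ in place of $\Dp^{\le -1}$. This is how the paper's proof uses (T): it reduces to $(j_*j^*F)_x\in D^{\le -1}$ and invokes (T) for $F\in\Dp^{\le 0}$ itself. The cone estimate of Lemma \ref{lem:stratcone}, used to verify (T) in Lemma \ref{lem:trans}, in fact only needs the $\Dp^{\le 0}$ bound. Under that hypothesis your argument closes directly from the rotated triangle, with no further bookkeeping.
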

\begin{proof}
    Let $i: \Lambda \subset X$ and $j: X \setminus \Lambda \subset X$. Consider
    the exact triangle
    \[
        i_*i^! F \to F \to j_* j^* F
    \]
    For $x \in \Lambda \cap X_p$, we have $F_x \in D^{\le - \dim(X_p)} \subset
    D^{\le 0}$ so, to guarantee that $(i_*i^! F)_x \in D^{\le 0}$, it suffices
    to show $(j_*j^*F)_x \in D^{\le -1}$, but this is true by assumption
    \ref{it:transverse}. 
\end{proof}

\begin{cor}
    \label{cor:texact}
    Under the hypothesis of Proposition \ref{prop:rightexact}, the functor
    \[
        i^!_{\Lambda}: \Sh_P(X;\D) \to \Sh_P(\Lambda;\D) 
    \]
    is $t$-exact with respect to the perverse $t$-structure, respectively standard 
    $t$-structure.
\end{cor}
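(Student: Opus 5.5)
The plan is to split $t$-exactness into its two halves and obtain each half from one of the two preceding propositions. A functor between stable $\infty$-categories with $t$-structures is $t$-exact exactly when it is both left exact (it carries $\Dp^{\ge 0}$ into $\D^{\ge 0}$) and right exact (it carries $\Dp^{\le 0}$ into $\D^{\le 0}$). So it suffices to check these two inclusions for $i^!_{\Lambda}$ separately.

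\emph{Right exactness.} This is precisely the conclusion of Proposition \ref{prop:rightexact}, whose only input is the transversality hypothesis \ref{it:transverse}, assumed in the corollary. The mechanism is the triangle $i_*i^!F \to F \to j_*j^*F$. Stalks of $F \in \Dp^{\le 0}$ along $\Lambda$ lie in $\D^{\le 0}$, and \ref{it:transverse} (applied to a suitable shift of $F$) places the stalks of $j_*j^*F$ along $\Lambda$ in $\D^{\le -1}$. Hence the stalks of $i^!F$ lie in $\D^{\le 0}$.

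\emph{Left exactness.} This follows from Proposition \ref{prop:leftexact}, which requires the reality condition \ref{it:real}: each $\Lambda_p = \Lambda \cap X_p$ is a smooth real submanifold with $\dim_{\RR}\Lambda_p \le \dim_{\CC}X_p$. I would read the phrase ``under the hypothesis of Proposition \ref{prop:rightexact}'' as implicitly retaining the standing setting of this subsection, in which $\Lambda$ is a totally real leaf satisfying \ref{it:real}. If that is not intended, \ref{it:real} has to be added as an explicit hypothesis, since \ref{it:transverse} alone gives no control on $i^!$ from below. The argument of Proposition \ref{prop:leftexact} then runs by induction over $P$ from maximal strata downward. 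It uses the recollement description of the standard $t$-structure on $\Lambda_{\ge p} = \Lambda_{>p} \cup \Lambda_p$, the factorization $i^!_{\Lambda_p} = l^!k^!$, the estimate $k^!F \in \D^{\ge -\dim_{\CC}X_p}$ for perverse $F$, and Lemma \ref{lem:shrieksub} to absorb this shift via the real codimension of $\Lambda_p$ in $X_p$, which is at least $\dim_{\CC}X_p$.

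Combining the two halves gives $i^!_{\Lambda}(\Dp^{\le 0}) \subset \D^{\le 0}$ and $i^!_{\Lambda}(\Dp^{\ge 0}) \subset \D^{\ge 0}$, which is the claimed $t$-exactness. In particular, $i^!_{\Lambda}$ restricts to an exact functor from $\Perv_P(X;\A)$ to the heart of the standard $t$-structure on $\Sh_P(\Lambda;\D)$. The only real obstacle is bookkeeping: making explicit that both \ref{it:real} and \ref{it:transverse} are in force. Once that is done, the corollary is a direct juxtaposition of Propositions \ref{prop:leftexact} and \ref{prop:rightexact}.
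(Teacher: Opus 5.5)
Your proposal is the paper's argument: the corollary is proved by combining Proposition \ref{prop:leftexact} and Proposition \ref{prop:rightexact}, and you are right that the left-exactness half needs \ref{it:real}, which the corollary's wording leaves implicit. One correction to your recap of the right-exactness mechanism: no shift helps, since applying \ref{it:transverse} (stated for $G \in \Dp^{\le -1}$) to $F[1]$ only gives $(j_*j^*F)_x \in \D^{\le 0}$, so the bound $(j_*j^*F)_x \in \D^{\le -1}$ for $F \in \Dp^{\le 0}$ needs the transversality estimate for objects of $\Dp^{\le 0}$ themselves (which is what the argument of Lemma \ref{lem:trans} in fact gives), though this does not affect your citation of the proposition.
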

\begin{proof}
    Proposition \ref{prop:rightexact} and Proposition \ref{prop:leftexact}.
\end{proof}

The hypothesis \ref{it:transverse} can be viewed as some (cohomological)
tubular regularity of the subspace $\Lambda \subset X$. There may be different
geometric setups in which this regularity holds: as already alluded to above,
one natural way to guarantee \ref{it:transverse} along with the hypothesis
\ref{it:real} of Proposition \ref{prop:leftexact}, is to assume that $\Lambda$
is the leaf of a totally real foliation which admits a ``transverse'' foliation
in the following sense:

\begin{defi}
    \label{defi:foliated}
    \begin{enumerate}
        \item By a {\em totally real foliation of $X$}, we mean a continuous map $f: X \to R$
        such that, for every $r \in R$ and $p \in P$, the subspace
        \[
            X_{r,p} = f^{-1}(r) \cap X_p \subset X_p
        \]
        is a totally real submanifold, and such that $\Lambda = f^{-1}(r)$ is locally
        conical in the following sense:
        \begin{itemize}
        \item for every $r \in R$, $p \in P$, and every $x \in X_{r,p}$, there is an open
            $P_{\le p}$-stratified embedding $\RR^k \times C(M)$ where $M$ is a
            compact $P_{> p}$-stratified topological space whose strata $M_q$, $q
            \in P_{>p}$ are smooth real manifolds of real dimension
            $\dim_{\CC}(X_q) - 1$.
        \end{itemize}
        The fibers of $f$ are called the {\em leaves} of the foliation. 
        \item A pair of foliations $f: X \to R$ and $g: X \to S$ is called {\em
            transverse} if, for every $x \in X$, the leaves $A = f^{-1}(f(x))$ and
            $B = g^{-1}(g(x))$ satisfy: there exists a neighbourhood $x \in U
            \subset X$ such that 
        \begin{enumerate}[label=(T\arabic*)]
            \item\label{trans:0} $A \cap B \cap U = \{x \}$,
            \item\label{trans:1} $B \cap U \setminus (A \cap U) \subset U \setminus (A \cap U)$ is a
                $P$-stratified homotopy equivalence 
            \item\label{trans:2} $A \cap V \setminus (B \cap V) \subset V \setminus (B \cap V)$ is a
                $P$-stratified homotopy equivalence.
        \end{enumerate}
        \end{enumerate}
\end{defi}

\begin{lem}
    \label{lem:trans}
    Let $\Lambda \subset X$ be a leaf of a totally real foliation which admits
    a transverse foliation. Then the \ref{it:transverse} holds.
\end{lem}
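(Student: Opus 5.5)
The hypothesis \ref{it:transverse} is a statement about stalks: $i_{\Lambda}^*j_*j^*G \in \D^{\le -1}$ can be checked pointwise at $x \in \Lambda$. So fix $x \in \Lambda \cap X_p$ and a perverse $G \in \Dp^{\le -1}$. The stalk is
\[
    (i^*_{\Lambda} j_*j^*G)_x \simeq \operatorname{colim}_{U \ni x} \Gamma(U \setminus \Lambda; G),
\]
and the plan is to compute this colimit through the transverse leaf. Let $B = g^{-1}(g(x))$ be the leaf of the transverse foliation through $x$. Choose $U$ as in Definition \ref{defi:foliated}, so that $\Lambda \cap B \cap U = \{x\}$, and cofinally small such $U$. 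Condition \ref{trans:1} says that $B \cap U \setminus \{x\} \subset U \setminus (\Lambda \cap U)$ is a $P$-stratified homotopy equivalence. Since $G$ is $P$-constructible, and sections of constructible sheaves are invariant under stratified homotopy equivalences (via the exit-path description of \S\ref{sub:comparison}), we get
\[
    \Gamma(U \setminus \Lambda; G) \simeq \Gamma\big((B \cap U) \setminus \{x\}; G|_B\big).
\]
This reduces the claim to showing that the restriction of $G$ to a punctured neighbourhood of $x$ in the transverse leaf $B$ has cohomology concentrated in degrees $\le -1$.

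To bound this, I would use the local conical structure. The leaf $B$ is again totally real, so near $x$ it is modelled on $\RR^{k} \times C(M)$ with strata $B_q$, $q \ge p$, of real dimension $\le \dim_{\CC}(X_q)$. By transversality \ref{trans:0}, $x$ is an isolated point of $B\cap\Lambda$, and $B_p$ should be $0$-dimensional at $x$, i.e. $k = 0$. Then $(B\cap U)\setminus\{x\} \simeq M \times (0,1)$, so up to homotopy we need $\Gamma(M; G|_M) \in \D^{\le -1}$. Here $M$ is compact and stratified by $M_q$ of real dimension $\le \dim_{\CC}(X_q) - 1$.

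The key estimate combines perversity with cohomological dimension. Since $G \in \Dp^{\le -1}$, the restriction $G|_{X_q}$ has cohomology sheaves in degrees $\le -1-\dim_{\CC}(X_q)$. By filtering $M$ by its strata (the same recollement induction over $P_{>p}$ as in Proposition \ref{prop:leftexact}), $\Gamma(M;G)$ is built from compactly supported cohomology $\Gamma_c(M_q; G|_{M_q})$, appropriately arranged. Each such term lies in degrees $\le -1-\dim_{\CC}(X_q) + \dim_{\RR}(M_q) \le -2$, hence certainly in $\D^{\le -1}$. For an upper bound on degrees, both $\Gamma$ and $\Gamma_c$ on a space of dimension $d$ raise degrees by at most $d$, so the filtration argument works with either. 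The uncompactified open cone point would cost one extra degree, which is exactly the slack between $-2$ and $-1$.

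The step I expect to be the main obstacle is justifying the homotopy-invariance replacement via \ref{trans:1}, and making sure the open neighbourhoods $U$ can be chosen cofinally so that the stratified homotopy equivalences are compatible with the colimit. I would first prove that $P$-constructible sheaves have sections invariant under stratified homotopy equivalences by reducing to exit-path $\infty$-categories. The other delicate point is confirming that the transverse leaf meets $X_p$ only in $x$ near $x$. If instead $k>0$, I would replace $C(M)$ by $\RR^k\times C(M)$ minus $\RR^k\times\{\text{cone pt}\}$, which is homotopic to $M$ again, so the estimate is unchanged.
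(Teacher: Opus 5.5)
Your route is the paper's. You reduce \ref{it:transverse} to stalks and use \ref{trans:1} to replace $(j_*j^*G)_x$ by sections over the punctured transverse leaf. You then bound these sections stratum by stratum, using total reality of the transverse leaf together with $G|_{X_q}\in D^{\le -1-\dim_{\CC}X_q}$. The paper packages that last step as Lemma \ref{lem:stratcone} plus Lemma \ref{lem:stratbounded} with $r=-1$. Your cofinality worry is legitimate, and the paper's proof has the same issue: Definition \ref{defi:foliated} only supplies one $U$, so \ref{trans:0}--\ref{trans:2} must be read as holding on a basis of neighbourhoods of $x$.

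\textbf{The step that fails as written is the passage to the link.}

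\emph{The claim $k=0$ is false.} That $B_p$ is $0$-dimensional at $x$ does not follow from \ref{trans:0}, and it fails in the paper's own examples. For $X=\CC$ with one stratum and the foliations by $\Re$ and $\Im$, $B$ is a horizontal line, so $k=1$ and $M=\emptyset$. In \S\ref{sub:hyperplane_arrangements}, $B\cap X_p$ has real dimension $\dim_{\CC}X_p$ near $x$.

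\emph{Your fallback for $k>0$ computes the wrong object.} The punctured transverse leaf is $N=(\RR^k\times C(M))\setminus\{(0,*)\}$, not $(\RR^k\times C(M))\setminus(\RR^k\times\{*\})\simeq M$. You are throwing away the punctured $p$-stratum $(\RR^k\setminus\{0\})\times\{*\}$. The link of $x$ in $B$ is the join $S^{k-1}*M$, not $M$. In the example above, your recipe gives sections over the empty set, whereas the actual stalk is sections over a punctured interval, i.e.\ two copies of $G_x$.

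\emph{The repair keeps your method.} The strata of $N$ and the bounds on them are:
\begin{itemize}
\item $N_p=(\RR^k\setminus\{0\})\times\{*\}$, of dimension $k=\dim B_p\le\dim_{\CC}X_p$;
\item $N_q=\RR^k\times(0,1)\times M_q$, of dimension $\dim B_q\le \dim_{\CC}X_q$.
\end{itemize}
So on every stratum $G$ lies in degrees $\le -1-\dim N_a$. Now $N\cong (0,1)\times L$ with $L\cong S^{k-1}*M$ compact and $\dim L_a=\dim N_a-1$. Your filtration by $\Gamma_c(L_a;G|_{L_a})$ then gives $\Gamma(N;G)\simeq\Gamma(L;G)\in D^{\le -2}$. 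Alternatively, apply Lemma \ref{lem:stratbounded} directly to $N$ with $r=-1$.

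The paper's proof of Lemma \ref{lem:stratcone} makes the same misidentification: it treats the punctured neighbourhood as $\RR^{k+1}\times M$ when $M\neq\emptyset$. The bound survives only because the omitted stratum satisfies the same dimension estimate.
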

\begin{proof}
    Let $G \in \Sh_P(X;\D)$ with $G \in \Dp^{\le -1}$.
    Due to \ref{trans:1} we have an equivalence
    \[
        (j_*j^*G)_x \simeq (k_*k^*G)_x
    \]
    where $k:\Lambda^{\perp} \setminus \{x\} \subset X$ is the punctured
    transverse leaf. Since the transverse leaf intersects each stratum $X_p$ in
    a totally real submanifold, in particular of real dimension
    $\dim_{\CC}(X_p)$, we may apply Lemma \ref{lem:stratcone} to obtain
    that, for every open neighborhood $x \in U \subset X$, we have
    $(k_*k^*G)(U) \in D^{\le -1}$ and thus $(k_*k^*G)_x\in D^{\le -1}$.
\end{proof}

\begin{lem}
    \label{lem:stratcone} 
    Let $A$ be a finite poset, $M$ an $A$-stratified space whose
    strata are smooth manifolds, and consider, for $k \ge 0$, the
    $A^{\lhd}$-stratified space $\Lambda = \RR^k \times C(M)$ where $C(M)$
    denotes the open cone over $M$. Denote further $k: \Lambda \setminus
    \{(0,*)\} \hra \Lambda$. Let $F \in \Sh_{\A^{\lhd}}(\Lambda,\D)$ and
    assume that
    \begin{itemize}
        \item for every $a \in A$, we have $F|\Lambda_a \in D^{\le - \dim(\RR^{k+1} \times
            M_a)}$.
    \end{itemize}
    Then we have 
    \[
        k_*k^*F \in D^{\le -1}.
    \]
\end{lem}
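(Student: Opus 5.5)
The plan is to compute the stalks of $k_*k^*F$ and bound the cohomological degrees stratum by stratum, using the recollement triangles for the stratification of $\Lambda \setminus \{(0,*)\}$.

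\emph{Points away from the cone point.} For $y \neq (0,*)$ we have $(k_*k^*F)_y \simeq F_y$. If $y$ lies in a stratum $\Lambda_a = \RR^k \times \RR_{>0} \times M_a$ with $a \in A$, the hypothesis gives $F_y \in D^{\le -\dim(\RR^{k+1}\times M_a)} \subset D^{\le -1}$. The bottom stratum $\RR^k \times \{*\}$ is not covered by the hypothesis as stated. It is empty in the punctured space when $k=0$, which is the case used in Lemma \ref{lem:trans}, since there the transverse leaf meets $\Lambda$ in an isolated point. For $k>0$ the statement really needs an analogous bound $F|\RR^k\times\{*\} \in D^{\le -k-1}$. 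For example, with $M=\emptyset$ and $F$ constant in degree $0$ on $\RR^k$, the stalk at $0$ is $H^*(S^{k-1})$, which is nonzero in degree $0$. I would therefore either add this bound to the hypotheses or restrict to $k=0$; with it, the argument below goes through verbatim.

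\emph{The cone point.} Here $(k_*k^*F)_{(0,*)} \simeq \operatorname{colim}_U \Gamma(U \setminus \{(0,*)\}; F)$, with $U$ ranging over neighbourhoods of the cone point. The conical structure, together with homotopy invariance of sections of constructible sheaves under stratified radial deformation, lets me replace this colimit by $\Gamma(\Lambda\setminus\{(0,*)\};F)$ for a single basic conical neighbourhood. So it suffices to show
\[
\Gamma(Y;F) \in D^{\le -1}, \qquad Y := \Lambda \setminus \{(0,*)\}.
\]
For this I would prove a general bound. Let $Y$ be a stratified space with finitely many strata $Y_s$, each a smooth manifold of dimension $d_s$, and let $G$ be constructible with $G|Y_s \in D^{\le -d_s-1}$ for all $s$. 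Then $\Gamma(Y;G)\in D^{\le -1}$. The proof is by induction on the number of strata. Pick an open stratum $j\colon Y_s \hra Y$ with closed complement $i\colon Y' \hra Y$, and apply $\Gamma$ to the triangle $j_!j^*G \to G \to i_*i^*G$. The term $\Gamma(Y';i^*G)$ is in $D^{\le -1}$ by induction. For the term $\Gamma(Y;j_!j^*G)$ I need the amplitude estimate that, for a locally constant sheaf $L$ on a $d$-manifold, $\Gamma(Y; j_!L)$ increases the top degree by at most $d$. This follows from the cohomological dimension $d$ of a $d$-manifold, which, suitably formulated for $j_!$ relative to the closure, bounds both $\Gamma(Y_s;-)$ and $\Gamma_c$-type functors by $+d$. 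Hence $\Gamma(Y;j_!j^*G) \in D^{\le -d_s-1+d_s} = D^{\le -1}$, and the extension of the two terms lies in $D^{\le -1}$. Applied to $Y$ with strata $\Lambda_a$ of dimension $k+1+\dim M_a$ (and possibly $(\RR^k\setminus 0)\times\{*\}$), this gives the claim.

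\emph{Main obstacle.} I expect the hardest step to be the amplitude estimate for $\Gamma(Y;j_!L)$ on a non-compact, non-closed stratum. It must hold uniformly and should not depend on a comparison with $\Gamma_c$ that might fail in non-compact situations. My first attempt would be to use finite covering dimension: $\Gamma(Y;-)$ has cohomological amplitude at most $\dim Y$ on sheaves supported on $\overline{Y_s}$, and the support of $j_!L$ has dimension $d_s$. This makes the bound follow from Grothendieck-type vanishing for paracompact spaces of finite covering dimension, as recalled in the appendix on sheaves. A secondary technical point is the homotopy-invariance reduction at the cone point, which I would justify using the standard conical-neighbourhood argument for constructible sheaves.
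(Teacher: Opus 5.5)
You are right that the statement needs a hypothesis on the bottom stratum $\RR^k\times\{*\}$. Your example with $M=\emptyset$ is a genuine counterexample for $k\ge 1$, and the paper's proof passes over it: identifying the stalk with sections over $\RR^{k+1}\times M$ drops the stratum $(\RR^k\setminus 0)\times\{*\}$ of the punctured neighbourhood. Restricting to $k=0$ is not an option, though. In Lemma \ref{lem:trans} the cone is the transverse leaf through $x\in X_p$, which meets $X_p$ in a submanifold of dimension $k=\dim_{\CC}(X_p)$. What controls that stratum there is $G\in\Dp^{\le -1}$. Also, the natural extra hypothesis is $F|\RR^k\times\{*\}\in D^{\le -k}$, the exact analogue of the given ones. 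Your $D^{\le -k-1}$ is an artefact of the problem below.

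\textbf{The genuine gap is in the last step.} Your general bound needs $G|Y_s\in D^{\le -\dim Y_s-1}$, but the lemma only provides $F|\Lambda_a\in D^{\le -\dim \Lambda_a}$. So on $Y=\Lambda\setminus\{(0,*)\}$ your argument gives only $\Gamma(Y;F)\in D^{\le 0}$. No better amplitude estimate for $Y$ as an abstract stratified space can fix this, because yours is sharp. The constant sheaf $R[1]$ lies in $D^{\le -1}$ on both $1$-manifolds $(0,1)$ and $S^1$, yet only $\Gamma((0,1);R[1])$ lies in $D^{\le -1}$.

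\textbf{The missing input is the cone direction.} Radial scaling gives a stratified homeomorphism $Y\cong\RR_{>0}\times\mathcal{L}$, where $\mathcal{L}=S^{k-1}*M$ is the link. Its strata $\RR^k\times M_a$ and $S^{k-1}$ have dimension one less than $\Lambda_a$ and $\RR^k\times\{*\}$. By homotopy invariance $\Gamma(Y;F)\simeq\Gamma(\mathcal{L};F|_{\mathcal{L}})$. With $D^{\le -k}$ on the bottom stratum, the hypotheses are now exactly the case $r=-1$ of Lemma \ref{lem:stratbounded} on $\mathcal{L}$. The paper's proof has the same structure: stalk as sections over a punctured model, then Lemma \ref{lem:stratbounded} with $r=-1$. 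The degrees only work out on a model whose strata are one dimension lower than those of $\Lambda$.

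Your induction over strata via $j_!j^*\to\id\to i_*i^*$ and finite covering dimension is then a reasonable proof of Lemma \ref{lem:stratbounded}, which the paper states without proof. It needs strata in the frontier of $\mathcal{L}_s$ to have smaller dimension, so that $\dim\overline{\mathcal{L}_s}=\dim\mathcal{L}_s$; this holds in the complex-analytic setting of the application. As written, your argument proves only the weaker variant with $D^{\le -\dim-1}$ on every stratum. That suffices for Lemma \ref{lem:trans}, but it is not the lemma.
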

\begin{proof}
    The stalk of $k_*k^*F$ can be identified with the global sections of the
    restriction of $F$ to either $\RR^{k} \setminus \{0\}$ (for $M =
    \emptyset$) or $\RR^{k+1} \times M$ (for $M$ nonempty). The statement now
    follows from Lemma \ref{lem:stratbounded} for $r = -1$. 
\end{proof}

\begin{lem}
    \label{lem:stratbounded}
    Let $M$ be a $A$-stratified space and $F \in \Sh_A(M;\D)$ where $\D$ is a
    stable $\infty$-category with $t$-structure. Let $r \in \ZZ$ and assume
    that, for every $a \in A$:
    \begin{enumerate}
        \item the stratum $M_a$ is a smooth real manifold, 
        \item $F|M_a \in D^{\le - \dim(M_a) + r}$.
    \end{enumerate}
    Then $\Gamma(M,F) \in D^{\le r}$. 
\end{lem}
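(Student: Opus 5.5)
We argue by induction on the number of strata of $M$, using the recollement attached to a maximal stratum. Without loss of generality assume $A$ is finite and all strata nonempty. Cohomological dimension is controlled separately on each stratum via the following standard fact: for a sheaf $G$ on a smooth real manifold $N$ of dimension $d$ (possibly noncompact, paracompact), one has $\Gamma(N,G) \in D^{\le d + s}$ whenever $G \in D^{\le s}$. This holds because the cohomological dimension of $N$ is at most $d$. More generally, the same bound holds for $\Gamma$ of $j_* G$-type pushforwards to a point, and for the compactly supported variant.

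Concretely, choose an open stratum $U = M_a$, i.e. one with $a$ minimal for the ordering in which open strata come first. Write $j: U \hra M$ for its inclusion and $i: Z = M \setminus U \hra M$ for the closed complement. This gives the fiber sequence
\[
    i_* i^! F \to F \to j_* j^* F,
\]
and applying $\Gamma(M,-)$ yields
\[
    \Gamma_Z(M,F) \to \Gamma(M,F) \to \Gamma(U,F|U).
\]
Since $D^{\le r}$ is closed under extensions, it suffices to bound the two outer terms.

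For the right-hand term, the hypothesis $F|U \in D^{\le -\dim U + r}$ together with the dimension bound gives $\Gamma(U,F|U) \in D^{\le r}$.

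For the left-hand term we face a mismatch: $\Gamma_Z(M,F) = \Gamma(Z, i^!F)$ involves $i^!$, while the hypotheses constrain restrictions $F|M_b$, i.e. $i^*$. The plan is therefore to reverse the roles. Take $U$ to be an open stratum, and instead use the triangle $j_!j^*F \to F \to i_*i^*F$. Now $F|Z = i^*F$ satisfies the hypotheses on the smaller stratified space $Z$, so induction gives $\Gamma(Z, i^*F) \in D^{\le r}$. What remains is to show $\Gamma(M, j_!j^*F) \in D^{\le r}$.

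The main obstacle is precisely this sections-of-$j_!$ term. Via the fiber sequence $\Gamma(M,j_!G) \to \Gamma(U,G) \to \Gamma(Z,i^*j_*G)$ with $G = F|U$, we get $\Gamma(U,G) \in D^{\le r}$ from the dimension bound. It then suffices to show $\Gamma(Z, i^*j_*G) \in D^{\le r-1}$, since a fiber of a map between $D^{\le r}$ and $D^{\le r-1}$ lies in $D^{\le r}$. Here we use the conical/manifold structure implicit in the application, namely that $M$ carries the stratification from Lemma \ref{lem:stratcone}, in which the link of $Z$ in $U$ along each stratum $Z_b$ is a manifold of dimension $\dim U - \dim Z_b - 1$. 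Then $(i^*j_*G)|Z_b \in D^{\le -\dim U + r + (\dim U - \dim Z_b - 1)} = D^{\le -\dim Z_b + r - 1}$, so the inductive hypothesis applied on $Z$ with $r-1$ gives the required bound. I expect verifying this link-dimension estimate stalkwise, which uses local conicality, to be the only delicate point. If the stratification is not assumed conical, the statement should instead be proved with stalk estimates obtained by further stratifying, along the same induction.
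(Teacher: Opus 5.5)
Your argument is essentially sound, but it takes a different route from the paper's and relies on hypotheses the lemma does not state.

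\textbf{The paper's route.} The paper writes out no proof of this lemma. The route it indicates is the one used for Corollary \ref{cor:manifold}:
\begin{enumerate}
\item take a finite regular cell decomposition refining the strata;
\item identify $F$ via Proposition \ref{prop:strat} with a functor $Q \to \D$, so that $\Gamma(M,F) \simeq \lim F$;
\item note that an $n$-cell inside $M_a$ has $n \le \dim M_a$;
\item apply Proposition \ref{prop:Qestimate}\ref{con:1}.
\end{enumerate}
That proof inducts on the top cell dimension using exactly the triangle $i_*i^!F \to F \to j_*j^*F$ that you discarded. The open piece is a discrete set of top cells. The restriction $i^!F$ inherits the hypothesis because $j_*j^*F(q)$ is a finite sum of values $F(q') \in D^{\le -m}$ with $m > d(q)$.

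\textbf{Your ``mismatch'' disappears in your own setting too.} Your link estimate $(i^*j_*G)|_{Z_b} \in D^{\le -\dim Z_b + r - 1}$ gives
\[
i^!F|_{Z_b} \simeq \fib\bigl(F|_{Z_b} \to (i^*j_*G)|_{Z_b}\bigr) \in D^{\le -\dim Z_b + r}.
\]
So a single application of the inductive hypothesis to $i^!F$, with the same $r$, suffices. The detour through $j_!$ with a second induction at $r-1$ is correct but not needed.

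\textbf{What each route buys.} Your geometric route avoids triangulations, but it has three costs:
\begin{enumerate}
\item You must assume local conicality with compact links whose strata are manifolds; the lemma does not state this.
\item Before invoking induction you must check that $i^*j_*G$ (or $i^!F$) is again constructible along the strata of $Z$. This holds for conical stratifications, but you do not mention it.
\item You use the vanishing bound for $\Gamma$ on a possibly noncompact manifold as a black box. That is fine for module coefficients, but not for an arbitrary $t$-structure, where infinite products need not be $t$-exact.
\end{enumerate}
The cellular route needs a compatible finite cell structure. Once that is in place, only finite limits appear, so any $t$-structure works. It also proves the one-stratum bound (Corollary \ref{cor:manifold}) rather than assuming it.
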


\begin{cor}
	\label{cor:manifold}
	Let $M$ be a smooth real manifold of dimension $n$ and let $F \in \Sh(M;\D)$ be a locally
	constant sheaf with $F \in D^{\le -n}$. Then $\Gamma(M,F) \in D^{\le 0}$. 
\end{cor}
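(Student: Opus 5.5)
This follows directly from Lemma \ref{lem:stratbounded} applied with the one-element poset $A = \{*\}$ and $r = 0$. I would take $M$ itself as the stratified space with its trivial stratification. The unique stratum is then $M_* = M$, which is a smooth real manifold of dimension $n$, so the first hypothesis of the lemma holds. The second hypothesis asks for $F|M_* = F \in D^{\le -\dim(M_*) + r} = D^{\le -n}$, which is exactly our assumption. The lemma then gives $\Gamma(M,F) \in D^{\le 0}$.

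The only point to check is that $F$ is a legitimate input, i.e. $F \in \Sh_A(M;\D)$ for the trivial stratification. Constructibility with respect to a single stratum just means local constancy, and $F$ is locally constant by hypothesis. So nothing beyond Lemma \ref{lem:stratbounded} is required.

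Should Lemma \ref{lem:stratbounded} itself need justification, for instance if one wanted the corollary as its base case, I would argue directly instead. Choose a good cover of $M$, or use a handle/cell decomposition, to write $\Gamma(M,F)$ as a finite or countable limit of $\Gamma$ over contractible pieces. On each contractible piece $F$ is constant, so the sections there lie in $D^{\le -n}$. The Čech or cellular spectral sequence has $E_1$-terms in degrees $-n + p$ with $p \le n$, because $M$ has covering dimension $n$. Hence the totalization lies in $D^{\le 0}$.

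The one subtlety is convergence for noncompact $M$. There I would use that the limit has effective cohomological amplitude $n$, because sheaf cohomology on an $n$-manifold has finite cohomological dimension $n$. This prevents $\lim^1$ phenomena from pushing classes into positive degrees. That step is the only potential obstacle, and it is exactly what Lemma \ref{lem:stratbounded} packages.
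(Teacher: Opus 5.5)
Your main argument is a valid deduction from the paper's stated results. The corollary is literally the one-stratum, $r=0$ case of Lemma \ref{lem:stratbounded}, and your constructibility check matches the paper's definition.

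It is not how the paper argues, though. Lemma \ref{lem:stratbounded} is stated in the paper without proof, and the corollary is proved directly instead. The paper triangulates $M$ and regards the simplices as a regular cell decomposition with face poset $Q$. It then uses Proposition \ref{prop:strat} to identify $F$ with a diagram $Q\to\D$ whose limit computes $\Gamma(M,F)$. Finally it applies Proposition \ref{prop:Qestimate} with $d$ the cell dimension and $r=0$. The only observation needed is $F(q)\in D^{\le -n}\subset D^{\le -d(q)}$, since every cell has dimension at most $n$.

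Your route is shorter, but it defers all the content to an unproved lemma. The natural proof of that lemma is exactly this cellular argument, run with a triangulation adapted to the strata and using $F(q)\in D^{\le -\dim M_a + r}\subset D^{\le -d(q)+r}$ for cells $q\subset M_a$. The paper's route actually exhibits the mechanism.

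Your fallback, once made precise, collapses into the paper's argument. Covering dimension $n$ gives refinements of order $n+1$, but not necessarily good ones, so ``$p\le n$'' does not follow as you stated it. The cover by open stars of vertices of a triangulation is good and has the triangulation itself as nerve. Its \v{C}ech limit is essentially the limit over $Q$.

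Invoking ``cohomological dimension $n$'' to handle noncompact $M$ essentially assumes what is to be proved. What is really needed is this: with an $n$-dimensional nerve, the alternating \v{C}ech complex has finitely many terms, each a product of objects of $D^{\le -n}$. So you only need products to preserve $D^{\le -n}$, which holds for $\D=\Mod_R$. The paper's proof tacitly needs the same input, since Proposition \ref{prop:Qestimate} is stated for finite $Q$, while a triangulation of a noncompact $M$ is infinite (though locally finite).
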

\begin{proof}
	Viewing a triangulation of the manifold $M$ as a regular cell decomposition, we may apply  
	Proposition \ref{prop:strat} so that the claim is a special case of Proposition \ref{prop:Qestimate}.
\end{proof}

\subsection{Cellular}%
\label{sub:cellular}

In this section we translate the geometric setting of \ref{sub:geometric} into the cellular, in order to prove $t$-exactness for the normalized projection functors
\[
\widetilde{\nu}_l: \Fun^P(Q,\D) \to \Fun(Q_l,\D).
\] 
Therefore we enhance the setup of \ref{sec:universal}, asking for the existence of a collection of maps of posets:
\[
s:Q \to P, \quad \pi:Q \to L\times T, \quad \p:P \to \ZZ, \quad \delta: L \to \NN,
\] 
being subject the additional constraints:
\begin{enumerate}[start= 5]
	\item\label{item1: regassum} The two maps $\delta,\p$ satisfy the relation 
	\[
	\p(s(q)) + \delta(l(q)) \leq 0
	\]
	for all $q \in Q$.
	\item\label{item2: regassum} For $q \in Q_l$ and $p = s(q)$, each slice poset $q/Q_p^{t(q)}\setminus \{q\}$ (this models the link around $q$ inside $Q_p$) has homotopy type of an $n$-dimensional sphere where $n \geq -(\p(p) + \delta(l)) -1$. (compare this to \ref{lem:shrieksub})
\end{enumerate}

Assume for the following that \ref{tfitem1}-\ref{tfitem4} of \ref{sec:universal} hold. We indicate in the proceeding, which additional assumption of \ref{item1: regassum} and \ref{item2: regassum} need to be satisfied in order to obtain (left/right) $t$-exactness of $\widetilde{\nu}_l$. 

\begin{rem}
	Recall, $\Fun^P(Q,\D) \subset \Fun(Q,\D)$ consists of those functor $F: Q \to \D$, such that $q \leq q'$ in $Q$ is send to an equivalence if $s(q) = s(q')$.
	
	The perverse $t$-structure on $\Fun(Q,\D)$ is given as follows
	\begin{enumerate}
		\item[$(P^+)$] $F \in {}^\p D^{\geq 0}$ if and only if $i_p^!F \in D^{\geq \p(p)}$ for all $p \in P$.
		\item[$(P^-)$] $F \in {}^\p D^{\leq 0}$ if and only if $i_p^\ast F \in D^{\leq \p(p)}$ for all $p \in P$.	
	\end{enumerate}	
	We obtain an induced $t$-structure on $\Fun^P(Q,\D) \subset \Fun(Q,\D)$. (It is straightforward to see that the truncation functors commute with the inclusion)
\end{rem}

\begin{prop}\label{defi:pfoliation}
	Let $\pi:Q \to L \times T $ be a foliation satisfying \ref{tfitem1}-\ref{tfitem4} of \ref{sec:universal} and \ref{item2: regassum}. Then
	\begin{enumerate}
		\item[$(\ast)$]\label{defi:pfoliation:item1} for each $l \in L$ and $p \in P$ we have $\widetilde{\nu}_l|_{Q_p}: \Fun(Q_p,\D^\simeq) \to \Fun(Q_{(l,p)},\D)$ is left $t$-exact for all $p \in P$.
	\end{enumerate}
\end{prop}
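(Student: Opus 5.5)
The plan is to reduce left $t$-exactness to a pointwise computation, using the formula \eqref{eq:projectioncommon} from Theorem \ref{thm:conclusion}.

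First I fix the reading of the statement. The source consists of $F \in \Fun(Q,\D)$ supported on $Q_p$ whose restriction to $Q_p$ sends all arrows to equivalences; these model locally constant sheaves on the stratum $X_p$. The perverse condition $(P^+)$ then says $F(q) \in D^{\ge \p(p)}$ for all $q \in Q_p$. I must show that $\widetilde{\nu}_l(F)(q) \in D^{\ge 0}$ for every $q \in Q_{(l,p)} = Q_l \cap Q_p$. Points of $Q_l$ outside $Q_p$ should contribute nothing, and I would check this from the same formula.

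By Theorem \ref{thm:conclusion}\ref{it:c2} together with \eqref{eq:pointwiseproj} and the coinitiality \eqref{eq:coinitialproj},
\[
\nu_l(F)(q) \simeq \fib\bigl(F(q) \to \lim F|q/Q^{t(q)}_{>l}\bigr).
\]
Because $\pi$ has discrete fibers, every $q' > q$ in $Q^{t(q)}$ satisfies $l(q') > l$. Hence $q/Q^{t(q)}_{>l}$ is exactly the punctured slice $q/Q^{t(q)} \setminus \{q\}$.

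Next I replace this limit by a limit over $K := q/Q^{t(q)}_p \setminus \{q\}$, the link from assumption \ref{item2: regassum}. The justification is that $F$ vanishes off $Q_p$, and that its restriction to the punctured slice is right Kan extended from the part lying in $Q_p$. This holds because $Q_p$ is convex in $Q$, so that $q/Q_p$ is upward closed in $q/Q$ up to the strata above $p$, where $F$ is zero. On $K$ the functor $F$ is constant with value $E := F(q)$, since $F$ inverts all arrows within $Q_p$. Therefore $\lim F|K \simeq C^*(K;E)$, and
\[
\nu_l(F)(q) \simeq \fib\bigl(E \to C^*(K;E)\bigr) \simeq \widetilde{C}^*(K;E)[-1].
\]

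By \ref{item2: regassum}, $K \simeq S^n$ with $n \ge -(\p(p)+\delta(l))-1$, which gives $\nu_l(F)(q) \simeq E[-n-1]$. This includes the degenerate case $K = \emptyset$, $n=-1$, where the fiber is $E$ itself. Normalizing by \eqref{eq:normnu} yields
\[
\widetilde{\nu}_l(F)(q) \simeq E[-n-1-\delta(l)] \in D^{\ge \p(p)+n+1+\delta(l)} \subset D^{\ge 0},
\]
where the final inclusion is precisely the inequality on $n$. For $q \in Q_l$ not in $Q_p$, the same computation makes the value $0$ or again governed by the link; I would verify this along the way.

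I expect the main obstacle to be the reduction from the limit over the full punctured slice to the limit over the link $K$. This requires checking carefully that the zero values of $F$ off $Q_p$ do not alter the limit, which is a Kan extension and coinitiality argument. Once that reduction is in place, the remaining steps are formal.
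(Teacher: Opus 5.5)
Your proposal is correct and follows the paper's proof essentially step for step. Both use the pointwise total-fibre formula of Theorem~\ref{thm:conclusion}, reduce to $q/Q^{t(q)}_p$ because $F$ is supported on $Q_p$, use that $F$ is constant on the link $K\simeq S^n$ to get $F(q)[-(n+1)]$, and finish with the shift $[-\delta(l)]$ and the bound in \ref{item2: regassum}; you are in fact more careful than the paper about this normalization and about the reduction step.

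One slip: the reduction works because $q/Q^{t(q)}_p$ is \emph{downward} closed in $q/Q^{t(q)}$, not upward closed. Every element of $q/Q$ lies over $P_{\ge p}$, and $s^{-1}(p)$ is downward closed in $s^{-1}(P_{\ge p})$; this is exactly what makes the zero extension a right Kan extension. Also, your aside about points of $Q_l\setminus Q_p$ can simply be dropped, since the target is $\Fun(Q_{(l,p)},\D)$.
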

\begin{proof}
	By \ref{thm:conclusion} we have the formula
	\[
	\widetilde{\nu}_l(F)(q) = \totfib(F|q/Q^{t(q)}),
	\]
	since $F$ has support on $Q_p$, we obtain 
	\[
	\widetilde{\nu}_l(F)(q) = \totfib(F|q/Q_p^{t(q)}),
	\]
	by assumption $q/Q_p^{t(q)} \setminus \{q\}$ has the homotopy type of a sphere of dimension $n \geq -\delta(l) -\p(p)- 1$, since $F$ has values in $\D^\simeq$, we compute
	\[
	\totfib(F|q/Q_p^{t(q)}) \simeq \fib(F(q) \to \lim_{S^n}F(q)) \simeq F(q)[-(n+1)]
	\]
	which shows that if $F \in D^{\geq \p(p)}$ the normalized projection functor shifts it into degree $\geq 0$.
\end{proof}

Let $Q$ be a finite poset and let $d: Q \to \NN$ be a map of posets with
discrete fibers. Let $\D$ be a stable $\infty$-category with $t$-structure. 

\begin{prop}
    \label{prop:Qestimate}
    Suppose that $F: Q \to \D$ is a diagram and there exists $r \in \ZZ$ such that, for $n \in \NN$, we have
    $F|Q_n \in \D^{\le -n + r}$. 
    \begin{enumerate}
        \item\label{con:1} Then $\lim F \in \D^{\le r}$. 
        \item\label{con:2} Suppose that $Q_0 = \{x\}$ where $x \in Q$ is initial. Then $\totfib F \in \D^{\le 0}$.
    \end{enumerate}
\end{prop}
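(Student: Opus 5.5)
We argue by induction on the size of $Q$. The basic tool is the standard decomposition of a finite limit: remove a maximal element, or more conveniently stratify $Q$ by the fibres $Q_n$ of $d$. Because $d$ is a map of posets with discrete fibres, each $Q_n$ is a discrete set and comparable elements have distinct $d$-values. The only fact about $\D$ we use is that $\D^{\le r}$ is closed under finite products, extensions, and cofibres (and hence under $[1]$).

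\emph{Part \ref{con:1}.} Choose a maximal element $m \in Q$ and set $Q' = Q \setminus \{m\}$. Since $m$ is maximal, $\lim F$ is the fibre product
\[
\lim F \simeq \lim F|Q' \times_{\lim F|(Q'_{<m})} F(m),
\]
where $Q'_{<m} = \{q < m\}$ and the map $\lim F|Q' \to \lim F|Q'_{<m}$ is restriction. This gives a fibre sequence
\[
\lim F \to \lim F|Q' \oplus F(m) \to \lim F|Q_{<m}.
\]
The three terms are handled as follows.
\begin{itemize}
\item By induction applied to $Q'$, with the same $d$, we get $\lim F|Q' \in \D^{\le r}$.
\item $F(m) \in \D^{\le -d(m)+r} \subset \D^{\le r}$.
\item Every $q < m$ satisfies $d(q) \le d(m)-1$. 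Hence $F|Q_{<m}$ fulfils the hypothesis with $r$ replaced by $r - 1$ after reindexing. More precisely, we apply induction to $Q_{<m}$ with the same $d$, but we need the sharper bound $\D^{\le r-1}$.
\end{itemize}
The sharper bound is the main obstacle, since plain induction only yields $\D^{\le r}$ for $\lim F|Q_{<m}$. The fibre of a map from a $\D^{\le r}$-object into a $\D^{\le r}$-object lies only in $\D^{\le r+1}$. We therefore need the third term in $\D^{\le r-1}$, so that its desuspension lies in $\D^{\le r}$ and the fibre stays in $\D^{\le r}$.

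To obtain it, we induct on the statement "$\lim F \in \D^{\le r - \min d + \ldots}$". The clean route is to induct on $\max d$. Set $N = \max d$ and let $Q_N$ be the discrete set of maximal-degree elements, which are all maximal in $Q$. Right Kan extension gives
\[
\lim F \simeq \lim F|Q_{<N} \times_{\prod_{m \in Q_N} \lim F|Q_{<m}} \prod_{m \in Q_N} F(m).
\]
We need $\lim F|Q_{<m} \in \D^{\le r-1}$ for each such $m$. Here the elements of $Q_{<m}$ have $d \le N-1$, which looks like no gain, since their values are only bounded by $-d+r$. So the real tool should be a different induction: put $\ell = \lim F|Q_{<m}$ and argue that it is a limit over a poset on which the bound improves.

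Alternatively, and more simply, replace the claim by the stronger assertion that $\lim F|S \in \D^{\le r - \min_{S} d}$ for every subposet $S \subset Q$. This fails as stated, so instead we prove the assertion for posets with a terminal (maximal) element cut off. I expect the honest statement for the induction is \ref{con:1} with the bound $r$, and that the key estimate for $Q_{<m}$ comes from a cone argument: $\D^{\le r}$ is preserved under the fibre only after a shift by $d(m)$. Concretely, the cofibre term has the form $\fib(F(m) \to \lim F|Q_{<m})$, and $F(m)$ sits in degree $\le r - d(m)$. I will try organising the induction via the filtration by $d$ and the skeleta of the nerve. By the Bousfield–Kan spectral sequence, $\lim F$ is built from $\prod_{q_0<\dots<q_k} F(q_k)[-k]$. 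Since $d$ strictly increases along chains, $d(q_k) \ge k$, so each such term lies in $\D^{\le -k + r + k} = \D^{\le r}$. This totalisation estimate is the argument I would actually write: the limit over a finite poset is a finite iterated fibre of these cosimplicial terms, and the chain bound $k \le d(q_k)$ gives exactly $\D^{\le r}$.

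\emph{Part \ref{con:2}.} Let $Q^\circ = Q \setminus \{x\}$, so that $\totfib F = \fib(F(x) \to \lim F|Q^\circ)$. Chains in $Q^\circ$ start in degree $\ge 1$, so the same chain estimate gives $\lim F|Q^\circ \in \D^{\le r-1}$. Also $F(x) \in \D^{\le r}$. The fibre therefore lies in $\D^{\le r}$, which is the claim with $r = 0$, the normalization intended in the statement.
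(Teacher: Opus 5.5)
Your final argument, the one you say you would actually write, is correct, but it is not the paper's route.

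\textbf{The paper's proof.} It proves part~(1), normalized to $r=0$, by induction on the top value $m$ of $d$. It peels off the whole top layer $Q_m$, which is discrete and consists of maximal elements, via the triangle $i_*i^!F\to F\to j_*j^*F$ for $i\colon Q_{<m}\subset Q$ and $j\colon Q_m\subset Q$. Then $\lim j^*F=\prod_{q\in Q_m}F(q)\in\D^{\le -m}$. The pointwise formula $i^!F(q)\simeq\fib\bigl(F(q)\to\bigoplus_{q<q'\in Q_m}F(q')\bigr)$, together with $d(q)\le m-1$, shows that $i^!F$ again satisfies the hypothesis on $Q_{<m}$. Induction gives $\lim i^!F\in\D^{\le 0}$, hence $\lim F\in\D^{\le 0}$.

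\textbf{Your proof.} You instead filter $\lim F$ all at once by the finite $\operatorname{Tot}$-tower of the cosimplicial replacement. Its layers are $\prod_{q_0<\dots<q_k}F(q_k)[-k]$, and since $d$ strictly increases along chains, $d(q_k)\ge k$, so every layer lies in $\D^{\le r}$.

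\textbf{Comparison.} Your route explains the estimate in one stroke: a chain ending at $q$ has length at most $d(q)$, and each unit of length costs exactly one shift. It also gives part~(2) from the same count, without reindexing $d$. Its cost is that it imports a standard but nontrivial fact: the fibre of $\operatorname{Tot}_k\to\operatorname{Tot}_{k-1}$ is $\Omega^k$ of the product over nondegenerate $k$-simplices of $N(Q)$ (Bousfield--Kan plus Eilenberg--Zilber/Dold--Kan in a stable $\infty$-category). State this explicitly rather than appealing to a spectral sequence. Since $N(Q)$ is finite-dimensional, the tower is finite, and closure of $\D^{\le r}$ under extensions is all you need. The paper's argument is more elementary and stays inside the recollement language used throughout, needing only the pointwise formula for right Kan extension.

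\textbf{Delete the exploratory first half.} In particular, the pullback $\lim F\simeq\lim F|Q'\times_{\lim F|Q_{<m}}F(m)$ has the variance backwards. For covariant $F$ the structure maps go $F(q)\to F(m)$, and there is no map $F(m)\to\lim F|Q_{<m}$. Removing a maximal $m$ instead gives a pullback over the limit on $Q_{<m}$ of the \emph{constant} diagram with value $F(m)$. Since chains in $Q_{<m}$ have length at most $d(m)-1$, that term lies in $\D^{\le r-1}$. So the ``main obstacle'' you ran into came from the incorrect formula, not from the problem itself.
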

\begin{proof}
    Statement \ref{con:2} follows immediately from the defining
    exact triangle
    \[
        \tfib F \to F(x) \to \lim F|Q \setminus \{x\}
    \]
    and observing that, by \ref{con:1} with $r = -1$, we have $\lim F|Q
    \setminus \{x\} \in \D^{\le -1}$. We prove \ref{con:1}. Without loss of
    generality we may assume $r = 0$.
    Let $m \in \NN$ be maximal such that $Q_m$ is nonempty. We argue by
    induction on $m$. Denote by $j: Q_m \subset Q$ and $i: Q_{<m} \subset Q$
    the inclusions. We have an exact triangle
    \[
        i_*i^! F \to F \to j_*j^* F.
    \]
    to which we apply $\lim$ to obtain an exact triangle
    \begin{equation}
        \label{eq:limtriangle}
        \lim(i^! F) \to \lim(F) \to \lim(j^* F)
    \end{equation}
    in $\D$. Since $Q_m$ is discrete, we have $\lim(j^* F) \in \D^{\le
    -m}$. We claim that $i^! F: Q_{<m} \to \D$ satisfies the hypothesis of the
    proposition so that, by induction on $m$, we have $\lim(i^! F) \in \D^{\le
    0}$. This will imply $\lim(F) \in \D^{\le 0}$ by \eqref{eq:limtriangle}.
    To show the claim, suppose that $q \in Q_{<m}$ and consider the exact triangle
    \begin{equation}
        \label{eq:evaluatetriangle}
        i_*i^! F(q) \to F(q) \to j_*j^* F(q)
    \end{equation}
    obtained by evaluating at $q$. We have $F(q) \in \D^{ \le -d(q)}$ and, by the pointwise formula, 
    \[
        j_*j^* F(q) \simeq \bigoplus_{\substack{q' \in Q_m \\ q < q'}} F(q') \in \D^{\le -m}.
    \]
    Therefore, by \eqref{eq:evaluatetriangle}, we deduce $i^! F(q) = i_*i^! F(q) \in \D^{\le -d(q)}$. 
\end{proof}

\begin{prop}[Left $t$-exactness]
	Let $Q \to L \times T$ satisfy \ref{tfitem1}-\ref{tfitem4} of \ref{sec:universal} and \ref{item2: regassum}. Given $l \in L$ then the restricted normalized projection functor
	\[
	\widetilde{\nu}_l|_{\Fun^P(Q,\D)}: \Fun^P(Q,\D) \to \Fun(Q_l,\D)
	\]
	is left $t$-exact, with respect to the perverse $t$-structure on $\Fun^P(Q,\D)$ and the standard $t$-structure on $\Fun^P(Q_l,\D)$.
\end{prop}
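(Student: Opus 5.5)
The plan is to reduce to objects concentrated on a single complex stratum $Q_p$, using a finite filtration by $!$-restrictions to the $P$-strata, and then apply Proposition \ref{defi:pfoliation} to each graded piece. Let $F \in \Fun^P(Q,\D)$ with $F \in \Dp^{\ge 0}$. By $(P^+)$ this means $i_p^!F \in \D^{\ge \p(p)}$ for every $p \in P$. We must show $\widetilde{\nu}_l(F) \in \D^{\ge 0}$.

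\textbf{Step 1 (filtration).} Choose a filtration $\emptyset = A_0 \subset A_1 \subset \dots \subset A_N = P$ by downward-closed subsets with $A_k \setminus A_{k-1} = \{p_k\}$. Setting $\Gamma_{A}F := \fib(F \to j_*j^*F)$, where $j$ is the inclusion of the open complement $Q_{P\setminus A}$, gives a tower $\Gamma_{A_1}F \to \dots \to \Gamma_{A_N}F = F$ in $\Fun^P(Q,\D)$. Its successive cofibers are $(i_{p_k})_*i_{p_k}^!F$, exactly as in Proposition \ref{prop:recollement} applied within $\Fun(Q_{A_k},\D)$. The functor $\widetilde{\nu}_l$ is exact and $\D^{\ge 0}$ is closed under extensions. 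Hence it suffices to prove
\[
\widetilde{\nu}_l\big((i_p)_*G\big) \in \D^{\ge 0}
\]
for every $p$, where $G := i_p^!F$. This $G$ is a functor on $Q_p$ sending all arrows to equivalences, since it is the restriction of an object of $\Fun^P$, and it satisfies $G \in \D^{\ge \p(p)}$.

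\textbf{Step 2 (base change).} Next I would identify $\widetilde{\nu}_l((i_p)_*G)$ with the projection of $G$ computed inside $Q_p$, namely $\widetilde{\nu}_l|_{Q_p}(G)$ on $Q_{(l,p)}$, extended by zero off $Q_{(l,p)}$. Concretely, I would use the pointwise formula \eqref{eq:projectioncommon}, $\nu_l(H)(q) = \totfib(H|q/Q^{t(q)})$. Since $(i_p)_*G$ is a right Kan extension from $Q_p$ along $Q_p \subset Q_{\le p}$, followed by extension by zero, the totalized fibre over $q/Q^{t(q)}$ can be computed after restricting to $q/Q^{t(q)}_p$. This is the same coinitiality and right-Kan-extension manipulation used in the proof of Theorem \ref{thm:conclusion}, and it is the combinatorial shadow of base change ($!$-restriction to a leaf commutes with $*$-pushforward from a stratum).

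I expect Step 2 to be the main obstacle. One must check carefully that the Cartesian-fibration hypothesis on $\pi$ makes the relevant slice inclusions coinitial. One must also check that for $q \notin Q_p$ the totalized fibre vanishes, or at least lands in $\D^{\ge 0}$. My first attempt would be to show that $q/Q^{t(q)} \cap Q_{\le p}$ has $q/Q^{t(q)}_p$ as a coinitial subposet whenever the latter is nonempty, and that $(i_p)_*G$ restricted there is right Kan extended, so the total fibre is zero when $q/Q_p^{t(q)}$ is empty.

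\textbf{Step 3 (conclusion).} With Step 2 in hand, Proposition \ref{defi:pfoliation}$(\ast)$ applies directly to $G$; this is where assumption \ref{item2: regassum} enters. It gives $\widetilde{\nu}_l|_{Q_p}(G) = G[-\delta(l)-(n+1)]$ pointwise with $n \ge -\p(p)-\delta(l)-1$. Since $G \in \D^{\ge \p(p)}$, this shift lies in $\D^{\ge 0}$. Assembling the pieces through the filtration of Step 1 yields $\widetilde{\nu}_l(F) \in \D^{\ge 0}$, which is the claimed left $t$-exactness.
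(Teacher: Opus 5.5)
\textbf{Step 2 is false as stated, and this is a genuine gap.} Your d\'evissage in Step 1 is fine. The problem is that $\widetilde\nu_l\big((i_p)_*G\big)$ is \emph{not} the extension by zero of $\widetilde\nu_l|_{Q_p}(G)$ off $Q_{(l,p)}$: it can be nonzero at cells $q\in Q_l$ with $s(q)<p$.

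Take the case $n=k=1$ of \S\ref{sub:hyperplane_arrangements}. Then $Q=\Sigma\times\Sigma$, $L=T=\Sigma$, $s^{-1}(0)=\{(0,0)\}$ and $\delta(0)=0$. Let $p=1$, $l=0$, and let $G$ be the constant diagram with value $E$ on $Q_1=Q\setminus\{(0,0)\}$, which is an $8$-cell circle. Then $(i_1)_*G(0,0)\simeq\lim_{Q_1}E\simeq E\oplus E[-1]$. With $q=(0,0)$ we have $q/Q^{t(q)}=\{(0,0),(+,0),(-,0)\}$, so formula \eqref{eq:projectioncommon} gives
\[
\widetilde\nu_0\big((i_1)_*G\big)(0,0)\simeq\fib\big(E\oplus E[-1]\to E\oplus E\big)\simeq E[-1]\oplus E[-1]\neq 0,
\]
although $(0,0)\notin Q_{(0,1)}=\{(0,+),(0,-)\}$. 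Geometrically, this is the stalk at the origin of $i_\Lambda^! j_*\underline{E}$ for the imaginary axis $\Lambda$. Your proposed verification cannot work either. Here $q/Q^{t(q)}_p=\{(\pm,0)\}$ is nonempty but not coinitial in $\{(0,0),(\pm,0)\}$, and $(i_1)_*G$ is not right Kan extended from it, since $E\oplus E[-1]\not\simeq E\oplus E$.

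\textbf{The missing idea is the correct base change.} Restricting to $Q_p$ an extension by zero from $Q_l$ gives the extension by zero from $Q_{(l,p)}$ of the restriction. Passing to right adjoints yields $\nu_l\circ(i_p)_*\simeq(i'_{l,p})_*\circ\nu^{Q_p}_l$. Here $i'_{l,p}\colon Q_{(l,p)}\subset Q_l$, and $\nu^{Q_p}_l$ is the $!$-restriction inside $Q_p$, which is exactly what Proposition \ref{defi:pfoliation} computes. So off $Q_{(l,p)}$ you get a right Kan extension, not zero; in the example it is the limit over $\{(0,\pm)\}$ of $E[-1]$. Right Kan extensions are pointwise limits and $\D^{\ge 0}$ is closed under limits, so $(i'_{l,p})_*$ is left $t$-exact. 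With this, Step 3 goes through.

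Once repaired, your argument is a valid alternative to the paper's. The paper inducts down $P$ and decomposes the \emph{target} $\widetilde\nu_l(F)$ along $Q_{(l,p)}\subset Q_{(l,(\ge p))}$, testing $a_p^!$ and $b_{>p}^*$. The only compatibility it needs is transitivity of $!$-restrictions, $a_p^!\,i^!_{(l,(\ge p))}=i^!_{(l,p)}\,i^!_p$. Left exactness of $*$-pushforward is hidden in the recollement description of the standard $t$-structure on $Q_l$. Your filtration of the source instead requires base change plus that left exactness explicitly.

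Both arguments also use, without comment, that $i_p^!F$ is locally constant on $Q_p$, i.e.\ that $j_*$ preserves $P$-constructibility. Your justification ``restriction of an object of $\Fun^P$'' presupposes exactly this for $\Gamma_{A_k}F$.
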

\begin{proof}
		We proceed by induction: As for the induction start we notice that for $p \in P$ maximal, by \ref{defi:pfoliation} we have that
	\[
	\widetilde{\nu}_l|_{Q_p}:\Fun(Q_p,\D^\simeq) \to \Fun(Q_{(l,p)},\D)
	\]
	is left $t$-exact by \ref{defi:pfoliation}. Let $p \in P$ be arbitrary and assume that
	\[
	\widetilde{\nu}_l|_{Q_{(>p)}}: \Fun^P(Q_{(>p)},\D) \to \Fun(Q_{(l,(>p))},\D)
	\]
	is already left $t$-exact. Let $F \in {}^\p D^{\geq 0}$, by recollement gluing with respect to the open closed decomposition $Q_{(\geq p)} =  Q_{(>p)} \cup Q_p$, this is equivalent to $i_p^!F \in D^{\geq \p(p)}$ and  $F|_{Q_{(>p)}} \in {}^\p D^{\geq 0}$. To prove left $t$-exactness of $\widetilde{\nu}_l|_{Q_{(\geq p)}}$ we again argue by decomposition of $Q_{(l,(\geq p))} = Q_{l,p} \cup Q_{(l,(\geq p))}$ and denote $a_p: Q_{l,p} \subset Q_{(l,(\geq p))} $ and $b_{(>p)} : Q_{(l,(>p))} \subset Q_{(l,(\geq p))}$. Then $\widetilde{\nu}_l|_{Q_{(\geq p)}}(F) \in D^{\geq 0}$ if and only if
	\begin{enumerate}\label{thm:cellularpurity:enum2}
		\item\label{thm:cellularpurity:enum2:item1} $a_p^! \widetilde{\nu}_l|_{Q_{(\geq p)}}(F) \in D^{\geq 0} $
		\item\label{thm:cellularpurity:enum2:item2} $b_{>p}^\ast \widetilde{\nu}_l|_{Q_{(l,\geq p)}}(F)\in D^{\geq 0} $.
	\end{enumerate}
	Consider the commuting square
	\[
	\begin{tikzcd}
		{Q_{(l,(\geq p))}} \arrow[r, "{i_{(l,(\geq p))}}"]      & Q_{(\geq p)}             \\
		{Q_{(l,p)}} \arrow[u, "a_{p}"] \arrow[r, "{i_{(l,p)}}"] & Q_{p} \arrow[u, "i_{p}"]
	\end{tikzcd}
	\]
	hence $a_p^!i_{(l,(\geq p))}^!F = i_{(l,p)}^!i_p^!F$. Notice that $\widetilde{\nu}_l(F)|_{Q_{\geq p}} \simeq i_{(l,(\geq p))}^!F [d(l)]$. Since by \ref{defi:pfoliation:item1} of \ref{defi:pfoliation}, $i_{(l,p)}^![d(l)]$ is left $t$-exact and $i_p^!F$ takes values in degree $\geq \p(p)$ \ref{thm:cellularpurity:enum2:item1} follows. \ref{thm:cellularpurity:enum2:item2} follows by induction since $b_{(>p)}^\ast i_{(l,(\geq p))}^!F = i_{(l,(>p))}^!F$.
\end{proof}
\begin{prop}[Right $t$-exactness]
	Let $\pi:Q \to L\times T$ satisfy \ref{tfitem1}-\ref{tfitem4} of \ref{sec:universal} and \ref{item1: regassum}. Given $l \in L$, then the normalized projection functor
	\[
	\widetilde{\nu}_l: \Fun(Q,\D) \to \Fun(Q_l,\D).
	\]
	is right $t$-exact. On $\Fun(Q,\D)$ we put the perverse $t$-structure and on $\Fun(Q_l,\D)$ the standard $t$-structure.
\end{prop}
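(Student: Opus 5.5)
We argue pointwise. Fix $l \in L$ and $F \in \Fun(Q,\D)$ with $F \in \Dp^{\le 0}$, i.e. $i_p^\ast F \in \D^{\le \p(p)}$ for all $p \in P$. Since $i_p^\ast$ is restriction to $Q_p$, this means $F(q) \in \D^{\le \p(s(q))}$ for every $q \in Q$. We must show $\widetilde{\nu}_l(F)(q) \in \D^{\le 0}$ for every $q \in Q_l$. By Theorem \ref{thm:conclusion}\ref{it:c2} and \eqref{eq:normnu}, writing $t = t(q)$, we have
\[
\widetilde{\nu}_l(F)(q) \simeq \totfib(F|q/Q^{t})[-\delta(l)].
\]
So it suffices to show $\totfib(F|q/Q^{t}) \in \D^{\le \delta(l)}$. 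The plan is to apply Proposition \ref{prop:Qestimate}\ref{con:2}, after shifting, to the poset $q/Q^t$.

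Equip $q/Q^t$ with the grading $d(q') := \delta(l(q')) - \delta(l)$. By Lemma \ref{lem:cubical}, $q/Q^t/q'$ is a cube of dimension $d(q')$, so $d$ is a map of posets. It has discrete fibres, since $q' < q''$ with equal $\delta$-value would give a $0$-dimensional cube, forcing $q' = q''$. Moreover $d^{-1}(0) = \{q\}$ and $q$ is initial. This matches the hypotheses of Proposition \ref{prop:Qestimate}\ref{con:2}.

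Next we check the degree bound. Assumption \ref{item1: regassum} gives $\p(s(q')) \le -\delta(l(q'))$, hence
\[
F(q') \in \D^{\le \p(s(q'))} \subset \D^{\le -\delta(l(q'))} = \D^{\le -d(q') - \delta(l)}.
\]
Thus $G := F|q/Q^t[-\delta(l)]$ satisfies $G|d^{-1}(n) \in \D^{\le -n}$ for all $n$. Proposition \ref{prop:Qestimate}\ref{con:2} then gives $\totfib G \in \D^{\le 0}$, which is exactly $\widetilde{\nu}_l(F)(q) \in \D^{\le 0}$.

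The substantive step is already packaged in Proposition \ref{prop:Qestimate}, so the main point needing care is the bookkeeping above. Two things must be verified: that the restriction of $\delta \circ l$ to $q/Q^t$ really is a grading with discrete fibres and with $q$ as its unique degree-$0$ element (this is where the Cartesian, discrete-fibre structure of $\pi$ and \ref{tfitem4} enter, via Lemma \ref{lem:cubical}), and that the shift conventions match, i.e. that $\widetilde{\nu}_l = [-\delta(l)]\circ\nu_l$ combines with the threshold $\D^{\le -n}$ in the Proposition to give exactly degree $\le 0$. Note that only \ref{item1: regassum} is used, not \ref{item2: regassum}, which is consistent with the statement's hypotheses.
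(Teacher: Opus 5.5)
Your argument is correct and is essentially the paper's proof: you use the $\totfib$ formula from Theorem \ref{thm:conclusion}, the grading $d=\delta\circ l-\delta(l)$ on $q/Q^{t(q)}$, the degree bound from assumption \ref{item1: regassum}, and Proposition \ref{prop:Qestimate}, and you are more careful than the paper in tracking the normalization shift $[-\delta(l)]$ and in noting that only \ref{item1: regassum} is needed. One harmless slip: the intermediate reduction should read $\totfib(F|q/Q^{t})\in\D^{\le -\delta(l)}$, not $\D^{\le \delta(l)}$, but your subsequent computation with $G$ proves exactly the right statement.
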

\begin{proof}
	By \ref{thm:conclusion} we have the formula
	\[
	\widetilde{\nu}_l(F)(q) = \totfib(F|q/Q^{t(q)})
	\]
	Consider the induced map $d: q/Q^{t(q)}\subset Q \to L \xrightarrow{\delta -\delta(l(q))} \NN$, normalized so that $d(q) = 0$ (i.e.\ substracted by $\delta(l(q))$). Given $F \in {}^\p D^{\leq 0}$, then for $r \in q/Q^{t(q)}$ we have
	\[
	F(r) \in D^{\leq \p(s(r))} \subset D^{\leq - \delta(l(r))}
	\]
	since $\p(s(r)) + \delta(l(r)) \leq 0$ by \ref{item2: regassum}, so that the assumptions of \ref{prop:Qestimate} are satisfied, and we conclude $\nu_l(F)(q) \in D^{\leq 0}$.
\end{proof}
\begin{cor}\label{cor:cellulartexactness}
	Let $Q \to L\times T$ satisfy \ref{tfitem1}-\ref{tfitem4} of \ref{sec:universal} and \ref{item1: regassum},\ref{item2: regassum}. Then the projection functor restricted to $\Fun^P(Q,\D)$
	\[
	\widetilde{\nu}_l|_{\Fun^P(Q,\D)}: \Fun^P(Q,\D)\to \Fun(Q_l,\D)
	\]
	is $t$-exact.
\end{cor}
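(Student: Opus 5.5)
This corollary follows by combining the two one-sided statements just established. A functor between stable $\infty$-categories with $t$-structures is $t$-exact exactly when it is both left and right $t$-exact. So I will show that $\widetilde{\nu}_l|_{\Fun^P(Q,\D)}$ is left $t$-exact and right $t$-exact, using the perverse $t$-structure on the source and the standard $t$-structure on $\Fun(Q_l,\D)$.

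\textbf{Left exactness.} Our hypotheses include \ref{tfitem1}--\ref{tfitem4} together with \ref{item2: regassum}. These are precisely the hypotheses of the left $t$-exactness proposition, which is stated directly for the restriction $\widetilde{\nu}_l|_{\Fun^P(Q,\D)}$. It applies verbatim and gives left $t$-exactness.

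\textbf{Right exactness.} Using \ref{tfitem1}--\ref{tfitem4} and \ref{item1: regassum}, the right $t$-exactness proposition shows that $\widetilde{\nu}_l$ is right $t$-exact on all of $\Fun(Q,\D)$, equipped with its perverse $t$-structure. To transfer this to $\Fun^P(Q,\D)$, I need that its induced perverse $t$-structure is the restriction of the one on $\Fun(Q,\D)$. Concretely, this means ${}^\p D^{\le 0}\cap \Fun^P(Q,\D)$ is the connective part of the induced $t$-structure, which holds because the truncation functors commute with the inclusion $\Fun^P(Q,\D)\subset\Fun(Q,\D)$, as noted in the remark defining the perverse $t$-structure. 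Hence if $F\in\Fun^P(Q,\D)$ lies in ${}^\p D^{\le 0}$, then it also lies in ${}^\p D^{\le 0}$ of $\Fun(Q,\D)$, so $\widetilde{\nu}_l(F)\in D^{\le 0}$. Thus the restriction is right $t$-exact.

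\textbf{Main point to watch.} The only step that is not a direct citation is this compatibility of $t$-structures under restriction. The same compatibility is what gives the coconnective part $(P^+)$ its meaning for left exactness. Since the paper asserts it in the remark, I would cite it there. If more justification is wanted, one can check that $i_p^*$ and $i_p^!$ of a functor in $\Fun^P(Q,\D)$ are computed within the locally constant subcategories. Combining the two halves gives $t$-exactness.
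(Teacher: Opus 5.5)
Your proposal is correct and matches the argument the paper intends: the corollary is stated without a separate proof, exactly parallel to Corollary~\ref{cor:texact}. Left $t$-exactness comes from the left $t$-exactness proposition using \ref{item2: regassum}, and right $t$-exactness comes from the right $t$-exactness proposition using \ref{item1: regassum}. Your extra check, that right exactness on $\Fun(Q,\D)$ restricts to $\Fun^P(Q,\D)$ because the perverse truncations preserve $\Fun^P(Q,\D)$ (so its connective part is ${}^\p D^{\le 0}\cap\Fun^P(Q,\D)$), is the one step the paper leaves implicit, and you justify it correctly.
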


\begin{cor}
    \label{cor:texactfoliated}
	Given $Q \to L \times T$ as in Corollary \ref{cor:cellulartexactness}. Then $F \in {}^\p D^\heartsuit$ if and only if $\nu_l(F) \in D^\heartsuit$ for all $l \in L$.
\end{cor}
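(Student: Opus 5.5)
The forward implication is immediate from Corollary \ref{cor:cellulartexactness}. If $F \in {}^\p D^\heartsuit$, then $F \in {}^\p D^{\ge 0} \cap {}^\p D^{\le 0}$. By $t$-exactness, $\widetilde{\nu}_l(F) \in D^{\ge 0}\cap D^{\le 0} = D^\heartsuit$ for every $l \in L$. (Here $\nu_l$ in the statement should be read as the normalized functor $\widetilde{\nu}_l$, or equivalently $\nu_l$ up to the fixed shift $[-\delta(l)]$ that the normalization absorbs.)

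For the converse I would use that the family $\{\widetilde{\nu}_l\}_{l\in L}$ is jointly conservative, and is in fact an equivalence onto the lax limit by \eqref{eq:folver}. Each $\widetilde{\nu}_l$ is exact and $t$-exact, so it commutes with the truncation functors: $\widetilde{\nu}_l(\tau^{\le -1}_{\p}F) \simeq \tau^{\le -1}\widetilde{\nu}_l(F)$, and similarly for $\tau^{\ge 1}_{\p}$. This uses the standard fact that a $t$-exact functor preserves truncation triangles, since these are characterized by membership of their outer terms in the aisles. Now suppose $\widetilde{\nu}_l(F) \in D^\heartsuit$ for all $l$. Then $\widetilde{\nu}_l(\tau^{\le -1}_{\p}F) \simeq 0$ for all $l$, and likewise for $\tau^{\ge 1}_{\p}F$.

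It remains to show that an object $G \in \Fun^P(Q,\D)$ with $\nu_l(G)\simeq 0$ for all $l$ is zero. The stratification $\{\X_l\}$ is right-admissible and generates $\X$ (Proposition \ref{prop:laxlim}), and $\nu_l$ is the right adjoint of the inclusion $\X_l\subset\X$. So $\Map(x_l, G)\simeq 0$ for every $x_l\in\X_l$ and every $l$, which means $G$ is right orthogonal to all of the $\X_l$. Since the full subcategory of objects $x$ with $\Map(x,G)\simeq 0$ is stable and contains every $\X_l$, it contains $\X_L = \X$. In particular $\Map(G,G)\simeq 0$, so $G\simeq 0$. Alternatively, one can invoke directly that the lax limit equivalence of Theorem \ref{thm:lax_lim} is conservative.

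Applying this to $G=\tau^{\le -1}_{\p}F$ and $G=\tau^{\ge 1}_{\p}F$ gives $F\in{}^\p D^\heartsuit$. The only step that needs care is verifying that the truncations $\tau_{\p}$ computed in $\Fun^P(Q,\D)$ are the ones to which $t$-exactness of $\widetilde{\nu}_l|_{\Fun^P(Q,\D)}$ applies. This holds because the remark preceding the statement notes that truncations commute with the inclusion $\Fun^P(Q,\D)\subset\Fun(Q,\D)$, so the truncations stay in $\Fun^P(Q,\D)$. I expect no real obstacle beyond this bookkeeping.
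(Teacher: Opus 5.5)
Your proposal is correct and follows the paper's route. The forward direction comes from $t$-exactness of $\widetilde{\nu}_l$ (Corollary \ref{cor:cellulartexactness}), and the converse from combining that $t$-exactness with joint conservativity of $\{\widetilde{\nu}_l\}_{l\in L}$. The differences are minor refinements in your favour, with one small slip:
\begin{itemize}
\item The paper tests vanishing of the perverse cohomology objects ${}^\p H^i(F)$, $i\neq 0$, which tacitly uses non-degeneracy of the perverse $t$-structure. Your truncations $\tau^{\le -1}_{\p}F$, $\tau^{\ge 1}_{\p}F$ avoid this.
\item The paper only cites Proposition \ref{prop:laxlim} for conservativity, whereas you actually prove it via the adjunction and generation.
\item The slip: the class $\{x : \Map(x,G)\simeq *\}$ is not itself stable, since it is not closed under desuspension. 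Either phrase the orthogonality with mapping spectra, or use that each $\X_l$ is closed under all shifts; the conclusion is unaffected.
\end{itemize}
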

\begin{proof}
	The only if direction follows directly from \ref{cor:cellulartexactness}. For the reverse direction notice that $F\in {}^\p D^\heartsuit$ if and only if ${}^\p H^i(F) = 0$ for all $i \neq 0$. This again holds if and only if $\widetilde{\nu}_l({}^\p H^i(F)) = 0$ for all $l \in L$ and $i \neq 0$ by \ref{prop:laxlim} (In fact for all $F \in \Fun(Q,\D)$ we have $F \simeq 0$ if and only if $\widetilde{\nu}_l(F) \simeq 0$ for all $l \in L$). Due to $t$-exactness of $\widetilde{\nu}_l$ (\ref{cor:cellulartexactness}) \[\widetilde{\nu}_l({}^\p H^i(F)) = H^i(\widetilde{\nu}_l(F)),\] but this implies the result since $H^i(\widetilde{\nu}_l(F)) = 0$ for all $l \in L$ and $i \neq 0$ by assumption.
\end{proof}
\subsection{Summary}
We briefly summarize the main results of \S\ref{sec:stratifications} and \S\ref{sec:foliations_and_purity}, and formulate a common framework in which the upcoming examples of \S \ref{sec:examples_of_interest} can be stated uniformly. This framework consists of a collection of maps of posets
\begin{equation}
	s:Q \to P, \quad \p:P \to \ZZ, \quad \pi: Q \to L \times T, \quad \delta: L \to \NN
\end{equation}
subject to the following conditions
\begin{enumerate}\label{enum:universalsetup}
	\item[$(U1)$] The map $\pi: Q \to L \times T$ is a
	Cartesian fibration and admits discrete fibers
	\[
	Q_l^t=\pi^{-1}(\{l\}\times \{t\}),
	\qquad l\in L,\ t\in T.
	\]  
	\item[$(U2)$] For $l<l' \in L$, the poset $l/L/l'$ is a cube of dimension
	$\delta(l') - \delta(l)$.
	\item[$(U3)$] The two maps $\delta,\p$ satisfy the relation 
	\[
	\p(s(q)) + \delta(l(q)) \leq 0
	\]
	for all $q \in Q$.
	\item[$(U4)$] For $q \in Q_l$ and $p = s(q)$, each slice poset $q/Q_p^{t(q)}\setminus \{q\}$ has homotopy type of an $n$-dimensional sphere where $n \geq -(\p(p) + \delta(l)) -1$. 
\end{enumerate}
The following is a collection of the relevant results of \S \ref{sec:stratifications} and \S\ref{sec:foliations_and_purity}:
\begin{prop}
	Let $X \to P$ be a complex conically stratified manifold with complex strata $X_p$ together with a refined cellular decomposition $X \to Q \to P$. Assume, in addition, the presence of a map $\pi: Q \to L\times T$ satisfying the conditions $(U1)$-$(U4)$. Then
	\begin{enumerate}
		\item\label{item1: summary} The refined cellular decomposition $X \to Q \to P$ yields equivalences of $\infty$-categories
		\[
		Sh_{Q}(X,\D) \simeq \Fun(Q,\D); \quad \Sh_P(X,\D) \simeq \Fun^P(Q,\D).
		\]
		\item\label{item2: summary} The stratified poset $Q \to L$ gives rise to the (cellular) foliated Verdier duality equivalence
		\[
		\Fun(Q,\D) \simeq \llim_{l \in L} \Fun(Q_l,\D).
		\]
		\item\label{item3: summary} The normalized diagram
		\begin{align*}
			\widetilde{\nu}: (L^\lhd)^\op \to \Cat_{\infty},\quad  -\infty\neq l &\mapsto
			\Fun(Q_l, \D),\\ -\infty &\mapsto \Fun^P(Q,\D)
		\end{align*}
		whose structure maps are given by the gluing and projection functors, consists entirely of $t$-exact functors. Moreover, the diagram obtained by restricting to $L$ commutes coherently. Consequently, passing to the hearts of the respective $t$-structures yields a diagram
		\[
		L^\op \longrightarrow\Cat_1
		\]
		of abelian categories. 
	\end{enumerate}
\end{prop}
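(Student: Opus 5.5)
The proposition is a summary statement, so the plan is to assemble it item by item from results already established, checking at each step that the hypotheses $(U1)$--$(U4)$ supply what those results require.

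For \ref{item1: summary}, we will invoke Proposition \ref{prop:strat}\ref{strat:2}. The cells of the refined decomposition form a regular cell decomposition with incidence poset $Q$, which yields $\Sh_Q(X,\D)\simeq\Fun(Q,\D)$. The second equivalence is Proposition \ref{prop: localizedexodromy}, which applies because $X\to P$ is a complex conically stratified manifold and $X\to P$ factors through $s\colon Q\to P$. Concretely, $\Sh_P(X,\D)$ is the full subcategory of $Q$-constructible sheaves whose restriction to each $P$-stratum is locally constant. Under the first equivalence, these correspond to the functors inverting arrows $q\le q'$ with $s(q)=s(q')$, which is exactly the subcategory $\Fun^P(Q,\D)$.

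For \ref{item2: summary}, Proposition \ref{prop:laxlim} shows that $\{\Fun(Q_l,\D)\}_{l\in L}$ is a right-admissible $L$-stratification of $\Fun(Q,\D)$. Theorem \ref{thm:lax_lim} then exhibits $\Fun(Q,\D)$ as the lax limit of the associated gluing diagram. The normalization by the shifts $[-\delta(l)]$ in Remark \ref{rem:normalization} is an autoequivalence on each stratum, so it does not affect the equivalence.

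For \ref{item3: summary}, we first obtain coherent commutativity of the restriction to $L$ from Theorem \ref{thm:conclusion}\ref{it:c1}, whose hypotheses \ref{tfitem1}--\ref{tfitem4} are exactly $(U1)$ and $(U2)$. Next, $t$-exactness of the projection functors $\widetilde{\nu}_l|_{\Fun^P(Q,\D)}$ is Corollary \ref{cor:cellulartexactness}. That corollary needs the left $t$-exactness statement, fed by $(U4)$, and the right $t$-exactness statement, fed by $(U3)$.

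The remaining point, and the one I expect to be the main obstacle, is $t$-exactness of the normalized gluing functors $\widetilde{\gamma}_{l,l'}$ with respect to the standard $t$-structures. I would use formula \eqref{eq:gluingcommon} of Theorem \ref{thm:conclusion}\ref{it:c3}. Up to the normalizing shift, this expresses $\widetilde{\gamma}_{l,l'}(F)(q)$ as a finite direct sum $\bigoplus_{q'} F(q')$ over $q'\in q/Q^{t(q)}_{l'}$. Finite direct sums preserve both $\D^{\le 0}$ and $\D^{\ge 0}$, so the gluing functor is $t$-exact. The care needed is to check that the shifts in \eqref{eq:normgamma} match those absorbed in \eqref{eq:gluingcommon}. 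This uses the cube-looping identification of Proposition \ref{prop:stackcubes} together with Lemma \ref{lem:cubical}: the $(\delta(l')-\delta(l))$-fold looping is cancelled by the normalizing shift.

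Finally, a coherent diagram of $t$-exact functors between stable $\infty$-categories restricts to hearts. Each heart is the nerve of an abelian category, hence a $1$-category, and the functors are exact. The restricted diagram $L^\op\to\Cat_\infty$ therefore factors through $\Cat_1$, giving the claimed diagram of abelian categories.
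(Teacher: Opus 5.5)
Your proposal is correct and follows essentially the same route as the paper: item (1) comes from the cellular identification together with Proposition~\ref{prop: localizedexodromy}, item (2) from Proposition~\ref{prop:laxlim} and Theorem~\ref{thm:lax_lim}, and item (3) from Corollary~\ref{cor:cellulartexactness} combined with the direct-sum formula of Theorem~\ref{thm:conclusion} for the normalized gluing functors, whose shifts cancel exactly as you describe. Your explicit appeal to Theorem~\ref{thm:conclusion}\ref{it:c1} for coherence, and your check that $(U4)$ feeds left and $(U3)$ feeds right $t$-exactness, spell out what the paper's two-line proof leaves implicit.
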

\begin{proof}
	\ref{item1: summary} follows from \ref{prop: localizedexodromy}. \ref{item3: summary} follows from \ref{cor:cellulartexactness}, together with the observation, that the gluing functors are by \ref{thm:conclusion} given as direct sums, hence are also $t$-exact.
\end{proof}

\section{Perverse sheaves via foliated Verdier duality}%
\label{sec:examples_of_interest}

\subsection{Riemann surfaces}%
\label{sub:riemann_surfaces}

We generalize the introductory example from \S \ref{subsec:illustrative} using
the foliated Verdier duality approach of \S
\ref{sec:combinatorial_foliated_verdier_duality}. Let $X$ be a Riemann surface,
$P$-stratified into a finite subset $A \subset X$ and its complement. We
consider the horizontal foliation of a meromorphic quadratic differential
$\omega$ on $X$ with poles of order at most $1$. We may treat this example as a
special case of the setup of Theorem \ref{thm:conclusion} combined with
Corollary \ref{cor:texactfoliated} but, given its simplicity, we prefer to
derive the desired classification result directly. 

To begin, we focus on investigating the category of constructible (and
perverse) sheaves locally at a pole of $\omega$. To this end, we may
assume $X = \CC$, $A = \{0\}$, and $\varphi = z^n dz^2$ with $n \ge -1$. 
The leaves of the horizontal foliation of $\varphi$ are the connected
components of the fibers of the map 
\[
    \lambda: \CC \to \RR_{\ge 0},\; z \mapsto \Im(\sqrt{z^{n+2}}).
\]
In particular, we treat $Z_0 = \lambda^{-1}(0) \subset \CC$ as a single leaf: a
corolla with a vertex of valency $n+2$, given as the union of the origin with
the open rays $R_k = \RR_{> 0}\zeta^k$, $0 \le k < n+2$, where $\zeta =
\exp( \frac{2 \pi i }{n+2})$. The remaining leaves are regular leaves $Z \cong
\RR$ exhausting the open sectors between the rays. 

The poset $Q$ is the incidence poset of the cell decomposition of $\CC$ into
the origin, labelled by $* \in Q$, the $n+2$ open rays, labelled by
$i \in \ZZ/(n+2)$, and the open sectors between the rays, labelled by the pairs
$(i+1,i)$ of incident rays, $i \in \ZZ/(n+2)$. A diagram $F: Q \to \D$ may
therefore be depicted as 
\begin{equation}
    \label{diag:const}
    \begin{tikzcd}[execute at end picture = {
                \node[E = (\tikzcdmatrixname-1-3)] {};
                \node[E = (\tikzcdmatrixname-1-5)] {};
                \node[E = (\tikzcdmatrixname-1-7)] {};
                \node[E = (\tikzcdmatrixname-2-2)(\tikzcdmatrixname-2-8)(\tikzcdmatrixname-3-5)] {};
                            }
                  ]
         & & F_{(2,1)} & & F_{(1,0)} &  & F_{(0,n+1)} &  & \\
        \dots & F_2 \ar{ur}& &\ar{ul} F_1 \ar{ur} & & \ar{ul} F_0 \ar{ur} & & \ar{ul} F_{n+1} & \dots\\
              &&&& F_* \ar{uu} \ar[bend left]{uull}\ar[bend right]{uurr} \ar[bend left]{ulll}\ar{ul}\ar{ur}\ar[bend right]{urrr} &&&&
    \end{tikzcd}
\end{equation}
where the dashed regions indicate the fibers of the map to $L$. Such a diagram is
$P$-constructible if the arrows connecting the top two rows are equivalences in
$\D$. Unpacking the description of the lax limit
\begin{equation}
    \label{eq:laxlim}
    \varprojlim_{l \in L} \Fun(Q_l,\D)
\end{equation}
in terms of sections of the Grothendieck construction, yields an identification
of this category with diagrams of the form
\begin{equation}
    \label{diag:dual}
\begin{tikzcd}
     & & F'_{(2,1)} & & \ar[dotted,-,bend right=65]{drrr} \ar[dotted,-,bend left=65]{dlll} F'_{(1,0)} &  & F'_{(0,n+1)} &  & \\
    \dots & F'_2 \ar[<-, color=red]{ur}& &\ar[<-, color=red]{ul} F'_1 \ar[<-, color=red]{ur} & & \ar[<-, color=red]{ul} F'_0 \ar[<-, color=red]{ur} & & \ar[<-, color=red]{ul} F'_{n+1} & \dots\\
          &&&& F'_* \ar[<-, color=red]{uu} \ar[bend left,<-, color=red]{uull}\ar[bend right,<-, color=red]{uurr} \ar[bend left=40]{ulll}\ar{ul}\ar{ur}\ar[bend right=40]{urrr} &&&&
\end{tikzcd}
\end{equation}
where the solid triangles commute (coherently), while the dotted edges indicate
zero homotopies for all composites of the form $F'_{(i+1,i)} \to F'_* \to F'_k$
with $k \notin \{i+1,i\}$. The zero relations come about as follows: the
transition functor of the contravariant diagram $L \to \Cat_{\infty}, l \mapsto
\Fun(Q_l,\D)$ corresponding to the arrow in $L$ connecting $*$ to an open
sector, say $(1,0)$, associates to an object $F'_{(1, 0)} \in \D$ (the stalk of
a constructible, and thus constant, sheaf on the open sector) the diagram in
$\Fun(Q_0, \D)$ given by  
\begin{align*}
    * &\mapsto F'_{(1, 0)}\\
    i & \mapsto \begin{cases}  F'_{(1, 0)} & \text{for $i \in \{0,1\}$}\\ 0 & \text{else.} \end{cases}
\end{align*}
The gluing datum corresponding to the edge $* \to (1,0)$ in the lax limit
\eqref{eq:laxlim} consists of a natural transformation between this diagram
and another given diagram in $\Fun(Q_0, \D)$ which then produces precisely the
red arrows in \eqref{diag:dual} with domain $F'_{(1,0)}$ along with the claimed
zero homotopies. 

The foliated Verdier duality 
\begin{equation}
    \label{eq:foliatedverdierriemann}
\Fun(Q,\D) \simeq \llim_{l \in L} \Fun(Q_l,\D),
\end{equation}
associates to a diagram as in \eqref{diag:const} the diagram 
\begin{equation}
    \label{eq:folverdier}
        \scriptsize
        \begin{tikzcd}[column sep=.2cm]
          & F_{(2,1)}[1] & & F_{(1,0)}[1] &  & F_{(0,n+1)}[1] &   \\
          \dots  \ar[<-, color=red]{ur}& &\ar[<-, color=red]{ul} \fib(F_1 \to F_{(2,1)} \oplus F_{(1,0)}) \ar[<-, color=red]{ur} & & \ar[<-, color=red]{ul} \fib(F_0 \to F_{(1,0)} \oplus F_{(0,n+1)}) \ar[<-, color=red]{ur} & & \ar[<-, color=red]{ul} \dots  \\
         &&& \fib(F_* \to \oplus_{i \in \ZZ/(n+2)} F_{(i+1,i)}) \ar[<-, color=red]{uu} \ar[bend left,<-, color=red]{uull}\ar[bend right,<-, color=red]{uurr} \ar[bend left=40]{ulll}\ar{ul}\ar{ur}\ar[bend right=40]{urrr} &&&
    \end{tikzcd}
\end{equation}
of the form \eqref{diag:dual}. Note the distinctive feature of foliated Verdier
duality: arrows within each leaf of the foliation keep their orientation while
arrows connecting different leaves are reversed. 

From the explicit description of foliated Verdier duality in
\eqref{eq:folverdier}, we observe that it identifies $P$-constructible diagrams
with diagrams of the form \eqref{diag:dual} such that, again, the arrows
connecting the two top rows are equivalences in $\D$. Finally, $P$-smooth
perverse sheaves correspond precisely to the $P$-constructible diagrams in
$\Fun(Q,\D)$ whose foliated Verdier dual is a diagram of the form
\eqref{diag:dual} taking values in the abelian category $\A \subset \D$. 

In conclusion, while our foliated Verdier duality in the form of
\eqref{eq:foliatedverdierriemann} yields an equivalence at the level of stable
$\infty$-categories, when passing to perverse sheaves, we recover the
classication results of \cite{KS:Riemann}:
\begin{prop}
    \label{prop:riemann}
    The category of perverse sheaves on $\CC$ valued in the abelian category $\A$
    is equivalent to the category of data of the following form
    \begin{enumerate}[label=\arabic *.]
        \item an object $E_* \in \A$ along with families $\{E_i\}$ and
            $\{E_{(i+1,i)}\}$ of objects in $\A$, both parameterized by $i \in \ZZ/(n+2)$, 
        \item for every $i \in \ZZ/(n+2)$, a morphism $E_{(i+1,i)} \to E_*$ and a morphism $E_* \to E_i$ in $\A$,
    \end{enumerate}
    subject to the condition:
    \begin{itemize}
        \item for $i,k \in \ZZ/(n+1)$, the composite $E_{(i+1,i)} \to E_* \to E_k$
            is an equivalence, for $k \in \{i,i+1\}$, and $0$, else.
    \end{itemize}
\end{prop}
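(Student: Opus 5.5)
We combine Proposition~\ref{prop: localizedexodromy} with the lax limit description of Theorem~\ref{thm:lax_lim} and the $t$-exactness of foliated Verdier duality. By Proposition~\ref{prop: localizedexodromy} (or directly Proposition~\ref{prop:strat}), $\Sh_P(\CC;\D)\simeq\Fun^P(Q,\D)$: diagrams \eqref{diag:const} whose arrows $F_i\to F_{(i+1,i)}$, $F_i\to F_{(i,i-1)}$ are equivalences. The leaf poset is $L=\{*<\text{sectors}\}$. The fiber $Q_*$ is the corolla $\{*\}\cup\{i\}$; each sector $(i+1,i)$ is its own fiber. We take $\delta(*)=0$ and $\delta(\text{sector})=1$, so the normalized functors are $\widetilde\nu_*=\nu_*$ and $\widetilde\nu_{(i+1,i)}=\nu_{(i+1,i)}[-1]$.

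First, compute the projections with Proposition~\ref{prop:nuformula}. For $q=i$ the set $q/Q_{>*}$ is the two sectors adjacent to $i$, so $\nu_*(F)(i)=\fib(F_i\to F_{(i+1,i)}\oplus F_{(i,i-1)})$. For $q=*$ it is all sectors; this poset is discrete since sectors are maximal, giving $\fib(F_*\to\bigoplus F_{(i+1,i)})$. For a sector, $\nu(F)=F_{(i+1,i)}$. This reproduces \eqref{eq:folverdier} up to the normalizing shift. By Remark~\ref{rem:gluing}, the gluing functor from sector $(i+1,i)$ to $Q_*$ sends $E$ to the diagram with $E$ at $*,i,i+1$ and $0$ elsewhere, up to a shift by $[-1]$ that the normalization removes. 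Unwinding a section of the Grothendieck construction over $L$ gives exactly the data \eqref{diag:dual}: an object of $\Fun(Q_*,\D)$, objects $E_{(i+1,i)}$, and maps of $Q_*$-diagrams $\gamma(E_{(i+1,i)})\to E|_{Q_*}$. These amount to maps $E_{(i+1,i)}\to E_*$ and $E_{(i+1,i)}\to E_k$ for $k\in\{i,i+1\}$ commuting with $E_*\to E_k$, together with nullhomotopies of $E_{(i+1,i)}\to E_*\to E_k$ for all other $k$.

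Second, identify the image of $\Fun^P(Q,\D)$. If $F$ is $P$-constructible, then $F_i\to F_{(i+1,i)}$ is an equivalence. The fiber at $i$ is then $F_{(i,i-1)}[-1]$, and after normalization the red map $E_{(i,i-1)}\to E_i$ is an equivalence; the same holds for the other adjacent sector. Conversely, the inverse equivalence sends such data back to constructible diagrams, because the $Q_*$-part of $\nu$ is computed fiberwise by the same formula. So the constructible subcategory corresponds to data in which the composites $E_{(i+1,i)}\to E_*\to E_k$ are equivalences for $k\in\{i,i+1\}$ and null otherwise. The intermediate object $E_k$ is then determined, which is why only maps $E_{(i+1,i)}\to E_*\to E_i$ remain in the statement.

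Third, pass to perverse sheaves. The pointwise computations above show that the normalized functors are $t$-exact, taking the perverse $t$-structure with $\p(\{0\})=-0$ and $\p(\CC^*)=-1$ against the standard one. Checking the sphere conditions of Corollary~\ref{cor:texactfoliated} by hand: at $*$ the link is discrete. Lemma~\ref{prop:leftexact} and Proposition~\ref{prop:rightexact} then apply via this direct computation, so a constructible $F$ is perverse iff all $E_*,E_i,E_{(i+1,i)}$ lie in $\A$. For objects of $\A$ the data is $1$-categorical: homotopies become equalities, and the nullhomotopies become conditions. This yields the stated abelian description.

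I expect the main obstacle to be this last step. It requires verifying both directions of $t$-exactness at the vertex $*$, where $\fib(F_*\to\bigoplus F_{(i+1,i)})$ must land in degree $0$. Right exactness uses $F_*\in D^{\le 0}$ and $F_{\text{sector}}\in D^{\le -1}$. Left exactness uses the costalk condition $i^!_0F\in D^{\ge 0}$, which is exactly $\fib(F_*\to\lim_{\text{link}})$ with the circle link computed via $F_i\simeq F_{\text{sectors}}$. I would verify these by the Proposition~\ref{prop:Qestimate}-style estimate rather than by the general cube framework.
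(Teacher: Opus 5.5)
Your proposal is correct and follows the paper's own route. It passes by cellular exodromy to $\Fun^P(Q,\D)$, reads off the projection and gluing functors of the lax limit over $L=\{*<\text{sectors}\}$ from Proposition~\ref{prop:nuformula} and Remark~\ref{rem:gluing}, turns constructibility into invertibility of the red arrows in \eqref{diag:dual}, and uses $t$-exactness to pass to hearts. There are two caveats.

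First, Corollary~\ref{cor:texactfoliated} does not literally apply here: no $T$ makes $Q\to L\times T$ a Cartesian fibration with discrete fibers, since a Cartesian lift of $(*,t(s))\to(s,t(s))$ would have to lie above both rays adjacent to $s$. So $t$-exactness rests on your direct check at $*$. Right exactness there is Proposition~\ref{prop:Qestimate}, and left exactness follows from the fiber sequence $i_0^!F\to\widetilde\nu_*(F)(*)\to\prod_i\widetilde\nu_*(F)(i)$, obtained by factoring $F_*\to\lim_{\mathrm{link}}F\to\bigoplus_s F_s$.

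Second, the argument, like the statement itself, needs $n\ge 0$: for $n=-1$ the decomposition is not regular, because the single ray borders the single sector from both sides.
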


It is a straightforward task to generalize to more general Riemann surfaces
equipped with a quadratic differential as specified above: either one can
adapt the above setup and give a mutatis mutandis proof, or, one can find a
suitable covering of the Riemann surface by open disks and use {\em ordinary}
$\infty$-categorical descent for the association $X \mapsto \Sh_P(X;\D)$. 

\subsection{Real hyperplane arrangements}%
\label{sub:hyperplane_arrangements}

Let $\H = \{H_1, ..., H_k\}$ be a finite configuration of hyperplanes in
$\RR^n$, cut out by the linear forms $l_1, ...,l_k \in (\RR^n)^*$. We consider
the stratification $\CC \to \{0 < 1\}$ of $\CC$ into the origin and its
complement. Evaluating the real linear forms on $\CC^n$, we consider the composite
\[
    \CC^n \overset{(l_i)}{\to} \CC^k \to \{0 < 1\}^k.
\]
Restricting the codomain of this map to its image $P \subset \{0 < 1\}^k$, we
obtain a $P$-stratification $\CC^n \to P$ with open stratum given by the
complement of the union of the complexified hyperplanes $H_{\CC}$. We will
investigate the category $\Sh_P(\CC^n;\D)$ of $P$-constructible sheaves on
$\CC^n$ via foliated Verdier duality.

The leaves of the foliation to be used are the fibers of the map $\Re: \CC^n
\to \RR^n$ given by component-wise passage to real parts. Thus, all leaves are
homeomorphic to $\RR^n$, but the $P$-constructibility constraints vary as
follows: Consider the poset $\Sigma = \{- > 0 < + \}$ and the stratification
$\RR \to \Sigma$ of $\RR$ into $\RR_{<0} \amalg \{0\} \amalg \RR_{>0}$.
Evaluating the linear forms $l_i$ on $\RR^n$ yields a continuous map
\[
    \pi: \RR^n \overset{(l_i)}{\to} \RR^k \to \Sigma^k
\]
and we obtain, denoting the image of this map by $L$, an $L$-stratification
$\pi: \RR^n \to L$. We refer to the strata of this stratification of $\RR^n$ as the
{\em flats} of $\H$. For $x \in \RR^n$, the leaf $Z = \Re^{-1} x \subset \CC^n$
can be identified with the space $\RR^n$, stratified by the flats of the
configuration $\H_x := \{H \in \H | x \in H\}$ (if $\H_x = \emptyset$ then, by
convention, there is a single stratum). 

To study foliated Verdier duality in its cellular incarnation, as developed
in \S \ref{sec:combinatorial_foliated_verdier_duality}, we consider the
stratification
\[
    \CC^n \overset{(\Re, \Im)}{\to} \RR^n \times \RR^n \overset{\pi \times
    \pi}{\to} L \times L
\]
which is in fact a regular cell decomposition with cells given by $A+iB$ where
$A,B \in L$ are flats of the real configuration (cf. \cite{KS:hyperplane}). The
combinatorial setup is then obtained by setting $Q = L \times L$, equipped with
the projection $Q \to L$ to the first component and the map $f: Q \to P$ is given as follows: 
Consider the map $\rho: \Sigma^2 \to \{0 < 1\}$ which maps $(0,0)$ to $0$ and
all other elements to $1$. Then set 
\[
    L \times L \to \{0 < 1\}^k, ((x_1, ..., x_k), (y_1, ..., y_k)) \mapsto (\rho(x_1,y_1), ..., \rho(x_k,y_k))
\]
and observe that this map takes values in $P$. We define the codimension $c(A)$
of a flat $A \in L$ to be the length of a maximal chain $A = A_0 < A_1 < ... <
A_d$ and set 
\[
    \delta: L \to \NN, A \mapsto n - c(A).
\]

Let $\X = \Fun(Q,\D)$ and, for $A \in L$, set $\X_A = \Fun(Q_A,\D)$. We analyze
the foliated Verdier duality equivalence
\begin{equation}
    \label{eq:matrix}
    \tilde{\nu}:\X \simeq \llim_{A \in L} \X_A
\end{equation}
arising from Theorem \ref{thm:lax_lim}, in its normalized version as described
in Remark \ref{rem:normalization}. 

\begin{thm}
    \label{thm:hyperplanefolver} 
    Let $R$ be a ring, let $\A$ be the abelian category of $R$-modules, and let
    $\D = \Mod_R$ be the derived category of $R$-modules. 
    \begin{enumerate}
        \item\label{thm:hyperplanefolver:0} The normalized gluing diagram 
            \[
                L \to \Cat_{\infty}, A \mapsto \Fun(Q_A,\D)
            \]
            is equivalent to the constant diagram $A \mapsto \Fun(L,\D)$ where
            we identify $\Fun(Q_A,\D)$ with $\Fun(L,\D)$ via projection to the
            second factor of $L \times L$. 
        \item\label{thm:hyperplanefolver:1} The foliated Verdier duality equivalence \eqref{eq:matrix} takes the form
            \[
                \tilde{\nu}: \X = \Fun(L \times L,\D) \simeq \Fun(L^{\op} \times L, \D).
            \]
        \item\label{thm:hyperplanefolver:2} The equivalence $\tilde{\nu}$
            identifies the $\infty$-category $\Sh_P(X;\D)$ of $P$-constructible
            sheaves with the full subcategory of 
            \[
                \Fun(L^{\op} \times L, \D).
            \]
            consisting of those diagrams that send edges of the form
            \begin{enumerate}
                \item\label{edge:1} $(A,B) \le (A,B')$ with $f(A,B) = f(A,B')$ and
                \item\label{edge:2} $(A,B) \ge (A',B)$ with $f(A,B) = f(A',B)$
            \end{enumerate}
            to equivalences in $\D$.
        \item\label{thm:hyperplanefolver:3} 
            The equivalence $\tilde{\nu}$ identifies
            $\Perv_P(X;\A)$ with the full subcategory of
            \[
                \Fun(L^{\op} \times L, \A)
            \]
            sending the edges listed in \ref{thm:hyperplanefolver:2}
            \ref{edge:1} and \ref{edge:2} to isomorphisms in $\A$. 
    \end{enumerate} 
\end{thm}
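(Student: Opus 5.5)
The plan is to realize the hyperplane setup as an instance of the universal framework $(U1)$–$(U4)$. We then read off each item from Theorem \ref{thm:conclusion}, Corollary \ref{cor:cellulartexactness} and Corollary \ref{cor:texactfoliated}.

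\emph{Choice of transverse data.} We take $T = L$ and $\pi = \id: Q = L \times L \to L \times T$. This is trivially a Cartesian fibration with discrete (singleton) fibers, giving $(U1)$, and the transverse slice $Q^t = L \times \{t\}$ is the leaf poset.

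\emph{The cube condition $(U2)$.} For flats $A < A'$ we must check that $A/L/A'$ is a cube of dimension $c(A) - c(A')$. Locally near a point of $A$, the arrangement is a product of a central arrangement with a vector space. The flats between $A$ and an adjacent face $A'$ correspond to choosing, for each hyperplane containing $A$ but not $A'$, whether to keep the sign $0$ or pass to the sign of $A'$. Each such choice is realized because faces of a real arrangement form the face poset of a regular cell complex whose intervals are boolean in this sense, i.e.\ the covectors between $A$ and $A'$ form a boolean lattice. I expect this to be the main combinatorial obstacle, and would cite the oriented matroid fact that intervals $[X,Y]$ of the covector poset above a fixed covector are boolean.

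\emph{Items (1) and (2).} Given $(U1)$ and $(U2)$, Theorem \ref{thm:conclusion}\ref{it:c1} gives a coherently commuting normalized gluing diagram. Formula \eqref{eq:gluingcommon} identifies the normalized gluing $\Fun(Q_{A'},\D) \to \Fun(Q_A,\D)$ as $F \mapsto (q \mapsto \bigoplus_{q' \in q/Q^{t(q)}_{A'}} F(q'))$. Since $Q^{t}$ has one element over each flat, the slice $q/Q^t_{A'}$ is the single element $(A',t)$. The gluing functor is therefore the identity under the identification $\Fun(Q_A,\D) \simeq \Fun(L,\D)$ via the second factor, which is (1).

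A lax limit over $L$ of a constant diagram with value $\Fun(L,\D)$ is $\Fun(\text{Grothendieck construction}, \D)$, computed via Cartesian sections of $L^\op \times$-type. Concretely, it is $\Fun(L, \Fun(L,\D))$ with contravariance in the first variable, i.e.\ $\Fun(L^\op \times L,\D)$; this follows from the identification of lax limits of constant diagrams as functor categories, as in Proposition \ref{prop:strat}. This gives (2). The orientation reversal in the first factor matches the ``arrows across leaves are reversed'' phenomenon of \eqref{eq:folverdier}.

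\emph{Items (3) and (4).} For (3), $\Sh_P$ corresponds to $\Fun^P(Q,\D)$ by Proposition \ref{prop: localizedexodromy}. We then track which edges become equivalences. An edge $(A,B)\le(A,B')$ within a leaf is preserved by $\tilde\nu$ up to the pointwise totfib formula \eqref{eq:projectioncommon}. We show by induction over the cube $q/Q^{t}$ that $F$ inverting all $P$-trivial edges is equivalent to $\tilde\nu F$ inverting the listed edges (a) and (b), using that $P$-trivial edges in $Q$ decompose into leafwise and transverse ones and that totfibers of cubes with inverted edges vanish.

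For (4), we verify $(U3)$ and $(U4)$ with $\p(p) = -\dim_\CC X_p$ (the middle perversity up to the paper's convention). $(U3)$ reads $-\dim_\CC X_{s(A,B)} + \dim A \le 0$, which holds since $\dim_\CC X_{s(A,B)} \ge \dim_\RR(A+iB) - \dim B \ge \dim A$. For $(U4)$, the link $q/Q^{t}_p\setminus\{q\}$ is the link of the face $A$ inside the real stratum $\Re(X_p)\cap\{\Im \in B\}$. That stratum is a real subspace arrangement complement, hence a sphere of dimension $\dim_\CC X_p - \dim A - 1$, which matches the required bound.

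Corollary \ref{cor:texactfoliated} then yields that perversity is equivalent to $\tilde\nu F$ taking values in $\A$, combined with (3).
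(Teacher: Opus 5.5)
Your route is the paper's. You take $T=L$ and $\pi=\id$, read off (1) from Theorem~\ref{thm:conclusion} (the transverse fibres $Q^B_{A'}$ are singletons, so the normalized gluing functors are pointwise the identity), get (2) by unwinding the lax limit of a constant diagram, trace the $P$-trivial edges for (3), and apply Corollary~\ref{cor:texactfoliated} for (4). Your explicit check of (U3) and (U4), which the paper leaves implicit, is correct, though it is cleaner to phrase it as follows. Put $S=\{i : A_i=B_i=0\}$ and $V_S=\bigcap_{i\in S}H_i$. Then $\dim_{\CC}X_{f(A,B)}=\dim V_S\ge\dim A$ because $A\subset V_S$. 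Moreover $(A,B)/Q^B_{f(A,B)}\setminus\{(A,B)\}$ is the poset of faces $C>A$ with $C\subset V_S$, i.e.\ the link of $A$ in the restricted arrangement on $V_S$, which is a sphere of dimension $\dim V_S-\dim A-1$.

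The step that does not hold up is your justification of (U2). There is no oriented-matroid fact saying that intervals of the face poset are boolean.
\begin{itemize}
\item The interval $A/L/A'$ is the face lattice of the cone $\overline{A'}$ localized at $A$, i.e.\ of a polytope of dimension $\delta(A')-\delta(A)-1$. It is a cube only when that polytope is a simplex.
\item Counterexample: take $\H=\{x_1=0,\,x_2=0,\,x_3=0,\,x_1+x_2+x_3=0\}$ in $\RR^3$. The chamber $C=\{x_1>0,\,x_2>0,\,x_3<0,\,x_1+x_2+x_3>0\}$ has four walls, so $\{0\}/L/C$ has $10$ elements rather than $8$.
\item Your sign-choice heuristic overcounts even for simplicial arrangements. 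For the $A_3$ arrangement all six hyperplanes contain the minimal face and none contains a chamber, yet the interval is a $3$-cube.
\end{itemize}
The paper's proof asserts the same cube property without argument, so this is not a departure from the paper. But as written, both arguments only cover arrangements all of whose local cones are simplicial.

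For $Q=L\times L$ you can replace cubicality by sphericity, which does hold. Write $L_{(A,A')}=\{C: A<C<A'\}$; its realization is $S^{\delta(A')-\delta(A)-2}$.
\begin{itemize}
\item For $F\in\X_{A'}$ and $E=F(A',B)$, one gets $\nu_A(F)(A,B)\simeq\Omega\,\fib\bigl(E\to C^*(|L_{(A,A')}|;E)\bigr)\simeq E[\delta(A)-\delta(A')]$.
\item For $A<A'<A''$ and $G$ supported on $Q_{A''}$, let $K$ be the cofibre of the counit $i_{A'!}\nu_{A'}G\to G$. Since $\nu_{A'}K=0$, on the slice $\{C>A\}$ the diagram $K$ is right Kan extended from $\{C>A,\ C\neq A'\}$, where it is concentrated at $A''$ with value $y=G(A'',B)$.
\item Hence $\lim_{C>A}K\simeq\fib\bigl(y\to C^*(|L_{(A,A'')}\setminus\{A'\}|;y)\bigr)$. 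This vanishes because a sphere with one vertex removed is contractible.
\end{itemize}
So locally Cartesian edges compose, and this replaces Proposition~\ref{prop:stackcubes}.

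Two smaller points. First, going from ``pointwise the identity'' to ``equivalent to the constant diagram'' needs a coherence argument. The paper sketches one via the $\tfib(j_!(\cdots))$ formula; alternatively, note that all normalized gluing functors are equivalences and $|L|$ is contractible. Second, $q/Q^t\cong A/L$ is an upper set, not a cube, so the induction you propose in (3) should run over $A/L$.
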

\begin{proof}
    Using Theorem \ref{thm:conclusion}, we obtain that, for $A \in L$, the
    normalized projection functor is given by the formula
    \[
        \widetilde{\nu}_A(F)(A,B) = [-\delta(A)]\tfib( F | (A/L) \times B).
    \]
    Using that, for every $A' \le A \in L$, the poset $A'/L/A$ is a cube of
    dimension $\delta(A') - \delta(A)$, it follows that the unnormalized gluing
    functor $\gamma_{A,A'}: \X_{A'} \to \X_{A}$ can be identified with the
    shift $[-\delta(A') + \delta(A)]$ and hence the normalized gluing functor
    can be identified with the identity functor. To see that the normalization
    choices can indeed be made coherently, assume for simplicity that all
    hyperplanes intersect in a single point. In this case, we may identify the
    normalized projection functor with 
    \[
        \widetilde{\nu}_A(F)(A,B) = \tfib( j_! (F | (A/L) \times B)).
    \]
    where $j: A/L \subset L$ is the inclusion. From this formula it can be read
    off that all images of a diagram $F \in \Fun(Q_A,\D)$ under the various
    normalized gluing functors $\gamma_{A',A}$, for $A' \le A$, can be
    identified (pointwise at $B \in L$) with total fibers of the same diagram
    $j_! (F | (A/L) \times B)$ hence coherently identifying $\gamma$ with the
    identity. A suitable adaptation of this argument can also be used to treat
    the general case, showing \ref{thm:hyperplanefolver:0}.

    Statement \ref{thm:hyperplanefolver:1} is an immediate consequence of
    \ref{thm:hyperplanefolver:0}, using 
    \[
        \llim_{A \in L} \Fun(L,\D) \simeq \Fun(L^{\op}, \Fun(L,\D)) \simeq \Fun(L^{\op} \times L,\D)
    \]
    For Part \ref{thm:hyperplanefolver:2}, we use that the $\infty$-category $\Sh_P(X;\D)$ of
    constructible sheaves can be identified with the full subcategory of
    $\Fun(Q,\D)$ inverting edges sent by $f$ to degenerate edges in $P$ (cf. \S
    \ref{sub:cellular_sheaves}), and trace this condition through the foliated
    Verdier duality. 

    Finally, \ref{thm:hyperplanefolver:3} follows immediately from Corollary
    \ref{cor:texactfoliated}.
\end{proof}

\begin{rem}
    \label{rem:ks} The statement of Theorem \ref{thm:hyperplanefolver}
    \ref{thm:hyperplanefolver:3} recovers the main result from
    \cite{KS:hyperplane} (where it is proved using Cousin complexes). Note that
    our approach is rather different, in particular, the {\em derived}
    equivalence of \ref{thm:hyperplanefolver:3} given by foliated Verdier
    duality is new. 
\end{rem}

\subsection{Symmetric products}%
\label{sub:symmetric_products}

We consider the symmetric product $X = \Sym_0^n(\CC)$ (with center of mass $0$)
and use divisor notation $\sum n_k [z_k]$, $n_k \in \NN$, $z_k \in \CC$, to
denote its elements. We equip $X$ with the discriminant stratification, given
by 
\[
    X \to P, \sum n_k [z_k] \mapsto \sum n_k
\]
where $P$ denotes the poset of partitions of $n$, ordered by refinement. 
To study the category $\Sh_P(X;\D)$ of $P$-constructible sheaves, we consider the
foliation with leaves given by the fibers of the map
\[
    X \to \Sym_0^n(\RR), \sum n_k [z_k] \mapsto \sum n_k [\Re(z_k)].
\]
In particular, the leaf $Z_x \subset \Sym_0^n(\CC)$ lying over a point $x = n_1 [x_1]
+ n_2[ x_2] + ... + n_m [x_m]$ with pairwise distinct $x_k$ is homeomorphic to 
\[
    Z_x \cong \Sym_0^{n_1}(\RR) \times \cdots \times \Sym_0^{n_m}(\RR).
\]
The {\em real} symmetric product $\Sym_0^n(\RR)$ is naturally stratified by the
poset $L$ of {\em compositions} (i.e. ordered partitions) of $n$, via the map 
\[
    \Sym_0^n(\RR) \to L, \; \sum n_k [x_k] \mapsto (n_1, ..., n_m).
\]
The leaves in $\Sym_0^n(\CC)$ that lie over $(n_1, ..., n_m) \in L$ vary in a
locally constant fashion so that the corresponding $L$-stratum of
$\Sym_0^n(\CC)$ is homeomorphic to $\RR^m \times Z_x$.

The space $\Sym^n_0(\CC)$ admits a cell structure parameterized by the poset $Q$
of {\em contingency matrices}. These are matrices with entries in $\NN$ whose
row-- and column--sums form compositions of $n$. Forming column--sums equips $Q$
with a map $Q \to L$ and there is a forgetful map $Q \to P$. Finally, for a
composition $l \in L$, we define $\delta(l)$ to be the number of components of $l$.

\begin{exa}
    \label{exa:3}
    For $n=3$, the poset $L$ is given by 
    \[
    \begin{tikzcd}
          & 21 \ar{dr}& \\
        3 \ar{ur}\ar{dr} & & 111\\
          & 12 \ar{ur} & 
    \end{tikzcd}
    \]
    and the $L$-stratified poset $Q$ of contingency matrices splits into the
    strata $\{Q_l\}_{l \in L}$ as shown in Figure \ref{fig:sym3}.
\end{exa}

\begin{figure}[htp]
\centering
    \includegraphics[width=\textwidth]{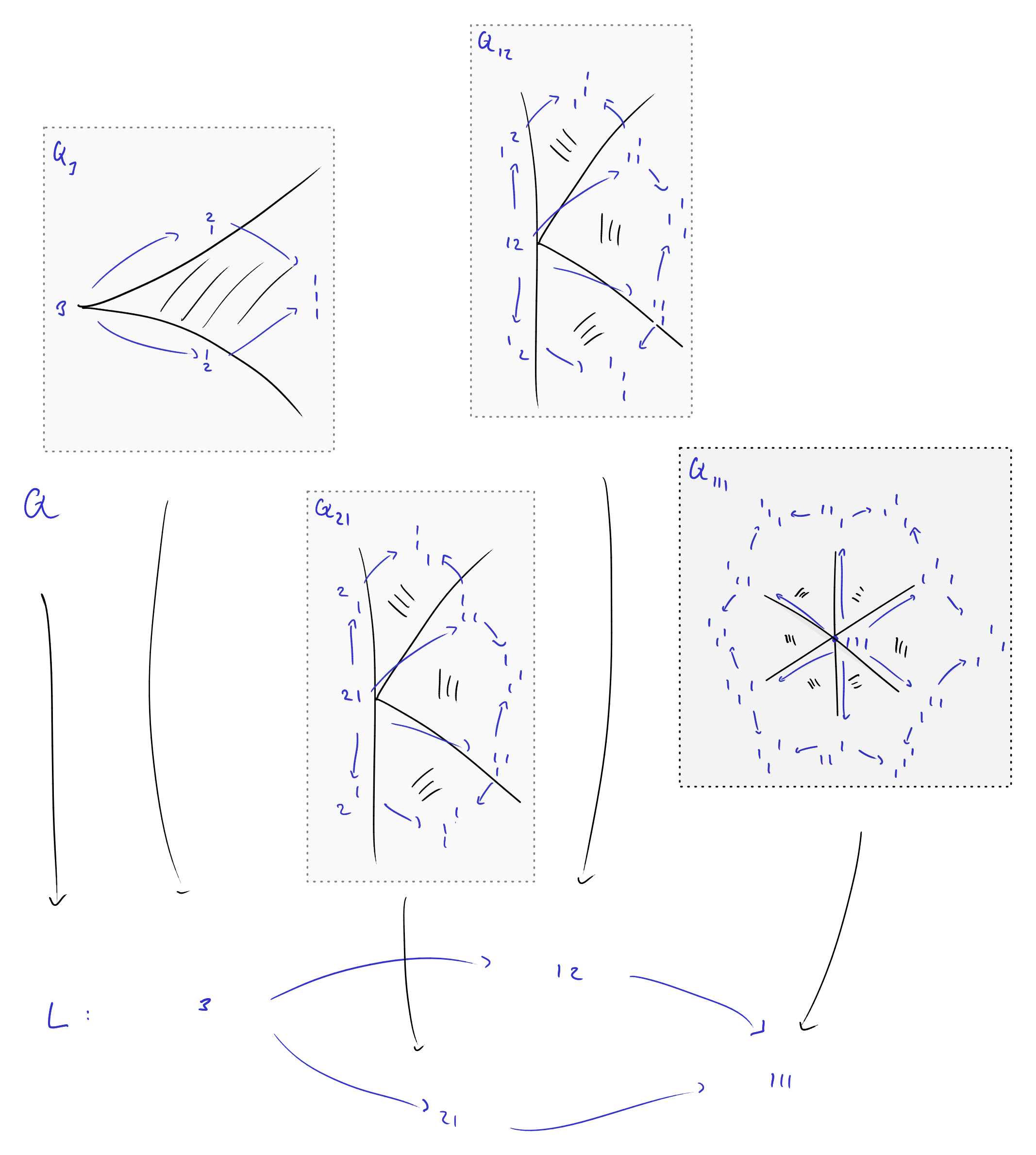}
    \caption{representation of fibers of the map $Q \to L$ for $Q$ the poset of $3 \times 3$ contingency matrices}
\label{fig:sym3}
\end{figure}

It is useful for our analysis to keep track of a bit more structure: we set $T
= L$ and consider the map $\pi: Q \to L \times T$ given by column and row sums
in the respective factors $L$ and $T$. Geometrically, this map corresponds to
taking into account a ``transverse'' foliation with leaves corresponding to the
fibers of the map 
\[
    X \to \Sym_0^n(\RR),\; \sum n_k [z_k] \mapsto \sum n_k [\Im(z_k)].
\]
We take note of the following observations: 
\begin{prop}
    \label{prop:cartesiancube}
    \begin{enumerate}
        \item The map $\pi$ is a Cartesian fibration with discrete fibers.
        \item For every ordered pair $l < l'$ in $L$, the interval $[l,l'] \subset
            L$, i.e. the subposet of intermediate elements $l \le x \le l'$, forms
            a {\em cube} of dimension $k = \delta(l') - \delta(l)$ (i.e. is in
            order-perserving bijection with the poset $[1]^k$).
    \end{enumerate}
\end{prop}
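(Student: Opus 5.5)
The plan is to make both statements combinatorial. I will use the standard description of the cell poset of $\Sym^n_0(\CC)$ (Fox--Neuwirth type cells, cf. \cite{ks:bruhat}) and encode compositions by subsets. Concretely, a composition $l=(n_1,\dots,n_m)$ of $n$ corresponds to the subset $S(l)=\{n_1,n_1+n_2,\dots,n_1+\dots+n_{m-1}\}\subset\{1,\dots,n-1\}$ of partial sums. This assignment is a bijection, and it turns the order on $L$ (coarsening $\le$ refining, as in Example~\ref{exa:3}, where $3<21<111$) into inclusion of subsets. Moreover $\delta(l)=m=|S(l)|+1$.

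Part (2) is immediate from this encoding. The interval $[l,l']$ is isomorphic to the poset of subsets $S$ with $S(l)\subset S\subset S(l')$. That poset is the power set of $S(l')\setminus S(l)$, i.e. the cube $[1]^k$ with $k=|S(l')|-|S(l)|=\delta(l')-\delta(l)$.

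For Part (1) I first fix the order on $Q$. A relation $M\le M'$ holds if and only if $M$ is obtained from $M'$ by summing blocks of consecutive columns and blocks of consecutive rows. Geometrically this records that the cell $M$ lies in the closure of the cell $M'$, where points collide in real or imaginary part. Such a merge is determined by a coarsening of the column composition and a coarsening of the row composition. Because entries are nonnegative and the margins of $M'$ are positive, the margins of the merged matrix are again positive compositions. So merging always produces a contingency matrix, and $\pi$ is monotone.

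Discreteness of fibers then follows: if $M\le M'$ and $\pi(M)=\pi(M')$, then the column and row coarsenings involved are trivial, hence $M=M'$. For the Cartesian property, take $M'\in Q$ with $\pi(M')=(c',r')$ and $(c,r)\le(c',r')$ in $L\times T$. I will define $M:=\mu_{c,r}(M')$ as the matrix obtained by summing the column blocks of $M'$ prescribed by the coarsening $c'\to c$ and the row blocks prescribed by $r'\to r$. Then $M\le M'$ and $\pi(M)=(c,r)$. It remains to check that $M\to M'$ is Cartesian: any $N\le M'$ with $\pi(N)\le(c,r)$ must satisfy $N\le M$, and this factorization is unique since $Q$ is a poset.

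To see this, write $N=\mu_{\pi(N)}(M')$. Coarsenings of compositions correspond to reverse inclusions of subsets, so $S(\pi(N))\subset S(c)\subset S(c')$, and similarly for rows. Merging along nested subsets is transitive, so $\mu_{\pi(N)}(M')=\mu_{\pi(N)}(\mu_{c,r}(M'))$, which gives $N\le M$.

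The main obstacle is pinning down that the order on $Q$ is exactly ``generated by merges'' and is well defined, i.e. that the merged matrix is independent of the order of the elementary merges. I would deduce this from the subset encoding: a merge is just the composite of pushforward along the surjections $\{1,\dots,m'\}\to\{1,\dots,m\}$ of column indices and row indices determined by the nested subsets, and composites of such pushforwards are functorial.
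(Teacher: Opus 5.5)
Your proposal is correct. The partial-sums encoding gives the cube structure on $[l,l']$ directly, and your observation that for $N \le M'$ the merging surjections are forced by the margins, so that $N=\mu_{\pi(N)}(M')$, yields both the discreteness of the fibers and the Cartesian lifts. This is the routine verification the paper compresses into ``Immediate''. The point you flag as the main obstacle is harmless: if you define $M\le M'$ as the existence of a pair of order-preserving surjections pushing $M'$ forward to $M$, then well-definedness and transitivity are automatic.
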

\begin{proof}
    Immediate.
\end{proof}

\begin{thm}
    \label{thm:symfolver} 
    Let $\X = \Fun(Q,\D)$ equipped with the
    stratification $\{\X_l\}_{l \in L}$ with $\X_l = \Fun(Q_l,D)$ from Theorem
    \ref{thm:lax_lim}. Then 
    \begin{enumerate}
        \item The locally Cartesian fibration $\pi: \G(\X) \to L^{\rhd}$ from
            Definition \ref{defi:grothendieck} is (globally) Cartesian so that
            its straightening defines a strictly (coherently) commutative
            gluing diagram $L \to \Cat_{\infty}$.
        \item For $l$ in $L$, the normalized projection functor $\widetilde{\nu}_l: \X \to
            \X_l$ is given, for $F \in \X$ and $q \in Q_l$, by the pointwise formula
            \[
                \widetilde{\nu}_l(F)(q) \simeq \tfib(j_!F|q/Q^{t(q)})
            \]
            where $j_!$ denotes the left Kan extension (i.e. extension by zero)
            along $q/Q^{t(q)} \subset Q^{t(q)}$.
        \item\label{symver:3} For $l < l'$ in $L$, the normalized gluing functor $\widetilde{\gamma}_{l,l'}: \X_{l'} \to
            \X_l$ is given, for $F \in \X_{l'}$ and $q \in Q_l$, by the pointwise formula
            \[
                \gamma_{l,l'}(F)(q) \simeq \bigoplus_{q' \in q/Q^{t(q)}_{l'}} F(q')
            \]
    \end{enumerate} 
\end{thm}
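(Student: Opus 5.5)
The plan is to deduce all three statements from Theorem \ref{thm:conclusion}, using Proposition \ref{prop:cartesiancube} to check its hypotheses. That proposition says $\pi: Q \to L \times T$ is a Cartesian fibration with discrete fibers, and that every interval $l/L/l'$ is a cube of dimension $\delta(l')-\delta(l)$. So conditions \ref{tfitem1}--\ref{tfitem4} of \S\ref{sec:universal} hold with $T = L$ (row sums). Theorem \ref{thm:conclusion} then supplies three things: the pointwise formula $\nu_l(F)(q) \simeq \totfib(F|q/Q^{t(q)})$, the direct sum formula for the normalized gluing functors, and coherent commutativity of the normalized diagram over $L$. The direct sum formula is exactly \ref{symver:3}.

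For (2), I would rewrite $\totfib(F|q/Q^{t(q)})$ as $\tfib(j_!F|q/Q^{t(q)})$. Since $q/Q^{t(q)} \subset Q^{t(q)}$ is upward closed, $j_!$ is extension by zero. The total fiber over the cube-like poset $q/Q^{t(q)}$ is then computed by the same limit as $\fib(F(q) \to \lim F|q/Q^{t(q)}_{>l})$, via the coinitiality \eqref{eq:coinitialproj}. Twisting by $[-\delta(l)]$ gives the normalized formula. The point of writing it with $j_!$ is that every normalized projection and gluing functor then becomes a total fiber of one fixed diagram $j_!(F|q/Q^{t})$, restricted to sub-cubes. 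This is the same device as in the proof of Theorem \ref{thm:hyperplanefolver}.

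For (1), I must show that locally $\pi$-Cartesian edges of $\G(\X)$ are closed under composition over every chain $a<b<c$ in $L^{\rhd}$. Chains inside $L$ are handled by Theorem \ref{thm:conclusion}\ref{it:c1}. For chains $l<l'<*$ I would argue as in that proof. An edge $F_l \to F$ over $l<*$ is locally Cartesian iff, for each $q \in Q_l$, the cone diagram $D_q$ on $[1]\times(\{q\}\cup q/Q_{>l})$ is a limit diagram. By the coinitiality \eqref{eq:coinitialproj}, this reduces to the transverse slice $Q^{t(q)}$. There Lemma \ref{lem:cubical} gives discrete $L$-fibers and cubical intervals, and the condition becomes biCartesianness of $\Delta^1 \times (q/Q^{t}/q')$-cubes, one for each $q'$. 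The composite of $F_l \to F_{l'}$ with $F_{l'} \to F$ then corresponds to stacking biCartesian cubes along $\{0,1,2\}$. I would then rerun the argument of Proposition \ref{prop:stackcubes}: extend each cube by zero objects to the big interval, use iterated totalization \cite[Corollary A.13]{djw:auslander} to see that the extended cubes stay biCartesian, and conclude with the pasting result \cite[Corollary A.18]{djw:auslander}.

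The main obstacle is this chain $l<l'<*$. Unlike the case inside $L$, the target $F$ is not supported on a single stratum, so the cube reduction needs care. The question is whether the zero-extension $j_!$ in formula (2) makes the restriction of $F$ to $q/Q^t$ split as a sum over $q' \in q/Q^t_{l'}$ of cubes whose total fibers are the $\widetilde{\nu}_{l'}(F)(q')$. I would first test this on Example \ref{exa:3} ($n=3$, $l=3<21<*$) before writing the general inductive argument over the height of $l'$. If the splitting fails there, that is where the claim would need reinterpreting; the expected outcome is that the $j_!$-formulation makes both sides equal to the same iterated total fiber.
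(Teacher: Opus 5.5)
Your reduction of all three parts to Theorem~\ref{thm:conclusion}, with its hypotheses supplied by Proposition~\ref{prop:cartesiancube}, is exactly the paper's proof. The problem is the extra step you single out as the main obstacle. Locally $\pi$-Cartesian edges of $\G(\X)\to L^{\rhd}$ are \emph{not} closed under composition over chains $l<l'<*$, so no cube-stacking or inductive argument can establish this.

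Take any $0\neq F\in\X_l$ and $l<l'$. By axiom~\ref{strat:4} of a stratification, $\Map_{\X}(G,F)\simeq 0$ for all $G\in\X_{l'}$. Hence $\nu_{l'}(F)\simeq 0$; equivalently, \eqref{eq:nu} gives $\fib(0\to 0)$ at every $q\in Q_{l'}$. The composite of locally Cartesian edges $\gamma_{l,l'}\nu_{l'}(F)\to\nu_{l'}(F)\to F$ is therefore $0\to F$. By contrast, the locally Cartesian edge over $l<*$ with target $F$ is the equivalence $\nu_l(F)\simeq F$.

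This is just the laxness of the cone triangles from Remark~\ref{rem:laxgluing}. Already in the $A_3$-example of \S\ref{sub:quiver}, $\nu_0=\fib(a_0\to a_1)$ differs from $\gamma_{0,1}\nu_1=\fib(a_1\to a_2)[-1]$. If $\pi$ were Cartesian over all of $L^{\rhd}$, the lax limit cone would commute strictly. So the phrase ``$\pi\colon\G(\X)\to L^{\rhd}$ is globally Cartesian'' in part (1) has to be read for the restriction $\pi|L$, as the clause about the gluing diagram $L\to\Cat_\infty$ indicates. In that reading, part (1) is literally Theorem~\ref{thm:conclusion}\ref{it:c1}, and you should drop the $l<l'<*$ step rather than test or repair it.

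A smaller point concerns the shift in (2). If $\totfib(F|q/Q^{t(q)})$ and $\tfib(j_!F|q/Q^{t(q)})$ agreed, twisting by $[-\delta(l)]$ afterwards would not yield the displayed formula. The role of $j_!$ is that the total fiber is taken over all of $Q^{t(q)}$.

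This poset has an initial element $c$: the fiber over the minimum $(n)$ of $L$ is a single matrix, which lies below every element by the Cartesian property. Push forward along the discrete Cartesian fibration $Q^{t(q)}\to L$, which computes limits fiberwise, and use that $L$ is a cube. Then the zeros of $j_!F$ contribute one loop for each direction of the cube $c/Q^{t(q)}/q$ (Lemma~\ref{lem:cubical}). This gives $\tfib(j_!F|q/Q^{t(q)})\simeq\totfib(F|q/Q^{t(q)})[-(\delta(l)-\delta((n)))]$. So the normalization is already built into the $j_!$-formula, up to a global constant, and should not be applied a second time.
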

\begin{proof}
    This follows from Theorem \ref{thm:conclusion}
\end{proof}

\begin{exa}
    \label{exa:symfolver}
    Let $n=3$ and $F \in \X$. We let $\D = \D(\Vect)$ be the derived category
    of vector spaces over a field $K$. We denote by $*$ a zero object in $\D$.
    The values of the foliated Verdier dual $\{\nu_{l}(F)\}_{l \in L}$ in the
    transversal leaf $Q^{3}$ over $3$ are
    \begin{equation}
        \label{eq:folverfor}
    \begin{array}{lcr}
                \scriptsize
                \nu_{3}(F)(3) = \left[ 
                \begin{tikzcd}
                    & 21 \ar{dr} & \\
                    3 \ar{ur}\ar{dr} & & 111\\
                    & 12 \ar{ur}& \\
                \end{tikzcd}
                \right]
                & 
                \scriptsize
                \nu_{21}(F)(21) = \left[ 
                \begin{tikzcd}
                    & 21 \ar{dr} & \\
                    * \ar{ur}\ar{dr} & & 111\\
                    & * \ar{ur}& \\
                \end{tikzcd}
                \right]\\
                \scriptsize
                \nu_{12}(F)(12) = \left[ 
                \begin{tikzcd}
                    & * \ar{dr} & \\
                    * \ar{ur}\ar{dr} & & 111\\
                    & 12 \ar{ur}& \\
                \end{tikzcd}
                \right]
                &
                \scriptsize
                \nu_{111}(F)(111) = \left[ 
                \begin{tikzcd}
                    & * \ar{dr} & \\
                    * \ar{ur}\ar{dr} & & 111\\
                    & * \ar{ur}& \\
                \end{tikzcd}
                \right]
    \end{array}
    \end{equation}
    where the square brackets indicate total fibers of the diagrams in $\D$
    obtained by applying $F$ to the depicted diagrams in $Q$. These values are
    related by gluing data exhibiting $\{\nu_l(F)\}_{l \in L}$ as an object of
    the lax limit $\llim_{l \in L} \X_l$ which, when restricted to the slice
    $Q^3$ amounts to the canonical commutative square
    \begin{equation}\label{eq:dualdiag}
        \begin{tikzcd}
                & \nu_{21}(F)(21) \ar{dl}& \\
            \nu_{3}(F)(3) & & \ar{ul} \ar{dl}\nu_{111}(F)(111)\\
                          & \nu_{12}(F)(12) \ar{ul}& 
        \end{tikzcd}
    \end{equation}
    Note first, that when passing from $F$ to $\nu_*(F)$ the arrows withing the
    transversal leaf $Q^3$ have been reversed -- the expected effect of
    foliated Verdier duality.
    Observe further that there is a canonical equivalence between the total
    cofiber of \eqref{eq:dualdiag} and $F(3)$ - we know that foliated Verdier
    duality is an equivalence, here we see how to recover the value of $F$ at
    $3$ from $\nu_*(F)$. For a perverse sheaf $F$, the objects $G$, $F_1$,
    $F_2$, $E$ are pure in degree $0$ so that the total cofiber of
    \eqref{eq:dualdiag} can be identified with the complex of vector spaces
    \[
        F(3) \simeq \nu_{111}(F)(111) \to \nu_{21}(F)(21) \oplus \nu_{12}(F)(12)  \to \nu_{3}(F)(3) \in \D(\Vect)
    \]
    This recovers the perspective of \cite{gm:rebroussement} (and \cite{ks:bruhat})
    where \eqref{cor:symperverse} is established by explicitly reverse
    engineering the perverse sheaf from the data comprised in the gluing data
    for $\{\nu_l(F)\}$. An advantage of our approach is that the equivalence
    is given {\em a priori} and already on the level of derived categories by
    means of lax descent. 
    
    We conclude by computing the values of $\nu_3(F)$ in the leaf $Q_3$,
    besides $\nu_3(F)(3)$ which we have already described above. The values are
    given by 
        \[
                \scriptsize
                \nu_{3}(F)(\mat{2\\1}) = \left[ 
                \begin{tikzcd}
                    &\mat{1 & 1 & \\& & 1} & \\
                    \mat{2 & \\& 1} \ar{ur}& & \ar{d}\ar{ul}\mat{1 & 1 \\& 1} &  \\
                    \mat{2\\1} \ar{u}\ar{d}\ar{urr}\ar{drr} &&  \mat{1 &  &1 \\&1 & }  \\
                    \mat{& 2\\1 & }\ar{dr}&&  \ar{dl}\ar{u}\mat{1 & 1 \\1 & } & \\
                    &\mat{ & 1 &1 \\1& & } & 
                \end{tikzcd}
                \right]
                \quad\quad
                \scriptsize
                \nu_{3}(F)(\mat{1\\2}) = \left[ 
                \begin{tikzcd}
                    &\mat{ & &1 \\1&1 & } & \\
                    \mat{ & 1\\2 & } \ar{ur}& & \ar{d}\ar{ul}\mat{ & 1 \\1 & 1} &  \\
                    \mat{1\\2} \ar{u}\ar{d}\ar{urr}\ar{drr} &&  \mat{ & 1 & \\1& &1 }  \\
                    \mat{& 2\\1 & }\ar{dr}&&  \ar{dl}\ar{u}\mat{1 &  \\1 &1 } & \\
                    &\mat{1 &  & \\&1 &1 } & 
                \end{tikzcd}
                \right]
            \]
        \[
                \scriptsize
                \nu_{3}(F)(\mat{1\\1\\1}) = \left[ 
                \begin{tikzcd}
                          & \mat{1 & & \\& 1 & \\& & 1}  &  \ar{l}\ar{r}  \mat{1 & \\& 1\\& 1}   & \mat{ 1& & \\&  & 1\\& 1& }& \\
                          & \mat{ 1& \\1& \\& 1} \ar{u}\ar{dl}  &       &  \mat{ 1& \\&1 \\1& }\ar{u}\ar{rd} &  \\
                    \mat{ &1 & \\1&  & \\& & 1} & & \mat{1\\1\\1}\ar{uu} \ar{ur}\ar{ul}\ar{dl}\ar{dr}\ar{dd} &&\mat{ & 1& \\&  & 1\\1& & }\\
                          & \mat{ &1 \\1& \\&1 } \ar{ul}\ar{d}  &       & \ar{ur}\ar{d} \mat{ & 1\\&1 \\1& } &  \\
                          & \mat{ & & 1\\1&  & \\& 1& }  &    \mat{ & 1\\1& \\1& }\ar{r}\ar{l}   &  \mat{ & & 1\\&1  & \\1& & } & \\
                \end{tikzcd}
                \right]
            \]
            These values organize into a commutative square $\nu_3(F): Q_3 \to \D$ given by 
            \begin{equation}\label{eq:dualdiag2}
                \begin{tikzcd}
                        & \nu_{3}(F)(\mat{2\\1}) \ar{dr}& \\
                    \nu_{3}(F)(3)\ar{ur}\ar{dr} & & \nu_{3}(F)(\mat{1\\1\\1})\\
                                  & \nu_{3}(F)(\mat{1\\2}) \ar{ur}& 
                \end{tikzcd}
            \end{equation}
            where, as expected from foliated Verdier duality, the arrows {\em
            within} the leaf do not get reversed under the passage from $F$ to
            $\nu_*(F)$. All other values of $\nu_l(F)(q)$ have a similar form,
            by Theorem \ref{thm:symfolver} they are obtained by totalizing the
            corresponding transversal leaf $Q^{t(q)}$ at $q$ with zeroes placed
            at vertices in the complement of $q/Q^{t(q)}$ (cf.
            \eqref{eq:folverfor}).
\end{exa}

From the $t$-exactness of foliated Verdier duality, we recover the following
result of \cite{ks:bruhat}.

\begin{cor}
    \label{cor:symperverse}
    Let $\A$ be an abelian category. The category $\Perv_P(\Sym_0^n(\CC);\A)$
    of $P$-constructible perverse sheaves on $\Sym_0^n(\CC)$ with values in
    $\A$ is equivalent to the category comprised by the following linear
    algebraic data:
    \begin{enumerate}
        \item for every $q \in Q$, an object $F_q \in \A$,
        \item for every $l \in L$, and $\alpha: q \le q'$ in $Q_l$, a
            morphism $\alpha_*: F_q \to F_{q'}$ in $\A$, such that
            \begin{enumerate}
                \item the assignment $\alpha \mapsto \alpha_*$ is covariantly functorial in $Q_l$,
                \item if $p(q) = p(q')$ then $\alpha_*$ is an isomorphism. 
            \end{enumerate}
        \item for every $t \in T$, and $\beta: q \le q'$ in $Q^t$, a
            morphism $\beta^*: F_{q'} \to F_q$ in $\A$, such that
            \begin{enumerate}
                \item the assignment $\beta \mapsto \beta^*$ is contravariantly functorial in $Q^t$,
                \item if $p(q) = p(q')$ then $\beta^*$ is an isomorphism. 
            \end{enumerate}
        \item for $x \in Q^t_l$, $x' \in Q^t_{l'}$, $y \in Q^{t'}_l$ with $\alpha: x \le x'$ and $\beta: x \le y$, we have 
            \[
                \beta_* \circ \alpha^* = \sum \gamma^* \circ \delta_*
            \]
            where the sum ranges over all $y' \in Q^{t'}_{l'}$ with $\gamma: y \le y'$ and $\delta: x' \le y'$.
    \end{enumerate}
\end{cor}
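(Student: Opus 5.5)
The plan is to deduce the corollary from the general framework of the Summary, applied to $\Sym^n_0(\CC)$ with $s: Q \to P$ the forgetful map and $\pi: Q \to L \times T$ given by column and row sums. The first task is to verify the hypotheses $(U1)$--$(U4)$. Conditions $(U1)$ and $(U2)$ are Proposition \ref{prop:cartesiancube}. For $(U3)$ and $(U4)$ I take the middle perversity $\p(p) = -\dim_{\CC} X_p$, so that $\p(p) = -(\text{number of parts of } p) + 1$ because of the center of mass constraint. Then $\delta(l(q)) - 1 \le$ number of distinct points $\le$ number of parts, which gives $(U3)$ after matching the normalization of $\delta$ to the real dimension of the leaf space. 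The link condition $(U4)$ should follow since $q/Q^{t(q)}_p \setminus \{q\}$ is the link of the cell $q$ inside the stratum $X_p$ intersected with the transverse leaf. That link is a sphere of the appropriate dimension, because the transverse leaf meets $X_p$ in a totally real submanifold.

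With the hypotheses in place, Corollary \ref{cor:texactfoliated} says that $F \in \Sh_P(X;\D)$ is perverse if and only if every $\widetilde{\nu}_l(F)$ lies in the heart. Combining this with foliated Verdier duality (part \ref{item2: summary} of the summary proposition) and Theorem \ref{thm:symfolver}, I identify $\Perv_P$ with the heart-valued objects of $\llim_{l} \Fun(Q_l,\D)$ that satisfy the constructibility constraint. Because the gluing diagram is strictly commutative (Theorem \ref{thm:symfolver}(1)), the lax limit is the category of sections of the Cartesian fibration over $L^{\op}$. Such a section consists of three pieces of data:
\begin{itemize}
\item objects $F_q = \widetilde{\nu}_{l(q)}(F)(q)$;
\item covariant maps $\alpha_*$ within each $Q_l$, coming from the functor $\widetilde{\nu}_l(F): Q_l \to \D$;
\item maps $\widetilde{\gamma}_{l,l'}(\widetilde{\nu}_{l'}F) \to \widetilde{\nu}_l F$ for $l<l'$.
\end{itemize}
By \ref{symver:3}, each component of the last kind of map is $\bigoplus_{q' \in q/Q^{t(q)}_{l'}} F_{q'} \to F_q$. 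Splitting into summands gives the contravariant maps $\beta^*: F_{q'} \to F_q$ along edges of $Q^t$. Since everything lies in a $1$-category (the heart $\A$), higher coherences become conditions rather than data. Contravariant functoriality in $Q^t$ comes from compatibility of the lax-limit data with composition $l<l'<l''$. Naturality of $\widetilde{\gamma}_{l,l'}(\widetilde{\nu}_{l'}F) \to \widetilde{\nu}_l F$ in $q \in Q_l$ yields the mixed relation (4). Applying $\widetilde{\gamma}$ to $\alpha$ produces a matrix whose entries are indexed by the $y'$ over $x'$ and $y$, which accounts for the sum $\sum \gamma^*\circ\delta_*$.

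It remains to translate the constructibility condition for $\Fun^P(Q,\D)$ through the duality. This gives the invertibility conditions 2(b) and 3(b), in the same way as Theorem \ref{thm:hyperplanefolver}\ref{thm:hyperplanefolver:2}. One direction is visible from the formula for $\widetilde{\nu}_l$. Since $\widetilde{\nu}$ is an equivalence, the converse should follow by tracing which edges of $Q$ must be inverted. Concretely, an edge with $p(q)=p(q')$ factors through leaf-wise and transverse pieces by the Cartesian property, so I would reduce to the two types of edge separately.

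I expect the main obstacle to be the precise verification of $(U4)$: identifying the homotopy type of the link posets of contingency matrices within a fixed partition stratum. The fallback is to argue geometrically with the regular cell structure and Lemma \ref{lem:shrieksub}.
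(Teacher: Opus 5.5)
Your proposal follows the paper's route: perversity is detected by $t$-exactness of the normalized foliated Verdier duality (Corollary \ref{cor:texactfoliated}), and the lax limit of hearts is then unravelled via Remark \ref{rem:ordinary_explicit} together with the direct-sum gluing formula of Theorem \ref{thm:symfolver}\ref{symver:3}. It is in fact more explicit than the paper, which never checks (U3)--(U4) for this example; your renormalization of $\delta$ to $\#\text{columns}-1$ is genuinely needed for (U3) because of the center-of-mass constraint, but the inequality to use is $\delta(l(q)) = \#\text{columns} \le \#\{\text{distinct points}\} = \#\text{parts of } s(q)$, not $\delta(l(q))-1 \le \#\{\text{distinct points}\}$, which is off by one.
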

\begin{proof}
    This follows from the normalized foliated Verdier duality equivalence
    \eqref{eq:folver} upon unravelling the commutativity constraints satisfied
    by sections of the Grothendieck construction comprising the lax limit (cf.
    Remark \ref{rem:ordinary_explicit}), using the formula from Theorem
    \ref{thm:symfolver} \ref{symver:3}.
\end{proof}

\subsection{Coxeter systems}
\label{sub:coxeter}

We conclude with one more example that falls into the above
context, generalizing \S \ref{sub:symmetric_products}.

\begin{exa}[Coxeter systems]
\label{exa:example_coxeter_systems}

As explained in \cite{ks:bruhat} the example from \S
\ref{sub:symmetric_products} generalizes by considering, for a finite
reflection group $W \subset \on{O}(\h_{\RR})$, the quotient space $X = W
\backslash \h$ with $\h = \h_{\RR} \otimes_{\RR} \CC$. Adpoting the notation of
\cite{ks:bruhat}, we consider for the corresponding Coxeter system $(W,S)$, the
poset $C = \P(S)$ of subsets of $S$. Let 
\[
    \pi: Q \to C^{\op} \times C^{\op}
\]
denote the contravariant Grothendieck construction of the diagram of sets 
\[
    \Xi: C \times C \to \Set, \; (I,J) \mapsto W \backslash ( W/W_I \times W/W_J )
\]
Setting $L = T = C^{\op}$ realizes the setup of \S \ref{sec:universal}. As
explained in loc. cit. the poset $Q$ parameterizes a regular cell decomposition
of the space $W \backslash \h$ so that we may identify the $\infty$-category $\X =
\Fun(Q,\D)$ with cellular sheaves according to Proposition \ref{prop:strat}.
Generalizing \S \ref{sub:symmetric_products}, the $L$-stratification of $X$ is
foliated with leaves given by the fibers of the map $X \to W \backslash
\h_{\RR}$ induced by taking the real parts of all components, while the $T$
stratification is foliated analogously by the fibers of the map taking
imaginary parts.
The ``mixed Bruhat'' description of perverse sheaves now follows directly from
Theorem \ref{thm:conclusion} combined with the purity result
\cite[Proposition 4.1]{ks:bruhat}. 

\end{exa}

\appendix

\section{Sheaves and $t$-structures}
\label{sec:sheaves}
In this section we introduce the sheaf-theoretic framework used throughout the paper. We first consider locally compact Hausdorff spaces and recall the $\infty$-category $\Sh(X)$ of $\D$-valued sheaves for $\D$ stable and presentable, together with the functorial operations that we will use. We then specialize to the case where $X$ is additionally paracompact and of finite covering dimension equipped with a conical $P$-stratification, with each stratum $X_p$, $p\in P$ being locally contractible. In this setting we recall:
\begin{enumerate}
	\item the full subcategory $\Sh_P(X)\subset \Sh(X)$ of $P$-constructible sheaves and the exodromy equivalence
	\[
	\Sh_P(X)\simeq \Fun(\Exit_P(X),\D);
	\]
	\item for $\A=\Mod_R$ the abelian category of $R$-modules and $\D=\D(\A)$ the unbounded derived $\infty$-category, the standard $t$-structure on $\D$ and its induced $t$-structure on $\Sh(X)$;
	\item the perverse $t$-structures on $\Sh_P(X)$ associated to a perversity function $P\to\mathbb Z$.
\end{enumerate}
Thus, while the general sheaf-theoretic formalism is developed for locally compact Hausdorff spaces, the constructibility and exodromy results are considered under the additional paracompactness and finite covering dimensionality and stratification hypotheses above. The additional assumption of paracompactness and finite dimensionality allows us, in the applications considered below, to identify sheaves with hypersheaves and hence to pass between the ordinary and hyperconstructible formulations without further difficulty.

\subsection{Sheaves and functor formalism}

Recall from \cite{lurie:sag} the definition of sheaves valued in $\D$.
\begin{defi}[{\cite[1.1.2.1]{lurie:sag}}]
	Let $X$ be a topological space and $\D$ any $\infty$-category, we call a presheaf
	\[
	F: \Open(X)^\op \to \D 
	\]
	a \textit{sheaf}, if it satisfies 
	\begin{enumerate}
		\item[$( \ast )$] Let $\U= \{U_\alpha\}$ be an open cover of $U \in \Open(X)$ and denote $\Open(X)/\U \subset \Open(X)$, the subposet consisting of $V \in \Open(X)$ for which $ V \subset U_\alpha$ for some $\alpha$. Then the canonical map
		\[
		F(U)  \to \lim_{V \in (\Open(X)/\U)^\op}F(V)
		\]
		is an equivalence in $\D$. Denote the full subcategory $\Sh(X;\D) \subset \Fun(\Open(X)^\op,\D)$ the $\infty$-category of sheaves valued in $\D$. 
	\end{enumerate}
\end{defi}
\begin{lem}[{\cite[1.3.1.6, 1.3.17]{lurie:sag}}]\label{lem:tensorsheaves}
	Let $\D$ be a presentable $\infty$-category. We obtain an equivalence of $\infty$-categories 
	\[
	\Sh(X;\mathcal{S}) \otimes \D \simeq \Sh(X;\D),
	\]
	where $\otimes$ denotes the tensor product as introduced in \cite[4.8.1]{lurie:ha}.
\end{lem}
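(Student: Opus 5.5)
The statement to prove is Lemma~\ref{lem:tensorsheaves}: $\Sh(X;\mathcal{S})\otimes\D\simeq\Sh(X;\D)$ for presentable $\D$. The plan is to identify both sides as full subcategories of $\D$-valued presheaves, cut out by the same limit conditions. Three facts from \cite[4.8.1]{lurie:ha} carry the argument:
\begin{enumerate}
\item for presentable $\mathcal{C},\D$ one has $\mathcal{C}\otimes\D\simeq\Fun^{R}(\mathcal{C}^{\op},\D)$, the category of limit-preserving functors;
\item for a small $\infty$-category $K$, $\P(K)\otimes\D\simeq\Fun(K^{\op},\D)$;
\item tensoring with $\D$ carries accessible left Bousfield localizations to localizations.
\end{enumerate}

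First, write $\Sh(X;\mathcal{S})$ as the accessible left exact localization of $\P(\Open(X))=\Fun(\Open(X)^{\op},\mathcal{S})$ at the set $W$ of covering sieve inclusions $\mathcal{U}\hookrightarrow h_U$. Here each sieve is viewed as a presheaf $\operatorname{colim}_{V\in\Open(X)/\mathcal{U}}h_V$.

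Second, tensor with $\D$. The localization $\P(\Open(X))\otimes\D\to\Sh(X;\mathcal{S})\otimes\D$ exhibits the target as the full subcategory of $\Fun(\Open(X)^{\op},\D)$ on those $F$ that are local with respect to the maps $w\otimes d$, for $w\in W$ and $d$ ranging over a set of generators of $\D$. Concretely, this can be seen through $\Fun^R(\Sh(X;\mathcal{S})^{\op},\D)\subset\Fun^R(\P(\Open(X))^{\op},\D)\simeq\Fun(\Open(X)^{\op},\D)$. This inclusion is given by restriction along the localization functor, and a functor factors through the localization if and only if it inverts the maps in $W$.

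Third, unwind the locality condition. A limit-preserving $G:\P(\Open(X))^{\op}\to\D$ is determined by $F=G|_{\Open(X)^{\op}}$, and it sends the colimit presenting a sieve to a limit. So $G$ inverts $\mathcal{U}\to h_U$ exactly when
\[
F(U)\to\lim_{V\in(\Open(X)/\mathcal{U})^{\op}}F(V)
\]
is an equivalence, which is precisely condition $(\ast)$. Hence $\Sh(X;\mathcal{S})\otimes\D$ is identified with $\Sh(X;\D)$.

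The main obstacle is the bookkeeping in the second step: showing that locality with respect to $W$ after tensoring is the same as $G$ inverting $W$. This requires checking that the equivalence $\mathcal{C}\otimes\D\simeq\Fun^R(\mathcal{C}^{\op},\D)$ is natural in left adjoints $\mathcal{C}\to\mathcal{C}'$, so that the localization functor corresponds to restriction. I would cite \cite[4.8.1.17]{lurie:ha} for this naturality rather than reprove it.
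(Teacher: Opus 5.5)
Your argument is correct and is essentially the one the paper relies on. The paper gives no proof of its own; it cites \cite[1.3.1.6, 1.3.1.7]{lurie:sag}, where $\Sh(X;\mathcal{S})\otimes\D$ is identified with limit-preserving functors $\Sh(X;\mathcal{S})^{\op}\to\D$ via \cite[4.8.1.17]{lurie:ha} and then restricted along $\Open(X)\to\P(\Open(X))\to\Sh(X;\mathcal{S})$ exactly as in your second and third steps. The naturality issue you flag can in fact be avoided, since the full faithfulness of precomposition with $L^{\op}$ (with image those functors inverting $W$) is a statement about functor categories alone, \cite[5.5.4.20]{lurie:htt} applied to colimit-preserving functors into $\D^{\op}$, so your fact (3) is not needed.
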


Let $\D$ be a stable and presentable $\infty$-category. We abbreviate $\Sh(A):= \Sh(A;\D)$. The assignment $A \to \Sh(A)$ for $A$ locally compact Hausdorff space satisfies due to \cite{volpe2025six} the following:
\begin{enumerate}
	\item[$(F1)$] for every continuous map $f: A \to B$, we have adjunctions
	\[
	f_! : \Sh(A) \llra \Sh(B) : f^!
	\]
	and
	\[
	f^* : \Sh(B) \llra \Sh(A) : f_*
	\]
	such that if $f:A \hra B$ is an embedding $f_!$ and $f_*$ are fully faithful (\cite[6.1]{volpe2025six}),
	\item[$(F2)$] For $j: U \subset C$ open, we have $j^! = j^*$ (\cite[6.5]{volpe2025six}),
	\item[$(F3)$] For $i: A \subset C$ closed, we have $i_! = i_*$ (\cite[6.3]{volpe2025six}),
	\item[$(F4)$] For each pullback diagram 
	\[
	\begin{tikzcd}
		A \arrow[r, "\tilde{a}"] \arrow[d, "\tilde{b}"'] & B \arrow[d, "a"] \\
		C \arrow[r, "b"']                                       & D               
	\end{tikzcd}
	\]
	the diagram 
	\[
	\begin{tikzcd}
		\Sh(A) \arrow[d, "\tilde{b}_!"'] & \Sh(B) \arrow[l, "\tilde{a}^\ast"'] \arrow[d, "a_!"] \\
		\Sh(C)                                  & \Sh(D) \arrow[l, "b^\ast"]                          
	\end{tikzcd}
	\]
	commutes (\cite[6.9]{volpe2025six}).
\end{enumerate}

\subsection{Stratification, Constructible sheaves and the Exit path category}

\begin{defi}[Stratification]
	Let $X$ be a topological space and $P$ a partially orderd set, which is a topological space by means of the Alexandrov-topology, i.e.\ $U \subset P$ is open if and only if it is upwards-closed ($x \leq y$, $x \in U \Rightarrow y \in U$). We call a continuous map 
	\[
	s: X \to P
	\]
	a $P$-stratification on $X$.
\end{defi}
\begin{defi}[Constructibility]
	Let $X \to P$ be a stratified space and $\D$ a stable and presentable $\infty$-category. We say that a (hyper)sheaf $F \in \Sh(X;\D)$ (resp.\ $F\in \Sh^{\on{hyp}}(X;\D)$) is $P$-(hyper)constructible if $F|_{X_p}$ is locally constant. Denote $\Sh_P(X;\D) \subset \Sh(X;\D)$ (resp.\  $\Sh^{\on{hyp}}_P(X;\D) \subset \Sh^{\on{hyp}}(X;\D)$) the full subcategory consisting of $P$-(hyper)constructible (hyper)sheaves.
\end{defi}
\begin{rem}
	Let $X \to P$ be a conically stratified space with locally contractible strata. We have the following tensor decomposition (\cite[Theorem 3 (3)]{porta2022topological}):
	\[
	\Sh_P^{\on{hyp}}(X;\D) \simeq \Sh_P^{\on{hyp}}(X;\mathcal{S})\otimes \D.
	\]
	So that it is compatible with the tensor decomposition of \ref{lem:tensorsheaves}.
\end{rem}
\begin{prop}[Exodromy, {\cite{porta2022topological}}]
	\label{prop:exodromy}
	Let $X \to P$ be a conical $P$-stratification with strata locally contractible strata $X_p$, $p \in P$ and let $\D$ be a stable and presentable $\infty$-category. Then we have an equivalence of $\infty$-categories
	\[
	\Sh^{\on{hyp}}_P(X;\D) \simeq \Fun(\Exit_P(X),\D),
	\]
	where $\Exit_P(X)$ denotes the exit path $\infty$-category of \cite[A.9]{lurie:ha}.
\end{prop}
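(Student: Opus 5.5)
This is the exodromy theorem of \cite{porta2022topological}, so the plan is to reduce to the space-valued statement via the tensor decompositions already recorded, rather than reprove it from scratch.

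\emph{Space-valued case.} First we take the case $\D = \mathcal{S}$. For a conically stratified space with locally contractible strata, \cite[A.9]{lurie:ha} constructs $\Exit_P(X)$ as an $\infty$-category. The key step is to show that the functor sending a hyperconstructible hypersheaf $F$ to its family of stalks, with exit paths acting through specialization maps, is an equivalence
\[
\Sh^{\on{hyp}}_P(X;\mathcal{S}) \simeq \Fun(\Exit_P(X),\mathcal{S}).
\]
Here we would follow Lurie's argument (\cite[Theorem A.9.3]{lurie:ha}), which proceeds in three steps:
\begin{enumerate}
\item Conicality gives a basis of neighbourhoods $\RR^k \times C(M)$ for each point.
\item On such a neighbourhood, sections of a constructible sheaf agree with the stalk at the cone point, and this stalk is the value at the initial object of the local exit path category.
\item A hypercover and descent argument globalizes this local statement. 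Working with hypersheaves is what lets descent along hypercovers go through without finiteness assumptions.
\end{enumerate}
Lurie assumes paracompactness and finite dimensionality in this argument. The hypothesis of local contractibility of strata is what allows \cite{porta2022topological} to drop those assumptions, so at this point we simply invoke their Theorem.

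\emph{General coefficients.} We tensor the space-valued equivalence with $\D$. By the Remark preceding the statement, $\Sh^{\on{hyp}}_P(X;\D) \simeq \Sh^{\on{hyp}}_P(X;\mathcal{S}) \otimes \D$. On the other side, $\Fun(K,\mathcal{S}) \otimes \D \simeq \Fun(K,\D)$ holds for any small $\infty$-category $K$, by \cite[4.8.1]{lurie:ha}, because $\Fun(K,\mathcal{S}) = \mathcal{P}(K^{\op})$ is a presheaf category. Combining these identifications yields the claim.

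\emph{Main obstacle.} The hard step is the tensor compatibility $\Sh^{\on{hyp}}_P(X;\mathcal{S}) \otimes \D \simeq \Sh^{\on{hyp}}_P(X;\D)$. Hypersheafification and the constructibility condition do not obviously commute with the tensor product. The strategy I would try first is this: use the space-valued exodromy to see that $\Sh^{\on{hyp}}_P(X;\mathcal{S})$ is a presheaf $\infty$-topos, hence compactly generated. Then check directly that both sides have the same stalkwise description, namely locally constant on strata with specialization maps.
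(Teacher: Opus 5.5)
Your proposal is correct and matches the paper, which gives no proof of its own and simply imports the statement from \cite{porta2022topological}; your chain $\Sh^{\on{hyp}}_P(X;\D)\simeq\Sh^{\on{hyp}}_P(X;\mathcal{S})\otimes\D\simeq\Fun(\Exit_P(X),\mathcal{S})\otimes\D\simeq\Fun(\Exit_P(X),\D)$ is a rearrangement of that same citation, using the tensor decomposition recorded in the Remark just before the statement. The detour through $\mathcal{S}$ is redundant, because the cited theorem already covers arbitrary presentable coefficients, and your ``main obstacle'' paragraph can be dropped once the tensor decomposition is cited---as sketched, it would amount to reproving the coefficient version of exodromy itself.
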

\begin{prop}[Stability under coarsening, {\cite{haine2026exodromy}}]
	\label{prop:stabilityundercoarsening}
	Given a stratified space $X \to P$ as in \ref{prop:exodromy}, and a map of posets $P \to Q$, such that the stratification $X \to Q$ satisfied the requirement of \ref{prop:exodromy}. Denote $\W$ the class of morphisms in $\Exit_P(X)$ which are sent to equivalences under $\Exit_P(X) \to P \to Q$. Then we have an equivalence of $\infty$-categories 
	\[
	\Exit_Q(X) \simeq \Exit_P(X)[\W^{-1}].
	\]
\end{prop}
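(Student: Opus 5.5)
We need to prove the stability under coarsening statement: $\Exit_Q(X)\simeq \Exit_P(X)[\W^{-1}]$. The plan is to compare functor categories and then conclude via Yoneda.

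\textbf{Step 1: identify the coarser constructible sheaves.} For any stable presentable $\D$, and in particular for $\D=\mathcal S$ or spectra, we identify $\Sh^{\on{hyp}}_Q(X;\D)$ as a full subcategory of $\Sh^{\on{hyp}}_P(X;\D)$. A $Q$-constructible hypersheaf is locally constant on each $Q$-stratum $X_q$, which is a union of $P$-strata. Its restriction to each $X_p\subset X_q$ is therefore locally constant, so it is $P$-constructible.

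\textbf{Step 2: match this subcategory with $\W$-inverting functors.} Under exodromy (Proposition \ref{prop:exodromy}) for $P$, we claim this subcategory corresponds exactly to the functors $\Exit_P(X)\to\D$ inverting $\W$. A morphism of $\W$ is an exit path whose image in $Q$ is constant. Such a path is homotopic to a concatenation of exit paths living inside a single $Q$-stratum $X_q$. A sheaf locally constant on $X_q$ sends these paths to equivalences, because on $X_q$ it is classified by $\Pi_\infty(X_q)$. Conversely, suppose a $P$-constructible $F$ inverts $\W$. Then $F|_{X_q}$, viewed with its induced $P$-stratification, corresponds to a functor on $\Exit_P(X_q)$ inverting all morphisms. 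Such a functor factors through the localization $\Exit_P(X_q)[\text{all}^{-1}]\simeq \Pi_\infty(X_q)$, the latter equivalence being a standard consequence of exodromy for $X_q$ as a trivially stratified space. Hence $F|_{X_q}$ is locally constant.

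\textbf{Step 3: conclude.} Combining Step 2 with exodromy for $Q$ gives natural equivalences
\[
\Fun(\Exit_Q(X),\D)\simeq \Fun^{\W}(\Exit_P(X),\D)\simeq \Fun(\Exit_P(X)[\W^{-1}],\D),
\]
compatible with restriction along the canonical functor $\Exit_P(X)[\W^{-1}]\to \Exit_Q(X)$. Taking $\D=\mathcal S$ (presheaf categories, using $(-)^{\op}$) gives an equivalence of presheaf $\infty$-categories induced by a functor between the small categories. It preserves representables up to idempotent completion. Both exit path categories are idempotent complete: for conically stratified spaces they are even locally given by posets. So the Yoneda embedding shows the functor is an equivalence.

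The main obstacle is Step 2. We must check that $\W$-morphisms generate, up to homotopy, exactly the paths within $Q$-strata, and that restriction to $X_q$ commutes with the exodromy equivalences. We would use the functoriality of exodromy under stratified open embeddings (restricting to $X_{\ge q}$) followed by restriction to the closed stratum. The idempotent-completeness point in Step 3 is a secondary subtlety; alternatively, we could cite Haine--Porta--Teyssier directly.
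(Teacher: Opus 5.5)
The paper gives no argument for this statement --- it is imported from \cite{haine2026exodromy} --- so your proof has to stand on its own. The architecture is right: identify $\Sh_Q(X)\subset\Sh_P(X)$ under exodromy with the $\W$-inverting functors, then recover the $\infty$-category from its $\mathcal S$-valued functor category. However, the converse half of Step~2 is not justified as written.

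The equivalence $\Exit_P(X_q)[\mathrm{all}^{-1}]\simeq\Pi_\infty(X_q)$ is not a consequence of exodromy for $X_q$ with the trivial stratification. That theorem only identifies locally constant sheaves with $\Fun(\Pi_\infty(X_q),\mathcal S)$ and says nothing about $\Exit_P(X_q)$. In fact this equivalence is exactly the case $Q=\ast$ of the proposition, applied to $X_q$ with its induced stratification. So Step~2 reduces the general case to a special case that it then takes for granted, and this case needs its own argument.

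For instance, use naturality of exodromy (with $\mathcal S$-coefficients) for the coarsening of $X_q$ to the trivial stratification. It identifies pullback along $\Exit_P(X_q)\to\Pi_\infty(X_q)$ with the fully faithful inclusion $\Sh_{\mathrm{lc}}(X_q)\subset\Sh_P(X_q)$. Hence pullback along the induced map of spaces $g\colon|\Exit_P(X_q)|\to\Pi_\infty(X_q)$ is fully faithful on $\mathcal S$-valued local systems. Under $\Fun(B,\mathcal S)\simeq\mathcal S_{/B}$, the counit of $g_!\dashv g^*$ at the terminal object is $g$ itself, so $g$ is an equivalence.

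Alternatively, argue locally. On a $P$-conical chart $U\subset X_q$ around $x$, the point $x$ is initial in $\Exit_P(U)$, so $\Gamma(U,F)\simeq F_x$. If all specialization maps are invertible, the induced map from the constant sheaf with value $F_x$ to $F|_U$ is a stalkwise, hence actual, equivalence of hypersheaves.

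There are three smaller points.

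First, Step~3 needs $\mathcal S$-valued exodromy. This is not an instance of Proposition~\ref{prop:exodromy} as stated here, since $\mathcal S$ is not stable. With stable coefficients alone the Yoneda argument is unavailable, so you must quote the unstable version.

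Second, the inputs that actually need a reference are not the ones you flag. Describing $\W$ is immediate: the $Q$-label is nondecreasing along a morphism of $\Exit_P(X)$, so a morphism with both endpoints over $q$ lies entirely in $X_q$. Thus $\W$ is the union of the full subcategories $\Exit_P(X_q)\subset\Exit_P(X)$. What does require citation is naturality of exodromy along the stratified maps $(X_q,P)\hookrightarrow(X,P)$, $(X,P)\to(X,Q)$ and $(X_q,P)\to(X_q,\ast)$. This is what makes your composite equivalence in Step~3 equal to restriction along $\psi\colon\Exit_P(X)[\W^{-1}]\to\Exit_Q(X)$.

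Third, given that naturality, idempotent completeness plays no role, and your justification for it is off. Exit-path categories are not locally posets, and $\Exit_P(X)[\W^{-1}]$ is not a priori an exit-path category. Instead: since $\psi^*$ is an equivalence, its inverse $\psi_!$ carries corepresentables to corepresentables, so Yoneda shows $\psi$ is fully faithful. As $\psi$ is bijective on objects, it is an equivalence.
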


\subsection{$t$-structures}
By \ref{lem:tensorsheaves} we have an equivalence
\[
\Sh(X;\D) \simeq \Sh(X;\mathcal{S}) \otimes \D.
\]
\begin{prop}[{\cite[Proposition 3.8]{haine2025standard}}]
	\label{prop: haineinducedtstructure}
	Given an accessible left complete $t$-structure $(\D^{\geq 0},\D^{\leq 0})$ on $\D$ and $\D^{\geq 0}$ closed under filtered limits, then $\Sh(X;\D)$ (resp.\ $\Sh^{\on{hyp}}(X;\D)$) carries a canonical $t$-structure, given by the pair
	\[
	\left(\Sh(X,\mathcal{S}) \otimes \D^{\geq 0},\Sh(X;\mathcal{S}) \otimes \D^{\leq 0}\right),
	\]
	resp.
	\[
	\left(\Sh^{\on{hyp}}(X,\mathcal{S}) \otimes \D^{\geq 0},\Sh^{\on{hyp}}(X;\mathcal{S}) \otimes \D^{\leq 0}\right).
	\]
\end{prop}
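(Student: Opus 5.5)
The plan is to build the $t$-structure directly on $\Sh(X;\D)$ using Lurie's criterion for $t$-structures generated by a presentable subcategory. Afterwards we identify the two halves with the two tensor products $\Sh(X;\mathcal{S}) \otimes \D^{\ge 0}$ and $\Sh(X;\mathcal{S}) \otimes \D^{\le 0}$. Throughout, we use the model from Lemma \ref{lem:tensorsheaves}: $\Sh(X;\mathcal{S}) \otimes \D$ is the $\infty$-category of limit-preserving functors $\Sh(X;\mathcal{S})^{\op} \to \D$, equivalently $\D$-valued sheaves $F$ on $\Open(X)$.

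\emph{Step 1 (connective part).} For $U \in \Open(X)$ and $d \in \D^{\le 0}$, write $h_U \otimes d$ for the sheafification of the presheaf $V \mapsto d$ if $V \subset U$ and $0$ otherwise; this is the image of the pure tensor $h_U \boxtimes d$. Let $\Sh(X;\D)^{\le 0}$ be the smallest full subcategory closed under small colimits and extensions containing all $h_U \otimes d$. It is presentable, being generated by a small set (use that $\D^{\le 0}$ is accessible, hence generated by a small set). By \cite[1.4.4.11]{lurie:ha} it is therefore the connective half of an accessible $t$-structure. Since the functor $\Sh(X;\mathcal{S}) \otimes \D^{\le 0} \to \Sh(X;\mathcal{S}) \otimes \D$ preserves colimits and its image is generated by pure tensors, this subcategory is exactly the essential image of that functor.

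\emph{Step 2 (coconnective part).} The coconnective half is the right orthogonal of $\Sh(X;\D)^{\le -1}$. By adjunction,
\[
\Map(h_U \otimes d[1], F) \simeq \Map_{\D}(d[1], F(U)),
\]
so $F$ is coconnective if and only if $F(U) \in \D^{\ge 0}$ for every open $U$. It remains to identify this with $\Sh(X;\mathcal{S}) \otimes \D^{\ge 0}$, i.e.\ with limit-preserving functors $\Sh(X;\mathcal{S})^{\op} \to \D^{\ge 0}$. This is the point where the hypotheses are used. $\D^{\ge 0}$ is closed under all limits in $\D$: it is closed under finite limits, and under filtered limits by assumption. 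Hence the inclusion $\D^{\ge 0} \subset \D$ preserves and reflects limits, so a $\D^{\ge 0}$-valued sheaf is the same thing as a $\D$-valued sheaf with values in $\D^{\ge 0}$. Accessibility of the $t$-structure makes $\D^{\ge 0}$ presentable, so that the tensor product is defined. Left completeness is not needed for this identification; I expect it is used only to ensure that the induced $t$-structure behaves well, for example that it is compatible with the postnikov towers appearing in the exodromy comparison.

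\emph{Step 3 (hypersheaves).} For $\Sh^{\on{hyp}}$, run the same argument with hypersheafification in place of sheafification. The coconnective description is unchanged, since a hypersheaf is in particular a sheaf. One must also check that the truncation of a hypersheaf is again a hypersheaf. Here $\tau^{\ge 0}$ is a right adjoint, hence preserves the relevant limits, and $\tau^{\le 0}F = \cof(\tau^{\ge 1}F \to F)$ is a cofiber of hypersheaves.

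I expect the main obstacle to be the identifications of Steps 1 and 2. There one must confirm that the tensor product functors $\Sh(X;\mathcal{S}) \otimes \D^{\le 0} \to \Sh(X;\D)$ and $\Sh(X;\mathcal{S}) \otimes \D^{\ge 0} \to \Sh(X;\D)$ behave as claimed, the first with essential image the colimit-closed subcategory generated by pure tensors and the second fully faithful with the stated image. I would handle the coconnective one through the limit-preserving-functor model as above. For the connective one, I would argue via the generation by pure tensors and the universal property of $\otimes$ in $\mathrm{Pr}^L$.
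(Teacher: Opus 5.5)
The paper gives no argument of its own here; it only cites \cite[Proposition 3.8]{haine2025standard}, so your proposal has to stand on its own. Two parts are fine: the existence part of Step 1 via \cite[1.4.4.11]{lurie:ha}, and Step 2. The coconnective half is exactly the sheaves $F$ with $F(U)\in\D^{\geq 0}$ for all $U$, and the model $\Fun^{R}(\Sh(X;\mathcal S)^{\op},-)$ identifies these with $\Sh(X;\mathcal S)\otimes\D^{\geq 0}$. But in the paper's cohomological convention $\D^{\geq 0}=(\D^{\leq -1})^{\perp}$ is closed under all limits for every $t$-structure. So Step 2 uses no hypothesis, and the only hypothesis your whole argument uses is accessibility. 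That is a sign the substantive step has been skipped.

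\textbf{The gap is the identification of the connective half in Step 1.} Write $\iota\colon\D^{\leq 0}\subset\D$, $a$ for $\D$-valued sheafification, and $\Theta\colon\Sh(X;\mathcal S)\otimes\D^{\leq 0}\to\Sh(X;\D)$, $\Theta(G)=a(\iota G)$, with right adjoint $\Theta^R(F)=\tau^{\leq 0}\circ F$.

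To say the connective half \emph{is} $\Sh(X;\mathcal S)\otimes\D^{\leq 0}$ you need $\Theta$ to be fully faithful. Your inference (``$\Theta$ preserves colimits and its source is generated by pure tensors, hence its essential image is the colimit-and-extension closure of the $h_U\otimes d$'') is only valid once this is known. When $\Theta$ is not full, diagrams between objects of the image need not lift. Compare $\Sigma^\infty_+\colon\mathcal S\to\mathrm{Sp}$: its source is generated by a point, yet its image misses $H\ZZ$, which lies in the colimit closure of $\mathbb S$.

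Full faithfulness is not formal either. The $\mathrm{Pr}^L$ formalism carries full faithfulness of $\iota$ across $\Sh(X;\mathcal S)\otimes-$ when the right adjoint $\tau^{\leq 0}$ preserves colimits, or when $\Sh(X;\mathcal S)$ is dualizable. You invoke neither. What must be shown is $G\simeq\tau^{\leq 0}\circ a(\iota G)$ for every $\D^{\leq 0}$-valued sheaf $G$.

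The natural proof runs as follows:
\begin{enumerate}
\item Compute $a$ by transfinitely iterating the plus construction $P^+(U)=\on{colim}_R\lim_{V\in R}P(V)$ over covering sieves.
\item $\tau^{\leq 0}$ commutes with the limits because it is a right adjoint. It commutes with the filtered colimits if and only if $\D^{\geq 1}$ is closed under filtered colimits.
\item Granting this, $\tau^{\leq 0}\circ a(\iota G)=a_{\D^{\leq 0}}(G)=G$.
\item The same computation shows that $a$ preserves $\D^{\geq 1}$-valued presheaves. Hence the truncation on $\Sh(X;\D)$ is $F\mapsto a(\iota\,\tau^{\leq 0}\circ F)=\Theta\Theta^R F$, so every connective sheaf lies in the image of $\Theta$.
\end{enumerate}
So compatibility of the $t$-structure with filtered colimits is exactly the input your Step 1 is missing. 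This is presumably what the printed ``closed under filtered limits'' is meant to say.

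In Step 3 the plus construction does not compute hypersheafification, so the hypersheaf case needs its own version of this argument, which you do not give. The claim there about truncations is beside the point: you can apply \cite[1.4.4.11]{lurie:ha} in $\Sh^{\on{hyp}}(X;\D)$ directly. Its justification is also off, since $\tau^{\geq 0}$ is the \emph{left} adjoint of $\D^{\geq 0}\subset\D$, not a right adjoint.
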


\begin{notation}
	In order to avoid clumsy notation we denote the induced $t$-structure on $\Sh(X;\D)$ (resp.\ $\Sh^{\on{hyp}}(X;\D)$) given by \ref{prop: haineinducedtstructure} 
	\[
	\left(D^{\geq 0},D^{\leq 0}\right).
	\]
\end{notation}

\begin{rem}
	In this work we use the cohomological grading conventions. We reformulate the cited statements (which use homological convention, especially \cite{lurie:ha} and \cite{haine2025standard}) accordingly without further mention. In order to translate the two grading conventions, we observe that given a $t$-structure $(D^{\geq 0},D^{\leq 0})$ on $D$ in cohomological grading convention, then on $D^\op$ we have the homologically graded $t$-structure $((D^{\leq 0})^\op,(D^{\geq 0})^\op)$.
\end{rem}

\begin{exa}[standard $t$-structure]
	\label{exa:tstructureonderivedcategory}
	Let $R$ be a connective $\mathbb{E}^1$-ring and denote $\D= \on{LMod}_R$ the category of left $R$-module as introduced in \cite[\S 7]{lurie:ha}. Recall $\D$ carries a $t$-structure (\cite[7.1.1.13]{lurie:ha}) satisfying the assumptions of \ref{prop: haineinducedtstructure}, hence we obtain a $t$-structure on $\Sh(X;\D)$ with (co)connective objects given as above. Suppose $R$ is a connective discrete $\mathbb{E}^1$-ring and let $\A= \Mod_{\pi_0(R)}$ be the abelian category of modules the ordinary ring $\pi_0(R)$, then $\on{LMod_R} \simeq \D(\A)$ accoring to \cite[7.1.1.16]{lurie:ha}.
\end{exa}
In the following lemma we recall basic properties of the standard $t$-structure induced on $\Sh(X;\on{LMod}_R)$
\begin{lem}[{\cite[$\S 3$]{haine2026derived}}]
	Denote $f:X \to Y$ a map of topological space, denote $\Sh^{\on{hyp}}(A):= \Sh^{\on{hyp}}(A;\on{LMod}_R)$ for $R$ as in \ref{exa:tstructureonderivedcategory} and $A$ some topological space, then
	\begin{enumerate}
		\item $f^\ast: \Sh^{\on{hyp}}(Y) \to \Sh^{\on{hyp}}(X)$ is $t$-exact,
		\item $f_\ast: \Sh^{\on{hyp}}(X) \to \Sh^{\on{hyp}}(Y)$ is left $t$-exact (i.e.\ it preserves $D^{\geq 0}$),
		\item If $f$ is an open immersion, then $f_!:\Sh^{\on{hyp}}(X) \to \Sh^{\on{hyp}}(Y)$ is right $t$-exact,
		\item If $f$ is closed immersion, then $f^!:\Sh^{\on{hyp}}(Y) \to \Sh^{\on{hyp}}(X)$ is left $t$-exact and $f_\ast:\Sh^{\on{hyp}}(X) \to \Sh^{\on{hyp}}(Y)$ is $t$-exact,
	\end{enumerate}
	where exactness is always with respect to the standard $t$-structure.
\end{lem}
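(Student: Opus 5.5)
The plan is to reduce every claim to two descriptions of the standard $t$-structure on $\Sh^{\on{hyp}}(A)$ and then use adjunctions.

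\emph{Sections description.} Starting from Proposition \ref{prop: haineinducedtstructure}, the coconnective part $D^{\le 0}$ is generated under colimits and extensions by the objects $(j_U)_!\underline{M}_U$. Here $j_U\colon U\subset A$ ranges over opens and $M\in\D^{\le 0}$; this is the image of the generators of $\Sh(A;\mathcal S)$ tensored with $\D^{\le 0}$. The adjunction gives $\Map((j_U)_!\underline{M}_U,F)\simeq\Map(M,F(U))$. Since $D^{\ge 0}$ is the right orthogonal of $D^{\le -1}$, it follows that $F\in D^{\ge 0}$ if and only if $F(U)\in\D^{\ge 0}$ for every open $U$.

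\emph{Stalk description.} Using that we work with hypersheaves and that the $t$-structure on $\D=\on{LMod}_R$ is left complete, I claim that $F\in D^{\le 0}$ (resp.\ $D^{\ge 0}$) if and only if all stalks $F_x$ lie in $\D^{\le 0}$ (resp.\ $\D^{\ge 0}$). The argument goes through the cohomology sheaves $\mathcal H^n(F)$, the sheafifications of $U\mapsto H^n(F(U))$. The truncations are computed by sheafifying the truncations of sections. Hypercompleteness then guarantees that $F\simeq 0$ exactly when all $\mathcal H^n(F)$ vanish, and this vanishing is detected on stalks. This step is the main obstacle. Hypercompleteness is essential here, since for non-hypercomplete sheaves stalks do not detect equivalences. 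I would cite \cite[\S 3]{haine2026derived} or reprove it by comparing $\tau^{\le 0}$ of sections with the sheafified truncation and using Postnikov convergence in $\D$.

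With these two descriptions the statements follow quickly.
\begin{enumerate}
\item Since $(f^*F)_x\simeq F_{f(x)}$, the stalk description shows that $f^*$ is $t$-exact.
\item As the right adjoint of the right $t$-exact functor $f^*$, the functor $f_*$ is left $t$-exact. Alternatively, $(f_*F)(U)=F(f^{-1}U)$ together with the sections description gives the same conclusion.
\item For an open immersion we have $f^!=f^*$, which is $t$-exact. Hence its left adjoint $f_!$ is right $t$-exact.
\item For a closed immersion, $f_*=f_!$ has stalks $(f_*F)_y=F_y$ for $y\in X$ and $0$ otherwise, by proper base change (F4). So $f_*$ is $t$-exact by the stalk description. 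Its right adjoint $f^!$ is therefore left $t$-exact.
\end{enumerate}
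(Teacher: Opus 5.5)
Your argument is correct. It does, however, run in the opposite logical order from the paper. The paper gives no proof of its own: it defers to \cite[\S 3]{haine2026derived}, and it records the stalkwise criterion as a \emph{corollary} of this lemma. In that ordering, the criterion presumably comes from $t$-exactness of $x^*$ for points $x\colon \pt \to X$ together with conservativity of stalks on hypersheaves.

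You instead make the stalk criterion the key input and deduce (1)--(4) from it by adjunctions. This buys economy. Part (1) for arbitrary $f$ follows from the point case via $x^*f^*\simeq (f\circ x)^*$, so you never need an explicit formula for $f^*$ on hypersheaves, or the fact that (hyper)sheafification preserves coconnective objects. The lemma-first route gets the stalk criterion for free, but has to prove exactness of a general $f^*$ directly.

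The price of your ordering is that you should not cite Haine's Corollary~3.1.4 for the stalk criterion: at least as the paper arranges things, that would be circular. You have to prove it. Your sketch does this once made precise:
\begin{enumerate}
\item $x^*$ is right $t$-exact, as left adjoint of the skyscraper functor. The skyscraper functor is left $t$-exact by your sections description.
\item $x^*$ is left $t$-exact, because filtered colimits preserve $\D^{\ge 0}$ for $\D=\on{LMod}_R$.
\item Sheaves in the heart (sheaves of $\pi_0R$-modules) on a topological space are detected by stalks.
\item The hypersheaf $t$-structure is non-degenerate.
\end{enumerate}
For the last point, the relevant input is not left completeness of $\D$ but left separatedness coming from hypercompleteness. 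If all homotopy sheaves of $G$ vanish, then each $\Omega^{\infty-n}G\to *$ is $\infty$-connective, hence an equivalence in the hypercomplete topos, so $G\simeq 0$. Right separatedness follows from your sections description.

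Three small corrections:
\begin{itemize}
\item In cohomological grading, $D^{\le 0}$ is the \emph{connective} part (the aisle), not the coconnective one. Your mathematics treats it correctly; only the word is wrong.
\item The properties (F2) and (F4) are recorded in the paper only for locally compact Hausdorff spaces and non-hypercomplete sheaves, whereas the lemma concerns arbitrary spaces and hypersheaves. You do not need them.
\item For closed $i\colon Z\subset Y$, the stalk formula follows directly from $(i_*F)(V)=F(V\cap Z)$. For $y\notin Z$ this vanishes for small $V\ni y$; for $y\in Z$ the sets $V\cap Z$ run through all open neighbourhoods of $y$ in $Z$, so $(i_*F)_y\simeq F_y$. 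For an open immersion, $f_!$ is by definition the left adjoint of $f^*$.
\end{itemize}
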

\begin{cor}[{\cite[3.1.4]{haine2026derived}}]
	Let $X$ be a topological space, then $F \in \Sh^{\on{hyp}}(X;{\on{LMod}_R})$ is (co)connective with respect to the standard $t$-structure if and only if $F_x \in \on{LMod}_R$ is (co)connective for all $x \in X$.
\end{cor}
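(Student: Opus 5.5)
We reduce the statement to the standard characterization of (co)connectivity in $\Sh^{\on{hyp}}(X;\on{LMod}_R)$ by the stalk functors $x^* \colon \Sh^{\on{hyp}}(X) \to \on{LMod}_R$, where $x\colon \pt \to X$ is the inclusion of a point. The argument is pointwise and has a trivial direction and a substantive one.

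\emph{Trivial direction.} If $F \in D^{\le 0}$ (resp. $D^{\ge 0}$), then every stalk $F_x = x^*F$ is (co)connective, because $x^*$ is $t$-exact by the preceding lemma (pullback along any map is $t$-exact).

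\emph{Converse for $D^{\le 0}$.} Suppose all stalks $F_x$ lie in $\on{LMod}_R^{\le 0}$. We use the truncation triangle
\[
\tau^{\le 0}F \to F \to \tau^{\ge 1}F
\]
in $\Sh^{\on{hyp}}(X)$. Applying the $t$-exact functor $x^*$ identifies $(\tau^{\ge 1}F)_x$ with $\tau^{\ge 1}(F_x)$, which vanishes. Hence $G := \tau^{\ge 1}F$ is a hypersheaf all of whose stalks are zero.

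The key step is to show that such a $G$ is itself zero. Here hypercompleteness is essential; on non-hypercomplete sheaves the statement can fail, which is why the corollary is phrased for $\Sh^{\on{hyp}}$. I would argue via the equivalence $\Sh^{\on{hyp}}(X;\D) \simeq \Sh^{\on{hyp}}(X;\mathcal S) \otimes \D$ of Lemma \ref{lem:tensorsheaves}. Reduce to the homotopy sheaves $\pi_n G$, i.e. the sheafified cohomology presheaves. These are ordinary sheaves of abelian groups whose stalks are $H^n(G_x) = 0$, so they vanish. In a hypercomplete $\infty$-topos, an object all of whose homotopy sheaves vanish is zero (Whitehead), so $G \simeq 0$.

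To make the reduction precise in the stable setting, I would use the left completeness of the $t$-structure on $\on{LMod}_R$, together with the description of the induced $t$-structure in Proposition \ref{prop: haineinducedtstructure}. This identifies the heart of $\Sh^{\on{hyp}}(X;\on{LMod}_R)$ with sheaves of $\pi_0 R$-modules, and the homotopy objects of $G$ with sheaves whose stalks are the stalk homotopy groups.

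\emph{Converse for $D^{\ge 0}$.} This is symmetric: if all stalks are coconnective, the same argument applied to $\tau^{\le -1}F$ shows it has zero stalks, hence vanishes.

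\emph{Main obstacle.} The delicate point is the identification of stalks of truncations and homotopy objects with truncations of stalks, i.e. the compatibility of $x^*$ with the induced $t$-structure, combined with the conservativity of the family of stalks on hypersheaves. The $t$-exactness of $x^*$ is supplied by the previous lemma. For conservativity, I would first appeal directly to the statement in \cite[\S 3]{haine2026derived} that stalks jointly detect equivalences of hypersheaves valued in $\on{LMod}_R$. Only if that is not available would I run the Whitehead argument sketched above.
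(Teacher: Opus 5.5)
The paper gives no argument of its own for this statement; it simply cites \cite[3.1.4]{haine2026derived}. Your proof is correct and follows the standard route.
\begin{itemize}
\item $t$-exactness of $x^*$ (the preceding lemma applied to $x\colon \pt\to X$) gives the forward direction. It also identifies stalks of truncations with truncations of stalks.
\item The converse then reduces to joint conservativity of stalks on hypersheaves. You derive this correctly from two facts. First, the cohomology sheaves $\mathcal H^n(G)$ are ordinary sheaves of $\pi_0R$-modules with stalks $H^n(G_x)$, because filtered colimits are exact in $\on{LMod}_R$. Second, a hypersheaf whose cohomology sheaves all vanish is zero.
\end{itemize}

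Two small refinements.

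First, the precise input in that last step is $\bigcap_n D^{\le -n}=0$ in $\Sh^{\on{hyp}}(X;\on{LMod}_R)$. You get it by applying Whitehead to the objects $\Omega^{\infty-m}$ of the underlying hypersheaf of spectra. Left completeness of $\on{LMod}_R$ is not what is used there.

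Second, hypercompleteness is only needed in the coconnective direction.
\begin{itemize}
\item In the connective direction, $G=\tau^{\ge 1}F$ lies in $D^{\ge 1}$. Vanishing of all $\mathcal H^n(G)$ then puts $G$ in $\bigcap_n D^{\ge n}$. Since coconnectivity for the standard $t$-structure is tested on sections, this forces $G(U)\in\bigcap_n \on{LMod}_R^{\ge n}=0$, already on the non-hypercomplete category $\Sh(X;\on{LMod}_R)$.
\item In the coconnective direction, the object $\tau^{\le -1}F$ has vanishing stalks and is therefore $\infty$-connective. Concluding that it is zero is exactly where hypercompleteness is required.
\end{itemize}
So your remark that ``hypercompleteness is essential'' belongs to the $\tau^{\le -1}F$ step, not the $\tau^{\ge 1}F$ step where you placed it. That is also the only place where the statement genuinely fails for non-hypercomplete sheaves.
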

\begin{prop}[Perverse $t$-structure]\label{prop: perversetstructure}
	Given a accessible left complete $t$-structure $(\D^{\geq 0},\D^{\leq 0})$ on a stable and presentable $\infty$-category $\D$. Given a stratified space $X \to P$ together with a perversity function 
	\[
	\p: P \to \ZZ
	\] 
	then there exists a $t$-strucutre on $\Sh_P(X;\D)$, called the perverse $t$-structure denoted $({}^\p D^{\geq 0},{}^\p D^{\leq 0})$, which is characterized by
	\begin{enumerate}
		\item[$(P^+)$] $F \in {}^\p D^{\geq 0}$ if and only if $i_p^!F \in D^{\geq \p(p)}$ for all $p \in P$,
		\item[$(P^-)$] $F \in {}^\p D^{\leq 0}$ if and only if $i_p^\ast F \in D^{\leq \p(p)}$ for all $p \in P$.
	\end{enumerate}
\end{prop}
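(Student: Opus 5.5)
The plan is to build the perverse $t$-structure by induction on $|P|$, gluing along the recollements supplied by the stratification, in the manner of \cite{BBD}. Each step uses the $\infty$-categorical gluing of $t$-structures along a recollement of stable $\infty$-categories, which is formally identical to the classical gluing lemma (cf.\ the appendix on recollements, \S\ref{sec:recollements}). We start from the strata. For $p\in P$, the category $\Sh_{\{p\}}(X_p;\D)$ of locally constant sheaves carries the standard $t$-structure of Proposition~\ref{prop: haineinducedtstructure}, shifted by $\p(p)$: we declare $F\in{}^\p D^{\ge 0}$ iff $F\in D^{\ge \p(p)}$, and similarly for $\le 0$.

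To see that the standard $t$-structure really restricts to locally constant sheaves, note the following. Truncations on $\Sh(X_p;\D)$ are computed stalkwise, by the stalkwise corollary to the lemma cited from \cite{haine2026derived}. Pullback $f^*$ is $t$-exact, so it commutes with truncation. Hence for a local trivialization $F|_V\simeq c^*E$ we get $\tau_{\le n}(F)|_V\simeq c^*\tau_{\le n}E$, which is again constant. Equivalently, under exodromy (Proposition~\ref{prop:exodromy}) locally constant sheaves form $\Fun(\Exit(X_p),\D)$ with the pointwise $t$-structure.

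For the inductive step, choose a maximal $p\in P$, let $U=X_p$ (open) and $Z=X_{P\setminus\{p\}}$ (closed), with inclusions $j$ and $i$. I would show that the recollement
\[
\Sh_{P\setminus\{p\}}(Z;\D)\;\rightleftarrows\;\Sh_P(X;\D)\;\rightleftarrows\;\Sh_{\{p\}}(U;\D)
\]
given by $i^*,i_*,i^!,j_!,j^*,j_*$ restricts to constructible sheaves. The functors $j^*$, $i^*$ and $i_*$ clearly preserve constructibility. For $j_*$, $j_!$ and $i^!$ I would invoke exodromy: under $\Sh_P(X)\simeq\Fun(\Exit_P(X),\D)$, the functor $j_*$ becomes right Kan extension along the full subcategory $\Exit(U)\subset\Exit_P(X)$, and $j_!$ becomes extension by zero. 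Then $i^!=\fib(i^*\to i^*j_*j^*)$ preserves constructibility as well.

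This compatibility of the functor formalism of $(F1)$--$(F4)$ with constructibility is the step I expect to be the main obstacle. It is where the conical hypothesis and the paracompact, finite-dimensional hypothesis (so that sheaves agree with hypersheaves) are really used. If the direct comparison is awkward, I would instead argue locally on basic opens $\RR^k\times C(M)$, where $j_*$ of a locally constant sheaf is visibly constant along the cone strata.

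With the recollement in place, the gluing lemma produces a $t$-structure on $\Sh_P(X;\D)$:
\[
{}^\p D^{\ge 0}=\{F : j^*F\in{}^\p D^{\ge0}_U,\ i^!F\in{}^\p D^{\ge0}_Z\},\qquad
{}^\p D^{\le 0}=\{F : j^*F\in{}^\p D^{\le0}_U,\ i^*F\in{}^\p D^{\le0}_Z\}.
\]
Here the $t$-structure on $Z$ exists by the induction hypothesis. For $q\neq p$, the stratum inclusion factors as $i_q=i\circ i'_q$, so $i_q^!=(i'_q)^!i^!$ and $i_q^*=(i'_q)^*i^*$. For $q=p$ we have $i_p^!=i_p^*=j^*$ because $U$ is open. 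Substituting the inductive characterization of the $t$-structure on $Z$ then yields exactly $(P^+)$ and $(P^-)$.

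Uniqueness of a $t$-structure with these characterizations is automatic, since each of the two halves determines the other by orthogonality. The hypotheses of accessibility and left completeness enter only through Proposition~\ref{prop: haineinducedtstructure}, which provides the stratumwise $t$-structures.
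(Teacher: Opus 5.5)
Your proposal is correct, but it follows a different route from the paper. The paper gets existence in one line from \cite[Proposition~1.4.4.11]{lurie:ha}. The class of $F\in\Sh_P(X;\D)$ with $i_p^*F\in D^{\le\p(p)}$ for all $p$ is closed under colimits and extensions, since each $i_p^*$ is exact and preserves colimits. Hence, in the presentable stable $\infty$-category $\Sh_P(X;\D)$, this class is the connective half ${}^\p D^{\le 0}$ of a $t$-structure. This needs no recollement on $\Sh_P(X;\D)$ and never asks whether $j_*$ preserves constructibility. It only uses presentability of $\Sh_P(X;\D)$ and of that class, which the paper takes for granted.

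The price is that the paper's argument only delivers $(P^-)$; $(P^+)$ is not checked there. To obtain $(P^+)$ along that route you would need precisely your key input. For $G$ right orthogonal to ${}^\p D^{\le 0}$, adjunction gives $\Map(A,i_p^!G)\simeq\Map((i_p)_!A,G)\simeq 0$ for all locally constant $A\in D^{\le\p(p)}$. Concluding $i_p^!G\in D^{\ge\p(p)+1}$ then requires $i_p^!G$ to be locally constant, i.e.\ that $j_*$, and hence $i^!$, preserves constructibility.

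Your inductive BBD gluing front-loads that step. You correctly identify it as the crux and place it in the conical/exodromy hypotheses. One caveat: exodromy by itself only identifies the right adjoint of $j^*$ \emph{within} $\Sh_P$ with right Kan extension. Showing that the sheaf-theoretic $j_*$ actually lands in $\Sh_P$ is the real content, and your local argument on conical charts is the right way to supply it. In return, your approach yields both characterizations at once and needs no presentability. It also gives the stratum-by-stratum description of ${}^\p D^{\ge 0}$ via the open--closed recollement, which the paper implicitly relies on later, e.g.\ in the proofs of left $t$-exactness.
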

\begin{proof}
	Consider the small collection of elements $F \in \Sh_P(X;\D)$ such that $i_p^\ast F \in D^{\leq \p(p)}$ for all $p \in P$. By \cite[Proposition 1.4.4.11.]{lurie:ha} there exists a $t$-structure whose coconnective objects are given by those $F$, such that $i_p^\ast F \in D^{\leq \p(p)}$ for all $p \in P$.
\end{proof}
\subsection{Sheaves and hypersheaves}\label{sub:comparison}
In this section we briefly discuss the two different notions of sheaves and hypersheaves and compare them. We formulate the setting which subsumes all the examples of \S \ref{sec:examples_of_interest}.
\begin{lem}[Comparison sheaves and hypersheaves]\label{lem:compsheaveshypersheaves}
	Let $X$ be a paracompact topological space with finite covering dimension. Then the inclusion
	\[
	\Sh^{\on{hyp}}(X;\mathcal{S}) \hookrightarrow \Sh(X;\mathcal{S})
	\]
	is an equivalence of $\infty$-categories.
\end{lem}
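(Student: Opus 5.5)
The statement is the classical fact (Lurie, HTT 7.2.1.12 combined with 7.2.3.6) that a paracompact space of finite covering dimension has a hypercomplete $\infty$-topos of sheaves. I would derive it from two ingredients. First, homotopy dimension bounds give hypercompleteness. Second, paracompactness plus finite covering dimension gives a homotopy dimension bound.

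The plan has three steps.

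Step 1. I would first show that $\Sh(X;\mathcal{S})$ has homotopy dimension $\le n$, where $n$ is the covering dimension of $X$. This is HTT Theorem 7.2.3.6. Its proof uses paracompactness to replace sheaves on $X$ by sheaves on the poset of open covers and closed neighbourhoods (HTT 7.1). It then uses finite covering dimension to refine any cover to one of order $\le n+1$. This lets one construct global sections of any $n$-connective sheaf by induction over skeleta of the nerve of the refined cover.

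Step 2. I would apply the same argument to every open subset $U \subset X$. Open subsets of a paracompact space of covering dimension $\le n$ need not be paracompact. So I would use the locally-compact-Hausdorff standing assumption of the paper: for a locally compact paracompact space, every open subset is again covered by paracompact opens of covering dimension $\le n$. This yields that $\Sh(X;\mathcal{S})$ is locally of homotopy dimension $\le n$, in the sense that the objects of homotopy dimension $\le n$ generate it under colimits.

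Step 3. I would conclude with HTT Corollary 7.2.1.12: an $\infty$-topos locally of homotopy dimension $\le n$ is hypercomplete. Hypercompleteness says that every $\infty$-connective morphism is an equivalence. This is exactly the statement that the inclusion $\Sh^{\on{hyp}}(X;\mathcal{S}) \hookrightarrow \Sh(X;\mathcal{S})$ is essentially surjective, hence an equivalence.

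I expect the main obstacle to be Step 2. The cleanest route is to avoid passing to arbitrary open subsets altogether. For that I would first try Lurie's remark that for paracompact $X$ of covering dimension $\le n$ the whole topos is locally of homotopy dimension $\le n$ (HTT Remark 7.2.3.7 and the surrounding discussion). If that is insufficient, I would restrict attention to the spaces actually used in \S\ref{sec:examples_of_interest}. These are manifolds and complex analytic spaces, which are metrizable, so every open subset is itself paracompact of dimension $\le n$, and Step 1 applies directly.
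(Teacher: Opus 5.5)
Your proof is the paper's proof: it also just combines HTT 7.2.3.6 with HTT 7.2.1.12. Your Step 2 makes explicit the passage from ``homotopy dimension $\le n$'' to ``locally of homotopy dimension $\le n$'', which the paper's one-line citation leaves implicit, and that step needs neither local compactness nor metrizability: in a paracompact Hausdorff $X$ the cozero sets $\{f>0\}=\bigcup_k\{f\ge 1/k\}$ form a basis, are $F_\sigma$ and hence paracompact, and have covering dimension $\le n$ by the countable sum theorem (each $\{f\ge 1/k\}$ is closed in $X$, hence of dimension $\le n$), so you can drop both the appeal to a remark in HTT and the restriction to metrizable spaces.
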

\begin{proof}
	This follows from \cite[7.2.3.6]{lurie:htt} together with \cite[7.2.1.12]{lurie:htt} yielding that $\Sh(X;\mathcal{S})$ is already hypercomplete making the inclusion
	\[
	\Sh^{\on{hyp}}(X;\mathcal{S}) \hookrightarrow \Sh(X;\mathcal{S})
	\]
	an equivalence of $\infty$-categories.
\end{proof}
\begin{rem}\label{rem: comparisonshandhypsh}
	Many statements in the literature are proven in the setting of hypersheaves. Lemma \ref{lem:compsheaveshypersheaves} implies the validity of these results in our setup of $\D$-valued sheaves on paracompact topological space with finite covering dimension. More explicitly we have a chain of equivalences for $X$ satisfying the conclusion of \ref{lem:compsheaveshypersheaves} (e.g.\ $X$ paracompact, finite covering dimension):
	\[
	\Sh^{\on{hyp}}(X;\D) \simeq \Sh^{\on{hyp}}(X;\mathcal{S}) \otimes \D \xrightarrow{\simeq} \Sh(X;\mathcal{S})\otimes \D \simeq \Sh(X;\D). 
	\]
\end{rem}

\begin{rem}\label{rem: exodromyforsheaves}
	Let $X \to P$ be a conically stratified paracompact space with finite covering dimension, such that all strata are locally contractible. In this case Remark \ref{rem: comparisonshandhypsh} yields that the statements of Proposition \ref{prop:exodromy} and \ref{prop:stabilityundercoarsening} hold for $\D$-valued sheaves, i.e.\ we obtain an equivalence of $\infty$-categories
	\[
	\Sh_P(X;\D) \simeq \Fun(\Exit_P(X),\D).
	\]
\end{rem}
The following exhausts all examples that are covered in \S \ref{sec:examples_of_interest}:
\section{Lax limit cones}
\label{sub:laxlimcones}

We develop our general approach to lax diagrams and lax limit cones following
along the lines of the representative example from \S \ref{sub:quiver}. Let $K$
be a simplicial set and let $\pi: \G \to K^{\rhd}$ be a locally Cartesian
fibration. 

The straightening of such a fibration yields, in general, a {\em laxly}
commutative diagram $K^{\rhd} \to \Cat_\infty$, and we wish to formulate the
condition that this diagram forms a lax limit cone directly in terms of the
fibration $\pi$.
In the context of {\em commutative} diagrams of $\infty$-categories, (op)lax
limits can be simply modelled by sections of the respective (co)Cartesian
fibrations. For laxly commutative diagrams we need a slight elaboration (as is
already apparent from \S \ref{sub:quiver}), based on barycentric subdivision:
Given a finite nonempty linearly ordered set $I$, we denote by $\P^*(I)$ the
poset of nonempty subsets of $I$. We denote by $\min: \P^*(I)^{\op} \to I$ the
order--preserving map given by assigning to a subset $J \subset I$ its minimum
$\min(J)$. 

We then define the functor
\[
    \sd: \Delta \lra \sSet^+, \; [n] \mapsto \N(\P^*([n])^{\op})
\]
where we mark all edges of the form 
\[
\begin{tikzcd}
	{[k]} \arrow[rd, "\phi"'] \arrow[rr, "i \mapsto i+1"] &       & {[k+1]} \arrow[ld, "\psi"] \\
	& {[n]} &                           
\end{tikzcd}
\]
and think of elements in $\P^*([n])$ as elements in $\Delta/[n]$.
We further note that the function $\min$ defines a natural transformation 
\[
\min : \sd \Rightarrow Y
\] 
where $Y: \Delta \to \sSet^+, [n] \mapsto (\Delta^n)^{\sharp}$ denotes the
(fully marked) Yoneda embedding. Notice that the marked edges are precisely the
locally Cartesian edges with respect to the maps $\min : \sd([n]) \to [n]$ for
all $n$. Via left Kan extension, we obtain a functor $\sd: \sSet \to \sSet^+$
together with a natural transformation $\sd \Rightarrow (-)^{\sharp}$ which we
still denote by $\min$. 
Finally we can extend this map by adding a cone tip and receive
\[
    \operatorname{min}^\rhd: \sd(-)^\rhd \to (({-})^\rhd)^\sharp.
\] 
The marked edges of $\sd(K)^\rhd$, for $K \in \sSet$ are given by conserving the ones for $\sd(K)$ and adding: for $\phi :[0] \to K$ we mark the maps $\phi \to *$, where $*$ denotes the cone tip.

We now define the $\infty$-categories
\[
    \Gamma(\G) := \Map_{K^{\rhd}}^{\flat}(\sd(K)^\rhd, \G^{\natural})
\]
and
\[
    \Gamma^*(\G) := \Map_{K}^{\flat}(\sd(K), \G^{\natural}|K)
\]
given by functors over $K^{\rhd}$ preserving marked edges. 

\begin{lem}\label{lem:tipprojection}
    The projection 
    \[
        \Gamma(\G) \to \G_*
    \]
    is a trivial Kan fibration. 
\end{lem}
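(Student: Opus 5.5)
We will show that the projection $\Gamma(\G) \to \G_*$ is a trivial Kan fibration by writing it as restriction along a marked anodyne-type inclusion, and then using the standard fact that restriction along such maps is a trivial fibration for locally Cartesian fibrations. The projection to the tip is restriction along the inclusion of the cone point
\[
    \{*\} \hookrightarrow \sd(K)^{\rhd}
\]
over $K^{\rhd}$. The target of this restriction is $\Map^\flat_{K^\rhd}(\{*\},\G^\natural) \simeq \G_*$.

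The plan has three steps.

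\emph{Reduction to simplices.} Both sides are compatible with colimits in $K$: the functor $\sd$ was defined by left Kan extension, and $(-)^\rhd$ preserves the relevant colimits relative to the cone point. So it suffices to show that $\{*\} \to \sd(K)^\rhd$ is a \emph{right} (Cartesian) marked anodyne map over $K^\rhd$. We do this by a skeletal induction on $K$. Right marked anodyne maps are closed under pushouts and transfinite composition, so we reduce to the inclusions
\[
    \sd(\partial\Delta^n)^\rhd \cup \{*\} \to \sd(\Delta^n)^\rhd,
\]
together with the base case $\{*\} \to \sd(\Delta^0)^\rhd = (\Delta^1)^\sharp$. The base case is right marked anodyne, since the edge $\phi \to *$ is marked.

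\emph{The simplex case.} In $\sd(\Delta^n)^{\rhd} = \N(\P^*([n])^{\op})^{\rhd}$, the new simplices are the chains of nonempty subsets ending at the tip that contain the full set $[n]$. In the opposite order, $[n]$ is initial among the nonempty subsets. We filter these new simplices by the minimum of the chain and attach them in order of decreasing dimension. Each attachment is a pushout along a horn inclusion $\Lambda^m_m \subset \Delta^m$ whose final edge is marked, or along the marked version $(\Delta^1)^\sharp$. Concretely, the last edge $[n] \to \{\min\} \to \dots \to *$ is Cartesian, because the marked edges are exactly those lying over the locally Cartesian edges of $\min$, together with the edges into the tip. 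This step requires us to choose an explicit ordering of the nondegenerate simplices in $\sd(\Delta^n)^\rhd \setminus (\sd(\partial\Delta^n)^\rhd\cup\{*\})$ and to check that each is attached along a single right horn with marked last edge.

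\emph{Conclusion.} Given the first two steps, Lurie's theory of marked simplicial sets (HTT 3.1) applies. For a locally Cartesian fibration, however, this theory only covers horns lying over single edges, or over $2$-simplices with degenerate pieces, so the relative Cartesian lifting property for $\G^\natural \to K^\rhd$ must be verified directly. We argue that every horn we attach has its last edge mapping to a single edge of $K^\rhd$: either $i \to i$, which is degenerate, or $i \to *$. Lifting against locally Cartesian edges then suffices, which gives the trivial fibration.

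The main obstacle is the combinatorial step: organizing the attachment so that only horns whose marked final edge is locally Cartesian over a single edge of $K^\rhd$ appear. This is where the barycentric subdivision is essential, rather than $K$ itself, because ordinary Cartesian-anodyne arguments fail for merely locally Cartesian fibrations. I would first try the filtration by $\min$ and by the length of the chain.
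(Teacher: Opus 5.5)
Your skeleton matches the paper's. $\Gamma(\G)\to\G_*$ is restriction along $\{*\}=\sd(\emptyset)^{\rhd}\subset\sd(K)^{\rhd}$, and a simplex-by-simplex argument reduces everything to $\sd(\partial\Delta^n)^{\rhd}\subset\sd(\Delta^n)^{\rhd}$. But the simplex case, which carries all the content, is left undone, and the criterion you give for it is wrong.

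The condition that the last edge maps to a single edge of $K^{\rhd}$ is vacuous. For a merely locally Cartesian fibration, a horn $\Lambda^m_m\subset\Delta^m$ with locally Cartesian last edge is only guaranteed to fill when the \emph{whole} simplex lies over an edge. That is, $\Delta^m\to K^{\rhd}$ must factor through the map $\Delta^m\to\Delta^1$ collapsing $0,\dots,m-1$; this is exactly the restriction in Proposition~\ref{prop:lma}\ref{lma:3}. Over a nondegenerate $j\to i\to *$, filling $\Lambda^2_2$ means factoring a given map $x_0\to F_{j\to *}(x)$ through the lax comparison $F_{j\to i}F_{i\to *}(x)\to F_{j\to *}(x)$. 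This already fails in the $A_3$-example of \S\ref{sub:quiver}: for $x=(a_0\to 0\to 0)$ with $a_0\neq 0$, the comparison is $0\to a_0$, and the identity of $a_0$ does not factor through it.

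Your assertion that the relevant last edges lie over $i\to i$ or $i\to *$ is also false. The marked edges $S\to S\setminus\{\min S\}$ of $\sd(K)$ lie over nondegenerate edges of $K$; already for $n=1$, $\{0,1\}\to\{1\}$ lies over $0\to 1$. And not every cell can be a right horn: the $2$-simplex $\{0,1\}\to\{1\}\to *$ over $0\to 1\to *$ must be attached along the inner horn $\Lambda^2_1$.

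Second, even a correct cell decomposition only yields the extension property, i.e.\ surjectivity on objects. A trivial Kan fibration also requires lifting against the pushout-products of $\{*\}\subset\sd(K)^{\rhd}$ with $(\partial\Delta^m)^{\flat}\subset(\Delta^m)^{\flat}$. This is not a ``standard fact'' here, because \cite[3.1.2.3]{lurie:htt} rests on generators that fail for locally Cartesian fibrations (arbitrary marked right horns, composites of marked edges).

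The paper settles both points with one identification. Record whether a subset contains $0$, and read the tip as $\emptyset$:
\[
\sd(\Delta^n)^{\rhd}\cong\N(\P([n])^{\op})\cong\Delta^1\times B,\qquad B=\N(\P(\{1,\dots,n\})^{\op}).
\]
Up to a pushout adding markings, this exhibits $\sd(\partial\Delta^n)^{\rhd}\subset\sd(\Delta^n)^{\rhd}$ as the pushout-product of $\{1\}\subset(\Delta^1)^{\sharp}$ with $A^{\flat}\subset B^{\flat}$, i.e.\ a map of type~\ref{lma:5}. For this to hold, $A$ must be the full subposet omitting the initial vertex $\{1,\dots,n\}$; the $\N(\P(\{2,\dots,n\})^{\op})$ printed in Corollary~\ref{cor:bary_anodyne} misses $\{0,1\}$ for $n=2$. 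Since $\min$ sends $\{0\}\times B$ to $0$, the decomposition of \cite[3.1.1.5]{lurie:htt} produces only inner horns and right horns lying entirely over an edge. Type~\ref{lma:5} maps are also closed under pushout-products with boundary inclusions, which gives the trivial fibration directly.

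If you keep your direct filtration, you must impose the collapse condition, allow inner horns, and prove the pushout-product step separately. That step does go through: inner anodyne maps are stable under it, and a collapsed right horn together with its pushout-products lies over $\Delta^1$, where the fibration is genuinely Cartesian.
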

 
To prepare the proof of the lemma, we need to establish some routine maneuvers
within the saturated class of maps which are anodyne with respect to locally
Cartesian fibrations. 
Let $A$,$B$ be marked simplicial sets and $S$ a simplicial set. Given a map $p:
B \to S^{\sharp}$, we say that an inclusion $i: A \to B$ is {\em marked local anodyone
over $S$} if, for every locally Cartesian fibration $q: X \to S$, every lifting
problem
\[
\begin{tikzcd}
    A \ar{r}\ar{d}{i} & X^{\natural}\ar{d}{p} \\
    B \ar{r}{p}\ar[dashed]{ur} & S^{\sharp}
\end{tikzcd}
\]
admits a solution.

\begin{prop} Let $S$ be a simplicial set. 
    \label{prop:lma}
    \begin{enumerate}
        \item\label{lma:1} Suppose $A \to B$ with $p: B \to S^{\sharp}$ is marked local
            anodyne over $S$ and let $t: S \to T$ be a map of simplicial
            sets. Then $A \to B$ is marked local anodyne over $T$ via the map
            $t \circ p: B \to T^{\sharp}$. 
        \item\label{lma:2} Let $n \ge 0$ and $0 < i < n$ and let $\Delta^n \to S$ be any
            map. Then the inner horn inclusion $(\Lambda^n_i)^{\flat} \to
            (\Delta^n)^{\flat}$ is marked local anodyne over $S$.
        \item\label{lma:3} For $n \ge 1$, consider the map $p: \Delta^n \to \Delta^1$ defined as the nerve
            of the poset map $[n] \to [1]$ mapping $0,1,...,n-1$ to $0$ and $n$
            to $1$. Then the right horn inclusion $\Lambda^n_n \to \Delta^n$,
            where the only nondegenerate edge marked in both simplicial sets is
            $\{n-1,n\}$, is marked local anodyne over $\Delta^1$. 
        \item\label{lma:4} Let $n \ge 0$. Let $p: \Delta^1 \times \Delta^n \to
            (\Delta^n)^{\lhd}$ be the nerve of the map of posets
            which collapses $\{0\} \times [n]$ to the initial vertex and
            restricts to the projection onto $[n]$ on $\{1\} \times [n]$. Then
            the smash product
            \begin{equation}\label{eq:smash4}
                \{1\} \times (\Delta^n)^{\flat} \coprod_{\{1\} \times \partial
                \Delta^n} (\Delta^1)^{\sharp} \times (\partial
                \Delta^n)^{\flat} \to  (\Delta^1)^{\sharp} \times (\Delta^n)^{\flat}
            \end{equation}
            of the maps $\{1\} \to (\Delta^1)^{\sharp}$ and $(\partial
            \Delta^n)^{\flat} \to (\Delta^n)^{\flat}$ is marked local anodyne
            over $(\Delta^n)^{\lhd}$. 
        \item\label{lma:5} Let $n \ge 0$. Let $A \to B$ be any inclusion of simplicial sets
            and let $\Delta^1 \times B \to B^{\lhd}$ be the map collapsing
            $\{0\} \times B$ to the initial vertex and restricting to the
            projection onto $B$ on $\{1\} \times B$. Then the smash product
            \[
                \{1\} \times B^{\flat} \coprod_{\{1\} \times A^{\flat}}
                (\Delta^1)^{\sharp} \times A^{\flat} \to  (\Delta^1)^{\sharp}
                \times B^{\flat}
            \]
            of the maps $\{1\} \to (\Delta^1)^{\sharp}$ and $A^{\flat} \to B^{\flat}$ is marked local anodyne
            over $B^{\lhd}$. In particular, by \ref{lma:1}, composing
            with any map $B^{\lhd} \to S$, we obtain a marked local
            anodyne morphism over $S$.
    \end{enumerate}
\end{prop}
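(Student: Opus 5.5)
The plan is to prove the five parts in order, each reducing to the previous ones or to standard facts about locally Cartesian fibrations (Lurie, HTT \S 2.4.2 and the marked model structure). Part \ref{lma:1} is formal. Given a locally Cartesian fibration $q: X \to T$, its pullback $X \times_T S \to S$ along $t$ is again locally Cartesian, and the locally Cartesian edges of the pullback are exactly the edges mapping to locally Cartesian edges of $X$. So a lifting problem over $T$ is the same thing as a lifting problem against $(X\times_T S)^{\natural} \to S^{\sharp}$, which we can solve by hypothesis.

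For part \ref{lma:2}, a locally Cartesian fibration is in particular an inner fibration. Since the horn inclusion is flatly marked, there is no marking condition to check, and the lift exists.

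For part \ref{lma:3}, we first use \ref{lma:1} to pull back along $p$, so we may assume $S = \Delta^1$ and $q: X \to \Delta^1$. A locally Cartesian fibration over $\Delta^1$ is Cartesian, since there is only one nondegenerate edge. In that case the right horn lifting against a simplex whose last edge $\{n-1,n\}$ is Cartesian is exactly the defining property of $q$-Cartesian edges (HTT 2.4.1.1, in the form of the right horn criterion 2.4.1.3/2.4.1.4).

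For part \ref{lma:4}, we pull back along $p$ via \ref{lma:1}, so that $X \to \Delta^1 \times \Delta^n$ is again locally Cartesian. We then filter $(\Delta^1)^{\sharp} \times (\Delta^n)^{\flat}$ in the usual way, adjoining the $(n+1)$-simplices $\sigma_k$ of the prism one at a time, in order of decreasing $k$ starting from the $\{1\}$-end. Each step is a pushout of either an inner horn, handled by \ref{lma:2}, or, at the final stage, a right horn whose last edge is a marked edge $(0,j)\to(1,j)$. The key observation is this. Every simplex of the prism maps under $p$ into $(\Delta^n)^{\lhd}$, so the vertices $(0,j)$ all map to the cone point. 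Hence any simplex containing a marked vertical edge is pulled back, after restricting along its image, to a fibration over a simplex of the form $\Delta^1$-collapsed. Over that simplex the relevant horn is exactly the situation of \ref{lma:3}: all vertices except the last lie over the initial vertex, and the last edge is marked. Concretely, for the horn filling $\Lambda^{m}_m \subset \Delta^m$ in which the first $m$ vertices lie in $\{0\}\times\Delta^n$ and the last is $(1,j)$, the composite to $(\Delta^n)^{\lhd}$ factors through a map $\Delta^m \to \Delta^1$ of the shape in \ref{lma:3}. Applying \ref{lma:1} then gives the lift.

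The main obstacle is the bookkeeping in \ref{lma:4}. The standard prism decomposition (HTT 2.1.2.6) uses left-horn-type fillers at the ends, whereas we must order the simplices so that every non-inner horn that appears is a \emph{right} horn whose final edge is a vertical marked edge, and whose other vertices all lie over the cone point. I would try the ordering from HTT 3.1.1 (the proof that $(\Delta^1)^\sharp\times$ preserves marked anodynes), reading the prism as $(\Delta^n)^{\lhd}$-stratified. I would then check that the problematic horns are those where vertex $(0,j)$ is missing, and that these can be arranged to have all remaining $0$-level vertices preceding $(1,j)$.

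Part \ref{lma:5} follows from \ref{lma:4} by the usual skeletal induction. The maps in \ref{lma:5} form a class stable under pushouts, transfinite composition and retracts, since lifting classes are weakly saturated. So we may write $A \to B$ as a transfinite composite of pushouts of $\partial\Delta^n \to \Delta^n$. Because the smash product with $\{1\}\to(\Delta^1)^\sharp$ preserves pushouts and composites, it suffices to treat the cells. Each cell $\Delta^n \to B$ induces a map $(\Delta^n)^{\lhd} \to B^{\lhd}$, and \ref{lma:4} together with \ref{lma:1} handles it. The final sentence of \ref{lma:5} is then \ref{lma:1} again.
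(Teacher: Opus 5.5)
Your proposal is correct and follows the paper's proof. The paper defers to the proof of \cite[3.1.1.5]{lurie:htt}, observing only that the right horn fillers needed for (4) collapse onto an edge so that (3) applies, and then derives (5) simplex by simplex.

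The bookkeeping you flag in (4) works out. Adjoin the prism simplices starting with $((0,0),(1,0),\ldots,(1,n))$. Then every horn is inner except the last one, $\Lambda^{n+1}_{n+1}\subset((0,0),\ldots,(0,n),(1,n))$. Its missing face is $\{0\}\times\Delta^n$, the face opposite $(1,n)$; it is not a face missing some vertex $(0,j)$, as your last paragraph suggests. This simplex maps onto the single edge from the cone point to $n$ in $(\Delta^n)^{\lhd}$, so (3) together with (1) handles it.
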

\begin{proof}
    This is completely analogous to the proof of \cite[3.1.1.5]{lurie:htt}.
    The only novum is the extra constraint in \ref{lma:3} that right horn
    fillings can only be achieved in situations when the $n$-simplex $\Delta^n$
    is collapsed to an edge $\Delta^1$. Due to our setup, it is guaranteed that
    the right horn fillings needed in \cite[3.1.1.5]{lurie:htt} to deduce \ref{lma:4}
    from \ref{lma:2} and \ref{lma:3} do satisfy this constraint. Statement
    \ref{lma:5} is then derived from \ref{lma:4} ``simplex by simplex'' as in
    loc. cit. 
\end{proof}

\begin{cor}
    \label{cor:bary_anodyne}
    \begin{enumerate}
        \item\label{bary_anodyne:1} Let $n \ge 0$. Consider the map $\min:
            \sd(\Delta^n)^{\rhd} \to (\Delta^n)^{\rhd}$. Then the inclusion
        \[
            \sd(\partial \Delta^n)^{\rhd} \to \sd(\Delta^n)^{\rhd}
        \]
        lies in the saturated hull of the morphisms of type \ref{lma:5}. In particular, it is marked local anodyne.
    \item\label{bary_anodyne:2} Let $A \subset B$ be simplicial sets. 
        Then the map 
        \[
            \sd(A)^{\rhd} \to \sd(B)^{\rhd}
        \]
        lies in the saturated hull of the morphisms of type \ref{lma:5}. In particular, it is marked local anodyne.
    \end{enumerate}
\end{cor}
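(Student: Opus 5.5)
We will deduce both parts from the structure of barycentric subdivision, the second from the first by a skeletal induction.

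For \ref{bary_anodyne:1}, recall that $\sd(\Delta^n)$ is the nerve of $\P^*([n])^{\op}$, while $\sd(\partial\Delta^n)$ is the full subposet of proper nonempty subsets. The complement therefore consists of the single vertex $[n]$ itself, which is \emph{initial} in $\P^*([n])^{\op}$, since every nonempty subset is contained in $[n]$. Write $B := \sd(\partial\Delta^n)^{\rhd}$. Then $\sd(\Delta^n)^{\rhd} \cong B^{\lhd}$ with cone point $[n]$, and $\sd(\partial\Delta^n)^{\rhd} = B$.

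We realize the inclusion $B \subset B^{\lhd}$ via the map $\Delta^1 \times B \to B^{\lhd}$ of type \ref{lma:5}, taking $A = \emptyset$. The morphism of type \ref{lma:5} is then $\{1\}\times B^\flat \to (\Delta^1)^\sharp\times B^\flat$, and its pushout along the collapse $\Delta^1 \times B \to B^{\lhd}$ is exactly $B \to B^{\lhd}$.

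The marking has to be checked. The edges $[n] \to J$ in $\sd(\Delta^n)$ must be marked precisely when they are images of the marked edges $(0,J)\to(1,J)$, after taking the pushout along $\{0\}\times B \to \{[n]\}$. This marking comes from a map $(\Delta^1)^\sharp\times B\to \sd(\Delta^n)^\rhd$. We expect this to be the main subtle step: in $\sd(\Delta^n)$ only the edges $[n]\to J$ with $J=\{1,\dots,n\}$-type shifts (the edges deleting the minimum) are marked, so the straight collapse marks too many edges. To repair this, we will instead filter. Order the missing simplices of $\sd(\Delta^n)^{\rhd}$ (those containing the vertex $[n]$) as joins $[n]\star\sigma$ with $\sigma$ a simplex of $B$, and attach them in order of dimension of $\sigma$. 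Each such attachment is a pushout of $\{1\}\times\Delta^k \cup (\Delta^1)^\sharp\times\partial\Delta^k \to (\Delta^1)^\sharp\times\Delta^k$ over $(\Delta^k)^{\lhd}$, i.e.\ of type \ref{lma:4}/\ref{lma:5}, after composing along the appropriate map to $(\Delta^n)^{\rhd}$ using \ref{lma:1}. Here the marked edge of $\Delta^1$ is sent to the unique marked edge $[n]\to[n]\setminus\{0\}$, with the collapse arranged to factor through it. Edges $[n]\to J$ that must be unmarked are then obtained as composites with unmarked edges, which is compatible with the smash product marking (only $\Delta^1\times\{v\}$ edges are marked). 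Since \ref{lma:5} already allows arbitrary further composition, saturation then gives the claim.

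For \ref{bary_anodyne:2}, $\sd$ and $(-)^{\rhd}$ (relative to the fixed tip) commute with colimits of monomorphisms. Hence $\sd(A)^\rhd\to\sd(B)^\rhd$ is a transfinite composite of pushouts of $\sd(\partial\Delta^n)^\rhd\to\sd(\Delta^n)^\rhd$, one for each nondegenerate simplex of $B$ not in $A$, added in order of dimension. Each such pushout lies in the saturated hull by \ref{bary_anodyne:1}, and saturated classes are closed under pushouts and transfinite composition. Finally, marked local anodyne maps form a saturated class, since they are defined by a left lifting property, and contain the type-\ref{lma:5} maps by Proposition \ref{prop:lma}. This gives the \enquote{in particular} statements.
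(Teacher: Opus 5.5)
Your part (2) is fine; it is the same simplex-by-simplex argument the paper uses. The gap is in part (1): neither of your constructions places $\sd(\partial\Delta^n)^{\rhd}\to\sd(\Delta^n)^{\rhd}$ in the saturated hull.

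\textbf{The collapse is not a pushout.} The map $\Delta^1\times B\to B^{\lhd}$ goes out of the \emph{target} of $\{1\}\times B^\flat\to(\Delta^1)^\sharp\times B^\flat$, so $B\to B^{\lhd}$ is not a pushout of that morphism. In fact $B^{\lhd}$ is not even the quotient $(\Delta^1\times B)/(\{0\}\times B)$: for $B=\Delta^1$ the quotient has two distinct edges from the collapsed point to $(1,1)$. Proposition~\ref{prop:lma}\ref{lma:5} only concerns lifting problems whose base map factors through the collapse. It says nothing about the inclusion $B\subset B^{\lhd}$.

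\textbf{The proposed filtration does not work either.} You attach the cones $[n]\star\sigma$ in order of $\dim\sigma$. When $[n]\star\sigma$ is attached, its faces $\sigma$ and $[n]\star\tau$ for $\tau\subsetneq\sigma$ are already present. So each step, apart from introducing the vertex $[n]$, is a pushout of a boundary inclusion $\partial\Delta^{k+1}\subset\Delta^{k+1}$. Such inclusions have no lifting property against locally Cartesian fibrations. They are also not pushouts of $\{1\}\times\Delta^k\cup(\Delta^1)^\sharp\times\partial\Delta^k\to(\Delta^1)^\sharp\times\Delta^k$. A pushout of a monomorphism adjoins exactly the nondegenerate simplices of the target missing from the source. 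Here those are $k+1$ simplices of dimension $k+1$ plus lower-dimensional ones, not a single $(k+1)$-simplex.

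\textbf{The missing idea} is to use the \emph{product} structure of the subdivision rather than its cone structure. You should split off the minimum $0\in[n]$, which is the direction collapsed by $\min$, instead of the maximal subset $[n]$.
\begin{itemize}
\item Write $\sd(\Delta^n)^{\rhd}\cong\N(\P([n])^{\op})$, with $\emptyset$ as the tip, and use $\P([n])\cong\P(\{0\})\times\P(\{1,\dots,n\})$. This gives $\sd(\Delta^n)^{\rhd}\cong\Delta^1\times B$ with $B=\N(\P(\{1,\dots,n\})^{\op})$, where $(0,T)\leftrightarrow\{0\}\cup T$ and $(1,T)\leftrightarrow T$.
\item Let $A\subset B$ be the nerve of the \emph{proper} subsets of $\{1,\dots,n\}$. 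For $n\ge 2$ you need all proper subsets, not only subsets of $\{2,\dots,n\}$. Then $\sd(\partial\Delta^n)^{\rhd}$, the chains whose largest member is not $[n]$, is exactly $\{1\}\times B\cup\Delta^1\times A$.
\item $\min^{\rhd}$ is constantly $0$ on $\{0\}\times B$. Hence it factors as the collapse $\Delta^1\times B\to B^{\lhd}$ followed by $B^{\lhd}\to(\Delta^n)^{\rhd}$, sending the cone point to $0$, $T\mapsto\min T$, and $\emptyset\mapsto$ tip. So \ref{lma:5} and \ref{lma:1} apply directly.
\item There is no over-marking. The marked edges of $(\Delta^1)^\sharp\times B^\flat$ are precisely $\{0\}\cup T\to T$, the marked edges of $\sd(\Delta^n)^{\rhd}$ deleting the minimum $0$. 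Every other marked edge lies in $\{1\}\times B\subset\sd(\partial\Delta^n)^{\rhd}$.
\item One pushout along the identity-on-simplices map from the smash product to $\sd(\partial\Delta^n)^{\rhd}$, with its correct marking, finishes the proof.
\end{itemize}
This is the decomposition the paper uses.
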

\begin{proof}
    For the proof of \ref{bary_anodyne:1}, note that the case $n=0$ corresponds
    to the generating anodyne map from \ref{lma:3} (with $n=1$ there). For $n
    \ge 1$, let $B$ be the nerve of the poset $\P(\{1,2,...,n\})^{\op}$ of subsets of
    $\{1,2,...,n\}$ and $A \subset B$ the nerve of $\P(\{2,...,n\})^{\op}$ (for $n =
    1$, $\{2,...,n\}$ is the empty set). Then on underlying simplicial sets,
    the smash product 
    \begin{equation}
        \label{eq:smash}
        A' = \{1\} \times B^{\flat} \coprod_{\{1\} \times A^{\flat}}
        (\Delta^1)^{\sharp} \times A^{\flat} \to  (\Delta^1)^{\sharp}
        \times B^{\flat} = B'
    \end{equation}
    of $A^{\flat} \subset B^{\flat}$ with $\{1\} \subset (\Delta^1)^{\sharp}$
    can be identified with 
    \begin{equation}
        \label{eq:desired}
        \sd(\partial \Delta^n)^{\rhd} \to \sd(\Delta^n)^{\rhd}
    \end{equation}
    so that $A' \to B'$ is marked local anodyne over $(\Delta^n)^{\rhd} =
    B^{\lhd}$ by Proposition \ref{prop:lma} \ref{lma:5}.
    To get the desired markings ($A'$ has fewer marked edges than $\sd(\partial
    \Delta^n)^{\rhd}$), we simply form a pushout of $A' \to B'$ along the
    inclusion $A' \to \sd(\partial \Delta^n)^{\rhd}$ which reproduces
    \eqref{eq:desired} with the {\em correct} marking, therefore showing that
    this latter map is also marked local anodyne.

    \ref{bary_anodyne:2} is then obtained by a standard ``simplex-by-simplex''
    argument, succesively adjoining barycenters via pushouts along the
    morphisms from \ref{bary_anodyne:1}.
\end{proof}

\begin{proof}[Proof of Lemma \ref{lem:tipprojection}]
    We set $A = \emptyset$ and $B = K$. Then the map $\Gamma(\G) \to \G_*$ can be
    identified with the restriction 
    \begin{equation}\label{eq:resanodyne}
        \Map^{\flat}_{K^{\rhd}}(\sd(K)^{\rhd}, \G) \to \Map^{\flat}_{K^{\rhd}}(\sd(\emptyset)^{\rhd}, \G)
    \end{equation}
    along the map $i: \sd(\emptyset)^{\rhd} \subset \sd(K)^{\rhd}$. To show that
    \eqref{eq:resanodyne} is a trivial Kan fibration it suffices to show that
    it has the right lifting property with respect to all boundary inclusions
    $j: \partial \Delta^n \subset \Delta^n$, $n \ge 0$. To this end, it
    suffices to show that the smash product $i \wedge j$ is marked local
    anodyne. But this follows, since $i$ lies within the saturated hull of
    morphisms of type \ref{lma:5}, which are stable under smash products, so
    that $i \wedge j$ is in the saturated hull of morphisms of type
    \ref{lma:5} as well, in particular marked local anodyne.
\end{proof}

\begin{defi}
    \label{defi:lax}
    We say that the locally Cartesian fibration $\pi: \G \to K^{\rhd}$ is a lax
    limit cone, exhibitting $\G_*$ as a lax limit of $\G|K$, if the restriction
    map 
    \[
        \Gamma(\G) \to \Gamma^*(\G)
    \]
    is an equivalence.
\end{defi}

\begin{rem}
    \label{rem:}
    Note that the condition of Definition \ref{defi:lax} is equivalent to
    condition that, for any section of the trivial Kan fibration $\Gamma(\G) \to
    \G_*$ (cf. Lemma \ref{lem:tipprojection}), the functor
    \[
        \G_* \to \Gamma^*(\G)
    \]
    obtained by composing the section with the projection $\Gamma(\G) \to
    \Gamma^*(\G)$ is an equivalence. 
\end{rem}

\begin{rem}
    \label{rem:ordinary_explicit} 
    In general, it is not feasible to explicitly extract all the
    higher coherence data comprised in the $\infty$-category $\Gamma^*(\G)$ in
    the same fashion as done in Remark \ref{rem:informal}. However, when $K$ is
    (the nerve of) an ordinary category and the fibers of $\pi$ are (nerves
    of) {\em ordinary} categories, then this can be easily done. In this case,
    the category $\Gamma^*(\G)$ is the category comprised by the following
    data: 
    \begin{itemize}
        \item for every object $k \in K$, an object $x_k \in \G_k$,
        \item for every morphism $k \to k'$ in $K$, a morphism $f_{k \to k'}: F_{k\to
            k'}(x_{k'}) \to x_k$,
        \item such that, for every composite $k \to k' \to k''$ in $K$, the square
            \[
                \begin{tikzcd}[column sep = 7em]
                    F_{k\to k'} F_{k'\to k''}(x_k'') \ar{r}{F_{k\to k'}(f_{k' \to k''})} \ar[swap]{d}{\alpha_{x_k''}} & F_{k\to k'}(x_k')\ar{d}{f_{k \to k'}}\\
                F_{k\to k''}(x_k'') \ar{r}{f_{k \to k''}} & x_k
            \end{tikzcd}
            \]
            commutes.
    \end{itemize}
    Here $F_{k\to k'}:\G_{k'} \to \G_k$ and $\alpha:  F_{k\to k'} F_{k'\to k''}
    \Rightarrow F_{k\to k''}$, respectively, denote the functors and natural
    transformations, respectively, modelled by the locally Cartesian fibration
    $\pi$. In the situation where $\pi$ is globally Cartesian, then the
    transformations $\alpha$ become equivalences so that the description
    simplifies even further. We will apply this explicit description in a situation where we start with
    a fibration $\pi$ models a diagram of stable $\infty$-categories and then
    show that the transition functors are $t$-exact with respect to chosen
    $t$-structures. Then, the lax limit of the resulting diagram of hearts
    allows for precisely the above explicit description. 
\end{rem}

\section{Recollements}
\label{sec:recollements}

The theory of descent, based on the idea that one can glue a sheaf from its
restrictions to the open subsets of an open cover, is a standard tool to
describe local-to-global phenomena in geometry and topology. Another kind of
``gluing'' is captured by the concept of a recollement which relates sheaves on
an given space to sheaves on an open subset and its closed complement. In the
original form, as introduced in \cite{BBD}, it is not
possible to truly glue the respective sheaf categories on the open and closed
part. The theory of $(\infty,2)$-categories offers a context in which this can
be done in a very elegant way: For any topological space $X$ and any open
subset $U \subset X$, we have an equivalence
\[
    \Sh(X,\D) \simeq \llim (\Sh(U,\D) \overset{i^!j_!}{\lra} \Sh(X\setminus U,\D))
\]
exhibiting the $\infty$-category of sheaves on $X$ as a lax limit over the {\em
gluing functor} $i^!j_!$. In this section, we review some basics of
recollements needed to establish this gluing statement.

\subsection{Classical recollements}%
\label{sub:recollements}

Recall that a recollement of a triangulated category $\X$ consists of full
triangulated subcategories $\A$ and $\B$ along with functors
\begin{equation}\label{eq:classical_recollement}
    \begin{tikzcd}
        \A \ar{r}{i_*} &\ar[shift left=1em,swap]{l}{i^!} \ar[shift right=1em, swap]{l}{i^*} \X \ar{r}{j^*} & \ar[shift left=1em,swap]{l}{j_*} \ar[shift right=1em, swap]{l}{j_!} \B
    \end{tikzcd}
\end{equation}
such that 
\begin{enumerate}
    \item there are adjunctions $i_* \dashv i^* \dashv i^!$ and $j_! \dashv j^* \dashv j_*$, 
    \item the composite $j^* \circ i_*$ is zero,
    \item every object $x \in \X$ admits distinguished triangles 
        \begin{align}
            \label{ext:1}
            j_! j^* x \lra x \lra i_*i^* x \overset{+1}{\lra}\\
            \label{ext:2}
            i_* i^! x \lra x \lra j_*j^* x \overset{+1}{\lra}
        \end{align}
\end{enumerate}
Using the connecting morphism of \ref{ext:1} along with the unit of the
adjunction $i_* \dashv i^!$, we can form the oplax commutative triangle 
\begin{equation}
    \label{eq:oplax}
\begin{tikzcd}[column sep=small]
    & \ar[swap]{dl}{j^*} \ar[dr,"i^*"{name=U}] \X   & \\
    \B \ar[swap]{rr}{F} \ar[to=U, shorten >= 10pt, shorten <= 10pt, Leftarrow] & &  \A  
\end{tikzcd}
\end{equation}
where $F = i^! \circ j_![1]$. Similarly, using the connecting morphism of \ref{ext:2}
along with the counit of the adjunction $i^* \dashv i_*$, we obtain the lax
commutative triangle
\begin{equation}
    \label{eq:lax}
\begin{tikzcd}[column sep=small]
    & \ar[swap]{dl}{j^*} \ar[dr,"i^!"{name=U}] \X   & \\
    \B \ar[swap]{rr}{G}\ar[to=U, shorten >= 10pt, shorten <= 10pt, Rightarrow] & &  \A 
\end{tikzcd}
\end{equation}
with $G = i^* \circ j_*[-1]$.

It will be of central importance to us to work in a context where the triangles
\eqref{eq:oplax} and \eqref{eq:lax} are universal cones, allowing us to recover
$\X$ as (op)lax limits of the ``gluing functors'' $F$ and $G$. To this
end, we need to enhance the triangulated categories suitably -- there are
several ways to achieve this, the most convenient for our purposes will be the
framework of stable $\infty$-categories due to Lurie. The relevant aspects of
the resulting theory of semiorthogonal decompositions and recollements will be
developed in the remainder of this section.

\subsection{Semiorthogonal decompositions}%
\label{sub:sods}

We review some basic aspects of semiorthogonal decompositions and recollements
within the context of stable $\infty$-categories. Most statements are discussed
in more detail in \cite{DKS:spherical}. 

Let $\X$ be a stable $\infty$-category equipped with full stable subcategories
$\A$ and $\B$. We denote by $\on{Ext}(\A,\B)$ the $\infty$-category of exact sequences
\begin{equation}\label{eq:ext}
    \begin{tikzcd}
          b \ar[r]\ar[d] & x\ar[d]\\
          0 \ar[r] & a
    \end{tikzcd}
\end{equation}
in $\X$ with $a \in \A$, $b \in \B$. The ordered pair $(\A,\B)$ is called a
{\em semiorthogonal decomposition of $\X$} if the forgetful functor
$\Ext(\A,\B) \to \X$, associating a sequence as in \eqref{eq:ext} to $x$,
is an equivalence. A bit more informally, this condition amounts to saying that
every object in $\X$ can be expressed in an essentially unique way as an
extension of an object in $\A$ by an object in $\B$.

\begin{rem}
    \label{rem:dual} Note that if $(\A,\B)$ forms a semiorthogonal
    decomposition of $\X$ then $(\B^{\op},\A^{\op})$ forms a semiorthogonal
    decomposition of $\X^{\op}$.
\end{rem}

\begin{lem}
    \label{lem:orthogonal}
    Let $(\A,\B)$ be a semiorthogonal decomposition of $\X$. Then the following
    are equivalent for an object $x \in X$:
    \begin{enumerate}[label=(\roman *)]
        \item\label{it:orth1} $x \in \B$.
        \item\label{it:orth2} Every morphism $x \to a$ in $\X$ with $a \in \A$ is homotopic to
            the zero morphism.
        \item\label{it:orth3} For every object $a \in \A$, the mapping space $\Map_{\X}(x,a)$
            is contractible. 
    \end{enumerate}
\end{lem}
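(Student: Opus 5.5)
The plan is to prove the cycle of implications \ref{it:orth1} $\Rightarrow$ \ref{it:orth3} $\Rightarrow$ \ref{it:orth2} $\Rightarrow$ \ref{it:orth1}, using only the defining property that the forgetful functor $\Ext(\A,\B) \to \X$ is an equivalence.

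\emph{Step \ref{it:orth1} $\Rightarrow$ \ref{it:orth3}.} First establish that $\A$ and $\B$ are mapping-orthogonal, i.e. $\Map_\X(b,a) \simeq *$ for $b \in \B$ and $a \in \A$. Apply the equivalence $\Ext(\A,\B) \simeq \X$ to $a \in \A$. It has the canonical decomposition $0 \to a \to a$ (with $0 \in \B$), so $\Map_\X(b,a)$ can be computed as a mapping space in $\Ext(\A,\B)$. A morphism of exact sequences from $(b \to b \to 0)$ (the decomposition of $b$) to $(0 \to a \to a)$ consists of three components:
\begin{itemize}
\item a map $b \to 0$ on the $\B$-components,
\item a map $0 \to a$ on the $\A$-components,
\item compatible data on the middle terms.
\end{itemize}
Since both $\B$-end and $\A$-end components factor through zero objects, this mapping space is contractible. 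To make this rigorous, I would use that $\Ext(\A,\B)$ is a full subcategory of $\Fun(\square,\X)$, so mapping spaces are ends/limits of the componentwise mapping spaces. Alternatively, by fullness, I would identify $\Ext(\A,\B) \to \A \times \B$-type fibration data and note that the mapping space is a pullback involving $\Map(b,0)$ and $\Map(0,a)$, both contractible.

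\emph{Step \ref{it:orth3} $\Rightarrow$ \ref{it:orth2}.} This is trivial: $\pi_0$ of a contractible space is a point, namely the zero map.

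\emph{Step \ref{it:orth2} $\Rightarrow$ \ref{it:orth1}.} Given $x$, take its decomposition $b \to x \to a$. By \ref{it:orth2} the map $x \to a$ is nullhomotopic. Then $b \simeq \fib(x \to a) \simeq x \oplus a[-1]$, so $a[-1]$ is a retract of $b$, and hence the inclusion $a[-1] \to b$ is a map from an object of $\A$-type into $\B$... but orthogonality only goes from $\B$ to $\A$. Instead I would argue via uniqueness: the sequence $0 \to x \to x$ must not be used, since $x$ is not yet known to be in $\A$. A cleaner route is as follows. The composite $b \to x \to a$ is zero and $x \to a$ is null, so the retraction gives a map $a \to b[1]$ whose composite with $b[1] \to x[1]$ splits. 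The split $x \simeq b \oplus a$ presents $x$ via the sequence $b \to b\oplus a \to a$, and the map $a \to x$ (inclusion of summand) composed with $x \to a$ is the identity. Since $x \to a$ is zero, $\id_a \simeq 0$, so $a \simeq 0$ and $x \simeq b \in \B$. This is the main step. The point to check carefully is that nullity of $x \to a$ indeed yields a section $a \to x$ after identifying $x \simeq b \oplus a$ — I would instead derive directly that the triangle $b \to x \xrightarrow{0} a \to b[1]$ forces $x \to a$ to be a split epi only if split, so the cleanest argument is: $\Map(x,a) \simeq *$ fails in general, but applying \ref{it:orth2} to the identity-like map requires $a$ to be a retract of $x$. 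I would establish that retraction from the exact sequence structure together with orthogonality $\Map(b[1],a)\simeq *$ from Step 1, which gives $\Map(a,a) \to \Map(x,a)$ injective on $\pi_0$, so $\id_a \mapsto (x\to a) \simeq 0$ forces $\id_a \simeq 0$.
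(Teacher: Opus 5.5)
Your proof is correct once the detour in the last step is deleted, and it differs from the paper's only mildly. The argument you settle on for (ii) $\Rightarrow$ (i) works. Apply $\Hom_\X(-,a)$ to $b\to x\xrightarrow{g}a\to b[1]$. Since $b[1]\in\B$, Step 1 gives $\Hom_\X(b[1],a)=0$, so $g^*\colon\Hom_\X(a,a)\to\Hom_\X(x,a)$ is injective. Then $g^*(\id_a)=g\simeq 0$ gives $\id_a\simeq 0$, hence $a\simeq 0$ and $x\simeq b\in\B$.

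The intermediate claim that nullity of $g$ yields $x\simeq b\oplus a$ with a section $a\to x$ does not follow. For the cofiber sequence $a[-1]\to 0\to a$ the middle map is zero, but $0\not\simeq a[-1]\oplus a$ when $a\not\simeq 0$. Nullity of $g$ splits $b$, not $x$, exactly as you wrote one line earlier.

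That earlier line already contains the paper's proof. You looked at the inclusion $a[-1]\to b$, which orthogonality cannot kill. The projection $b\simeq x\oplus a[-1]\to a[-1]$, however, is a map from $\B$ to $\A$, hence null by (i) $\Rightarrow$ (ii), and a null split epimorphism forces $a[-1]\simeq 0$. Both versions rest on the same vanishing $\Hom_\X(b[1],a)\cong\Hom_\X(b,a[-1])=0$. Yours avoids producing the splitting; the paper's avoids the long exact sequence.

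The remaining difference is bookkeeping. You prove (i) $\Rightarrow$ (iii) directly, by showing that the whole mapping space in $\Ext(\A,\B)$ from $(b\to b\to 0)$ to $(0\to a\to a)$ is contractible. To make this precise, restrict exact squares to their top edge, which is an equivalence onto $\Fun(\Delta^1,\X)$. Then $\Map_{\Fun(\Delta^1,\X)}\bigl((b\xrightarrow{=}b),(0\to a)\bigr)\simeq\Map_\X(b,0)\simeq *$. Your cycle therefore never needs a separate step (ii) $\Rightarrow$ (iii). The paper obtains that step from $\pi_i(\Map_\X(x,a),0)\cong\Hom_\X(x[i],a)$ and the stability of $\A$, having first shown (i) $\Rightarrow$ (ii) only at the level of $\pi_0$.
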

\begin{proof}
    Suppose $x \in \B$ and $a \in \A$. Via the equivalence $\Ext(\A,\B) \to
    \X$, we may identify any morphism $x \to a$ up to equivalence with a
    morphism between the exact sequences $x \to x \to 0$ and $0 \to a \to a$.
    But any such a morphism is evidently homotopic to zero, showing
    \ref{it:orth2}. The equivalence of \ref{it:orth2} and \ref{it:orth3}
    follows from $\Hom_{\X}(x[i],a) \cong \pi_i(\Map_{\X}(x,a),0)$ (cf.
    \cite[1.1.2]{lurie:ha}). We conclude by showing \ref{it:orth2} implies
    \ref{it:orth1}: For a given $x \in \X$ there is an exact sequence $b \to x
    \to a$ of the form \eqref{eq:ext}. Then $x \to a$ being homotopic to the
    zero map implies $b \simeq x \oplus a[-1]$. But this further implies $a
    \simeq 0$, since otherwise $b$ would have a nonzero projection map to
    $a[-1]$, contradicting the implication \ref{it:orth1} $\Rightarrow$
    \ref{it:orth2}. 
\end{proof}

It is convenient for what follows to express various functors and their
adjointness relations in terms of universal arrows in $\X$: 

\begin{defi}
    \label{defi:universal}
    Let $\eT$ be an $\infty$-category and $\eR$, $\eS$ full subcategories of $\eT$.
    A morphism $r \to s$ in $\eT$ with $r \in \eR$ and $s \in \eS$ is called 
    \begin{enumerate}
        \item {\em $(\eR,\eS)$-Cartesian} if it is final in the slice category
            $\eR \times_{\eT} \eT_{/s}$,
        \item {\em $(\eR,\eS)$-coCartesian} if it is initial in the slice category
            $\eS \times_{\eT} \eT_{r/}$.
    \end{enumerate}
    We will typically indicate that a given edge is Cartesian (resp.
    coCartesian), in the above sense, by decorating it with ``$*$'' (resp.
    ``$!$'').
\end{defi}

\begin{rem}
    \label{rem:grothendieck} 
    The ``absolute'' concepts of Cartesian (resp. coCartesian) morphisms from
    Definition \ref{defi:universal} formulated within the category $\X$
    immediately translate into the familiar ``relative'' concepts from
    \cite{lurie:htt} as follows. Let $\G(\eR,\eS)$ denote the full subcategory
    of $\eT \times \Delta^1$ spanned by the objects of $\eR \times \{0\}$ and
    $\eS \times \{1\}$ equipped with the projection map to $\pi:\G(\eR,\eS) \to
    \Delta^1$. Then the $(\eR,\eS)$-Cartesian (resp. coCartesian) edges in $\X$
    correspond precisely to the $\pi$-Cartesian (resp. coCartesian) edges over
    the edge $0 \to 1$. We may thus freely apply results from the familiar
    relative to the absolute context. As demonstrated below, the formulation of
    the respective universal properties directly within $\X$ without passing to
    specific Grothendieck constructions $\G(\eR,\eS)$ gives us more flexibility
    when simultaneously considering the various functors and adjunctions
    comprising a recollement. 
\end{rem}

\begin{lem}
    \label{lem:sodcartesian}
    Let $(\A,\B)$ be a semiorthogonal decomposition of $\X$. Then the following hold:
    \begin{enumerate}
        \item A morphism $x \to a$ in $\X$ with $a \in \A$ is
            $(\X,\A)$-coCartesian if and only if its fiber (or equivalently
            cofiber) lies in $\B$.
        \item A morphism $b \to x$ in $\X$ with $b \in \B$ is
            $(\B,\X)$-Cartesian if and only if its cofiber (or equivalently
            fiber) lies in $\A$.
    \end{enumerate}
\end{lem}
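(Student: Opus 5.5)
We prove the second statement and obtain the first by duality, applying it to the semiorthogonal decomposition $(\B^{\op},\A^{\op})$ of $\X^{\op}$ from Remark \ref{rem:dual}: under passage to opposites, $(\X,\A)$-coCartesian morphisms $x \to a$ become $(\A^{\op},\X^{\op})$-Cartesian morphisms, and fibers and cofibers are exchanged. Since fiber and cofiber of a morphism differ by a shift and $\A$, $\B$ are stable subcategories, the parenthetical ``or equivalently'' clauses are automatic.

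For the second statement, let $f: b \to x$ with $b \in \B$. By Definition \ref{defi:universal}, $f$ is $(\B,\X)$-Cartesian if and only if, for every $b' \in \B$, composition with $f$ induces an equivalence
\[
\Map_{\X}(b',b) \lra \Map_{\X}(b',x).
\]
Write $c = \cof(f)$. Applying the exact functor $\Map_{\X}(b',-)$ (valued in spectra, or using the long exact sequence of homotopy groups together with shifts) to the cofiber sequence $b \to x \to c$, we see that $f$ is Cartesian if and only if $\Map_{\X}(b',c)$ is contractible for all $b' \in \B$, i.e. if and only if $c$ lies in the right orthogonal $\B^{\perp}$.

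It therefore remains to show that $\B^{\perp} = \A$, which is the right-orthogonal analogue of Lemma \ref{lem:orthogonal}. The inclusion $\A \subset \B^{\perp}$ follows from Lemma \ref{lem:orthogonal} itself, since maps $b' \to a$ with $b' \in \B$ and $a \in \A$ are null, together with the identification of homotopy groups of mapping spaces with shifted Hom groups. For the converse, let $c \in \B^{\perp}$ and choose its decomposition $b'' \to c \to a$ given by the equivalence $\Ext(\A,\B) \simeq \X$. The map $b'' \to c$ is null because $c \in \B^{\perp}$, so $a \simeq c \oplus b''[1]$. The component $b''[1] \to a$ of this splitting is null by Lemma \ref{lem:orthogonal}, since $b''[1] \in \B$. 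Hence $b''[1]$ is a retract of $a$ through a null map, which forces $b'' \simeq 0$ and thus $c \simeq a \in \A$.

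The only mildly delicate step is this last retract argument. A cleaner alternative is to observe that the decomposition of $c$ is unique, and compare it with the decomposition $0 \to c \to c$, which is a valid decomposition once $c \in \B^{\perp}$. The remaining steps are routine.
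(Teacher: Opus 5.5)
Your proof is correct, but it is organized differently from the paper's. The paper proves the first statement directly and gets the second by duality. For $f\colon x\to a$, it identifies the mapping space in $\A\times_{\X}\X_{x/}$ from $f$ to $g\colon x\to a'$ with the space of null-homotopies of $\fib(f)\to x\to a'$. For the converse, it identifies $\Map_{\X}(\cof(f),a')$ with the mapping space from $f$ to the zero morphism $x\to a'$. In both directions the orthogonality input is exactly Lemma~\ref{lem:orthogonal}, which characterizes $\B$ as the \emph{left} orthogonal ${}^{\perp}\A$.

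You instead prove the second statement directly. You use the representability reformulation ($f$ is Cartesian iff $f_*\colon\Map_{\X}(b',b)\to\Map_{\X}(b',x)$ is an equivalence for all $b'\in\B$), together with exactness of mapping spectra and closure of $\B$ under shifts. This packages both implications into a single fiber-sequence argument and is arguably more conceptual.

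The price is that you land on $\cof(f)\in\B^{\perp}$, so you need the \emph{right}-orthogonal characterization $\B^{\perp}=\A$, which Lemma~\ref{lem:orthogonal} does not literally state. Your retract argument for it is fine; it is the dual of the paper's proof of (ii)$\Rightarrow$(i) in Lemma~\ref{lem:orthogonal}. You could shortcut it in two ways:
\begin{itemize}
\item apply Lemma~\ref{lem:orthogonal} to the decomposition $(\B^{\op},\A^{\op})$ of $\X^{\op}$, which gives $\B^{\perp}=\A$ immediately; or
\item as the paper does, prove the first statement directly, where $\B={}^{\perp}\A$ is available verbatim, and then dualize.
\end{itemize}

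One correction to your closing remark: the ``cleaner alternative'' is circular. The sequence $0\to c\to c$ is an object of $\Ext(\A,\B)$ only if its third term $c$ lies in $\A$, which is exactly what you are trying to prove; $c\in\B^{\perp}$ alone does not make it a valid decomposition. So the retract argument, or the duality citation above, is what actually carries this step.
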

\begin{proof}
    We show the first statement, the second is dual (cf. Remark
    \ref{rem:dual}). We extend the morphism $f: x \to a$ to an exact sequence
    $\fib(f) \to x \to a$. Then, for any morphism $g: x \to a'$ with $a' in
    \A$, the space of morphisms between $f$ and $g$ in $\A \times_{\X} \X_{x/}$
    can be identified with the space of zero homotopies of the composite
    $\fib(f) \to x \to a'$. If $\fib(f) \in \B$ then this space is contractible
    showing that $f$ is $(\X,\A)$-coCartesian. Vice versa, if $f$ is
    $(\X,\A)$-coCartesian, then the space of morphisms in $\X$ from the cofiber
    $\cof(f)$ to any given $a' \in \A$ can be identified with the space of
    morphisms in $\A \times_{\X} \X_{x/}$ between $f$ and the zero morphism $x
    \to a'$. The latter being contractible by assumption implies that $\cof(f)$
    (and hence $\fib(f) = \cof(f)[-1])$) lies in $\A$ by Lemma
    \ref{lem:orthogonal}.
\end{proof}

\begin{rem}
    \label{rem:exactcartesian}
    Note that, as an immediate consequence of Lemma \ref{lem:sodcartesian}, we
    obtain that for every exact sequence 
    \begin{equation}\label{eq:exactcartesian}
        \begin{tikzcd}
            b \ar{r}{*}\ar[d] & x\ar{d}{!}\\
            0 \ar{r} & a
        \end{tikzcd}
    \end{equation}
    in $\Ext(\A,\B)$, the morphism $b \to x$ is in fact $(\B,\X)$-Cartesian and the
    morphism $x \to a$ is $(\X,\A)$-coCartesian. In particular, in view of Remark
    \ref{rem:grothendieck}, this implies the existence of a left adjoint to the
    inclusion functor $\A \subset \X$ and, dually, a left adjoint to the inclusion
    $\B \subset \X$. Indeed, the fibration $\G(\X,\A) \to \Delta^1$ is biCartesian
    where the Cartesian fibration models the inclusion functor $\A \subset \X$ and
    the coCartesian fibration models its right adjoint (cf.
    \cite[5.2.2]{lurie:htt}).
\end{rem}

\begin{defi}
    \label{defi:admissible} Let $\eT$ be a stable $\infty$-category. A full
    stable subcategory $\eR$ is called {\em left} (resp. {\em right}) {\em
    admissible} if the inclusion $\eR \subset \eT$ admits a left (resp. right)
    adjoint. 
\end{defi}

\begin{rem}
    \label{rem:admissible}
    By Lemma \ref{lem:orthogonal} (and its dual), the categories $\A$ and $\B$
    of a semiorthogonal decomposition determine one another and, using variants
    of the arguments and results above, it is straightforward to see that the
    following data are equivalent for a given stable $\infty$-category $\X$:
    \begin{enumerate}[label= (\roman *)]
        \item a semiorthogonal decomposition $(\A,\B)$ for $\X$,
        \item a left admissible subcategory $\A \subset \X$,
        \item a right admissible subcategory $\B \subset \X$.
    \end{enumerate}
\end{rem}

Note that at this point, we have arrived at an $\infty$-categorical description
of the part 
\[
    \begin{tikzcd}
        \A \ar{r}{i_*} & \ar[shift right=1em, swap]{l}{i^*} \X \ar{r}{j^*} & \ar[shift right=1em, swap]{l}{j_!} \B
    \end{tikzcd}
\]
of the recollement diagram \eqref{eq:classical_recollement}. Also note that the canonical exact triangle 
\[
    b \overset{*}{\to} x  \overset{!}{\to} a
\]
from Remark \ref{rem:exactcartesian}, associated to every object $x \in \X$,
corresponds to the distinguished triangle \eqref{ext:1}. It is thus apparent
that semiorthogonal decompositions are slightly weaker structures than
recollements, as to be discussed next. 

\subsection{Recollements as (op)lax limits}%
\label{sub:recollements_as_op_lax_limits}

The following Proposition is straightforward to show using the techniques from \S
\ref{sub:sods}.

\begin{prop}
    \label{prop:rec}
    Let $\X$ be a stable $\infty$-category and let $(\A,\B)$ be a
    semiorthogonal decomposition of $\X$. Then the following are equivalent:
    \begin{enumerate}[label=(\roman *)]
        \item The inclusion functor $\A \subset \X$ admits a right adjoint. 
        \item For every $x \in \X$, there exists an $(\A,\X)$-Cartesian edge $a
            \to x$.
        \item For every $b \in \B$, there exists an $(\A,\B)$-Cartesian edge $a
            \to b$.
        \item There exists a semiorthogonal decomposition $(\B',\A)$ of $\X$.
    \end{enumerate}
\end{prop}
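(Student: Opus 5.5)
We prove the cycle of implications (i) $\Rightarrow$ (ii) $\Rightarrow$ (iii) $\Rightarrow$ (iv) $\Rightarrow$ (i), translating throughout between adjoints and universal arrows via Remark \ref{rem:grothendieck}. By that remark, a right adjoint to $\A \subset \X$ is the same as a Cartesian fibration $\G(\A,\X) \to \Delta^1$. Concretely, it is the existence of an $(\A,\X)$-Cartesian edge $a \to x$ for every $x \in \X$; this gives (i) $\Leftrightarrow$ (ii), using \cite[5.2.2]{lurie:htt}. The implication (ii) $\Rightarrow$ (iii) is immediate, since a morphism $a \to b$ that is final in $\A \times_\X \X_{/b}$ is $(\A,\B)$-Cartesian by definition, $\B$ being full in $\X$.

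For (iii) $\Rightarrow$ (ii) (needed so that we can later go back to (i)), let $x \in \X$ with canonical sequence $b \overset{*}{\to} x \overset{!}{\to} a_0$ from Remark \ref{rem:exactcartesian}. Choose an $(\A,\B)$-Cartesian edge $a' \to b$, and form the pushout $a := \cof(a' \to b \to x) \ldots$, or more precisely take $a$ as the fiber product making the square
\[
\begin{tikzcd} a' \ar{r}\ar{d} & a \ar{d} \\ b \ar{r} & x \end{tikzcd}
\]
biCartesian. Here $a$ is an extension of $a_0$ by $a'$ and hence lies in $\A$. The claim that $a \to x$ is $(\A,\X)$-Cartesian amounts to showing that, for $a'' \in \A$, $\Map(a'',a) \to \Map(a'',x)$ is an equivalence. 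We check this on the fiber sequence: the map $\Map(a'',a') \to \Map(a'',b)$ is an equivalence by assumption, and the map $\Map(a'',a_0) \to \Map(a'',a_0)$ is the identity, so the five lemma (in stable form) concludes.

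For (ii) $\Rightarrow$ (iv), set $\B' := \{x : \Map(a,x) \simeq * \text{ for all } a \in \A\}$, the right orthogonal, and for each $x$ take the cofiber sequence $a \overset{*}{\to} x \to \cof$. Cartesianness of $a \to x$ means that $\Map(a'',-)$ kills the cofiber, so $\cof \in \B'$. The functor $\Ext(\B',\A) \to \X$ is then essentially surjective; full faithfulness follows by the same argument as Lemma \ref{lem:sodcartesian}, since mapping spaces from $\A$ into $\B'$ vanish. Conversely, for (iv) $\Rightarrow$ (ii), Remark \ref{rem:exactcartesian} applied to the decomposition $(\B',\A)$ shows that the first map of the canonical sequence $a \to x \to b'$ is $(\A,\X)$-Cartesian, which is exactly what is needed.

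The main obstacle is (iii) $\Rightarrow$ (ii). It is the only step that uses the given semiorthogonal decomposition $(\A,\B)$ nontrivially, and one must carefully check that the constructed object $a$ lies in $\A$ and that the universal property holds at the level of mapping spaces rather than merely on homotopy groups. The remaining implications are formal consequences of Lemma \ref{lem:orthogonal}, Lemma \ref{lem:sodcartesian} and Remark \ref{rem:exactcartesian}.
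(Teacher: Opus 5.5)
Your decomposition is the right one, and the paper gives nothing beyond a pointer to the techniques of \S\ref{sub:sods}. (i)$\Leftrightarrow$(ii) is the pointwise criterion for adjoints. (ii)$\Leftrightarrow$(iii) is immediate because the slice $\A\times_{\X}\X_{/b}$ does not see $\B$. (ii)$\Leftrightarrow$(iv) holds for any stable $\A$ via $\B'=\A^{\perp}$ and Remark~\ref{rem:exactcartesian}. You announce a cycle but actually prove (i)$\Leftrightarrow$(ii), (ii)$\Leftrightarrow$(iii) and (ii)$\Leftrightarrow$(iv), which is enough.

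\textbf{The gap: constructing $a$ in (iii)$\Rightarrow$(ii).}

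\emph{First candidate.} $\cof(a'\to x)$ is wrong. By the octahedral axiom it is an extension of $a_0$ by $c:=\cof(a'\to b)$. Since $a'\to b$ is Cartesian, $c\in\A^{\perp}$, so this object is generally not in $\A$.

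\emph{Second candidate.} ``The fiber product making the square biCartesian'' is not a construction. From the composable pair $a'\to b\to x$ you have neither the span nor the cospan of the desired square. You must first produce the map $a'\to a$ or $a\to x$, and this is exactly where the Cartesian property of $a'\to b$ has to enter, not only afterwards in the mapping-space check.

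\emph{The missing step.} Let $\partial\colon a_0[-1]\to b$ be the connecting map, so that $x\simeq\cof(\partial)$.
\begin{enumerate}
\item Since $a_0[-1]\in\A$ and $a'\to b$ is $(\A,\B)$-Cartesian, $\partial$ factors essentially uniquely as $a_0[-1]\xrightarrow{\tilde\partial}a'\to b$.
\item Put $a:=\cof(\tilde\partial)$. This lies in $\A$ because $\A$ is stable.
\item The universal property of this cofiber, applied to $a'\to b\to x$, gives the map $a\to x$.
\item The upper square $(a_0[-1],0,a',a)$ and the outer rectangle $(a_0[-1],0,b,x)$ are pushouts. By the pasting law, the lower square $(a',a,b,x)$ is biCartesian.
\end{enumerate}
Equivalently, $b\to c$ extends uniquely along $b\to x$ because $\Map(a_0[-1],c)\simeq *$, and one sets $a:=\fib(x\to c)$.

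With this in place your verification goes through. More directly, $\fib(a\to x)\simeq\fib(a'\to b)\in\A^{\perp}$, which is exactly the statement that $a\to x$ is $(\A,\X)$-Cartesian.
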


\begin{defi}
    \label{defi:rec}
    A semiorthogonal decomposition $(\A,\B)$ of $\X$ is called {\em
    recollement} if the equivalent conditions of Proposition \ref{prop:rec}
    hold.
\end{defi}

\begin{rem}
    \label{rem:orth}
    The subcategory $\B' \subset \X$, uniquely determined as the complement of
    $\A$ by Lemma \ref{lem:orthogonal}, corresponds, in terms of the diagram
    \ref{eq:classical_recollement} to the image of $\B$ under the functor
    $j_*$. 
\end{rem}

The following Proposition provides a very convenient ``portrait'' relating all
pieces of the recollement in terms of universal properties (in contrast to
spelling out the various functors and their adjunction data). 

\begin{prop}
    \label{prop:grid}
    Let $(\A,\B)$ be a recollement of $\X$. Then the forgetful maps projecting
    to $x$ (or $x'$), provide an equivalence of $\X$ with the $\infty$-category
    $\Rec(\A,\B)$ of diagrams of the form
    \begin{equation}
        \label{eq:grid}
        \begin{tikzcd}
            x \ar{r}{!} \ar{d}{!} & b' \ar{d}{!}\ar{r}& \ar{d}0 \\
            a \ar{r}\ar{d} & a' \ar{r}{*}\ar{d}&  b\ar{d}{*}\\
            0 \ar{r} & a'' \ar{r}{*} & x'
        \end{tikzcd}
    \end{equation}
    in $\X$ with all squares biCartesian, $a,a',a'' \in \A$, $b \in \B$, $b' \in
    \B'$, all edges marked ``$*$'' Cartesian, and all edges marked ``$!$''
    coCartesian.
\end{prop}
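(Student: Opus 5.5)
Since $(\A,\B)$ is a recollement, we have two semiorthogonal decompositions of $\X$ at our disposal: $(\A,\B)$ and, by Proposition \ref{prop:rec}, $(\B',\A)$. The plan is to build the grid \eqref{eq:grid} functorially from $x$ by splicing the canonical exact sequences of Remark \ref{rem:exactcartesian}. Then we show that the resulting diagrams are the only ones in $\Rec(\A,\B)$, i.e.\ that the forgetful functor $\Rec(\A,\B)\to\X$, $D\mapsto x$, is a trivial fibration. We then check that $x'$ is determined equivalently well, so that projection to $x'$ is also an equivalence.

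\emph{Construction.} Starting from $x$, apply the decomposition $(\B',\A)$ to get $x\overset{!}{\to}b'$ with fiber in $\A$. Here ``$!$'' means $(\X,\B')$-coCartesian, by Lemma \ref{lem:sodcartesian}. Apply $(\A,\B)$ to get $x\overset{!}{\to}a$ with fiber in $\B$. Next apply $(\A,\B)$ to $b'$, giving $b'\overset{!}{\to}a'$, and form the right-hand square $b'\to 0$, $a'\to b$ as the biCartesian extension. Lemma \ref{lem:sodcartesian} shows that $b=\cof(b'\to a')[\ldots]$, namely the fiber-shifted term, lies in $\B$ and that $a'\to b$ is Cartesian. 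The edge $a\to a'$ is induced by the universal property of $x\to a$ since $a'\in\A$. Completing the top-left square to a pushout, its pushout corner differs from $a'$ by $\cof(x\to b')\in\A$ composed with things in $\B$, and one checks using Lemma \ref{lem:cart_bicat} that it is biCartesian. Finally the bottom row $a''$, $x'$ is obtained by taking pushouts of the middle row along $0$. Here $a''\simeq\cof(a\to a')\in\A$, and $x'$ arises from the bottom-right pushout. The fact that $a''\to x'$ and $b\to x'$ are Cartesian follows again from Lemma \ref{lem:sodcartesian}, because their cofibers lie in $\B$ and in $\A$ respectively.

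\emph{Uniqueness.} Each edge marked $!$ or $*$ is determined up to a contractible space of choices by its source or target, by the definition of (co)Cartesian edges as initial/final objects in slices. Each biCartesian square is determined by its span or cospan. So the forgetful functor from $\Rec(\A,\B)$ to $\X$ is a composite of restrictions along inclusions of diagram shapes, each of which is a trivial fibration. This uses \cite[4.3.2.15]{lurie:htt} for Kan extensions, together with Remark \ref{rem:grothendieck}, which translates the absolute Cartesian edges into relative ones for $\G(\eR,\eS)$. The symmetric argument, building from $x'$ using the dual decompositions (Remark \ref{rem:dual}), handles projection to $x'$. Alternatively, the composite $x\to x'$ read off the diagram is an equivalence: the total cofiber computation shows $\cof(x\to x')$ is an extension of objects forced to vanish by semiorthogonality.

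\emph{Main obstacle.} The delicate step is the interior consistency. The object $a'$ appears simultaneously as the target of two coCartesian edges from $b'$ and from $a$, and as the source of the Cartesian edges to $b$ and $a''$. I must show that the choices made via different universal properties are compatible, so that the shape is a genuine right Kan extension from a single-generator subdiagram rather than an overdetermined one. I would handle this by choosing an ordering of the grid's vertices so that each new vertex is added either as a pushout or as a single (co)Cartesian edge, and by verifying, via Lemma \ref{lem:cart_bicat} and Corollary \ref{cor:recoll_case}, that the remaining marking conditions are then automatic.
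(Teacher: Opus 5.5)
Your plan is the paper's. Its proof is only a sketch: start from $\Ext(\A,\B)\simeq\X$ and adjoin the remaining universal data by successive relative Kan extensions, using Lemma \ref{lem:sodcartesian} to turn each universal property into a condition on fibers and cofibers. Your vertex-by-vertex ordering is a sensible way to carry that out. However, three of your justifications are wrong as written and should be replaced.

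\emph{Top-left square.} Lemma \ref{lem:cart_bicat} presupposes a biCartesian square, so it cannot show that this square is one. Argue via the total cofiber instead. It equals $\cof\bigl(\cof(x\to a)\to\cof(b'\to a')\bigr)$, the cofiber of a map in $\B$. It also equals $\cof\bigl(\cof(x\to b')\to\cof(a\to a')\bigr)$, the cofiber of a map in $\A$. Since $\A\cap\B=0$ by Lemma \ref{lem:orthogonal}, the total cofiber vanishes.

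\emph{Cartesianness of $a'\to b$ and $a''\to x'$.} This concerns the right adjoint of $\A\subset\X$. The relevant instance of Lemma \ref{lem:sodcartesian} is therefore the one for the decomposition $(\B',\A)$: an edge $a_0\to y$ with $a_0\in\A$ is $(\A,\X)$-Cartesian iff its cofiber lies in $\B'$. For $a''\to x'$ you wrote $\B$, which is the wrong criterion. What actually holds, by the two right-hand squares, is $\cof(a''\to x')\simeq\cof(a'\to b)\simeq b'[1]\in\B'$. For $b\to x'$ your criterion is correct: $\cof(b\to x')\simeq\cof(a'\to a'')\simeq a[1]\in\A$.

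\emph{Projection to $x'$.} Your ``alternative'' argument fails. The map $x\to x'$ in the grid factors through the zero object in the top-right corner, so it is null, and its cofiber is $x[1]\oplus x'$, not $0$. The correct observation is that the outer square is a pasting of the four biCartesian squares with two zero corners, hence biCartesian. So $x'\simeq x[1]$ naturally, and projection to $x'$ is projection to $x$ followed by the shift. Your dual argument also works once you note the following: the reversed grid is a $\Rec$-diagram in $\X^{\op}$ for the recollement $(\A^{\op},\B'^{\op})$, with $x'$ as its initial corner, so $\B$ and $\B'$ trade places.
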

\begin{proof}
    The statement follows by starting with the $\infty$-category $\Ext(\A,\B)$
    which projects via an equivalence to $\X$ and adding more and more
    universal data by means of (relative) Kan extensions, where we use Lemma
    \ref{lem:sodcartesian} to control how the various universal properties
    relate to containment constraints of fibers and cofibers. We refer to
    \cite{DKS:spherical} for the details.
\end{proof}

\begin{rem}
    \label{rem:grid}
    Note that there are intermediate variants of Propostion \ref{prop:grid},
    identifying the $\infty$-category $\Rec(\A,\B)$ (and hence $\X$) with
    categories of subdiagrams. For instance, the forgetful map 
    \[
        \Rec(\A,\B) \to \left\{ 
    \begin{tikzcd}
        a' \ar{r}{*} \ar{d}&  b \ar{d}{*}\\
        a'' \ar{r}{*} & x'
    \end{tikzcd}
    \right\} 
    \]
    is an equivalence as well.
\end{rem}

As an immediate consequence of Proposition \ref{prop:grid} and Remark
\ref{rem:grid}, we obtain the following central result.

\begin{cor}
    \label{cor:lax}
    Let $(\A,\B)$ be a recollement for $\X$. 
    \begin{enumerate}
        \item\label{it:laxlim} The forgetful maps 
            \[
                \X \overset{\simeq}{\longleftarrow} \left\{ 
    \begin{tikzcd}
        a' \ar{r}{*} \ar{d}&  b \ar{d}{*}\\
        a'' \ar{r}{*} & x'
    \end{tikzcd}
    \right\} \overset{\simeq}{\lra} \left\{ 
    \begin{tikzcd}
        a' \ar{r}{*}\ar{d}&  b\\
        a'' & 
    \end{tikzcd}
     \right\}
     \] 
     are equivalences. In particular, $\X$ is the lax limit of the gluing
     functor $\B \to \A$ determined by the $(\A,\B)$-Cartesian edges.
    \item\label{it:oplaxlim} The forgetful maps 
            \[
                \X \overset{\simeq}{\longleftarrow} \left\{ \begin{tikzcd}
        x \ar{r}{!} \ar{d}{!} & b' \ar{d}{!}\\
        a \ar{r} & a' 
        \end{tikzcd} \right\} \overset{\simeq}{\lra} \left\{ \begin{tikzcd}
            & b' \ar{d}{!}\\
        a \ar{r} & a' 
         \end{tikzcd} \right\}
     \] 
     are equivalences. In particular, $\X$ is the oplax limit of the gluing
     functor $\B \to \A$ determined by the $(\B,\A)$-coCartesian edges.
    \end{enumerate}
\end{cor}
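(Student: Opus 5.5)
The plan is to deduce both parts from Proposition \ref{prop:grid} and Remark \ref{rem:grid} by forgetting further pieces of the grid \eqref{eq:grid}. The key tool throughout is the observation that restricting along a diagram inclusion is a trivial fibration whenever the missing vertices are determined by relative Kan extensions, i.e.\ by universal (co)Cartesian edges or by biCartesian squares. For \ref{it:laxlim}, the first forgetful map is already shown to be an equivalence in Remark \ref{rem:grid}: composing $\X \simeq \Rec(\A,\B)$ with the projection to the lower right square gives the claim.

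For the second map in \ref{it:laxlim}, I will argue that the vertex $x'$ and its edges are freely determined by the span $a'' \leftarrow a' \overset{*}{\to} b$. Concretely, I form the pushout $x' := a'' \sqcup_{a'} b$; this amounts to a left Kan extension, so the restriction from pushout squares to spans is a trivial fibration. It then remains to check that the resulting square has the two edges into $x'$ marked $*$, which is what makes it an object of the middle category. For this I will invoke Corollary \ref{cor:recoll_case}, with the roles of the two subcategories arranged so that $a' \to b$ is the given Cartesian edge. That corollary shows that a square with $a',a''\in\A$, $b\in\B$ and $a'\to b$ $(\A,\B)$-Cartesian is biCartesian exactly when the two edges into $x'$ are Cartesian. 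Hence the middle category coincides with the full subcategory of pushout squares on such spans, and the forgetful functor is an equivalence.

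The target category is the category of lax sections: an object $b$, its Cartesian edge $a' \to b$ modelling the gluing functor $\B\to\A$, and an arbitrary map $a' \to a''$ in $\A$. This is the Grothendieck-construction model of the lax limit of $\B \to \A$ over $\Delta^1$, in the sense of Remark \ref{rem:grothendieck}.

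Part \ref{it:oplaxlim} is the formal dual. I apply \ref{it:laxlim} to $\X^{\op}$ using Remark \ref{rem:dual}: $(\B',\A)$ is a semiorthogonal decomposition by Proposition \ref{prop:rec}, and passing to opposites exchanges Cartesian with coCartesian edges and pullbacks with pushouts. Alternatively, I can redo the argument directly with the upper left square of \eqref{eq:grid}, where $x$ is recovered as the pullback $b'\times_{a'} a$. In either form, the dual of Corollary \ref{cor:recoll_case} identifies pullback squares with squares whose edges out of $x$ are coCartesian.

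The main obstacle is the matching of conventions. One has to make sure that Corollary \ref{cor:recoll_case}, which is stated with $i_!\B$ and $\A$, applies to the specific edges in \eqref{eq:grid}, where $a'\to b$ is $(\A,\B)$-Cartesian and $b\to x'$ is $(\B,\X)$-Cartesian. If the roles do not line up directly, I will fall back on a direct fiber computation using Lemma \ref{lem:sodcartesian}: an edge into $x'$ is Cartesian iff its cofiber lies in the complementary subcategory, and these cofibers can be read off from the pushout square.
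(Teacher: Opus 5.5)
Your proposal is correct and follows the paper's route. The paper obtains the corollary directly from Proposition \ref{prop:grid} and Remark \ref{rem:grid} by forgetting grid data that is determined by universal constructions; your pushout argument via Corollary \ref{cor:recoll_case} (whose edges $h$, $f$, $g$ match $a'\to b$, $b\to x'$, $a''\to x'$ directly, so the convention worry in your last paragraph does not arise) and your dualization to $\X^{\op}$ simply make the second forgetful step explicit.
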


\begin{rem}
    \label{rem:fibration}
    To relate this statement to the diagrams \eqref{eq:lax} and \eqref{eq:oplax},
    we proceed as follows: Denote by $\L$ the full subcategory of $\X \times
    \Delta^2$ spanned by the vertices of $\A \times \{0\}$, $\B \times \{1\}$, and
    $\X \times \{2\}$. Then the projection functor $\pi: \L \to \Delta^2$ is a locally
    Cartesian fibration which straightens to a lax triangle 
    \begin{equation}\label{eq:laxtriangle}
        \Delta^2 \to \Cat_{\infty}
    \end{equation}
    of the form as depicted in \eqref{eq:lax} where the functors are determined by the
    respective Cartesian edges. In particular, the gluing functor $G: \B \to \A$ is
    determined by the $(\A,\B)$-Cartesian edges which correspond to locally
    $\pi$-Cartesian edges in $\L$ over the edge $0 \to 1$ of $\Delta^2$. Corollary
    \ref{cor:lax} \ref{it:laxlim} amounts to the statement that the triangle is a
    lax limit triangle exhibiting an equivalence
    \begin{equation}\label{eq:reclax}
        \X \simeq \{ (a,b,\eta) | a \in \A, b \in \B, \eta: G(b) \to a  \}.
    \end{equation}
    A dual construction, using Corollary \ref{cor:lax} \ref{it:oplaxlim}, leads to an
    oplax limit triangle exhibiting an equivalence
    \begin{equation}\label{eq:opreclax}
        \X \simeq \{ (a,b,\eta) | a \in \A, b \in \B, \eta: a \to F(b)  \}
    \end{equation}
    where $F: \B \to \A$ is the gluing functor determined by the
    $(\B,\A)$-coCartesian edges.
\end{rem}

\bibliographystyle{alpha}
\bibliography{pw}

\end{document}